\title{The moduli scheme of affine spherical varieties with a free weight monoid}
\author{Paolo Bravi}
\address{Dipartimento di Matematica ``Guido Castelnuovo'', ``Sapienza'' Universit\`a di Roma, Piazzale 
Aldo Moro 5, 00185 Roma, Italy}
\email{bravi@mat.uniroma1.it}
\author{Bart Van Steirteghem}
\address{Department of Mathematics, Medgar Evers College - City University of New York, 1650 Bedford Ave., Brooklyn, NY 11225, USA}
\email{bartvs@mec.cuny.edu}
\numberwithin{equation}{section}
\newcommand{\inn}{\subset}
\newcommand{\isom}{\simeq}
\newcommand{\into}{\hookrightarrow}
\renewcommand{\>}{\rangle}
\newcommand{\<}{\langle}
\renewcommand{\epsilon}{\varepsilon}
\DeclareMathOperator{\Spec}{Spec}
\DeclareMathOperator{\Hom}{Hom}
\DeclareMathOperator{\supp}{supp}
\newcommand{\mf}{\mathfrak}
\newcommand{\fb}{\mf{b}}
\newcommand{\fg}{\mf{g}}
\newcommand{\fn}{\mf{n}}
\newcommand{\ft}{\mf{t}}
\renewcommand{\sl}{\mf{sl}}
\newcommand{\msf}{\mathsf}
\newcommand{\ssA}{\msf{A}}
\newcommand{\ssB}{\msf{B}}
\newcommand{\ssC}{\msf{C}}
\newcommand{\ssD}{\msf{D}}
\newcommand{\ssE}{\msf{E}}
\newcommand{\ssF}{\msf{F}}
\newcommand{\ssG}{\msf{G}}
\newcommand{\GL}{\mathrm{GL}}
\newcommand{\SL}{\mathrm{SL}}
\newcommand{\shN}{\mathcal{N}}
\newcommand{\N}{{\mathbb N}}
\newcommand{\Q}{{\mathbb Q}}
\newcommand{\Z}{{\mathbb Z}}
\newcommand{\tM}{\mathrm{M}}
\newcommand{\wl}{\Lambda}
\newcommand{\rl}{\Z\sr}
\newcommand{\dw}{\Lambda^+}
\newcommand{\sr}{S}  
\newcommand{\pr}{R^+} 
\newcommand{\wm}{\Gamma}
\newcommand{\ms}{\mathrm{M}_{\wm}}
\newcommand{\hs}{\mathrm{H}_{\wm}}
\newcommand{\om}{\omega}
\newcommand{\Tad}{T_{\mathrm{ad}}}
\newcommand{\tHilb}{\mathrm{Hilb}}
\newcommand{\hwv}{x_0}
\newcommand{\Vgg}{(V/\fg\cdot\hwv)^{G_{\hwv}}}
\newcommand{\Vg}{V/\fg\cdot\hwv}
\renewcommand{\k}{\mathbbm{k}}
\newcommand{\rs}{R}
\newcommand{\tg}{\mathrm T_{X_0}}
\theoremstyle{plain}
\newtheorem{theorem}{Theorem}[section]
\newtheorem*{theorem*}{Theorem}
\newtheorem{lemma}[theorem]{Lemma}
\newtheorem{proposition}[theorem]{Proposition}
\newtheorem{corollary}[theorem]{Corollary}
\newtheorem{conjecture}[theorem]{Conjecture}
\theoremstyle{definition}
\newtheorem{definition}[theorem]{Definition}
\newtheorem{remark}[theorem]{Remark}
\begin{document}

\begin{abstract}
We study Alexeev and Brion's moduli scheme $\ms$ of affine spherical
varieties with weight monoid $\wm$ under the assumption that $\wm$ is
free. We describe the tangent space to $\ms$ at its `most degenerate
point' in terms of the combinatorial invariants of spherical
varieties and deduce that the irreducible components of $\ms$,
equipped with their reduced induced scheme structure, 
are affine spaces.   
\end{abstract}

\maketitle

\section{Introduction} \label{sec:introduction}
As part of the classification problem of algebraic varieties equipped with a group action, spherical varieties, which include symmetric, toric and flag varieties, have received considerable attention; see, e.g., \cite{brion-gensymm, knop-auto, luna-typeA, losev-uniqueness}. In~\cite{alexeev&brion-modaff}, V.~Alexeev and M.~Brion introduced an important new tool for the study of affine spherical varieties over an algebraically closed field $\k$ of characteristic $0$. 
We recall that an affine variety $X$ equipped with an action of a connected reductive group $G$ is called spherical if it is normal 
and its coordinate ring $\k[X]$ is multiplicity-free as a $G$-module. 
For such a variety a natural invariant, which completely describes the $G$-module structure of $\k[X]$, 
is its \textbf{weight monoid} $\wm(X)$. 
By definition, $\wm(X)$ is the set of isomorphism classes of irreducible representations of $G$ that occur in $\k[X]$. 
In view of the classification problem, we have the following natural question: how `good' an invariant is $\wm(X)$, or more explicitly: 
to what extent does $\wm(X)$ determine the multiplicative structure of $\k[X]$? 

Alexeev and Brion brought geometry to this question as follows. After choosing a Borel subgroup $B$ of $G$, and a maximal torus $T$ in $B$, we can identify $\wm(X)$ with a finitely generated submonoid of the monoid $\dw$ of dominant weights.  Let $\wm$ be another such submonoid of $\dw$ and put
\[V(\wm) = \oplus_{\lambda \in \wm} V(\lambda),\]
where we used $V(\lambda)$ for the irreducible $G$-module corresponding to $\lambda \in \dw$. 
Let $U$ be the unipotent radical of $B$ and let $V(\wm)^U$ be the subspace of $U$-invariants, which is also the space of highest weight vectors in $V(\wm)$. By choosing an isomorphism $V(\wm)^U \to \k[\wm]$ of $T$-modules, where $\k[\wm]$ is the semigroup ring associated to $\wm$, we equip $V(\wm)^U$ with a $T$-multiplication law. Alexeev and Brion's moduli scheme $\ms$ parametrizes the $G$-multiplication laws on $V(\wm)$ which extend the multiplication law on $V(\wm)^U$. For an introduction to this moduli scheme, we refer the reader to \cite[\S 4.3]{brion-ihs}.
Examples of $\ms$ have been computed in \cite{jansou-deformations, bravi&cupit, degensphermod}. 

Let $\wl$ be the weight lattice of $G$, that is, $\wl$ is the character group of $T$. Because $X$ is normal, its weight monoid $\wm(X)$ also satisfies the following equality in $\wl \otimes_\Z \Q$
\begin{equation}  \label{eq:3}
\wm(X) = \Z\wm(X) \cap \Q_{\ge 0}\wm.
\end{equation}
By definition, this makes $\wm(X)$ a \textbf{normal} submonoid of
$\dw$. 

In \cite{brion-ihs}, Brion conjectured that the irreducible components of $\ms$ are affine spaces. A precise version of this conjecture is the following.
\begin{conjecture} \label{conj:brion}
If $\wm$ is a normal submonoid of $\dw$, then the irreducible components of $\ms$, equipped with their reduced induced scheme structure, are affine spaces. 
\end{conjecture}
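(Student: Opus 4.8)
The plan is to establish Conjecture~\ref{conj:brion} in the case of interest here, where $\wm$ is free, by exploiting the torus action on $\ms$ to reduce the statement to a computation of the tangent space to $\ms$ at its most degenerate point, and then carrying out that computation using the combinatorics of spherical varieties. Recall from Alexeev and Brion that $\ms$ is an affine scheme of finite type over $\k$ on which the adjoint torus $\Tad$ acts, that it has a unique closed $\Tad$-orbit---a fixed point $X_0$, corresponding to the \emph{horospherical} affine spherical $G$-variety with weight monoid $\wm$, whose coordinate ring is $\bigoplus_{\lambda\in\wm}V(\lambda)$ with the multiplication induced by the Cartan projections---and that $X_0$ lies in the closure of every $\Tad$-orbit. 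Since $\Tad$ is connected, every irreducible component of $\ms$ is $\Tad$-stable and hence contains $X_0$; moreover, choosing a one-parameter subgroup of $\Tad$ that pairs strictly positively with every nonzero $\Tad$-weight occurring in $\k[\ms]$---which is possible because $X_0$ is the unique closed $\Tad$-orbit---we obtain a non-negative grading on $\k[\ms]$ with degree-zero part $\k$ whose graded maximal ideal is the maximal ideal of $X_0$.

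This produces the following reduction. Let $Z$ be an irreducible component of $\ms$ with its reduced structure, set $n=\dim Z$, and let $\fm_Z$ be the graded maximal ideal of $\k[Z]$. It is enough to prove that $Z$ is smooth at $X_0$, that is $\dim_\k\fm_Z/\fm_Z^2=n$: choosing homogeneous elements $f_1,\dots,f_n\in\fm_Z$ whose classes form a basis of $\fm_Z/\fm_Z^2$, the graded Nakayama lemma shows that the $f_i$ generate $\fm_Z$ as an ideal and therefore $\k[Z]$ as a $\k$-algebra, and the induced surjection $\k[t_1,\dots,t_n]\onto\k[Z]$ of finitely generated $\k$-domains of equal dimension is then an isomorphism, so $Z\isom\A^n$. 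Thus it all comes down to showing $\dim\tg Z=\dim Z$ for every component $Z$.

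The core of the argument is the determination of $\tg\ms$. Since $X_0$ is $\Tad$-fixed, $\tg\ms$ is a $\Tad$-module, and I would compute its weights and their multiplicities explicitly, in terms of the combinatorial invariants attached to $\wm$---the spherical roots compatible with $\wm$, together with the associated colours and distinguished parabolic subgroup---via the identification of $\tg\ms$ with a suitable space of first-order $G$-equivariant deformations of the horospherical ring $\bigoplus_{\lambda\in\wm}V(\lambda)$. Because each component $Z$ is a $\Tad$-stable subvariety through $X_0$, the subspace $\tg Z\inn\tg\ms$ is a $\Tad$-submodule---and, once one verifies that $\tg\ms$ is multiplicity-free in the free case, a coordinate subspace spanned by a sub-collection of the weight lines.

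Finally one has to match dimensions. The bound $\dim Z\le\dim\tg Z$ is automatic, so the real point is the reverse inequality, which I would obtain by exhibiting, for each admissible set $\Sigma$ of compatible spherical roots, an $|\Sigma|$-parameter family of affine spherical $G$-varieties with weight monoid $\wm$ sweeping out the corresponding component---equivalently, by reading the irreducible components of $\ms$ and their dimensions off Luna's combinatorial classification of spherical varieties and the resulting list of spherical systems with weight monoid $\wm$. Proving this sharpness---that every $\Tad$-weight direction in $\tg\ms$ is integrated by a genuine one-parameter deformation, so that no component's tangent space at $X_0$ is larger than the component itself---is the main obstacle, and it is exactly here that freeness of $\wm$ is decisive: it is what keeps the set of compatible spherical roots, the obstruction computations, and the combinatorial bookkeeping under control.
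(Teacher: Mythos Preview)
Your overall strategy---reduce to smoothness at $X_0$ via the contracting $\Tad$-action, compute $\tg\ms$ and show it is multiplicity-free with weights the N-spherical roots adapted to $\wm$---matches the paper's, and your graded-Nakayama argument for ``smooth at $X_0$ implies affine space'' is a clean alternative to the paper's route through the normalization result of Alexeev--Brion (their Corollary~2.14, which says the normalization of each orbit closure is already an affine space).

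The gap is in your dimension-matching step. Knowing that every weight direction in $\tg\ms$ integrates to a one-parameter family does \emph{not} by itself prevent a component $Z$ from having excess tangent space: the curve $C_\sigma$ integrating the $\sigma$-line might lie in a different component $Z'$ while still having its tangent line contained in $\tg Z$ (multiplicity-freeness only says the $\sigma$-line is unique in $\tg\ms$, not that it is tangent to a unique component). The paper avoids this by working not with components directly but with $\Tad$-orbit closures $\overline{\Tad\cdot X}$. For these one has, from Alexeev--Brion, the explicit description $\overline{\Tad\cdot X}=\Spec\k[-\mathscr M_X]$, so that $\dim\overline{\Tad\cdot X}=|\Sigma^N(X)|$ and every $\Tad$-weight of $\tg(\overline{\Tad\cdot X})$ lies in the root monoid $\mathscr M_X\subset\N\Sigma^N(X)$. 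The crucial combinatorial step (Proposition~\ref{prop:cut}) is then: if $\sigma\in\Sigma^{sc}(G)\cap\N\Sigma^N(X)$ is itself N-adapted to $\wm$, then $\sigma\in\Sigma^N(X)$. Together with multiplicity-freeness this forces $\dim\tg(\overline{\Tad\cdot X})\le|\Sigma^N(X)|$, hence equality, hence smoothness. The passage to components is then immediate because each component contains a dense $\Tad$-orbit (finiteness of orbits). So the missing ingredient in your sketch is precisely this ``no new N-adapted spherical root can appear as a nontrivial $\N$-combination of the elements of $\Sigma^N(X)$'' lemma, which replaces your proposed but unjustified appeal to integrating weight directions.
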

This conjecture was verified for free and $G$-saturated monoids of
dominant weights in \cite{bravi&cupit}. In fact, Bravi and
Cupit-Foutou proved that under these assumptions, $\ms$ is an affine
space. In \cite{degensphermod,dsmot-preprint} it is shown that $\ms$
is an affine space when $\wm$ is the weight monoid of a spherical
$G$-module. 
Luna provided the first non-irreducible example (unpublished):
for $G=\SL(2) \times \SL(2)$ and $\wm = \<2\om, 4\om + 2\om'\>$,
where $\om$ and $\om'$ are the fundamental weights of the two copies of $\SL(2)$,
the scheme $\ms$ is the union of two lines meeting in a point.
In this paper, we verify that Conjecture~\ref{conj:brion} holds
when $\wm$ is free.
\begin{theorem}[Corollary~\ref{cor:brionconj}]\label{thm:A} 
If $\wm$ is a free submonoid of $\dw$, then the irreducible components
of $\ms$, equipped with their reduced induced scheme structure, are affine spaces. 
\end{theorem}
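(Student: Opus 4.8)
The plan is to deduce Theorem~\ref{thm:A} from an explicit description of the Zariski tangent space $T_{X_0}\ms$ at the ``most degenerate'' point $X_0\in\ms$, combined with the conical structure of $\ms$ established by Alexeev and Brion. Recall that $\ms$ is an affine scheme of finite type on which the adjoint torus $\Tad$ acts, that $X_0$ is its unique $\Tad$-fixed point, and that this action is contracting: a suitable one-parameter subgroup $\gamma$ of $\Tad$ satisfies $\lim_{t\to 0}\gamma(t)\cdot x=X_0$ for all $x\in\ms$. Hence $\k[\ms]$ is non-negatively graded via $\gamma$ with $\k[\ms]_0=\k$, so choosing $\Tad$-eigenvectors $f_1,\dots,f_N\in\fm_{X_0}$ whose classes form a basis of $\fm_{X_0}/\fm_{X_0}^2$, graded Nakayama shows they generate $\k[\ms]$. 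They therefore define a closed $\Tad$-equivariant embedding $\ms\hookrightarrow T_{X_0}\ms$ taking $X_0$ to $0$, realizing $\ms$ as a $\Tad$-stable closed subcone of its own tangent space at $X_0$ and identifying $\k[T_{X_0}\ms]$ with a polynomial ring $\k[z_1,\dots,z_N]$ in which $z_i$ carries the $\Tad$-weight of $f_i$.

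The crux, and the step I expect to be the main obstacle, is to show that when $\wm$ is free the $\Tad$-module $T_{X_0}\ms$ has linearly independent weights; in particular it is multiplicity free. Here I would begin from Alexeev and Brion's deformation-theoretic identification $T_{X_0}\ms\cong\Vgg$ (with $V=V(\wm)$ and $x_0$ the distinguished base point), decompose this invariant space into $\Tad$-eigenspaces, and match these with the combinatorial invariants of spherical varieties. One expects an isomorphism $T_{X_0}\ms\cong\bigoplus_{\sigma\in\Sigma(\wm)}\k_{-\sigma}$, where $\Sigma(\wm)$ is the finite set of (spherically closed) spherical roots adapted to $\wm$, i.e.\ those occurring as a spherical root of some affine spherical $G$-variety with weight monoid $\wm$; the essential points are that each eigenspace is one-dimensional and, above all, that $\Sigma(\wm)$ is a linearly independent subset of the character lattice $\wl$. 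Proving this uses the classification of spherical varieties (wonderful varieties and their combinatorial invariants), and it is precisely here that freeness of $\wm$ enters in an essential way. That it is not a formality is already visible from Luna's non-irreducible example: even for free $\wm$ the scheme $\ms$ need not be irreducible, so the argument must genuinely control which spherical roots can be ``switched on'' together.

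Granting this, the conclusion is short. Under the embedding $\ms\hookrightarrow T_{X_0}\ms=\Spec\k[z_\sigma:\sigma\in\Sigma(\wm)]$ with $z_\sigma$ of $\Tad$-weight $-\sigma$, the ideal of $\ms$, and likewise the radical ideal of the reduced scheme $\ms_{\mathrm{red}}$, is $\Tad$-stable, hence generated by $\Tad$-eigenvectors. Since the weights $-\sigma$ are linearly independent, distinct monomials in the $z_\sigma$ have distinct $\Tad$-weights, so every $\Tad$-eigenvector in $\k[z_\sigma:\sigma\in\Sigma(\wm)]$ is a scalar multiple of a single monomial. Thus the ideal of $\ms_{\mathrm{red}}$ is a radical monomial ideal, so $\ms_{\mathrm{red}}$ is a finite union of coordinate subspaces of $T_{X_0}\ms$; its irreducible components are the maximal ones among these, each an affine space. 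Finally, the contracting $\Tad$-action forces every irreducible component of $\ms$ to contain $X_0$, so these coordinate subspaces are exactly the irreducible components of $\ms$. This proves Theorem~\ref{thm:A} (Corollary~\ref{cor:brionconj}).
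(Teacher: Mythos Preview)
Your overall strategy is sound and, if completed, would give a genuinely different and rather elegant proof. But the proposal has a real gap precisely where you flag it: the claim that the $\Tad$-weights of $\tg\ms$ are \emph{linearly independent} in $\wl$ is asserted, not proved. This is strictly stronger than multiplicity-freeness, and it is exactly what your monomial-ideal argument needs (distinct monomials in the $z_\sigma$ must have distinct $\Tad$-weights, i.e.\ $\sum a_\sigma\sigma=\sum b_\sigma\sigma$ with $a_\sigma,b_\sigma\in\N$ must force $a_\sigma=b_\sigma$). The paper does prove that $\tg\ms$ is multiplicity-free with weight set equal to the set of N-adapted spherical roots (Theorem~\ref{thm:TX0hs} and Corollary~\ref{cor:TX0hs}), but it never establishes linear independence of that set, and its proof of Corollary~\ref{cor:brionconj} does not use it.

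Instead, the paper argues as follows. By Knop's theorem and \cite[Corollary~2.14]{alexeev&brion-modaff}, the \emph{normalization} of each $\Tad$-orbit closure $\overline{\Tad\cdot X}$ is already known to be an affine space; it therefore suffices to show each such closure is smooth at $X_0$. The weights of $\tg(\overline{\Tad\cdot X})$ lie in the root monoid $\mathscr M_X\subset\N\Sigma^N(X)$ and are N-adapted (being weights of $\tg\ms$); Proposition~\ref{prop:cut} then says that any N-adapted spherical root lying in $\N\Sigma^N(X)$ must already belong to $\Sigma^N(X)$. Combined with multiplicity-freeness this gives $\dim\tg(\overline{\Tad\cdot X})\le|\Sigma^N(X)|=\dim\overline{\Tad\cdot X}$, hence smoothness. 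Since $\ms$ has only finitely many $\Tad$-orbits, each irreducible component is such an orbit closure.

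Note the logical distance between what is proved and what you need: Proposition~\ref{prop:cut} only excludes an N-adapted root from being a \emph{nonnegative integer} combination of other N-adapted roots; it says nothing about $\Z$-relations such as $\sigma_1+\sigma_2=\sigma_3+\sigma_4$ among four distinct N-adapted roots, which is exactly what could make two different monomials share a $\Tad$-weight. Your route may well be viable, but closing this gap would require additional case analysis beyond what the paper carries out, and you have not supplied it. (A minor slip: $\tg\ms$ embeds in $\Vgg$ but is in general a proper subspace, as one sees already for simple-root weights in Lemma~\ref{lem:vggalpha} versus Lemma~\ref{lem:tangsr}; identifying which subspace is the content of Section~\ref{sec:weight-spaces-tH}.)
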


The bulk of this paper is devoted to the description of the tangent space to $\ms$ at its `most degenerate point' $X_0$ in terms of certain combinatorial invariants, called N-spherical roots. To be more precise, we introduce some more terminology and recall some facts. If $X$ is an affine spherical $G$-variety $X$, then its \textbf{root monoid} $\mathscr M_X$ is the submonoid of $\wl$ generated by the set
\[\{\lambda + \mu - \nu \, |\, \lambda,\mu,\nu \in \dw \text{ such that }\<\k[X]_{(\lambda)}\cdot \k[X]_{(\mu)}\>_\k \cap \k[X]_{(\nu)} \neq 0\}.\]
Here $\k[X]_{(\lambda)}$ is the isotypic component of $\k[X]$ of type
$\lambda \in \dw$. Loosely speaking, $\mathscr M_X$ detects how far
the decomposition $\k[X]=\oplus_{\lambda \in \wm(X)}\k[X]_{(\lambda)}$
is from being a grading by $\wm(X)$. A deep result by
Knop~\cite[Theorem~1.3]{knop-auto} says that the saturation of $\mathscr M_X$, which is the intersection in $\wl \otimes_{\Z} \Q$ of the cone
\(\Q_{\ge 0} \mathscr M_X\) and the lattice $\Z \mathscr M_X$, is a
freely generated monoid. Its basis $\Sigma^N(X)$ is called the set of
\textbf{N-spherical roots} of $X$. By~\cite[Proposition~2.13]{alexeev&brion-modaff} a formal
 consequence of our theorem above is that if $X$ is an
 affine spherical $G$-variety with a free weight monoid, then its root
 monoid $\mathscr M_X$ is also free; see Corollary~\ref{cor:rootmonfree}.

In their seminal paper~\cite{alexeev&brion-modaff}, Alexeev and Brion
equipped $\ms$ with an action of the maximal torus $T$ of $G$. For
this action, $\ms$ has a unique closed orbit, which is a fixed point
$X_0$. Consequently, the tangent space $\tg \ms$ to $\ms$ at the
point $X_0$ is a finite-dimensional
$T$-module. We describe this tangent space as follows.

\begin{theorem}[Theorem~\ref{thm:TX0hs} and
  Corollary~\ref{cor:TX0hs}]\label{thm:B} 
If $\wm$ is a free submonoid of $\dw$ ,
then $\tg \ms$ is a multiplicity-free $T$-module, and $\gamma \in
\wl$ occurs as a weight in  $\tg \ms$ if and only if there exists
an affine spherical $G$-variety $X_{-\gamma}$ with weight monoid $\wm$
and $\Sigma^N(X_{-\gamma}) = \{-\gamma\}$. 
\end{theorem}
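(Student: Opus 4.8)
The plan is to describe $\tg\ms$ as a space of first-order deformations of the horospherical multiplication, decompose it under $T$, and identify its weights with the N-spherical roots that the free monoid $\wm$ admits. First I would recall Alexeev and Brion's local description: the point $X_0$ is the horospherical variety whose coordinate ring $\k[X_0]=\bigoplus_{\lambda\in\wm}\k[X_0]_{(\lambda)}$ is $\wm$-graded, with multiplication $m_0$ given by the Cartan projections $\k[X_0]_{(\lambda)}\otimes\k[X_0]_{(\mu)}\onto\k[X_0]_{(\lambda+\mu)}$, and a tangent vector at $X_0$ is a $G$-equivariant symmetric bilinear map $m_1\colon\k[X_0]\otimes\k[X_0]\to\k[X_0]$ satisfying the first-order associativity condition and strictly lowering the $\wm$-degree, the latter encoding the rigidification of the moduli problem. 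The torus $T$ acts on this space, so $\tg\ms=\bigoplus_\gamma(\tg\ms)_\gamma$; a homogeneous $m_1$ of weight $\gamma$ is a tuple of $G$-maps $\k[X_0]_{(\lambda)}\otimes\k[X_0]_{(\mu)}\to\k[X_0]_{(\lambda+\mu+\gamma)}$ with $\lambda,\mu,\lambda+\mu+\gamma\in\wm$, so $-\gamma\in\Z\wm$ is necessarily a nonzero nonnegative integral combination of simple roots.

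Second, I would exploit freeness of $\wm$: writing $\wm=\bigoplus_i\N\lambda_i$ with the $\lambda_i$ linearly independent turns the $\wm$-grading of $\k[X_0]$ into an honest grading and makes the associativity equations in each weight $\gamma$ into a finite, explicit linear system. The key point, which is the content of Theorem~\ref{thm:TX0hs}, is that for every admissible $\gamma$ the space $(\tg\ms)_\gamma$ is at most one-dimensional---so $\tg\ms$ is a multiplicity-free $T$-module---and is nonzero exactly when a combinatorial compatibility between $\gamma$ and $\wm$ holds. I expect multiplicity-freeness to follow from the fact that such a cocycle is pinned down by a few of its leading components, together with Losev's uniqueness theorem (two affine spherical $G$-varieties with the same weight monoid and the same root monoid are $G$-isomorphic), so that at a fixed weight there is room for only one genuine deformation direction.

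Third, I would match the admissible weights with the existence statement. For the ``if'' direction, an affine spherical $G$-variety $X_{-\gamma}$ with weight monoid $\wm$ and $\Sigma^N(X_{-\gamma})=\{-\gamma\}$ is a $\k$-point of $\ms$; its horospherical contraction---the associated graded of $\k[X_{-\gamma}]$ for the filtration attached to its single N-spherical root $-\gamma$---is $\k[X_0]$, so a suitable one-parameter subgroup of $T$ degenerates $X_{-\gamma}$ to $X_0$ through $\ms$, and I would show that the leading term of this degeneration is a nonzero tangent vector at $X_0$ of weight $\gamma$. For the ``only if'' direction, starting from a nonzero $m_1\in(\tg\ms)_\gamma$ I would show the deformation is unobstructed inside $\ms$, extend it to a $T$-stable irreducible curve through $X_0$, and verify that its generic fibre is a normal---hence spherical---$G$-variety $X_{-\gamma}$ with weight monoid $\wm$ and root monoid contained in $\N(-\gamma)$; Knop's theorem then forces $\Sigma^N(X_{-\gamma})=\{-\gamma\}$. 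Alternatively, once the compatibility of $\gamma$ with $\wm$ has been isolated in the second step, one may invoke the classification of spherical varieties to produce $X_{-\gamma}$ directly from the datum $(\wm,\{-\gamma\})$.

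The main obstacle is the interface between the two descriptions: controlling the higher-order obstructions---equivalently, checking that the combinatorial compatibility read off from $(\tg\ms)_\gamma$ is precisely a valid homogeneous spherical datum with spherical root $-\gamma$---and making sure the curve produced in the converse has a normal generic fibre rather than, say, a cuspidal parametrisation whose reduced tangent direction would carry a proper multiple of $\gamma$ (note that we may not assume $\mathscr M_X$ is free, since that is a downstream corollary). Luna's example with $G=\SL(2)\times\SL(2)$ already shows that the admissible weights need not form a single ray, so the argument must genuinely keep track of which spherical roots the monoid $\wm$ admits; this bookkeeping, rather than any single deformation-theoretic estimate, is where I expect the real work to lie.
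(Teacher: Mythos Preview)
Your high-level architecture is reasonable, and your ``if'' direction is close to the paper's: one does degenerate $X_{-\gamma}$ to $X_0$ along its $\Tad$-orbit and extract a tangent direction (this is Corollary~\ref{cor:TX0hs}). But the heart of the theorem is your second step, and there you have a genuine gap. You write that multiplicity-freeness should follow because a cocycle is ``pinned down by a few of its leading components, together with Losev's uniqueness theorem.'' Losev's theorem gives uniqueness of the \emph{variety} with given weight monoid and N-spherical roots; it says nothing about the dimension of a weight space in the tangent space. The paper does not deduce multiplicity one from any uniqueness statement: it embeds $\tg\ms$ into $\Vgg$ via the normal sheaf of $X_0 = \overline{G\cdot x_0}$ in $V = \bigoplus_i V(\lambda_i)$, and then carries out a lengthy Lie-theoretic analysis (Sections~\ref{sec:tad-weights-vgg} and~\ref{sec:weight-spaces-tH}) to show first that every weight is a spherically closed spherical root compatible with $S^p(\wm)$, and second that each weight space has dimension at most one (Proposition~\ref{prop:multfreequot} for non-simple roots, Lemma~\ref{lem:tangsr} for simple roots). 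The latter requires an explicit extension criterion across the codimension-one orbits of $X_0$ (Theorem~\ref{thm:extension-criterion}). None of this is visible from the associativity equations alone, and your proposal gives no mechanism to identify the admissible weights with the finite list in Table~\ref{table:spherical_roots}.

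Your ``only if'' direction is also problematic. You propose to show the first-order deformation is unobstructed and integrate it to a curve whose generic fibre is the sought $X_{-\gamma}$. But the paper explicitly notes (Remark~\ref{rem:alper}) that $\ms$ may fail to be reduced, so unobstructedness is not available a priori. The paper avoids this entirely: once it has shown, by the computations above, that any tangent weight $\gamma$ satisfies the combinatorial conditions of Corollary~\ref{cor:indiv_Nadapt-spher-roots}, the existence of $X_{-\gamma}$ is read off from the Luna--Vust classification (Proposition~\ref{prop:adapt-spher-roots}), not constructed by deformation. Your alternative (``invoke the classification once the compatibility has been isolated'') is exactly right, but isolating that compatibility \emph{is} the theorem, and you have not said how to do it.
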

To prove this we first use the combinatorial
theory of spherical varieties~\cite{knop-lv, luna-typeA,
  losev-uniqueness} to combinatorially characterize the weights $\gamma$ for which such a variety $X_{-\gamma}$ exists; see Corollary~\ref{cor:indiv_Nadapt-spher-roots}.  Such a
characterization was sketched by Luna in 2005 in
an unpublished note. 

To prove Theorem~\ref{thm:A} we use Theorem~\ref{thm:B}:
since it is known that the irreducible components of $\ms$,
equipped with their reduced induced scheme structure, are affine spaces after normalization
(by \cite[Theorem~1.3]{knop-auto} and \cite[Corollary~2.14]{alexeev&brion-modaff}),
it is enough to show that they are smooth,
and this follows from our description of the tangent space to $\ms$ at $X_0$
(see Section~\ref{sec:components}).

\subsection*{Notation}
Except if explicitly stated otherwise, $\wm$ will be a \emph{free} submonoid of
$\dw$ with basis 
\(F = \{\lambda_1,\lambda_2, \ldots, \lambda_r\}.\) 
We will use $\sr$ for the set of simple roots of $G$ (associated to $B$ and $T$) 
and $\pr$ for the set of positive roots. 
The irreducible representation of $G$ associated to the
dominant weight $\lambda \in \dw$ is denoted by $V(\lambda)$ and we use
$v_{\lambda}$ for a highest weight vector in $V(\lambda)$.  We use
$\fg, \fb, \ft, \fn,$ etc. for the Lie algebra of $G,B,T, U,$ etc.,
respectively. When $\alpha$ is a root,
$X_{\alpha} \in \fg_{\alpha}$ is a root operator and $\alpha^{\vee}$
the coroot. 
When $\fg$ is simple, simple roots are denoted by $\alpha_1,\ldots,\alpha_n$ 
and numbered like by Bourbaki (see \cite{bourbaki-geadl47}), 
the corresponding fundamental weights are denoted by $\om_1,\ldots,\om_n$.

\subsection*{Acknowledgement}
The authors are grateful to the Institut Fourier for hosting them in the summer of 2011, when work on this project began. They also thank the Centro Internazionale per la Ricerca Matematica in Trento, as well as Friedrich Knop and the Emmy Noether Zentrum in Erlangen for their hospitality in the summers of 2012 and 2013, respectively. 
The second-named author received support from The City University of New York PSC-CUNY Research Award Program and from the National Science Foundation through grant DMS 1407394. 

The authors are grateful to Michel Brion for suggesting this
problem and for helpful discussions. They also thank Domingo Luna for discussions and suggestions
in the summer of 2011 and for sharing his working paper of 2005;  they were particularly helpful for Section~\ref{sec:spher-roots-adapt}. They would like to thank Jarod Alper for a clarifying exchange, summarized in Remark~\ref{rem:alper}, about scheme structures on irreducible components of affine schemes. It alerted them to a mistake in an earlier version of this paper.  

The authors thank the referee for several helpful suggestions, and in particular for providing an elementary proof of Proposition~\ref{prop:weightmonoid}(\ref{item:8}) and for correcting an error in an earlier version of the proof of (\ref{item:9}) and (\ref{item:10}) of the same proposition.

As this paper was being completed, R.~Avdeev and S.~Cupit-Foutou
announced that they had independently obtained similar results
\cite{avdeev&cupit-irrcomps-arxivv1}. 

\subsection*{Note added during review}
While this paper was under review, a second version of the preprint \cite{avdeev&cupit-irrcomps-arxivv1} was posted on the arXiv, in which Avdeev and Cupit-Foutou propose a proof of Conjecture~\ref{conj:brion} for all normal monoids $\wm$ and an example of a non-reduced moduli scheme $\ms$ (cf.~Remark~\ref{rem:alper}).

\section{Spherical roots adapted to $\wm$} \label{sec:spher-roots-adapt}

In this section $\wm$ denotes a normal, but \emph{not necessarily free},
submonoid of $\dw$. By combining results from \cite{luna-typeA,
  knop-lv, losev-uniqueness, bravi-pezzini-primwonderful-arxivv1} we will describe when a set of spherical
roots is `adapted' or `N-adapted' to $\wm$. In particular, in
Corollaries~\ref{cor:indiv_adapt-spher-roots} 
and~\ref{cor:indiv_Nadapt-spher-roots} we give an explicit
characterization for when an element $\sigma$ of the root lattice is
`adapted' or `N-adapted' to $\wm$. 

\begin{definition} \label{def:Nadapted}
We say that a subset $\Sigma$ of $\N\sr$ is
\textbf{N-adapted} to $\wm$ if there exists an affine spherical
$G$-variety $X$ such that $\wm(X) = \wm$ and $\Sigma^N(X) =
\Sigma$. By slight abuse of language, we say that an element $\sigma$ of $\N\sr$ is N-adapted to
$\wm$ if $\{\sigma\}$ is N-adapted to $\wm$. 
\end{definition}

We will give the definition of `adapted', which requires some more
notions from the theory of spherical varieties, in
Definition~\ref{def:adapted} below. After recalling some basic definitions
concerning spherical varieties, we briefly discuss, in
Section~\ref{sec:spherical-systems}, the notion of
`spherically closed spherical systems', and the role they play in
classifiying spherically closed spherical subgroups of $G$. We then, in Section~\ref{sec:augmentations} review
Luna's `augmentations'. They classify the subgroups of $G$ which have a
given spherical closure $K$. Finally, after recalling some basic
results from the Luna-Vust theory of spherical embeddings in
Section~\ref{sec:colored-cones}, we deduce the combinatorial
characterization of adapted and N-adapted spherical roots.  

\subsection{Basic definitions} \label{sec:basic-notions}
In this section we briefly recall the basic definitions of the theory
of spherical varieties
by freely quoting from \cite{luna-typeA}. 
For more details on these notions the reader can
also consult~\cite{pezzini-cirmspher, timashev-embbook}.

We recall that a (not necessarily
affine) $G$-variety $X$ is called \textbf{spherical} if it is normal
and contains an open dense orbit for $B$. If $X$ is affine, this is
equivalent to the definition given before in terms of $\k[X]$. 

The complement of the open $B$-orbit in $X$ consists of finitely many $B$-stable prime divisors. 
Among those, the ones that are \emph{not} $G$-stable are called the \textbf{colors} of $X$. 
The set of colors of $X$ is denoted by $\Delta_X$. 

By the \textbf{weight lattice} $\wl(X)$ of
$X$ we mean the subgroup of $\wl$ made up of the $B$-weights in the
field of rational functions $\k(X)$. 
Since $X$ has a dense $B$-orbit two rational $B$-eigenfunctions on $X$
of the same weight 
are scalar multiples of one another.
 
Let $P_X$ be the stabilizer of the open $B$-orbit and
denote by $S^p_X$ the subset of simple roots corresponding to
$P_X$, which is a parabolic subgroup of $G$ containing $B$. 

Let
$\mathcal V_X \inn \Hom(\wl(X),\Q)$ be
the so-called \textbf{valuation cone} of $X$, i.e.\ the set of $\Q$-valued $G$-invariant valuations on $\k(X)$ 
seen as functionals on $\wl(X)$. 
By \cite[Theorem~3.5]{brion-gensymm} $\mathcal V_X$ is a cosimplicial cone.
Let $\Sigma(X)$ be the set of linearly independent primitive elements in $\wl(X)$ such that 
\[\mathcal V_X=\{v\in\Hom(\wl(X),\Q):\langle v,\sigma\rangle\leq0\mbox{ for all }\sigma\in\Sigma(X)\},\] 
i.e.\ the set of \textbf{spherical roots} of $X$.

Similarly, the discrete valuations on $\k(X)$ associated with colors give rise to functionals on $\wl(X)$.
This yields the so-called \textbf{Cartan pairing} of $X$, a $\Z$-bilinear map denoted by
\[c_X\colon \Z\Delta_X\times\wl(X)\to\Z.\]

Since
$X$ has a dense $B$-orbit, it has a dense $G$-orbit. Let $H$ be the
stabilizer of a point in this orbit, which we can then identify with
$G/H$. The group $H$ is called a \textbf{spherical subgroup} of $G$
because $G/H$ is a spherical $G$-variety. To $H$, we can associate a larger group $\overline{H}$, called
the \textbf{spherical closure} of $H$: the normalizer of $H$ in $G$
acts by $G$-equivariant automorphisms on $G/H$ and $\overline{H}$ is
the kernel of the induced action of this normalizer on $\Delta_X$
(see \cite[\S 6.1]{luna-typeA} or \cite[\S 2.4.1]{bravi-luna-f4}). We
recall that it follows from \cite[Lemma~2.4.2]{bravi-luna-f4} that
$\overline{\overline{H}} = \overline{H}$ (see \cite[Proposition~3.1]{pezzini-redaut-arxivv2} for a direct proof).

\subsection{Spherical systems} \label{sec:spherical-systems}
Here we briefly recall the definition of spherical system and its
role in the classification of spherical varieties, 
see \cite{luna-typeA,   bravi-luna-f4}.  

Wonderful varieties are special spherical varieties satisfying certain regularity properties.
We do not need their definition here, 
we just recall that by~\cite{losev-uniqueness, bravi-pezzini-primwonderful-arxivv1} 
wonderful $G$-varieties (or their open $G$-orbits) 
are classified by
their so-called spherical systems. This was known as Luna's conjecture,
another proof of which was proposed in \cite{cupit-wvgr-prep}. By
\cite{knop-auto},  spherical homogeneous spaces
$G/K$ 
with $K$ \textbf{spherically closed} (that is, $\overline{K} = K$)  can be realized as the open $G$-orbit of a unique wonderful variety. Consequently, they correspond to spherically closed spherical $G$-systems 
(systems satisfying certain combinatorial conditions, as explained below):
\[G/K\longmapsto \mathscr S_{G/K}=(S^p_{G/K},\Sigma(G/K),\mathbf A_{G/K}).\]

Let $K$ be a spherically closed spherical subgroup of $G$. 
The set $\Sigma(G/K)$ of spherical roots of $G/K$ is included in the
root lattice $\rl$ 
(because $K$ contains the center of $G$) and it is a basis of
$\wl(G/K)$. Let $\mathbf A_{G/K}$ be the set of colors that are not stable
under some minimal parabolic containing $B$ and corresponding to a simple
root belonging to $\Sigma(G/K)$. The full Cartan pairing restricts
to the $\Z$-bilinear pairing
$c_{G/K}\colon \Z\mathbf A_{G/K}\times\Z\Sigma(G/K)\to\Z$,
also called restricted Cartan pairing.

\begin{definition} \label{def:sphericalroots} 
 The set $\Sigma^{sc}(G)$ of \textbf{spherically closed spherical roots of $G$} is defined as
\[\Sigma^{sc}(G):=\{\sigma \in \rl \colon \sigma \in \Sigma(G/K) \text{ for
  some spherically closed spherical subgroup $K$ of $G$}\}.\] 
Let $H$ be a spherical subgroup of $G$ and let $X$ be any spherical
$G$-variety with open $G$-orbit $G/H$. Let $\overline{H}$ be the
spherical closure of $H$. We define
\begin{align*}
\Sigma^{sc}(X)&:= \Sigma^{sc}(G/H) := \Sigma(G/\overline{H});\\
\Sigma^N(X)&:= \Sigma^{N}(G/H):= \Sigma(G/N_G(H)).
\end{align*}
\end{definition}

\begin{remark}
\begin{enumerate}[1.]
\item It follows from \cite[Theorem~1.3]{knop-auto} that for $X$ affine,
$\Sigma^N(X)$ given in Definition~\ref{def:sphericalroots} agrees with
the description in Section~\ref{sec:introduction} of the set of
N-spherical roots of $X$.  
\item Thanks to \cite[Theorem~2]{losev-uniqueness} one can precisely describe the
  relationship between the three sets $\Sigma(X)$, $\Sigma^{sc}(X)$
  and $\Sigma^N(X)$; see Proposition~\ref{prop:scversusN} and \cite{bvs-owr} for more information. 
\item While $\Sigma^{sc}(X)$ and $\Sigma^N(X)$ are subsets of $\N\sr$,
  there exist wonderful varieties $X$ such that $\Sigma(X) \not\subset
  \rl$ (see \cite{wasserman}).
\item $\Sigma(X)$ is not always a basis of $\wl(X)$, but it is
  when $X$ is wonderful. 
\item The weight lattice, valuation cone and spherical roots
  are birational invariants of the spherical variety $X$ since they only
  depend on its open $G$-orbit $G/H$. The same is true of the colors and
  the Cartan pairing once we (naturally) identify the colors of $G/H$ with
  their closures in $X$.  
\end{enumerate}
\end{remark}

The set $\Sigma^{sc}(G)$ is finite. More precisely, there is the next
proposition, which follows from the classification of spherically closed spherical subgroups $K$ of $G$ 
with $\wl(G/K)$ of rank 1 \cite{ahiezer-eqvcompl,losev-uniqueness}, see also \cite[\S~1.1.6 and \S~2.4.1]{bravi-luna-f4}. We recall
that the \textbf{support} $\supp(\sigma)$ of $\sigma \in \N\sr$ is the
set of simple roots which have a nonzero coefficient in the unique
expression of $\sigma$ as a linear combination of the simple roots.  

\begin{proposition} \label{prop:sigmaG} 
An element $\sigma$ of $\N\sr$ belongs to $\Sigma^{sc}(G)$ if and only if 
after numbering the simple roots in $\supp(\sigma)$ like Bourbaki (see \cite{bourbaki-geadl47}) 
$\sigma$ is listed in Table~\ref{table:spherical_roots}. 
\end{proposition}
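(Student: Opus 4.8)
The plan is to reduce Proposition~\ref{prop:sigmaG} to the classification of \emph{rank-one} spherically closed spherical subgroups of $G$, which is precisely the content of the cited references \cite{ahiezer-eqvcompl,losev-uniqueness}, and then to compare that classification with Table~\ref{table:spherical_roots}. First I would establish that $\Sigma^{sc}(G)$ is the union of the sets $\Sigma(G/K)$ over all spherically closed spherical subgroups $K$ with $\wl(G/K)$ of rank one. One inclusion is immediate from Definition~\ref{def:sphericalroots}. For the other, suppose $\sigma\in\Sigma(G/K)$ for a spherically closed $K$ of arbitrary rank. By \cite{knop-auto} the space $G/K$ is the open orbit of a wonderful variety $M$, and for each $\tau\in\Sigma(G/K)$ there is a $G$-stable prime divisor $M_\tau$ of $M$ that is itself wonderful with set of spherical roots $\Sigma(G/K)\setminus\{\tau\}$; intersecting the $M_\tau$ over all $\tau\neq\sigma$ produces a wonderful subvariety of rank one whose unique spherical root is $\sigma$. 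Since $M$ is spherically closed, so is this subvariety, and hence its open $G$-orbit is of the form $G/K'$ with $K'$ spherically closed, $\wl(G/K')$ of rank one, and $\Sigma(G/K')=\{\sigma\}$. (Equivalently one argues combinatorially, taking the quotient of the spherically closed spherical system $\s_{G/K}$ by $\Sigma(G/K)\setminus\{\sigma\}$ and invoking the verification of Luna's conjecture in \cite{losev-uniqueness,bravi-pezzini-primwonderful-arxivv1} to realize the resulting rank-one system by a spherically closed $K'$.)

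Next I would reduce to the case of connected full support. By the standard localization operation on spherical systems \cite{luna-typeA}, restricting attention to the simple roots lying in $\supp(\sigma)$ shows that $\sigma$ is a rank-one spherically closed spherical root of $G$ if and only if it is one of the semisimple group $L$ whose Dynkin diagram is the full subdiagram of that of $G$ on $\supp(\sigma)$: one direction is localization, the other is parabolic induction (which preserves the properties of being wonderful and of being spherically closed). It therefore suffices to list, for each semisimple group, its rank-one spherically closed spherical roots. The classification \cite{ahiezer-eqvcompl,losev-uniqueness}, reproduced with the spherically closed refinement in \cite[\S~1.1.6 and \S~2.4.1]{bravi-luna-f4}, provides exactly this list; in particular each such $\sigma$ turns out to have connected support, and rewriting each entry in terms of the Bourbaki numbering of its support reproduces Table~\ref{table:spherical_roots} line by line, which finishes the proof.

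The part requiring genuine care, as opposed to mere bookkeeping, is keeping both reductions strictly inside the \emph{spherically closed} class: the claim that a $G$-stable subvariety of a spherically closed wonderful variety is again spherically closed, equivalently that the relevant quotient of a spherically closed spherical system still satisfies the spherical-closedness axiom, and the corresponding statement for parabolic induction. If one only knew $\sigma$ to be a spherical root of \emph{some} wonderful variety, then passing to the spherical closure could—via Losev's comparison of $\Sigma$ and $\Sigma^{sc}$ (cf.\ Proposition~\ref{prop:scversusN})—replace $\sigma$ by $2\sigma$ or conversely, so this distinction must be tracked throughout; the argument above avoids the issue by never leaving the spherically closed setting, after which the cited rank-one classification applies verbatim.
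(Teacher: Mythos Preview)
The paper does not give its own proof of this proposition; it simply records that the statement \emph{follows from the classification of spherically closed spherical subgroups $K$ of $G$ with $\wl(G/K)$ of rank~$1$}, citing \cite{ahiezer-eqvcompl,losev-uniqueness} and \cite[\S1.1.6, \S2.4.1]{bravi-luna-f4}. Your write-up is therefore not competing with a proof in the paper but rather unpacking the citation, and the two-step reduction you outline (first to rank one, then to the support of $\sigma$) is exactly the standard route implicit in those references.

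One small correction: your assertion that ``each such $\sigma$ turns out to have connected support'' is not quite right --- the entry $\alpha+\alpha'$ of type $\mathsf A_1\times\mathsf A_1$ in Table~\ref{table:spherical_roots} has disconnected support. This does not damage your localization argument (you only restrict to the subdiagram on $\supp(\sigma)$, which is allowed to be disconnected), but the sentence should be deleted or amended.

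On the point you flag as delicate --- that a $G$-stable wonderful subvariety of a spherically closed wonderful variety is again spherically closed --- you are right that this is where the content lies, and your sketch stops short of justifying it. A cleaner way to phrase the reduction, closer in spirit to how the cited references organize things, is to note that the combinatorial spherically-closed condition on a spherical system (cf.\ Definition~\ref{def:spherical-systems} and Remark~\ref{rem:spherical-systems}) concerns each spherical root individually (its type and its compatibility with $S^p$); since passing to a closed $G$-orbit keeps $S^p$ fixed and only shrinks $\Sigma$, the condition is manifestly inherited. Equivalently, axiom~(S) for $\mathscr S_{G/K}$, which is part of what \cite[\S7.1]{luna-typeA} proves for spherically closed $K$, is precisely the statement that each $\sigma\in\Sigma(G/K)$ comes from a rank-one spherically closed subgroup --- so once one grants that reference, your Step~1 is immediate and the geometric detour through intersections of boundary divisors is unnecessary.
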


\begin{table}\caption{spherically closed spherical roots} \label{table:spherical_roots}
\begin{center}
\begin{tabular}{ll}
Type of support & $\sigma$ \\
\hline
$\sf A_1$ & $\alpha$\\
$\sf A_1$ & $2\alpha$\\
$\mathsf A_1 \times \mathsf A_1$ & $\alpha+\alpha'$\\
$\mathsf A_n$, $n\geq 2$ & $\alpha_1+\ldots+\alpha_n$\\
$\mathsf A_3$ & $\alpha_1+2\alpha_2+\alpha_3$\\
$\mathsf B_n$, $n\geq 2$ & $\alpha_1+\ldots+\alpha_n$\\
                     & $2(\alpha_1+\ldots+\alpha_n)$\\
$\mathsf B_3$ & $\alpha_1+2\alpha_2+3\alpha_3$\\
$\mathsf C_n$, $n\geq 3$ & $\alpha_1+2(\alpha_2+\ldots+\alpha_{n-1})+\alpha_n$\\
$\mathsf D_n$, $n\geq 4$ & $2(\alpha_1+\ldots+\alpha_{n-2})+\alpha_{n-1}+\alpha_n$\\
$\mathsf F_4$ & $\alpha_1+2\alpha_2+3\alpha_3+2\alpha_4$\\
$\mathsf G_2$ & $4\alpha_1+2\alpha_2$\\          
& $\alpha_1+\alpha_2$
\end{tabular}
\end{center}
\end{table}

Recall that $K$ is a spherically closed spherical subgroup of $G$. 
Therefore, see \cite[\S 7.1]{luna-typeA}, the triple $\mathscr S_{G/K}=(S^p_{G/K},
\Sigma(G/K), \mathbf A_{G/K})$ is a spherically closed Luna spherical system in the following sense.
\begin{definition} \label{def:spherical-systems}
Let $(S^p,\Sigma,\mathbf A)$ be a triple where $S^p$ is a  subset of $\sr$,
$\Sigma$ is a subset of $\Sigma^{sc}(G)$ 
and $\mathbf A$ is a finite set endowed with a $\Z$-bilinear pairing $c\colon \Z\mathbf A\times\Z\Sigma\to\Z$. 
For every $\alpha \in \Sigma \cap S$, let $\mathbf A (\alpha)$ denote the set $\{D \in \mathbf A : c(D,\alpha)=1 \}$. 
Such a triple is called a \textbf{spherically closed spherical $G$-system} if all the
following axioms hold: 
\begin{itemize} 
\item[(A1)] for every $D \in \mathbf A$ and every $\sigma \in \Sigma$,
  we have that $c(D,\sigma)\leq 1$ and that
  if $c(D, \sigma)=1$ then $\sigma \in S$; 
\item[(A2)] for every $\alpha \in \Sigma \cap S$, $\mathbf A(\alpha)$
  contains two elements, which we denote by $D_\alpha^+$ and
  $D_\alpha^-$, and for all $\sigma \in \Sigma$ we have $c(D_\alpha^+,\sigma) + c(D_\alpha^-,\sigma) = \langle \alpha^\vee , \sigma \rangle$; 
\item[(A3)] the set $\mathbf A$ is the union of $\mathbf A(\alpha)$ for all $\alpha\in\Sigma \cap S$; 
\item[($\Sigma 1$)] if $2\alpha \in \Sigma \cap 2S$ then $\frac{1}{2}\langle\alpha^\vee, \sigma \rangle$ is a non-positive integer for all $\sigma \in \Sigma \setminus \{ 2\alpha \}$; 
\item[($\Sigma 2$)] if $\alpha, \beta \in S$ are orthogonal and $\alpha + \beta$ belongs to $\Sigma$ then $\langle \alpha ^\vee , \sigma \rangle = \langle \beta ^\vee , \sigma \rangle$ for all $\sigma \in \Sigma$; 
\item[(S)] every $\sigma \in \Sigma$ is \textbf{compatible} with
  $S^p$, that is, for every $\sigma \in \Sigma$ there exists a
  spherically closed spherical subgroup $K$ of $G$ with
  $S^p_{G/K}=S^p$ and $\Sigma(G/K)=\{\sigma\}$.
\end{itemize}
\end{definition}

\begin{remark}\label{rem:spherical-systems} 
\begin{enumerate}[1.]
\item Condition (S) of Definition~\ref{def:spherical-systems} can be stated
in purely combinatorial terms as follows (see \cite[\S 1.1.6]{bravi-luna-f4}).
A spherically closed spherical root $\sigma$ is compatible with $S^p$ if and only if:
\begin{itemize}
\item in case $\sigma=\alpha_1+\ldots+\alpha_n$ with support of type $\ssB_n$
\[\{\alpha\in\supp\sigma\colon\<\alpha^\vee,\sigma\>=0\}\setminus\{\alpha_n\}
\subseteq S^p\subseteq\{\alpha\in S\colon\<\alpha^\vee,\sigma\>=0\}\setminus\{\alpha_n\},\]
\item in case $\sigma=\alpha_1+2(\alpha_2+\ldots+\alpha_{n-1})+\alpha_n$ with support of type $\ssC_n$
\[\{\alpha\in\supp\sigma\colon\<\alpha^\vee,\sigma\>=0\}\setminus\{\alpha_1\}
\subseteq S^p\subseteq\{\alpha\in S\colon\<\alpha^\vee,\sigma\>=0\},\] 
\item in the other cases
\[\{\alpha\in\supp\sigma\colon\<\alpha^\vee,\sigma\>=0\}
\subseteq S^p\subseteq\{\alpha\in S\colon\<\alpha^\vee,\sigma\>=0\}.\]
\end{itemize} 
\item Definition~\ref{def:spherical-systems} combines the standard
  definition of spherical system, see \cite[\S 2]{luna-typeA}, with the
  requirement that it be spherically closed, see \cite[\S 7.1]{luna-typeA}
  and \cite[\S 2.4]{bravi-luna-f4}.
\end{enumerate}
\end{remark}

As shown in \cite{luna-typeA}, the set $\Delta_{G/K}$ of colors and the Cartan pairing $c$ of $G/K$ are uniquely determined by
$\mathscr S_{G/K}$, in the sense that they can be naturally identified with the set of
colors of and the full Cartan pairing
of $\mathscr S_{G/K}$, defined as follows.
Let $\mathscr S=(S^p,\Sigma,\mathbf A)$ be a (spherically closed) spherical $G$-system. The
\textbf{set of colors of $\mathscr S$} is the finite set $\Delta$
obtained as the disjoint union $\Delta=\Delta^a\cup\Delta^{2a}\cap\Delta^b$ where:
\begin{itemize}
\item $\Delta^a=\mathbf A$,
\item $\Delta^{2a}=\{D_\alpha : \alpha\in S\cap{\frac1 2}\Sigma\}$,
\item $\Delta^b=\{D_\alpha : \alpha\in S\setminus(S^p\cup\Sigma\cup{\frac1 2}\Sigma)\}/\sim$, where $D_\alpha\sim D_\beta$ if $\alpha$ and $\beta$ are orthogonal and $\alpha+\beta\in\Sigma$.
\end{itemize}

The \textbf{full Cartan pairing of $\mathscr S$} is the $\Z$-bilinear map $c\colon\Z\Delta\times\Z\Sigma\to\Z$ defined as:
\[c(D,\sigma)=\left\{\begin{array}{ll}
c(D,\sigma) & \mbox{ if $D\in\Delta^a$}; \\
{\frac1  2}\langle\alpha^\vee,\sigma\rangle & \mbox{ if $D=D_\alpha\in\Delta^{2a}$}; \\
\langle\alpha^\vee,\sigma\rangle & \mbox{ if $D=D_\alpha\in\Delta^b$}. 
\end{array}\right.\]

\subsection{Augmentations} \label{sec:augmentations} 
We continue to use $K$ for a spherically closed spherical subgroup of $G$. 
By \cite[Proposition~6.4]{luna-typeA} spherical homogeneous spaces
$G/H$ such that $\overline H$, the spherical closure of $H$, is equal
to $K$ are classified by their weight lattice, which is an
augmentation of $\mathscr S_{G/K}$ .
\begin{definition}\label{def:augmentation}
Let $\mathscr S=(S^p,\Sigma,\mathbf A)$ be a spherically closed
spherical $G$-system with Cartan pairing $c: \Z \mathbf{A} \times
\Z\Sigma \to \Z$. 
An \textbf{augmentation} of $\mathscr S$ is a
lattice $\Lambda'\subset\Lambda$ endowed with a pairing
$c'\colon\Z\mathbf A\times\Lambda'\to\Z$ such that
$\Lambda'\supset\Sigma$ and
\begin{itemize}
\item[(a1)] $c'$ extends $c$;
\item[(a2)] if $\alpha\in S\cap\Sigma$ then $c'(D_\alpha^+,\xi)+c'(D_\alpha^-,\xi)=\langle\alpha^\vee,\xi\rangle$ for all $\xi\in\Lambda'$;
\item[($\sigma1$)] if $2\alpha\in2S\cap\Sigma$ then $\alpha \notin
  \Lambda'$ and $\langle\alpha^\vee,\xi\rangle\in2\Z$ for all $\xi\in\Lambda'$;
\item[($\sigma2$)] if $\alpha$ and $\beta$ are orthogonal elements of
  $\sr$ with $\alpha+\beta\in\Sigma$ then
  $\langle\alpha^\vee,\xi\rangle=\langle\beta^\vee,\xi\rangle$ for all
  $\xi\in\Lambda'$; and
\item[(s)] if $\alpha\in S^p$ then $\langle\alpha^\vee,\xi\rangle=0$ for all $\xi\in\Lambda'$.
\end{itemize}
Let $\Delta$ be the set of colors of $\mathscr S$. The \textbf{full
  Cartan pairing} of the augmentation is the $\Z$-bilinear map $c': \Z \Delta
\times \Lambda'\to\Z$ given by
\begin{equation} \label{eq:4}
c'(D,\gamma)=\left\{\begin{array}{ll}
c'(D,\gamma) & \mbox{ if $D\in\Delta^a$}; \\
{\frac1  2}\langle\alpha^\vee,\gamma\rangle & \mbox{ if $D=D_\alpha\in\Delta^{2a}$}; \\
\langle\alpha^\vee,\gamma\rangle & \mbox{ if $D=D_\alpha\in\Delta^b$}. 
\end{array}\right.
\end{equation}
\end{definition}

\begin{remark} \label{rem:CPaugmentation}
By the definition of spherical closure, 
$\Delta_{G/H}$ and $\Delta_{G/\overline H}$ are naturally identified and the full Cartan pairing $\Z
\Delta_{G/H} \times \Lambda(G/H) \to \Z$ on
$G/H$ is the full Cartan pairing of the augmentation corresponding to
$H$ (see Proposition~6.4 and the proof of Theorem~3 in
\cite{luna-typeA}). 
\end{remark}

We state here, for future reference, the following consequence of
\cite[Theorem~2]{losev-uniqueness}. 
\begin{proposition} \label{prop:scversusN}
Let $G/H$ be a spherical homogeneous space with $\Sigma^{sc}(G/H) = \Sigma$. Then
\[\Sigma^N(G/H) = (\Sigma \setminus \Sigma_l) \cup 2\Sigma_l,\]
where $\Sigma_l=\{\alpha \in \Sigma \cap \sr \colon
c_{G/H}(D^+_{\alpha},\gamma) =  c_{G/H}(D^-_{\alpha},\gamma) \text{ for
  all }\gamma \in \wl(G/H)\}.$
\end{proposition}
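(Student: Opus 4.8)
The plan is to deduce the statement from \cite[Theorem~2]{losev-uniqueness}, translating its combinatorial output into the language of augmentations and Cartan pairings recalled above. First I would note, using Definition~\ref{def:sphericalroots}, that $\Sigma^{sc}(G/H)=\Sigma(G/\overline H)$ and $\Sigma^N(G/H)=\Sigma(G/N_G(H))$, and that we have the chain of normal inclusions $H\trianglelefteq\overline H\trianglelefteq N_G(H)$, so that $G/N_G(H)$ is the quotient of $G/\overline H$ by the finite group $N_G(H)/\overline H$. By \cite[Proposition~6.4]{luna-typeA} the space $G/H$ corresponds to an augmentation of $\mathscr S_{G/\overline H}$ in the sense of Definition~\ref{def:augmentation}: a weight lattice $\wl(G/H)$, which contains $\Z\Sigma=\wl(G/\overline H)$, together with the pairing identified in Remark~\ref{rem:CPaugmentation}. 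It is this extra datum, over and above $\mathscr S_{G/\overline H}$, that \cite[Theorem~2]{losev-uniqueness} uses to determine $\Sigma(G/N_G(H))$.

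Concretely, the output I would extract from \cite[Theorem~2]{losev-uniqueness} is that $\Sigma(G/N_G(H))$ is obtained from $\Sigma(G/\overline H)=\Sigma$ by a doubling operation confined to the simple spherical roots: every $\sigma\in\Sigma$ that is not a simple root is kept unchanged; a simple spherical root $\alpha\in\Sigma\cap\sr$ is replaced by $2\alpha$ if the two colors $D_\alpha^+,D_\alpha^-$ moved by the minimal parabolic containing $B$ associated with $\alpha$ become indistinguishable over $\wl(G/H)$, and is kept otherwise (the rank of the weight lattice and the valuation cone being unchanged; only the lattice, hence these labels, change). Granting this, the proof reduces to identifying the set of $\alpha\in\Sigma\cap\sr$ for which $D_\alpha^+$ and $D_\alpha^-$ become indistinguishable over $\wl(G/H)$ with the set $\Sigma_l$.

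For this identification I would invoke Remark~\ref{rem:CPaugmentation}: the full Cartan pairing $c_{G/H}$ on $\Z\Delta_{G/H}\times\wl(G/H)$ is the full Cartan pairing of the augmentation attached to $H$; in particular it restricts on $\Z\mathbf A_{G/\overline H}\times\wl(G/H)$ to an extension of $c_{G/\overline H}$, and by axiom~(a2) of Definition~\ref{def:augmentation} it satisfies $c_{G/H}(D_\alpha^+,\gamma)+c_{G/H}(D_\alpha^-,\gamma)=\<\alpha^\vee,\gamma\>$ for every $\gamma\in\wl(G/H)$ and every $\alpha\in\Sigma\cap\sr$. Hence the two functionals $c_{G/H}(D_\alpha^+,\cdot)$ and $c_{G/H}(D_\alpha^-,\cdot)$ on $\wl(G/H)$ coincide exactly when $\<\alpha^\vee,\gamma\>$ is split evenly between $D_\alpha^+$ and $D_\alpha^-$ for every $\gamma\in\wl(G/H)$, which is precisely the condition under which $D_\alpha^+$ and $D_\alpha^-$ become indistinguishable over $\wl(G/H)$ and $\alpha$ is doubled; by definition the resulting set of $\alpha$ is $\Sigma_l$. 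Substituting into the description of the previous paragraph gives $\Sigma^N(G/H)=(\Sigma\setminus\Sigma_l)\cup 2\Sigma_l$. (For consistency with $\Sigma^N(G/H)\inn\N\sr$ one also notes that if $\alpha\in\Sigma\cap\sr$ then its support has type $\mathsf A_1$, so $2\alpha$ again appears in Table~\ref{table:spherical_roots}.)

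The main obstacle is the bookkeeping in the first two paragraphs: translating \cite[Theorem~2]{losev-uniqueness}, which is phrased in Losev's own invariants---in particular an explicit description of the finite group $N_G(H)/\overline H$ and of its action on the colors and on the weight lattice of $G/\overline H$---into the augmentation-theoretic statement used here. Care is needed to verify (i) that only simple spherical roots are affected by the passage to the normalizer, and (ii) that the relevant condition is genuinely the equality of the two Cartan-pairing functionals over the full lattice $\wl(G/H)$, rather than, say, a mere evenness condition on $\<\alpha^\vee,\cdot\>$ or a condition over $\Z\Sigma$ only. Once this dictionary between Luna's augmentations and Losev's normalizer computation is in place, the remainder is the short calculation sketched above.
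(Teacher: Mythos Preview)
Your approach is essentially the same as the paper's: both derive the statement from \cite[Theorem~2]{losev-uniqueness} by translating Losev's description of the passage from $\Sigma(G/H)$ to $\Sigma(G/N_G(H))$ into the Cartan-pairing language of augmentations. The paper's proof is terser---it cites \cite[Lemma~7.1]{luna-typeA} (which handles the passage from $\Sigma(G/H)$ to $\Sigma^{sc}(G/H)=\Sigma(G/\overline H)$) as the second ingredient rather than unpacking the augmentation formalism as you do, but the content is the same and your honest acknowledgement of the bookkeeping as the main obstacle matches what the paper leaves implicit.
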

\begin{proof}
This follows immediately from  comparing \cite[Theorem~2]{losev-uniqueness},
  which describes the relationship between $\Sigma(G/H)$ and
  $\Sigma^N(G/H)$ with \cite[Lemma~7.1]{luna-typeA}, which describes
  the relationship between $\Sigma(G/H)$ and $\Sigma^{sc}(G/H)$. Note
  that \cite[Lemma~7.1]{luna-typeA} can be deduced from
  \cite{losev-uniqueness} without appealing to Luna's conjecture. 
\end{proof}

\subsection{Strictly convex colored cones and weight monoids of affine spherical varieties} \label{sec:colored-cones}
An equivariant embedding of a spherical homogeneous space $G/H$ as a dense orbit in a spherical $G$-variety
(an embedding of $G/H$, for short) is called
\textbf{simple} if it has only one closed orbit. Affine spherical
varieties are simple. 

If $X$ is a simple embedding of the spherical homogeneous space $G/H$, 
let $\mathcal F(X)$ be the set of colors of $X$ containing the closed orbit
(identified with elements of $\Delta_{G/H}$),
and let $\mathcal C(X)$ be the cone in $\Hom(\wl(G/H), \Q)$ 
generated by the valuations associated with the $G$-stable divisors of $X$ 
(identified with elements of $\mathcal V_{G/H}$) and by $c(\mathcal F(X),\cdot)$.
The couple $(\mathcal C(X),\mathcal F(X))$ is a strictly convex colored cone in the sense of the following definition.

A \textbf{strictly convex colored cone} is a couple $(\mathcal C,\mathcal F)$ where 
\begin{itemize}
\item[-] $\mathcal F$ is a subset of $\Delta_{G/H}$ such that the subset 
$c(\mathcal F,\cdot)$ of $\Hom(\wl(G/H),\Q)$ does not contain $0$,
\item[-] $\mathcal C$ is a strictly convex polyhedral cone in $\Hom(\wl(G/H), \Q)$  which is generated by
$c(\mathcal F,\cdot)$ and finitely many elements of $\mathcal V_{G/H}$ and  
whose relative interior intersects $\mathcal V_{G/H}$. 
\end{itemize}
We recall from \cite[Theorem~3.1]{knop-lv} that simple embeddings $X$ of the spherical homogoneous space $G/H$ 
are classified by their strictly convex colored cones. By \cite[Theorem~6.7]{knop-lv} the simple embedding $X$ is affine if
and only if there exists a character $\chi\in\wl(G/H)$ that is non-positive on $\mathcal V_{G/H}$, zero
on $\mathcal C(X)$ and $c(\cdot,\chi)$ is strictly positive on
$\Delta_{G/H}\setminus\mathcal F(X)$. 

We gather some known results about the weight monoid of affine spherical
varieties.
\begin{proposition} \label{prop:weightmonoid}
If $X$ is an affine sperical $G$-variety with weight monoid
$\wm(X)$ and open orbit $G/H$, then
\begin{enumerate}[(a)]
\item the weight lattice of $X$ (or of $G/H$) is $\Z\wm(X)$; \label{item:5}
\item the set $S^p_X$ (which is the same as $S^p_{G/H}$) is equal to
  $\{\alpha \in \sr \colon \<\alpha^{\vee},\gamma\>=0\mbox{ for all }\gamma\in\wm(X)\}$; \label{item:8}
\item the dual cone \(\wm^{\vee}(X):=\{v\in\Hom(\Z\wm(X),\Q):
  \langle v,\gamma\rangle\geq0\mbox{ for all }\gamma\in\wm(X)\}\) to
  $\wm(X)$ is a strictly convex polyhedral cone; \label{item:11}
\item every ray of \(\wm^{\vee}(X)\) contains an element of
  $c(\Delta_{G/H},\cdot)$ or of $\mathcal
  V_{G/H}$; \label{item:9}
\item \(\wm^{\vee}(X)\) contains $c(\Delta_{G/H},\cdot)$. \label{item:10}
\end{enumerate} 
\end{proposition}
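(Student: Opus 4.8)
The plan is to establish the five assertions of Proposition~\ref{prop:weightmonoid} by combining the Luna--Vust description of affine simple embeddings (recalled just above) with basic facts about the $B$-eigenspace decomposition of $\k[X]$. Throughout I write $\wl(X) = \wl(G/H)$ for the weight lattice, which by definition is the group generated by the $B$-weights of rational $B$-eigenfunctions.

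For part~(\ref{item:5}), I would argue that since $\k[X] = \bigoplus_{\lambda \in \wm(X)} \k[X]_{(\lambda)}$ and each isotypic component contains a (unique up to scalar) $B$-eigenvector of weight $\lambda$, the group $\Z\wm(X)$ is contained in $\wl(X)$; conversely, any $B$-eigenfunction in $\k(X)$ is a quotient $f/g$ of $B$-eigenfunctions in $\k[X]$ (clear denominators using that $X$ is affine and $\k[X]$ is a domain, and that $\k[X]^{(B)}$ generates $\k(X)^{(B)}$ as a group since $B$ has a dense orbit), so $\wl(X) \subseteq \Z\wm(X)$. For part~(\ref{item:8}), note that $P_X$, the stabilizer of the open $B$-orbit, acts on the line $\k v_\lambda$ for each $\lambda \in \wm(X)$, so $S^p_X \subseteq \{\alpha : \langle \alpha^\vee, \gamma\rangle = 0 \ \forall \gamma \in \wm(X)\}$; for the reverse inclusion one checks that if $\langle\alpha^\vee,\gamma\rangle = 0$ for all $\gamma\in\wm(X)$ then the minimal parabolic $P_\alpha$ fixes every highest-weight line, hence normalizes the open $B$-orbit. (As the acknowledgements note, the referee supplied an elementary proof of this item, so I would not expect to need the full combinatorial machinery here.)

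For parts~(\ref{item:11})--(\ref{item:10}) the key input is Knop's affineness criterion \cite[Theorem~6.7]{knop-lv}: $X$ affine means there is $\chi \in \wl(X)$ that is nonpositive on $\mathcal V_{G/H}$, zero on $\mathcal C(X)$, and with $c(D,\chi) > 0$ for all $D \in \Delta_{G/H}\setminus \mathcal F(X)$. I would first identify, via~(\ref{item:5}), the monoid $\wm(X)$ with the set of $\gamma \in \wl(X)$ that are nonnegative on the generators of $\mathcal C(X)$, i.e.\ $\wm(X) = \wl(X) \cap \mathcal C(X)^\vee$; this is the standard translation between $\k[X] = \bigoplus_{f \in \wm(X)} \k f$ and the colored cone, using that $\k[X]$ consists exactly of the rational functions that are regular along all the $G$-stable divisors (values $\geq 0$ on $\mathcal V$-part of $\mathcal C(X)$) and along the colors in $\mathcal F(X)$ (values $\geq 0$ on $c(\mathcal F(X),\cdot)$). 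Then~(\ref{item:11}) follows because $\wm^\vee(X)$ is the closure of $\mathcal C(X) + (\text{span of }\mathcal C(X))^{\dots}$—more carefully, $\wm^\vee(X) = \overline{\mathcal C(X)}$ once we know $\Z\wm(X) = \wl(X)$ and $\mathcal C(X)$ is already strictly convex by hypothesis. For~(\ref{item:10}), since every $D \in \Delta_{G/H}$ either lies in $\mathcal F(X)$—so $c(D,\cdot)$ generates a ray of $\mathcal C(X) = \wm^\vee(X)$—or lies outside $\mathcal F(X)$, in which case the affineness character $\chi$ has $c(D,\chi) > 0$ while $\langle\cdot,\chi\rangle$ vanishes on $\mathcal C(X)$; combined with $\mathcal V_{G/H} \supseteq \mathcal C(X)^{\vee,\circ}$-type considerations one deduces $c(D,\cdot) \in \mathcal C(X) = \wm^\vee(X)$ anyway. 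Finally~(\ref{item:9}): every ray of the strictly convex cone $\wm^\vee(X) = \mathcal C(X)$ is, by definition of a colored cone, generated either by an element of $\mathcal V_{G/H}$ or by $c(D,\cdot)$ for some $D \in \mathcal F(X)$.

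The main obstacle I anticipate is part~(\ref{item:9}) (and the dovetailing claim in~(\ref{item:10}) that $c(D,\cdot) \in \wm^\vee(X)$ even for colors \emph{not} in $\mathcal F(X)$): a colored cone is by construction generated by $c(\mathcal F(X),\cdot)$ together with \emph{finitely many} elements of $\mathcal V_{G/H}$, but those $\mathcal V$-generators need not themselves be extremal rays, and an extremal ray of $\mathcal C(X)$ could a priori be a positive combination of a $c(D,\cdot)$ and a valuation. Ruling this out requires using the affineness character $\chi$ to show the cone $\mathcal C(X)$ is in fact \emph{generated by its rays that hit} $\overline{\mathcal V_{G/H}}$ or $c(\mathcal F(X),\cdot)$, i.e.\ analyzing the facet of $\mathcal C(X)^\vee = \wm(X)^{\mathrm{sat}}$ cut out by $\chi$. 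This is precisely where the error in the earlier version lived, and it is the place where one must argue carefully rather than quote; I would handle it by a facet-by-facet analysis of $\wm^\vee(X)$ using that $\langle\cdot,\chi\rangle = 0$ on $\mathcal C(X)$ forces each extremal ray of $\mathcal C(X)$ into the hyperplane $\chi^\perp$, on which $\mathcal C(X)$ restricts to the colored cone of a related spherical variety of smaller rank, and then induct.
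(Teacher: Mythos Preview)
Your argument for parts~(\ref{item:5}) and~(\ref{item:8}) is essentially the same as the paper's. The divergence---and a genuine gap---occurs in your treatment of (\ref{item:11})--(\ref{item:10}).

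You attempt to route everything through the colored cone $(\mathcal C(X),\mathcal F(X))$ and identify $\wm^\vee(X)$ with $\mathcal C(X)$. That identification is false in general. Your own formula ``$\k[X]$ consists of rational functions regular along all $G$-stable divisors and along the colors in $\mathcal F(X)$'' is the source of the error: regularity on the normal variety $X$ requires nonnegative order along \emph{every} $B$-stable prime divisor, hence along \emph{all} colors $D\in\Delta_{G/H}$, not merely those in $\mathcal F(X)$. The correct description is
\[
\wm(X)=\{\gamma\in\wl(X):\; c(D,\gamma)\ge 0 \text{ for all }D\in\Delta_{G/H},\ v(\gamma)\ge 0 \text{ for each $G$-stable prime divisor }v\},
\]
so $\wm^\vee(X)$ is generated by $c(\Delta_{G/H},\cdot)$ together with the (finitely many) $G$-invariant valuations coming from the $G$-stable divisors. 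Parts~(\ref{item:9}) and~(\ref{item:10}) are then immediate, and (\ref{item:11}) follows from finite generation of $\wm(X)$. This is exactly the paper's argument: ``a rational $B$-eigenfunction on $X$ is regular if and only if it does not have poles along the colors or $G$-stable prime divisors of $X$, because $X$ is normal.''

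Your approach cannot be repaired as written: the affineness character $\chi$ in fact witnesses that colors $D\notin\mathcal F(X)$ satisfy $c(D,\chi)>0$ while $\chi$ vanishes on $\mathcal C(X)$, so $c(D,\cdot)\notin\mathcal C(X)$ for such $D$, contradicting $\wm^\vee(X)=\mathcal C(X)$. The facet-by-facet induction you propose at the end is chasing a phantom obstacle created by the wrong identification; once you use all colors from the start, there is nothing left to prove.
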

\begin{proof}
These statements are well-known to experts and can be extracted from the results summarized in \cite[\S 15.1]{timashev-embbook}. For the reader's convenience, we provide a proof. 
Assertion (\ref{item:5}) follows from the fact that a rational $B$-eigenfunction on $X$ 
is necessarily equal to the quotient of two regular $B$-eigenfunctions; see e.g.~\cite[Proposition
2.8(i)]{brion-cirmactions}.
Assertion (\ref{item:8}) is \cite[Lemme 10.2]{camus}. It follows from the fact that $P_X$ is the common stabilizer of the $B$-stable lines in $\k[X]$.  
This is the case because $P_X$ is the common stabilizer of the $B$-stable prime divisors of $X$ and the union of these divisors is the zero set of some $B$-eigenvector in $\k[X]$.  
Assertion (\ref{item:11}) is a standard fact in convex
geometry.  Parts (\ref{item:9}) and (\ref{item:10}) follow from the
fact that a rational $B$-eigenfunction on $X$ is regular if and only if it does not have poles along the colors or $G$-stable prime divisors of $X$. This, in turn, is so because $X$ is normal.
\end{proof}

\subsection{Adapted spherical roots} \label{subsec:adapt-spher-roots}
Recall that $\wm$ is a normal
submonoid of $\dw$. Combining the results recalled above, one derives
the condition on a set of spherical roots $\Sigma$ for being adapted
to $\wm$. 

\begin{definition} \label{def:adapted}
We say that a subset $\Sigma$ of $\Sigma^{sc}(G)$ is \textbf{adapted} (or \textbf{N-adapted}) to
$\wm$ if there exists an affine spherical $G$-variety $X$ such that
$\wm(X) = \wm$ and 
$\Sigma^{sc}(X) = \Sigma$ (respectively, $\Sigma^N(X) = \Sigma$). 
\end{definition}

\begin{remark} \label{rem:unique_adapt_var}
Let $\Sigma$ be a subset of $\Sigma^{sc}(G)$. 
Losev's Theorem~\cite[Theorem~1.2]{losev-knopconj} asserts that there
is \emph{at most one} affine spherical $G$-variety $X$ with $\wm(X) =
\wm$ and $\Sigma^N(X)=\Sigma$. Because $\Sigma^{sc}(X)$ determines
$\Sigma^N(X)$  (see 
Proposition~\ref{prop:scversusN}) there is also at most one
affine spherical $G$-variety $Y$ with $\Sigma^{sc}(Y)= \Sigma$ and
$\wm(Y) = \wm$. 
\end{remark}

The dual cone to $\wm$ is
\[\wm^{\vee}:=\{v\in\Hom(\Z\wm,\Q):
  \langle v,\gamma\rangle\geq0\mbox{ for all }\gamma\in\wm\}.\]
It is a strictly convex polyhedral cone. We denote the set of primitive vectors
on its rays by $E(\wm)$:
\begin{equation}
E(\wm):= \{\delta \in (\Z\wm)^* \colon \delta
\text{ spans a ray of }\wm^\vee \text{ and } \delta \text{ is primitive}\}.
\end{equation}
Observe that
\begin{equation} \label{eq:1}
E(\wm)=\{\delta \in (\Z\wm)^*\colon \delta  \text{
    is primitive},
\delta(\wm) \inn \Z_{\ge 0}, 
\delta \text{ is the equation of a face
  of codim $1$ of $\Q_{\ge 0}\wm$}\}. \end{equation} 
Moreover, for 
$\alpha \in S \cap \Z\wm$, we define
\[a(\alpha):= \{\delta \in  (\Z\wm)^*\colon \delta(\alpha)=1 \text{
  and } \bigl(\delta
\in E(\wm) \text{ or } \alpha^{\vee} - \delta \in E(\wm)\bigr)\}.\]
Finally, we put
\[S^p(\wm):=\{\alpha\in
  \sr:\langle\alpha^\vee,\gamma\rangle=0 \text{ for all
  }\gamma\in\wm\}.\]

\begin{proposition} \label{prop:adapt-spher-roots}
Let $\wm$ be a normal submonoid
of $\dw$. A subset $\Sigma$ of $\Sigma^{sc}(G)$ is adapted to $\wm$ if and
only if there exists a spherically closed spherical system $\mathscr
  S=(S^p, \Sigma, \mathbf A)$ such that
\begin{enumerate}
\item  $S^p=S^p(\wm)$; and \label{item:1}
\item  $\Z\wm$ is an augmentation of $\Z\Sigma$; and \label{item:2}
\item if $\delta \in E(\wm)$, then $\<\delta, \sigma\> \leq 0$ for all $\sigma \in
\Sigma$ or there exists $D\in \Delta$ such that $c(D, \cdot)$ is a
positive multiple of $\delta$; where $\Delta$ is the set of colors of
$\mathscr S$ and $c: \Z\Delta \times \Z\wm \to \Z$ is the full Cartan
pairing of the augmentation; and \label{item:3}
\item $ c(D,\cdot) \in \wm^\vee$ for all $D \in \Delta$. \label{item:4}
\end{enumerate}
\end{proposition}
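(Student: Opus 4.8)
The plan is to show that Proposition~\ref{prop:adapt-spher-roots} follows by unwinding the classification results recalled in Sections~\ref{sec:spherical-systems}--\ref{sec:colored-cones} and feeding them the data of an affine spherical variety $X$ with $\wm(X)=\wm$. The statement is a biconditional, so I would treat the two directions separately, but both rest on the same dictionary: by Luna's classification (via \cite{losev-uniqueness, bravi-pezzini-primwonderful-arxivv1}), a spherical homogeneous space $G/H$ with $\overline{H}$ spherically closed is determined by a spherically closed spherical system $\mathscr S = (S^p, \Sigma, \mathbf A)$ together with an augmentation $\Lambda' \subset \Lambda$ (Definition~\ref{def:augmentation} and \cite[Proposition~6.4]{luna-typeA}); and by Knop--Luna--Vust theory (\cite[Theorems~3.1 and~6.7]{knop-lv}) an affine simple embedding of $G/H$ is determined by a strictly convex colored cone $(\mathcal C, \mathcal F)$ subject to the affineness criterion recalled before Proposition~\ref{prop:weightmonoid}.

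For the forward direction, suppose $X$ is affine spherical with $\wm(X)=\wm$ and $\Sigma^{sc}(X)=\Sigma$. Then $G/\overline H$ corresponds to a spherically closed spherical system $\mathscr S = (S^p_{G/\overline H}, \Sigma, \mathbf A_{G/\overline H})$, and I would verify the four conditions in turn. Condition~(\ref{item:1}) is exactly Proposition~\ref{prop:weightmonoid}(\ref{item:8}) combined with the fact that $S^p_X = S^p_{G/H} = S^p_{G/\overline H}$. Condition~(\ref{item:2}) is Remark~\ref{rem:CPaugmentation}: the weight lattice $\Lambda(G/H) = \Z\wm$ (by Proposition~\ref{prop:weightmonoid}(\ref{item:5})) is an augmentation of $\mathscr S_{G/\overline H}$, with full Cartan pairing as in \eqref{eq:4}. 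For conditions~(\ref{item:3}) and~(\ref{item:4}) I would use Proposition~\ref{prop:weightmonoid}(\ref{item:9}) and~(\ref{item:10}): each ray of $\wm^\vee = \mathcal{C}(X)$-side data contains either a $G$-invariant valuation in $\mathcal V_{G/H}$ (which is where $\<\delta,\sigma\>\le 0$ for all $\sigma\in\Sigma$ comes from, since $\Sigma$ cuts out the valuation cone) or a point $c(D,\cdot)$ for a color $D$; and part~(\ref{item:10}) says all colors pair non-negatively with $\wm$, i.e.\ $c(D,\cdot)\in\wm^\vee$. The one technical point is matching the abstract colors $\Delta$ of $\mathscr S$ (Definition~\ref{def:spherical-systems}, via $\Delta^a \cup \Delta^{2a} \cup \Delta^b$) with the geometric colors $\Delta_{G/H}$ and checking the full Cartan pairing \eqref{eq:4} agrees with $c_X$ on these; this is precisely what \cite{luna-typeA} guarantees, so I would cite it rather than reprove it.

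For the converse, suppose we are given a spherically closed spherical system $\mathscr S = (S^p(\wm), \Sigma, \mathbf A)$ and that $\Z\wm$ is an augmentation satisfying (\ref{item:3}) and~(\ref{item:4}). Luna's classification produces a spherical homogeneous space $G/H$ with $\Sigma^{sc}(G/H) = \Sigma$ and $\Lambda(G/H) = \Z\wm$. It then remains to build an \emph{affine} embedding of $G/H$ whose weight monoid is exactly $\wm$. I would take $\mathcal F(X) := \Delta_{G/H} \setminus \{D : c(D,\cdot) \in \wm^\vee \text{ but } c(D,\cdot) \notin \partial\wm^\vee \text{ appropriately}\}$ — more precisely, let $\mathcal{C}$ be the cone dual to $\wm$ inside $\Hom(\Lambda(G/H),\Q)$ and let $\mathcal F$ be the set of colors mapping into $\mathcal C$; conditions~(\ref{item:3}) and~(\ref{item:4}) are exactly what is needed to check that $(\mathcal C, \mathcal F)$ is a strictly convex colored cone (its rays are spanned by $G$-invariant valuations or by $c(D,\cdot)$'s, using~(\ref{item:3}), and no color sits outside $\mathcal C$, using~(\ref{item:4})), and that it satisfies the affineness criterion of \cite[Theorem~6.7]{knop-lv} with $\chi$ any generator of a suitable ray. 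The resulting affine embedding $X$ then has $\wm(X) = \wm$ by construction, since $\wm^\vee = \mathcal C$ forces $\wm(X) = \Z\wm \cap (\mathcal C)^\vee = \Z\wm \cap \Q_{\ge 0}\wm = \wm$ by normality of $\wm$.

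I expect the main obstacle to be the bookkeeping in the converse direction: precisely specifying $\mathcal F$ so that $(\mathcal C, \mathcal F)$ is colored (the subtlety being which colors lie on the boundary of $\mathcal C$ versus its interior, and ensuring $c(\mathcal F,\cdot)$ does not contain $0$), and then checking the KLV affineness criterion, which requires exhibiting the character $\chi$ and verifying it is non-positive on $\mathcal V_{G/H}$, zero on $\mathcal C$, and strictly positive via $c$ on the colors \emph{not} in $\mathcal F$. All of these are encoded in conditions~(\ref{item:3}) and~(\ref{item:4}) together with the description \eqref{eq:1} of $E(\wm)$, but assembling them cleanly — and doing so uniformly, without case analysis on the shape of the spherical roots — is the delicate part. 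The forward direction, by contrast, is essentially a transcription of Proposition~\ref{prop:weightmonoid} into the language of spherical systems and augmentations.
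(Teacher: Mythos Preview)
Your forward direction is essentially the paper's argument and is correct.

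The converse, however, has a genuine gap. You propose to take $\mathcal C = \wm^\vee$ and $\mathcal F$ the set of all colors landing in $\mathcal C$ (which, by condition~(\ref{item:4}), is all of $\Delta$). But a strictly convex colored cone must have its \emph{relative interior} meeting the valuation cone $\mathcal V_{G/H}$, and $\wm^\vee$ need not satisfy this. A minimal example: $G=\SL(2)$, $\wm=\langle\alpha\rangle$, $\Sigma=\{\alpha\}$. Here $\Hom(\Z\wm,\Q)\cong\Q$, the cone $\wm^\vee$ is the nonnegative half-line, its relative interior is the strictly positive half-line, while $\mathcal V=\{v:\langle v,\alpha\rangle\le 0\}$ is the nonpositive half-line. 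They meet only at $0$, so $(\wm^\vee,\Delta)$ is not a colored cone. Conditions~(\ref{item:3}) and~(\ref{item:4}) guarantee that each ray of $\wm^\vee$ is spanned by a color or lies in $\mathcal V$, but they do not force the interior of $\wm^\vee$ to touch $\mathcal V$.

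The paper's construction is different and handles this: take $\mathcal C$ to be a \emph{maximal face} of $\wm^\vee$ whose relative interior meets $\mathcal V$ (such a face exists, since the zero face already meets $\mathcal V$), and let $\mathcal F$ be the colors lying in that face. Maximality then ensures that $\mathcal C$ sits in a hyperplane separating $\mathcal V$ from $\Delta\setminus\mathcal F$, and a defining equation of that hyperplane supplies the character $\chi$ for Knop's affineness criterion. Condition~(\ref{item:3}) and the maximality of $\mathcal C$ together give the inclusion $\wm\supset\{\gamma:\langle v,\gamma\rangle\ge 0\text{ for all }v\in\mathcal C\cup\Delta\}$, while (\ref{item:4}) gives the reverse inclusion. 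Your instinct that the delicate part is the choice of $(\mathcal C,\mathcal F)$ and the production of $\chi$ is exactly right; the missing idea is to pass to a face rather than use the whole dual cone.
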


\begin{proof}
This is a consequence of the results we reviewed in
Sections~\ref{sec:spherical-systems}
through~\ref{sec:colored-cones}. We begin with the \emph{necessity} of the
conditions. Let $X$ be an affine spherical $G$-variety with
$\Sigma^{sc}(X) = \Sigma$ and $\wm(X) = \wm$. Let $G/H$ be the open
orbit of $X$ and let $\overline{H}$ be the spherical closure of
$H$. Then $\Sigma^{sc}(X) = \Sigma(G/\overline{H})$ by definition, and
$S^p_{G/H} = S^{p}(\wm)$ by
Proposition~\ref{prop:weightmonoid}(\ref{item:8}). Moreover $S^p_{G/H}
= S^p_{G/\overline{H}}$.  
It follows from  \S 5.1 and Lemma~7.1 in \cite{luna-typeA} that 
$(S^p(\wm), \Sigma, \mathbf A_{G/\overline{H}})$ is a spherically
closed spherical system. 
Since $H$ has spherical closure $\overline{H}$, (\ref{item:2}) follows
from \cite[Proposition~6.4]{luna-typeA}. Conditions (\ref{item:3}) and
(\ref{item:4}) follow from (\ref{item:9}) and (\ref{item:10}) of
Proposition~\ref{prop:weightmonoid}. 

We now show that the conditions are \emph{sufficient} for $\Sigma$ to be
adapted to $\wm$. By \cite{bravi-pezzini-primwonderful-arxivv1} there
exists a spherically closed spherical subgroup $K$ of $G$ with spherical system
$\mathscr S$. Condition (\ref{item:2}) implies by \cite[Proposition
6.4]{luna-typeA} that there exists a spherical subgroup $H$ of $G$
with $\overline{H} = K$ and $\wl(G/H) = \Z\wm$. What remains is to
prove that $G/H$ has an affine embedding $X$ with weight monoid
$\wm$. That is, by \cite[Theorems 3.1 and 6.7]{knop-lv} we have to show that 
there exists a strictly convex colored cone $(\mathcal C,\mathcal F)$ in
  $\Hom(\Z\wm,\Q)$, with respect to $\mathcal
  V=\{v\in\Hom(\Z\Gamma,\Q): \langle
  v,\sigma\rangle\leq0\mbox{ for all }\sigma\in\Sigma\}$ and the set
  of colors $\Delta$ of $\mathscr S$, such that:
\begin{enumerate}[(i)]
\item there exists $\chi\in\Z\Gamma$ that is non-positive on
  $\mathcal V$, zero on $\mathcal C$ and strictly positive on
  $\Delta\setminus\mathcal F$; and
\item $\wm=\{\gamma\in\Z\Gamma:\langle
  v,\gamma\rangle\geq0\mbox{ for all }v\in\mathcal C\cup\Delta\}$. \label{item:12}
\end{enumerate}
We claim that if (\ref{item:1}), (\ref{item:3}) and (\ref{item:4}) hold, then the
desired strictly convex colored cone exists. Indeed, take $\mathcal C$ to be the
maximal face of $\wm^\vee$ whose relative interior meets $\mathcal V$
with $\mathcal F$ the set of colors contained in $\mathcal C$ (such a
maximal face exists since the zero face actually meets $\mathcal V$). 
The set $c(\mathcal F,\cdot)$ does not contain $0$.
Indeed, a color $D$ with $c(D,\cdot)=0$ necessarily belongs to $\Delta^b$, whence
$c(D,\cdot)=\langle\alpha^\vee,\cdot\rangle$ for some $\alpha\in \sr$
but by (\ref{item:1}) this implies $\alpha\in S^p$.
Moreover, $\mathcal C$ is contained in a hyperplane that separates
$\mathcal V$ and $\Delta\setminus\mathcal F$.  This yields $\chi$. 
The inclusion ``$\subset$''
of (\ref{item:12}) holds because $\mathcal C \inn
\wm^\vee$ and because $c(\Delta,\cdot) \inn \wm^\vee$ by
(\ref{item:4}). The other inclusion follows from (\ref{item:3}) and the
maximality of $\mathcal C$. 
\end{proof}

\begin{remark} \label{rem:adapt_unique}
It follows from equation (\ref{eq:5}) below that the spherical
system $\mathscr S$ and the Cartan pairing of the augmentation in
Proposition~\ref{prop:adapt-spher-roots} are uniquely determined by
$\wm$ and $\Sigma$. 
\end{remark}

\begin{corollary} \label{cor:Nadapt-spher-roots}
Let $\wm$ be a normal submonoid
of $\dw$. A subset $\Sigma$ of $\Sigma^{sc}(G)$ is N-adapted to $\wm$ if and
only if there exists a subset $\widetilde{\Sigma}$ of $\Sigma^{sc}(G)$ which is
adapted to $\wm$ and such that $\Sigma = (\widetilde{\Sigma}\setminus
\widetilde{\Sigma}_l) \cup 2\widetilde{\Sigma}_l$, where 
$\widetilde{\Sigma}_l=\{\alpha \in \widetilde{\Sigma} \cap S \colon
  a(\alpha) \text{ has one element}\}.$  
\end{corollary}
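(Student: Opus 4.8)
The plan is to deduce this corollary from the combinatorial characterization of \emph{adapted} spherical roots in Proposition~\ref{prop:adapt-spher-roots} together with Proposition~\ref{prop:scversusN}, which relates $\Sigma^{sc}(X)$ and $\Sigma^N(X)$ for any spherical homogeneous space. First I would unwind Definition~\ref{def:adapted}: by definition $\Sigma$ is N-adapted to $\wm$ exactly when there is an affine spherical $G$-variety $X$ with $\wm(X)=\wm$ and $\Sigma^N(X)=\Sigma$. The natural candidate for the bridge is $\widetilde\Sigma:=\Sigma^{sc}(X)$, which is a subset of $\Sigma^{sc}(G)$ adapted to $\wm$ by definition, and for which Proposition~\ref{prop:scversusN} gives $\Sigma^N(X)=(\widetilde\Sigma\setminus\widetilde\Sigma_l)\cup 2\widetilde\Sigma_l$ with $\widetilde\Sigma_l=\{\alpha\in\widetilde\Sigma\cap S\colon c_{G/H}(D^+_\alpha,\gamma)=c_{G/H}(D^-_\alpha,\gamma)\text{ for all }\gamma\in\wl(G/H)\}$. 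So the entire content of the corollary reduces to the following purely combinatorial identity: for $X$ affine spherical with $\wm(X)=\wm$ and $\widetilde\Sigma=\Sigma^{sc}(X)$, and for $\alpha\in\widetilde\Sigma\cap S$,
\[
c_{G/H}(D^+_\alpha,\gamma)=c_{G/H}(D^-_\alpha,\gamma)\ \text{for all }\gamma\in\wl(G/H)\iff a(\alpha)\text{ has exactly one element.}
\]

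To establish this equivalence I would use that, by Remark~\ref{rem:CPaugmentation} and Proposition~\ref{prop:weightmonoid}(\ref{item:5}), the full Cartan pairing of $G/H$ is the full Cartan pairing of the augmentation $\Z\wm$ of the spherical system $\mathscr S=(S^p(\wm),\widetilde\Sigma,\mathbf A)$ furnished by Proposition~\ref{prop:adapt-spher-roots}, so $c_{G/H}(D^\pm_\alpha,\cdot)=c'(D^\pm_\alpha,\cdot)$ on $\Z\wm$. Since for $\alpha\in\widetilde\Sigma\cap S$ axiom (a2) of Definition~\ref{def:augmentation} gives $c'(D^+_\alpha,\gamma)+c'(D^-_\alpha,\gamma)=\langle\alpha^\vee,\gamma\rangle$ for all $\gamma\in\Z\wm$, the condition $c'(D^+_\alpha,\cdot)=c'(D^-_\alpha,\cdot)$ is equivalent to $c'(D^\pm_\alpha,\gamma)=\tfrac12\langle\alpha^\vee,\gamma\rangle$ for all $\gamma\in\Z\wm$, i.e.\ to $2c'(D^+_\alpha,\cdot)=2c'(D^-_\alpha,\cdot)=\langle\alpha^\vee,\cdot\rangle$ as elements of $(\Z\wm)^*$. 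On the other hand, the definition of $a(\alpha)$ says that $\delta\in a(\alpha)$ iff $\delta(\alpha)=1$ and $\delta$ or $\alpha^\vee-\delta$ lies in $E(\wm)$. The plan is to show that $c'(D^+_\alpha,\cdot)$ and $c'(D^-_\alpha,\cdot)$ are precisely the two (possibly equal) elements of $a(\alpha)$: by conditions (\ref{item:3}) and (\ref{item:4}) of Proposition~\ref{prop:adapt-spher-roots} each $c'(D^\pm_\alpha,\cdot)$ lies in $\wm^\vee$, each takes value $1$ on $\alpha$ by the case $\sigma=\alpha$ of (a2), and the ray/color analysis in the proof of Proposition~\ref{prop:adapt-spher-roots} (via equation~(\ref{eq:1}) and condition (\ref{item:3})) forces each of them to span a ray of $\wm^\vee$, hence to be primitive and to lie in $E(\wm)$; conversely $\delta\in a(\alpha)$ together with the augmentation axioms pins $\delta$ down to one of $c'(D^\pm_\alpha,\cdot)$. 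Granting this, $a(\alpha)$ has one element iff $c'(D^+_\alpha,\cdot)=c'(D^-_\alpha,\cdot)$, which is exactly the membership condition for $\widetilde\Sigma_l$, completing the $(\Rightarrow)$ direction with $\widetilde\Sigma=\Sigma^{sc}(X)$.

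For the $(\Leftarrow)$ direction I would run the argument backwards: given $\widetilde\Sigma$ adapted to $\wm$, let $X$ be the (unique, by Remark~\ref{rem:unique_adapt_var}) affine spherical $G$-variety with $\wm(X)=\wm$ and $\Sigma^{sc}(X)=\widetilde\Sigma$; then Proposition~\ref{prop:scversusN} gives $\Sigma^N(X)=(\widetilde\Sigma\setminus\widetilde\Sigma_l^{\mathrm{Losev}})\cup 2\widetilde\Sigma_l^{\mathrm{Losev}}$, and the combinatorial identity above identifies Losev's $\widetilde\Sigma_l$ with the set $\{\alpha\in\widetilde\Sigma\cap S\colon a(\alpha)\text{ has one element}\}$ appearing in the statement, so $\Sigma=(\widetilde\Sigma\setminus\widetilde\Sigma_l)\cup 2\widetilde\Sigma_l=\Sigma^N(X)$ exhibits $\Sigma$ as N-adapted.

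The main obstacle is the combinatorial identity, and more precisely the claim that $\{c'(D^+_\alpha,\cdot),\ c'(D^-_\alpha,\cdot)\}=a(\alpha)$ as multisets in $(\Z\wm)^*$. The forward containment $c'(D^\pm_\alpha,\cdot)\in a(\alpha)$ requires knowing that these functionals are primitive and span rays of $\wm^\vee$ — this is where I expect to lean hardest on the ray analysis in the proof of Proposition~\ref{prop:adapt-spher-roots} and on axiom (A1), which forces $c'(D,\sigma)\le 1$ and pins the value on $\alpha$. The reverse containment, that every $\delta\in a(\alpha)$ equals one of the two, uses $\delta(\alpha)=1$ together with (a2) to write $\alpha^\vee-\delta$ as a candidate for the other color, and then the uniqueness in (A2)/Remark~\ref{rem:adapt_unique} to conclude $\delta$ is $c'(D^+_\alpha,\cdot)$ or $c'(D^-_\alpha,\cdot)$. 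One must also handle carefully the degenerate case where $\alpha^\vee$ itself is (twice) a primitive ray generator so that $D^+_\alpha$ and $D^-_\alpha$ coincide as functionals, matching $|a(\alpha)|=1$. Once these bookkeeping points are settled, the corollary follows formally.
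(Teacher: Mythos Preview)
Your plan is essentially the paper's own proof: reduce via Proposition~\ref{prop:scversusN} and Proposition~\ref{prop:adapt-spher-roots} to the combinatorial identity $a(\alpha)=\{c'(D^+_\alpha,\cdot),c'(D^-_\alpha,\cdot)\}$ (this is exactly equation~(\ref{eq:5}) in the paper), and then read off the equivalence $|a(\alpha)|=1\iff c'(D^+_\alpha,\cdot)=c'(D^-_\alpha,\cdot)$.

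There is one overclaim you should correct. For the inclusion $\{c'(D^+_\alpha,\cdot),c'(D^-_\alpha,\cdot)\}\subseteq a(\alpha)$ you assert that ``the ray/color analysis \ldots\ forces each of them to span a ray of $\wm^\vee$, hence to be primitive and to lie in $E(\wm)$.'' This is stronger than what is true or needed: in general only \emph{one} of $c'(D^+_\alpha,\cdot)$, $c'(D^-_\alpha,\cdot)$ need lie in $E(\wm)$. The correct argument (and the paper's) is: if \emph{neither} were in $E(\wm)$, then by conditions~(\ref{item:3}) and~(\ref{item:4}) of Proposition~\ref{prop:adapt-spher-roots}, axiom~(A1), and the description~(\ref{eq:4}) of the Cartan pairing, each $c'(D^\pm_\alpha,\cdot)$ would be a nonnegative combination of functionals that are nonpositive on $\alpha$, contradicting $c'(D^\pm_\alpha,\alpha)=1$. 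Once, say, $c'(D^+_\alpha,\cdot)\in E(\wm)$, both functionals lie in $a(\alpha)$ automatically: $c'(D^+_\alpha,\cdot)$ because it is itself in $E(\wm)$, and $c'(D^-_\alpha,\cdot)$ because $\alpha^\vee-c'(D^-_\alpha,\cdot)=c'(D^+_\alpha,\cdot)\in E(\wm)$ by~(a2). Your reverse inclusion sketch is fine and matches the paper's: for $\delta\in a(\alpha)$, one of $\delta,\alpha^\vee-\delta$ lies in $E(\wm)$ and is positive on $\alpha$, so by condition~(\ref{item:3}) it is a positive multiple of some $c'(D,\cdot)$; axiom~(A1) forces $D\in\{D^+_\alpha,D^-_\alpha\}$, the value $1$ on $\alpha$ forces the multiple to be $1$, and~(a2) then gives $\{\delta,\alpha^\vee-\delta\}=\{c'(D^+_\alpha,\cdot),c'(D^-_\alpha,\cdot)\}$.
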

\begin{proof}
This is a consequence of
Proposition~\ref{prop:adapt-spher-roots} and
Proposition~\ref{prop:scversusN} once we show the following: if $c$
is the full Cartan pairing of an augmentation $\Z\widetilde{\Sigma}
\inn \Z\wm$ of a spherical system $\mathscr S = (S^p(\wm),
\widetilde{\Sigma}, \mathbf{A})$ as in
Proposition~\ref{prop:adapt-spher-roots}, 
then 
\begin{equation} \label{eq:5}
a(\alpha) = \{c(D^+_{\alpha}, \cdot), c(D^-_{\alpha}, \cdot)\}
\end{equation}
for all $\alpha \in \widetilde{\Sigma} \cap \sr$. To prove the
inclusion ``$\subset$'' in (\ref{eq:5}),  let $\delta \in
a(\alpha)$. Then, $\<\delta, \alpha\> = \<\alpha^{\vee} -
\delta, \alpha\> = 1 > 0$ and at least one of $\delta$ and $\alpha^{\vee} - \delta$ is in
$E(\wm)$. By (\ref{item:3}) in
Proposition~\ref{prop:adapt-spher-roots} it follows that 
$\{\delta,\alpha^{\vee} - \delta\}$ contains a positive rational
multiple of $c(D,\cdot)$ for some color $D$. By axiom (A1) of the
spherical system $\mathscr S$, and the description (\ref{eq:4}) of
$c$, the color $D$ must be $D^+_{\alpha}$ or $D^-_{\alpha}$. Since
$c(D^+_{\alpha}, \alpha)=c(D^-_{\alpha}, \alpha) =1$, this implies
that 
the two sets $\{\delta,\alpha^{\vee} - \delta\}$ and $\{c(D^+_{\alpha},
\cdot),c(D^-_{\alpha},\cdot) \}$ intersect, and so by axiom (a2) of the
augmentation, they are equal. For the reverse inclusion in
(\ref{eq:5}) we have to show that $c(D^+_{\alpha}, \cdot)$ or
$c(D^-_{\alpha}, \cdot)$ belongs to $E(\wm)$. 
If neither belongs to $E(\wm)$, then by 
(\ref{item:3}) and (\ref{item:4}) in Proposition~\ref{prop:adapt-spher-roots} 
together with the description (\ref{eq:4}) of $c$
and axiom (A1) in Definition~\ref{def:spherical-systems}, 
each of them is a linear combination with positive rational coefficients 
of elements of $\Hom(\Z\wm, \Q)$ which are nonpositive on $\alpha$. 
This contradicts the fact that
$c(D^+_{\alpha}, \alpha) =1$ and finishes the proof of equation (\ref{eq:5}). 
\end{proof}

As the next two corollaries show, one can characterize very explicitly
 whether a single spherical root is adapted
(Corollary~\ref{cor:indiv_adapt-spher-roots}) or N-adapted
(Corollary~\ref{cor:indiv_Nadapt-spher-roots}) to $\wm$. In a 2005
working document, Luna had proposed a statement like Corollary~\ref{cor:indiv_adapt-spher-roots}. We remark
that while Proposition~\ref{prop:adapt-spher-roots} and
Corollary~\ref{cor:Nadapt-spher-roots} depend on the full
classification of wonderful varieties by spherical systems (Luna's
conjecture), the next two results only use the combinatorial
classification of rank $1$ wonderful varieties, which was obtained in
\cite{brion-rank1} and also in
\cite{ahiezer-eqvcompl}. 

\begin{corollary} \label{cor:indiv_adapt-spher-roots}
Let $\wm$ be a normal submonoid of $\dw$. 
If $\sigma \in \Sigma^{sc}(G)$, then $\sigma$ is adapted to $\wm$
if and only if all of the following conditions hold:
\begin{enumerate}[(1)]
\item $\sigma \in \Z\wm $; \label{sphad1}
\item $\sigma$ is compatible with $S^p(\wm)$; \label{sphad2}
\item if $\sigma \notin \sr$ and $\delta \in E(\wm)$ such that
  $\<\delta, \sigma\> > 0$ then there exists $\beta \in S\setminus
  S^p(\wm)$ such that $\beta^\vee$ is a positive multiple of
  $\delta$;  \label{sphad3}
\item if $\sigma \in \sr$ then \label{sradp}
\begin{enumerate}[(a)]
\item $a(\sigma)$ has one or two elements; and \label{sradp1}
\item $\<\delta, \gamma\> \ge 0$ for all $\delta \in a(\sigma)$ and
  all $\gamma \in \wm$; and \label{sradp2}
\item $\<\delta, \sigma\> \le 1$ for all $\delta \in E(\wm)$; \label{sradp3}
\end{enumerate}
\item if $\sigma = 2\alpha \in 2\sr$, then $\alpha \notin \Z\wm$ and $\<\alpha^{\vee}, \gamma\>
  \in 2\Z$ for all $\gamma \in \wm$; \label{sphad5}
\item if $\sigma = \alpha + \beta$ with $\alpha,\beta \in \sr$
  and \label{sorths} \label{sphad6}
  $\alpha \perp \beta$, then $\alpha^{\vee} = \beta^{\vee}$ on $\wm$. 
\end{enumerate}
\end{corollary}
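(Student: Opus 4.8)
The plan is to specialize Proposition~\ref{prop:adapt-spher-roots} to the case $\Sigma=\{\sigma\}$ and to unravel what conditions (\ref{item:1})--(\ref{item:4}) of that proposition become. Since $\sigma\in\Sigma^{sc}(G)$, the existence of a spherically closed spherical system $\mathscr S=(S^p(\wm),\{\sigma\},\mathbf A)$ is governed, by axioms (A1)--(A3), (S), ($\Sigma1$), ($\Sigma2$) of Definition~\ref{def:spherical-systems}, entirely by the combinatorics of the single rank-one spherical root $\sigma$. Condition (\ref{item:1}) of Proposition~\ref{prop:adapt-spher-roots} is automatic here; condition (S) becomes precisely the compatibility condition (\ref{sphad2}), which is the combinatorial statement in Remark~\ref{rem:spherical-systems}(1). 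So the first step is to observe that a spherically closed spherical system with the required $S^p$ and spherical root $\sigma$ exists if and only if (\ref{sphad2}) holds, and that when $\sigma$ is (twice) a simple root the remaining axioms ($\Sigma1$), ($\Sigma2$), (A2) impose exactly conditions (\ref{sphad5}) and (\ref{sphad6}) together with part of (\ref{sradp}).

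The second step is to translate condition (\ref{item:2}) of Proposition~\ref{prop:adapt-spher-roots}---that $\Z\wm$ be an augmentation of $\Z\{\sigma\}$---using Definition~\ref{def:augmentation}. Requirement (\ref{sphad1}) captures $\Sigma\subset\Lambda'$; the axioms ($\sigma1$), ($\sigma2$), (s) of an augmentation give (\ref{sphad5}), (\ref{sphad6}) and (using Proposition~\ref{prop:weightmonoid}(\ref{item:8}), i.e. the definition of $S^p(\wm)$) become automatic or match (\ref{sphad2}); and axiom (a2), which determines the splitting $c'(D^+_\alpha,\cdot)+c'(D^-_\alpha,\cdot)=\langle\alpha^\vee,\cdot\rangle$ when $\sigma=\alpha\in\sr$, is what forces $a(\alpha)=\{c(D^+_\alpha,\cdot),c(D^-_\alpha,\cdot)\}$ as recorded in~(\ref{eq:5}) in the proof of Corollary~\ref{cor:Nadapt-spher-roots}. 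This is how conditions (\ref{sradp1}) and (\ref{sradp2}) arise: $a(\sigma)$ must have two elements (the images $c(D^\pm_\sigma,\cdot)$), or possibly one if $D^+_\sigma$ and $D^-_\sigma$ are identified---and it must be possible to define the pairing $c'$ with values in $\Z_{\ge0}$ on $\wm$, which via (\ref{item:4}) of Proposition~\ref{prop:adapt-spher-roots} means $c'(D,\cdot)\in\wm^\vee$.

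The third step handles conditions (\ref{item:3}) and (\ref{item:4}) of Proposition~\ref{prop:adapt-spher-roots} for the rank-one set of colors $\Delta$ attached to $\mathscr S=(S^p(\wm),\{\sigma\},\mathbf A)$. When $\sigma\notin\sr$ the colors are of type $\Delta^a$, $\Delta^{2a}$ or $\Delta^b$, and on each of them the Cartan pairing is given by~(\ref{eq:4}); one checks that (\ref{item:4}) is automatically satisfied in the cases $\Delta^{2a}$, $\Delta^b$ (using compatibility and $S^p=S^p(\wm)$), and that the content of (\ref{item:3}) for a ray $\delta\in E(\wm)$ with $\langle\delta,\sigma\rangle>0$ is exactly the existence of a color whose Cartan functional is a positive multiple of $\delta$; inspecting Table~\ref{table:spherical_roots} and the rank-one classification shows such a color must come from $\Delta^b$, i.e. be of the form $\langle\beta^\vee,\cdot\rangle$ with $\beta\in\sr\setminus S^p(\wm)$, which is condition~(\ref{sphad3}). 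When $\sigma=\alpha\in\sr$ the colors $D^+_\alpha,D^-_\alpha$ carry the $\mathbf A$-pairing, and here (\ref{item:3}) becomes condition (\ref{sradp3}) (each $\delta\in E(\wm)$ pairs to at most $1$ with $\sigma$, otherwise no valid color exists to witness the ray), while (\ref{item:4}) is condition (\ref{sradp2}).

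The main obstacle I expect is the bookkeeping in step three: one must verify, case by case through the rank-one wonderful varieties of Table~\ref{table:spherical_roots}, that the finitely many colors and their Cartan functionals behave as claimed---in particular that in the non-simple-root cases the \emph{only} possible witnesses for a ray in condition (\ref{item:3}) are the $\Delta^b$-colors $\beta^\vee$, and that conditions (\ref{sphad5}) and (\ref{sphad6}) (coming from the ($\Sigma$)/($\sigma$) axioms) are genuinely implied, not merely compatible. I would organize this by splitting into the cases $\sigma\in\sr$, $\sigma\in2\sr$, $\sigma=\alpha+\beta$ with $\alpha\perp\beta$, and $\sigma$ of higher-rank support, and in each case match the four conditions of Proposition~\ref{prop:adapt-spher-roots} against the listed conditions (\ref{sphad1})--(\ref{sphad6}) using~(\ref{eq:5}) and Remark~\ref{rem:spherical-systems}(1).
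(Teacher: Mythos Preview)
Your approach---specializing Proposition~\ref{prop:adapt-spher-roots} to $\Sigma=\{\sigma\}$ and matching its conditions (\ref{item:1})--(\ref{item:4}) against (\ref{sphad1})--(\ref{sphad6})---is exactly what the paper does, and your use of Remark~\ref{rem:adapt_unique} and equation~(\ref{eq:5}) to pin down the unique candidate spherical system and Cartan pairing is also the paper's mechanism.

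Two points where you should tighten things. First, in your step one you attribute conditions (\ref{sphad5}) and (\ref{sphad6}) to the spherical-system axioms ($\Sigma1$), ($\Sigma2$); for a singleton $\Sigma$ those axioms are vacuous (there are no other spherical roots to pair with). Conditions (\ref{sphad5}) and (\ref{sphad6}) come solely from the \emph{augmentation} axioms ($\sigma1$), ($\sigma2$), as you correctly say in step two. The paper's proof keeps this straight: for $\sigma\notin\sr$ it says ``$\mathscr S$ is a spherically closed spherical $G$-system iff (\ref{sphad2}) holds'' and then ``$c$ gives an augmentation iff (\ref{sphad1}), (\ref{sphad5}) and (\ref{sphad6}) hold.''

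Second, the case-by-case run through Table~\ref{table:spherical_roots} that you anticipate in step three is unnecessary. The paper dispatches the $\sigma\notin\sr$ case uniformly: since $\mathbf A=\emptyset$, every color lies in $\Delta^{2a}\cup\Delta^b$, so by~(\ref{eq:4}) each $c(D,\cdot)$ is a positive rational multiple of some coroot. Because $\wm\subset\Lambda^+$, condition~(\ref{item:4}) of Proposition~\ref{prop:adapt-spher-roots} is then automatic, and condition~(\ref{item:3}) translates directly into (\ref{sphad3}) without inspecting individual spherical roots. Your remark that ``such a color must come from $\Delta^b$'' is slightly off (when $\sigma=2\alpha$ the color $D_\alpha\in\Delta^{2a}$ also gives a multiple of $\alpha^\vee$), but harmless for the conclusion. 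The only genuine case split is $\sigma\in\sr$ versus $\sigma\notin\sr$.
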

\begin{proof}
Let $\sigma \in \Sigma^{sc}(G)$. Define the triple $\mathscr S$ by
\[\mathscr S:= \begin{cases} (S^p(\wm), \{\sigma\}, \emptyset)
  &\text{if $\sigma \notin \sr$}; \\
(S^p(\wm), \{\sigma\}, \{D^+_{\sigma}, D^-_{\sigma}\}) &\text{if $\sigma \in \sr$}.
\end{cases} \]
Let $\Delta$ be the set of colors of $\mathscr S$ (see
Section~\ref{sec:spherical-systems}) and let $c:\Z\Delta \times \Z\wm$
be the bilinear pairing given by equation (\ref{eq:4}) if $\sigma
\notin \sr$ and by
\begin{align} \label{eq:6}
&c(D, \gamma)= \<\alpha^{\vee}, \gamma\> \text{ if $D = D_{\alpha} \in
  \Delta^b$}; \\
&\{c(D^+_{\sigma}, \cdot), c(D^-_{\sigma}, \cdot)\} = a(\sigma),
\end{align}
if $\sigma \in \sr$. By Remark~\ref{rem:adapt_unique}, we have to show
that the conditions of the corollary hold if and only if $\mathscr S$
is a spherically closed spherical system of which $\Z\wm$ together with
$c$ is an augmentation such that conditions (\ref{item:3}) and (\ref{item:4})
of Proposition~\ref{prop:adapt-spher-roots} hold. We briefly describe
the straightforward verification. 

We begin with the case $\sigma \notin \sr$. Then we have that
$\mathscr S$ is a spherically closed spherical $G$-system if and only if (\ref{sphad2})
holds. Then $c$ gives an augmentation of
$\mathscr S$ if and only if  (\ref{sphad1}), (\ref{sphad5}) and
(\ref{sphad6}) hold.  Condition  (\ref{item:4}) of Proposition~\ref{prop:adapt-spher-roots} 
is vacuous since $\wm \inn \Lambda^+$ and every $c(D,\cdot)$ is a
positive multiple of a coroot. Condition (\ref{sphad3}) in the
corollary is the same as condition (\ref{item:3}) of
Proposition~\ref{prop:adapt-spher-roots} by the definition of $c$.

We proceed to the case $\sigma \in \sr$. Now $\mathscr S$
is a spherically closed spherical $G$-system if and only if (\ref{sphad2}) and
(\ref{sradp1}) hold. Next, by construction, $c$ gives an augmentation of
$\mathscr S$ if and only if we have (\ref{sphad1}). Condition
(\ref{item:4}) of Proposition~\ref{prop:adapt-spher-roots} is
equivalent to (\ref{sradp2}). Finally, condition
(\ref{item:3}) of Proposition~\ref{prop:adapt-spher-roots} is
equivalent to (\ref{sradp3}), again by the definition of $c$.  
\end{proof}

The combinatorial conditions that characterize N-adapted spherical roots 
are exactly the same except for conditions (\ref{sradp1}) and (\ref{sphad5}).
We report all of them again entirely in the next statement for later reference.

\begin{corollary} \label{cor:indiv_Nadapt-spher-roots}
Let $\wm$ be a normal submonoid of $\dw$. 
If $\sigma \in \Sigma^{sc}(G)$, then $\sigma$ is N-adapted to $\wm$
if and only if all of the following conditions hold:
\begin{enumerate}[(1)]
\item $\sigma \in \Z\wm $; \label{item:inasr1}
\item $\sigma$ is compatible with $S^p(\wm)$; \label{item:inasr2}
\item if $\sigma \notin \sr$ and $\delta \in E(\wm)$ such that
  $\<\delta, \sigma\> > 0$ then there exists $\beta \in S\setminus
  S^p(\wm)$ such that $\beta^\vee$ is a positive multiple of
  $\delta$; \label{item:inasr3}

\item if $\sigma \in \sr$ then   \label{item:inasr4}
\begin{enumerate}[(a)]
\item $a(\sigma)$ has two elements; and    \label{item:inasr4a}
\item $\<\delta, \gamma\> \ge 0$ for all $\delta \in a(\sigma)$ and
  all $\gamma \in \wm$; and     \label{item:inasr4b}
\item $\<\delta, \sigma\> \le 1$ for all $\delta \in E(\wm)$; \label{item:inasr4c}
\end{enumerate}
\item if $\sigma = 2\alpha \in 2\sr$, then $\<\alpha^{\vee}, \gamma\>
\in 2\Z$ for all $\gamma \in \wm$; \label{item:inasr5}
\item if $\sigma = \alpha + \beta$ with $\alpha,\beta \in \sr$ and \label{nsorths}
  $\alpha \perp \beta$, then $\alpha^{\vee} = \beta^{\vee}$ on $\wm$. 
\end{enumerate}
\end{corollary}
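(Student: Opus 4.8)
The plan is to deduce Corollary~\ref{cor:indiv_Nadapt-spher-roots} from the already-proven Corollary~\ref{cor:Nadapt-spher-roots} together with the explicit characterization of adapted spherical roots in Corollary~\ref{cor:indiv_adapt-spher-roots}. By Corollary~\ref{cor:Nadapt-spher-roots}, a single element $\sigma \in \Sigma^{sc}(G)$ is N-adapted to $\wm$ if and only if there is some $\widetilde\Sigma \subseteq \Sigma^{sc}(G)$ adapted to $\wm$ with $\sigma = (\widetilde\Sigma \setminus \widetilde\Sigma_l) \cup 2\widetilde\Sigma_l$, where $\widetilde\Sigma_l = \{\alpha \in \widetilde\Sigma \cap S : a(\alpha) \text{ has one element}\}$. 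Since the left-hand side is a singleton, there are exactly two possibilities for $\widetilde\Sigma$: either $\widetilde\Sigma = \{\sigma\}$ with $\widetilde\Sigma_l = \emptyset$ (so $\sigma$ itself is adapted to $\wm$, and if $\sigma \in \sr$ then $a(\sigma)$ has two elements), or $\sigma = 2\alpha$ for some $\alpha \in \sr$ with $\widetilde\Sigma = \{\alpha\}$ and $\widetilde\Sigma_l = \{\alpha\}$, i.e.\ $\alpha$ is adapted to $\wm$ and $a(\alpha)$ has exactly one element. So the first step is to record this dichotomy precisely.

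Next I would translate each of the two cases through Corollary~\ref{cor:indiv_adapt-spher-roots}. In the first case, ``$\sigma$ adapted to $\wm$ with the extra condition that $a(\sigma)$ has two elements when $\sigma \in \sr$'' is, verbatim, the list of conditions in Corollary~\ref{cor:indiv_adapt-spher-roots} except that its condition (\ref{sradp1}) (``$a(\sigma)$ has one or two elements'') is strengthened to condition (\ref{item:inasr4a}) (``$a(\sigma)$ has two elements''). This explains why conditions (\ref{item:inasr1}), (\ref{item:inasr2}), (\ref{item:inasr3}), (\ref{item:inasr4b}), (\ref{item:inasr4c}) and (\ref{nsorths}) of the present corollary are identical to the corresponding conditions of Corollary~\ref{cor:indiv_adapt-spher-roots}, and it accounts for the change in condition (4a). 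In the second case, $\sigma = 2\alpha$: here $\alpha$ being adapted to $\wm$ with $a(\alpha)$ a singleton means, via Corollary~\ref{cor:indiv_adapt-spher-roots}, that $\alpha \in \Z\wm$, $\alpha$ is compatible with $S^p(\wm)$, $a(\alpha)$ has one element, $\<\delta,\gamma\> \ge 0$ for all $\delta \in a(\alpha)$, $\gamma \in \wm$, and $\<\delta,\sigma'\> \le 1$ for all $\delta \in E(\wm)$ (with $\sigma' = \alpha$); conditions (\ref{sphad5}) and (\ref{sphad6}) of Corollary~\ref{cor:indiv_adapt-spher-roots} are vacuous since $\alpha \in \sr$.

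The main point of the argument will then be to check that when $\sigma = 2\alpha \in 2\sr$ belongs to $\Sigma^{sc}(G)$, the conditions of the second case are equivalent to conditions (\ref{item:inasr1}), (\ref{item:inasr2}) and (\ref{item:inasr5}) of the present corollary for $\sigma = 2\alpha$. Here I would argue as follows. First, $2\alpha \in \Z\wm$ together with the (known, from Table~\ref{table:spherical_roots}) fact that $\alpha \notin \Z\wm$ for a support of type $\ssA_1$ with spherical root $2\alpha$ compatible with $S^p(\wm)$ needs to be reconciled with ``$\alpha \in \Z\wm$'' from the adapted-to-$\wm$ condition on $\alpha$; the resolution is that the relevant ambient lattice for the system $(\{S^p(\wm)\},\{\alpha\},\ldots)$ differs — one works with the augmentation $\Z\wm$ of $\Z\{\alpha\}$, and the singleton condition on $a(\alpha)$ plus $\<\alpha^\vee,\gamma\> \in 2\Z$ for all $\gamma \in \wm$ is exactly what forces $\alpha^\vee - \delta = \delta$, i.e.\ $a(\alpha)$ is a singleton $\{\tfrac12\alpha^\vee\}$, which is automatically in $\wm^\vee$ and satisfies $\<\tfrac12\alpha^\vee,\sigma'\>\le 1$. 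So conditions (\ref{item:inasr4b}) and (\ref{item:inasr4c}) for $\alpha$ become automatic once $\<\alpha^\vee,\wm\>\subseteq 2\Z$ holds, and compatibility of $2\alpha$ with $S^p(\wm)$ is equivalent to compatibility of $\alpha$ with $S^p(\wm)$ (both amount to $S^p(\wm) \subseteq \{\beta \in S : \<\beta^\vee,\alpha\>=0\}\setminus\{\alpha\}$). I expect this reconciliation of the two ``$\sigma \in \Z\wm$'' conditions and the bookkeeping around the singleton $a(\alpha)$ to be the only genuinely delicate step; the rest is a direct transcription. Finally I would note that the reverse direction — that conditions (\ref{item:inasr1})--(\ref{nsorths}) imply $\sigma$ is N-adapted — follows by running the same dictionary backwards, constructing the appropriate $\widetilde\Sigma$ (namely $\{\sigma\}$ if $\sigma \notin 2\sr$ or if $a(\sigma)$ would have two elements, and $\{\alpha\}$ if $\sigma = 2\alpha$).
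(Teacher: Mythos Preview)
Your overall strategy matches the paper's: reduce to Corollary~\ref{cor:Nadapt-spher-roots} and then to Corollary~\ref{cor:indiv_adapt-spher-roots}, splitting according to whether $\widetilde\Sigma=\{\sigma\}$ or $\widetilde\Sigma=\{\alpha\}$ with $\sigma=2\alpha$. The cases $\sigma\notin\sr\cup 2\sr$ and $\sigma\in\sr$ are handled correctly.

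The gap is in the case $\sigma=2\alpha\in 2\sr$. You correctly record at the outset that N-adaptedness of $2\alpha$ is the \emph{disjunction}
\[
\text{(i) }2\alpha\text{ is adapted to }\wm,\quad\text{or}\quad\text{(ii) }\alpha\text{ is adapted to }\wm\text{ and }|a(\alpha)|=1,
\]
but in the detailed analysis you only treat (ii), and in the reverse direction you always take $\widetilde\Sigma=\{\alpha\}$. That is not enough: case~(i) requires $\alpha\notin\Z\wm$ (this is part of condition~(\ref{sphad5}) in Corollary~\ref{cor:indiv_adapt-spher-roots}), while case~(ii) requires $\alpha\in\Z\wm$. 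These are not statements to be ``reconciled''; they are mutually exclusive alternatives, and the sentence about the ambient lattice differing is not correct --- the augmentation lattice is $\Z\wm$ in both cases. What the paper does (and what your sketch is missing) is: assuming (\ref{item:inasr1})--(\ref{item:inasr3}) and (\ref{item:inasr5}) for $\sigma=2\alpha$, suppose (i) fails. Then the only condition of Corollary~\ref{cor:indiv_adapt-spher-roots} for $2\alpha$ that can fail is $\alpha\notin\Z\wm$, so in fact $\alpha\in\Z\wm$. Using (\ref{item:inasr5}) one then checks that $\tfrac12\alpha^\vee|_{\Z\wm}$ is primitive and lies in $E(\wm)$, that it is the unique element of $a(\alpha)$, and that conditions (\ref{sradp1})--(\ref{sradp3}) of Corollary~\ref{cor:indiv_adapt-spher-roots} hold for $\alpha$; hence (ii) holds. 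Conversely, if (ii) holds then $a(\alpha)=\{\tfrac12\alpha^\vee\}$ gives (\ref{item:inasr5}), and the bound $\<\delta,\alpha\>\le 1$ from (\ref{sradp3}) forces any $\delta\in E(\wm)$ with $\<\delta,2\alpha\> >0$ to equal $\tfrac12\alpha^\vee$, yielding (\ref{item:inasr3}). Once you insert this disjunctive argument, the proof goes through.
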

\begin{proof}
By Corollary~\ref{cor:Nadapt-spher-roots}, if $\sigma \notin \sr \cup
2\sr$, then $\sigma$ is adapted to $\wm$ if and only if it is
N-adapted to $\wm$. From the same corollary it follows that $\sigma
\in \sr$ is N-adapted to $\wm$ if and only if it is adapted to $\wm$
and $a(\sigma)$ has two elements. 
The only remaining case is $\sigma = 2\alpha$ for some $\alpha \in
\sr$. Again by Corollary~\ref{cor:Nadapt-spher-roots}, $2\alpha$ is
N-adapted to $\wm$ if and only if either
\begin{enumerate}[(i)]
\item $2 \alpha$ is adapted to $\wm$; or \label{item:13}
\item $\alpha$ is adapted to $\wm$ and $a(\alpha)$ has one element. \label{item:14}
\end{enumerate}
We assume that (\ref{item:inasr1}) and (\ref{item:inasr2})
hold and claim that (\ref{item:inasr3}) and (\ref{item:inasr5}) hold
if and only if (\ref{item:13}) or (\ref{item:14}) is true. Indeed, it is clear
from Corollary~\ref{cor:indiv_adapt-spher-roots} that if $2\alpha$ is
adapted to $\wm$ then we have (\ref{item:inasr3}) and
(\ref{item:inasr5}). On the other hand, if $\alpha$ is adapted to
$\wm$ and $a(\alpha)$ has one element, then that element is 
$\frac{1}{2}\alpha^{\vee}$ and so (\ref{item:inasr5}) holds. Moreover,
condition (\ref{sradp3}) of
Corollary~\ref{cor:indiv_adapt-spher-roots} implies
(\ref{item:inasr3}) of this corollary. Conversely, suppose that we
have (\ref{item:inasr3}) and (\ref{item:inasr5}). Since the restricion of
$\alpha^{\vee}$ to $\Z\wm$ belongs to $\wm^\vee$ and $\<\alpha^{\vee},
2\alpha\> > 0$, there exists $\delta \in E(\wm)$ such that $\<\delta,
2\alpha\>>0$. It follows from (\ref{item:inasr3}) that $\delta =
q\beta^{\vee}$ for some $\beta \in \sr \setminus S^p(\wm)$ and $q \in
\Q_{>0}$. Clearly, $\beta = \alpha$, which proves that $\delta$ is the
only element of $E(\wm)$ that takes a positive value on
$2\alpha$. Now, suppose that $2\alpha$ is not
adapted to $\wm$, i.e. that (\ref{item:13}) does not hold. Then $\alpha$ must be an element of  $\Z\wm$. By (\ref{item:inasr5}),
$\frac{1}{2}\alpha^{\vee}$ takes integer values on $\Z\wm$, and since
it takes value $1$ on $\alpha$, it is primitive in $(\Z\wm)^*$ and
therefore an element of $E(\wm)$ and the only element of
$a(\alpha)$. It follows from
Corollary~\ref{cor:indiv_adapt-spher-roots} that (\ref{item:14})
is true. This finishes the proof. 
\end{proof}

\section{The $\Tad$-weights in $\Vgg$} \label{sec:tad-weights-vgg}
For the remainder of the paper, $\wm$ will be a free monoid with basis
$F \inn \dw$. In this section, we begin by recalling that the moduli
scheme $\ms$ is an open subscheme of a certain invariant Hilbert
scheme $\hs$. This allows one to realize the tangent space $\tg \ms$ as a $T$-submodule of a certain vector space $\Vgg$.  
In Section~\ref{sec:vggweightsr} we prove that if $\gamma$ is a
$T$-weight in $\Vgg$, then it is a spherical root of spherically closed type. 
In Section~\ref{sec:vggweightsprops} we further show that $\gamma$ is
compatible with $S^p(\wm)$. We also show
that if $\gamma \notin \sr$, then the weight space $\Vgg_{(\gamma)}$
has dimension at most $1$. For notational and computational
convenience, we actually work with the opposite of Alexeev and Brion's
$T$-action on $\ms$ and with a twist of their action on $\hs$ (see Section~\ref{sec:ihsrev}). 

\subsection{The invariant Hilbert scheme and its tangent space} \label{sec:ihsrev}
We briefly review some known facts regarding $\ms$ and its relation to
a certain invariant Hilbert scheme $\hs$.  For more details we refer
to \cite{alexeev&brion-modaff}, \cite[Section 4.3]{brion-ihs} and to \cite[\S 2.1 and \S 2.2]{degensphermod}.  Recall that $\wm$ is a free monoid of dominant weights with basis $F=\{\lambda_1, \lambda_2, \ldots, \lambda_r\}$, and put
\begin{align*}
V &:=  V(\lambda_1)\oplus V(\lambda_2) \oplus \ldots \oplus V(\lambda_r); \\
\hwv &:= v_{\lambda_1} + v_{\lambda_2}+\ldots +v_{\lambda_r}. 
\end{align*}
 We denote by $\hs$ the Hilbert scheme $\tHilb^G_h(V)$ of
\cite{alexeev&brion-modaff}, where $h$ is the characteristic function
of $\wm^*:=-w_0\wm$ (where $w_0$ is the longest element in the
Weyl group of $G$). The scheme $\hs$ parametrizes the $G$-stable ideals $I$ of $\k[V]$ such that 
$\k[V]/I \isom \oplus _{\lambda \in \wm^*} V(\lambda)$ as $G$-modules. We equip $\hs$ with the action of $T$ described in
\cite[\S 2.2]{degensphermod}. This is the same action as in
\cite{bravi&cupit}, and is a `twist' of the action in \cite{alexeev&brion-modaff} and in \cite[p. 101]{brion-ihs}. 
We briefly recall its definition. Let $\GL(V)^G$ be the group of linear automorphisms of $V$ that commute with the action of $G$. Note that $\GL(V)^G$ is a torus of dimension $r$. The natural action of $\GL(V)^G$ on $V$ (by $G$-equivariant automorphisms) induces an action on $\hs$. Composing with the homomorphism 
\begin{equation}
T \to \GL(V)^G \colon t \mapsto (\lambda_1(t), \lambda_2(t), \ldots, \lambda_r(t)),
\end{equation}
we obtain our action of $T$ on $\hs$. 

The center $Z(G)$ of $G$ belongs to the kernel of this action, which therefore descends to an action of $\Tad:=T/Z(G)$. 
We will refer to our action as the ``$\Tad$-action'' on $\hs$.
For the reader's convenience, the corresponding $\Tad$-action induced on the tangent space to $\hs$ at the $\Tad$-fixed point
is recalled below in (\ref{eq:tadaction}).  
As was reviewed in \cite[\S 2.2]{degensphermod} it follows from
\cite[Corollary~1.17 and Lemma~2.2]{alexeev&brion-modaff} that since
$\wm^*$ is free, we can view $\tM_{\wm^*}$ as a $\Tad$-stable open
subscheme of $\hs.$ Under this identification, the $\Tad$-fixed point $X_0$ of $\tM_{\wm^*}$ corresponds to a certain subvariety of $V$ which we also denote by $X_0$, namely
\begin{equation} \label{eq:7}
X_0 = \text{ the closure of the $G$-orbit of $\hwv$ in $V$}.
\end{equation}

The next proposition relates $\ms$ to $\hs$. 
\begin{proposition} \label{prop:msintohs}
Let $\wm$ be a free monoid of dominant weights. If we equip $\ms$ with
the opposite of the $\Tad$-action in \cite{alexeev&brion-modaff} and $\hs$
with the $\Tad$-action in \cite[\S 2.2]{degensphermod}, then there is a
$\Tad$-equivariant open embedding
\[\ms \into \hs\]
which sends the unique $\Tad$-fixed point of $\ms$ to the point $X_0$
in equation (\ref{eq:7}).
\end{proposition}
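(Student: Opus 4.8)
The plan is to deduce this essentially by quoting \cite{alexeev&brion-modaff}, as reviewed in \cite[\S 2.1 and \S 2.2]{degensphermod}, and then checking that the open immersion produced there remains equivariant once one passes to the two modified torus actions appearing in the statement: the opposite of Alexeev and Brion's action on $\ms$, and the twist of their action on $\hs$. Since this is a review of known facts, I will indicate the structure of the argument rather than reproduce the constructions of loc.\ cit.

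First I would recall the identification of $\ms$ with a subscheme of $\hs$. A point of $\ms$ is a degenerate $G$-algebra structure on the fixed $G$-module whose $U$-invariants are identified with $\k[\wm^*]$; since $\wm$, and hence $\wm^*$, is free with basis $\{\lambda_1, \ldots, \lambda_r\}$ (up to the involution $-w_0$), every such algebra is generated by the isotypic components indexed by that basis, by \cite[Corollary~1.17]{alexeev&brion-modaff}. Consequently the spectrum of such an algebra comes with a canonical $G$-equivariant closed immersion into $V = V(\lambda_1) \oplus \cdots \oplus V(\lambda_r)$, determined by the chosen identifications and by the vector $\hwv$, and this produces a point of $\hs$. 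Conversely, every $G$-stable ideal $I \inn \k[V]$ parametrized by $\hs$ has $\k[V]/I$ generated in ``degree one'' (again by freeness), hence arises in this way, so the map $\ms \to \hs$ is an open immersion; this is exactly \cite[Corollary~1.17 and Lemma~2.2]{alexeev&brion-modaff}, as reviewed in \cite[\S 2.2]{degensphermod}. Under this immersion, the ``most degenerate'' algebra of $\ms$ --- the one whose multiplication sends $V(\lambda) \times V(\mu)$ into the Cartan component $V(\lambda + \mu)$ only --- has as its spectrum the closure of the $G$-orbit of $\hwv$ in $V$, that is, the point $X_0$ of equation~(\ref{eq:7}).

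Second I would check $\Tad$-equivariance for the modified actions. Alexeev and Brion's $T$-actions on $\ms$ and on $\hs$ are both induced by rescaling the isotypic summands of the relevant modules, and the open immersion above is equivariant for those. The action we use on $\hs$ is the twist of theirs by the cocharacter $t \mapsto (\lambda_1(t), \ldots, \lambda_r(t))$ of $\GL(V)^G$, and on $\ms$ we take the opposite of theirs; comparing the explicit formulas one verifies that these two modifications are compatible through the immersion, so that it stays $\Tad$-equivariant and still sends the $\Tad$-fixed point of $\ms$ to $X_0$. This comparison of twists is carried out in \cite[\S 2.2]{degensphermod} and uses the same normalization as in \cite{bravi&cupit}. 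The only point demanding genuine care, rather than transcription from the cited works, is precisely this bookkeeping with the two modified torus actions; everything else is immediate from \cite{alexeev&brion-modaff}.
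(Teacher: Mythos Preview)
Your proposal is correct and follows essentially the same approach as the paper: invoke \cite[Corollary~1.17 and Lemma~2.2]{alexeev&brion-modaff} for the open embedding, identify the fixed point with $X_0$, and then do the bookkeeping for the two modified torus actions. The paper's proof is terser than yours but makes one intermediate step more explicit: it first uses the Chevalley involution of $G$ to produce an isomorphism $\ms \isom \tM_{\wm^*}$, and it is $\tM_{\wm^*}$ (not $\ms$ directly) that sits inside $\hs$; your phrase ``up to the involution $-w_0$'' gestures at this but does not spell it out, which is why your first paragraph reads as if a point of $\ms$ already has $U$-invariants $\k[\wm^*]$ rather than $\k[\wm]$.
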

\begin{proof}
This a matter of ``formal bookkeeping.'' Composing the action of $G$
on $V(\wm)$ with the Chevalley involution
of $G$ induces an isomorphism $\ms \isom
\tM_{\wm^*}$. Composing this isomorphism with the open $\Tad$-equivariant embedding
$\tM_{\wm^*} \into \hs$ chosen above gives an open embedding $\ms
\into \hs$. Comparing the definition of the action in
\cite{alexeev&brion-modaff} with that of the action in \cite[\S
2.2]{degensphermod} one shows that this open embedding is
$\Tad$-equivariant for the actions as given in the proposition. 
\end{proof}

\begin{remark}
In what follows, $\ms$ and $\hs$ will always be equipped with the action given in Proposition~\ref{prop:msintohs}. The action Alexeev and Brion defined on $\ms$ is conceptually the most
natural, while we find the action we are using on $\hs$ computationally more convenient.
\end{remark}

By \cite[Proposition~1.13]{alexeev&brion-modaff}, there is a canonical isomorphism \[\tg \hs\isom H^0(X_0, \shN_{X_0|V})^G\] where $H^0(X_0, \shN_{X_0|V})^G$ is the space  of $G$-invariant global sections of the normal sheaf $\shN_{X_0|V}$ of $X_0$ in $V$. Moreover, by \cite[Proposition~3.10]{brion-ihs}, there is an inclusion of $\Tad$-modules 
\[H^0(X_0, \shN_{X_0|V})^G \into \Vgg \isom H^0(G\cdot x_0, \shN_{X_0|V})^G,\] where the $\Tad$-action on $\Vgg$ is induced by the following action of $\Tad$ on $V$. 
For $t \in \Tad$ and $v$ a $T$-weight vector of weight $\delta$ in  $V(\lambda) \inn V$, we put 
\begin{equation}\label{eq:tadaction}
t \cdot v := \lambda(t)\delta(t)^{-1}v.
\end{equation}  

\subsection{The $\Tad$-weights in $\Vgg$ are spherical roots of $G$} \label{sec:vggweightsr}
In this section, we prove the following theorem.

\begin{theorem}\label{thm:vggweights}
If $\gamma$ is a $\Tad$-weight in $\Vgg$ then $\gamma$ is a 
spherically closed spherical root of $G$. 
\end{theorem}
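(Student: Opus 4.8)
The goal is to show that every $\Tad$-weight $\gamma$ occurring in $\Vgg$ lies in $\Sigma^{sc}(G)$, i.e.\ that it appears in Table~\ref{table:spherical_roots}. The natural strategy is to localize the computation of the tangent space along the open orbit $G\cdot\hwv \isom G/G_{\hwv}$ and then analyze, root subsystem by root subsystem, which $T$-weights can possibly survive in $(V/\fg\cdot\hwv)^{G_{\hwv}}$. Concretely, I would proceed as follows.

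First, I would make the $G_{\hwv}$-module structure of $V/\fg\cdot\hwv$ explicit. Since $\hwv = v_{\lambda_1}+\dots+v_{\lambda_r}$ is a sum of highest weight vectors, its stabilizer $G_{\hwv}$ contains the unipotent radical of the opposite Borel intersected appropriately, and in any case $\fg\cdot\hwv$ is spanned by $X_{-\alpha}\cdot\hwv$ for positive roots $\alpha$ together with the torus directions. The weight $\gamma = \lambda - \delta$ contributed (via \eqref{eq:tadaction}) by a weight vector $v$ of weight $\delta$ in $V(\lambda)$ is nonnegative on the dominant chamber in an appropriate sense, so $\gamma \in \N\sr$; the real content is to bound $\gamma$. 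I would reduce to the rank-one situation: it suffices to check, for each connected subset $S'$ of $\supp(\gamma)$ and the corresponding Levi/semisimple subgroup, that $\gamma$ restricted there is forced to be one of the rank-one spherically closed spherical roots. This is exactly the kind of root-by-root verification that the combinatorial classification of rank-one wonderful varieties (Brion \cite{brion-rank1}, Akhiezer \cite{ahiezer-eqvcompl}) controls, so I expect Table~\ref{table:spherical_roots} to emerge as the list of admissible weights.

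The key technical step is an invariance/vanishing argument: if $v\in V(\lambda)$ has weight $\delta$ and the class of $v$ in $V/\fg\cdot\hwv$ is $G_{\hwv}$-invariant, then applying elements of $\fg_{\hwv}$ (in particular lowering operators that fix $\hwv$) to $v$ must land in $\fg\cdot\hwv$; writing this out in each $V(\lambda_i)$-component gives strong constraints linking $\lambda$, $\delta$, and the combinatorics of $F$. Running this for $\fsl_2$-triples attached to simple roots in $\supp(\gamma)$ forces the coefficients of $\gamma$ to be small, and then a finite case analysis (organized by the type of $\supp(\gamma)$ as a Dynkin diagram, together with the pairings $\<\alpha^\vee,\gamma\>$) pins $\gamma$ down to the entries of the table. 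I would also use that $\gamma$ must be orthogonal to $S^p$-type directions in the appropriate sense, though the sharper statement that $\gamma$ is actually compatible with $S^p(\wm)$ is deferred to the next section.

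\textbf{Main obstacle.} The hard part will be the uniform rank-one reduction: making precise that a $G_{\hwv}$-invariant in $V/\fg\cdot\hwv$ of weight $\gamma$ restricts, along each irreducible component of $\supp(\gamma)$, to data of a spherical $G'$-variety of rank one, so that Brion's and Akhiezer's classification applies and yields precisely Table~\ref{table:spherical_roots}. One has to be careful that multiplicities and the interaction between different simple components of the support do not produce spurious weights; controlling this — essentially showing the relevant local model really is rank one on each connected piece of the support — is where the argument needs the most care. Once that is in place, the remaining verification is a bounded, if somewhat tedious, check over the finitely many Dynkin types.
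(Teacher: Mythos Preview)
Your invariance/vanishing mechanism --- apply elements of $\fg_{\hwv}$ (specifically the positive root vectors $X_\alpha$) to a representative $v$ and use that the result must lie in $\fg\cdot\hwv$ --- is exactly the engine the paper uses (see Proposition~\ref{prop:elemVggfacts} and Lemma~\ref{lem:simpluspos}). So the starting point is right.

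Where your plan diverges from the paper, and where I think it has a genuine gap, is the ``rank-one reduction.'' You want to argue that a $G_{\hwv}$-invariant class of weight $\gamma$ restricts, along each connected piece of $\supp(\gamma)$, to data of a rank-one spherical $G'$-variety, and then invoke the Akhiezer--Brion classification. But there is no spherical variety in sight here: $(V/\fg\cdot\hwv)^{G_{\hwv}}$ is just a $T$-module, and a weight vector in it does not visibly come from, or restrict to, a homogeneous space for a Levi. The $G_{\hwv}$-invariance conditions couple all the $\lambda_i$ at once through identities like $X_\alpha v = z\,X_{-\gamma+\alpha}\hwv$, so ``restricting to a Levi'' is not a well-defined move without further work, and you correctly flag this as the main obstacle. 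The paper never attempts such a reduction.

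Instead, the paper runs a direct Lie-algebra and root-system analysis, organized by a dichotomy you do not mention: either $\gamma$ is a positive root or it is not. In the first case (Proposition~\ref{prop:highestshortroot}, Corollary~\ref{cor:Vggweightposroot}) one shows $\gamma$ must be the locally dominant short root on its support, by repeatedly exploiting identities of the form $X_{\alpha_i}X_{\alpha_j}v = X_{\alpha_j}X_{\alpha_i}v$ together with Jacobi, to successively kill $X_{\alpha_i}v$ and reach a contradiction with Lemma~\ref{lem:simpluspos}. In the second case (Lemmas~\ref{lem:orthogonality}--\ref{lem:lengthandsupport}, Proposition~\ref{prop:loc_highest_root}, Corollary~\ref{cor:Vggweightnotposroot}) there is a unique simple $\alpha$ with $\gamma-\alpha\in\pr$, and explicit structure-constant computations force $\gamma-\alpha$ to be the locally highest root orthogonal to $\alpha$, with a length constraint $\|\gamma-\alpha\|^2=(k-1)\|\alpha\|^2$. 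Only then does one compare with Table~\ref{table:spherical_roots}; the table is the output of a root-system case check, not an input via a geometric classification. If you rework your proposal along these lines --- drop the rank-one geometric reduction and instead push the $X_\alpha v\in\fg\cdot\hwv$ constraints through structure-constant identities --- you will recover the paper's argument.
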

\begin{proof}
Corollary~\ref{cor:Vggweightposroot} and Corollary~\ref{cor:Vggweightnotposroot}.
\end{proof}

For future use, we recall the following elementary and well-known facts
regarding $\Vgg$. We include  proofs for convenience. Before stating them 
we define
\[F^{\perp}:= \{\beta \in \pr \colon \<\lambda, \beta^\vee\> = 0
\text{ for all }\lambda \in F\}.\]

\begin{proposition} \label{prop:elemVggfacts}
\begin{enumerate}[(a)]
\item A basis of $\Tad$-eigenvectors of $\fg \cdot x_0$ is given by
  $\{v_{\lambda} \colon \lambda \in F\} \cup \{X_{-\beta}\cdot x_0\colon \beta
    \in \pr \setminus F^{\perp}\}$. \label{item:6}
\item If $[v]$ is a $\Tad$-eigenvector in $\Vg$ of weight $\gamma$,
  then $[v] \in \Vgg$ if and only if $\gamma \in \Z\wm$ and $X_{\beta}
  \cdot v \in \fg \cdot x_0$ for all $\beta \in \sr \cup -(\sr \cap
  F^{\perp})$. \label{item:7}
\end{enumerate}
\end{proposition}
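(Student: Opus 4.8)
The plan is a direct computation with the triangular decomposition $\fg=\fn^{-}\oplus\ft\oplus\fn$, where $\fn=\bigoplus_{\beta\in\pr}\fg_{\beta}$ and $\fn^{-}=\bigoplus_{\beta\in\pr}\fg_{-\beta}$, exploiting the $\Tad$-grading throughout. For part (a), I would first compute $\fg\cdot\hwv=\fn\cdot\hwv+\ft\cdot\hwv+\fn^{-}\cdot\hwv$ termwise: since each $v_{\lambda_i}$ is a highest weight vector, $\fn\cdot\hwv=0$, and $\ft\cdot\hwv=\langle v_{\lambda_1},\dots,v_{\lambda_r}\rangle_{\k}$ because the $\lambda_i$ are linearly independent ($\wm$ being free on $F$). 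For the negative part, $X_{-\beta}\cdot\hwv=\sum_i X_{-\beta}\cdot v_{\lambda_i}$, and writing $V=\bigoplus_i V(\lambda_i)$ together with the standard fact that $X_{-\beta}\cdot v_{\lambda_i}=0$ iff $\langle\lambda_i,\beta^{\vee}\rangle=0$ (look at the copy of $\sl_2$ attached to $\beta$, for which $v_{\lambda_i}$ is a highest weight vector of weight $\langle\lambda_i,\beta^{\vee}\rangle$), one sees $X_{-\beta}\cdot\hwv=0$ exactly when $\beta\in F^{\perp}$; so the proposed set spans $\fg\cdot\hwv$. To get linear independence I would compute $\Tad$-weights via (\ref{eq:tadaction}): $v_{\lambda_i}$ has $\Tad$-weight $0$, whereas $X_{-\beta}\cdot v_{\lambda_i}$, and hence $X_{-\beta}\cdot\hwv$, has $\Tad$-weight $\beta$. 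As distinct positive roots are distinct nonzero $\Tad$-weights, the vectors $X_{-\beta}\cdot\hwv$ ($\beta\in\pr\setminus F^{\perp}$) sit in pairwise distinct $\Tad$-weight spaces, none of which is the weight-$0$ space containing the obviously independent $v_{\lambda_i}$; hence the set is a basis. In particular, for $\gamma\in\pr\setminus F^{\perp}$ the $\Tad$-weight space $(\fg\cdot\hwv)_{\gamma}$ is the line $\k\,X_{-\gamma}\cdot\hwv$, and $0$ is the only other nonzero $\Tad$-weight of $\fg\cdot\hwv$.

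For part (b), the plan is to unwind $\Vgg=(\Vg)^{G_{\hwv}}$ using the structure of $G_{\hwv}$. Repeating the computation of (a) for a general $\xi\in\fg$ gives
\[\fg_{\hwv}=\fn\oplus\ft'\oplus\bigoplus_{\beta\in\pr\cap F^{\perp}}\fg_{-\beta},\qquad\text{where }\ \ft':=\textstyle\bigcap_i\ker\lambda_i\subseteq\ft.\]
An elementary observation --- if $\beta=\sum_{\alpha\in\sr}n_{\alpha}\alpha\in F^{\perp}$ with all $n_{\alpha}\ge0$, then $\sum_{\alpha}n_{\alpha}(\lambda_i,\alpha)=0$ with nonnegative terms, forcing $\supp\beta\subseteq\sr\cap F^{\perp}$ --- shows that $\pr\cap F^{\perp}$ is exactly the set of positive roots supported on $\sr\cap F^{\perp}$, and hence that $\fg_{\hwv}$ is generated as a Lie algebra by $\ft'$ together with $\{X_{\beta}:\beta\in\sr\}$ and $\{X_{-\alpha}:\alpha\in\sr\cap F^{\perp}\}$. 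On the group side, the subgroups $U$, $U_{-\alpha}$ ($\alpha\in\sr\cap F^{\perp}$) and $T':=\{t\in T:\lambda_i(t)=1\text{ for all }i\}$ each fix every $v_{\lambda_i}$, hence lie in $G_{\hwv}$; realizing $G_{\hwv}$ as the common kernel, inside the parabolic $P:=\bigcap_i\mathrm{Stab}_G(\k v_{\lambda_i})$, of the characters of $P$ extending the $\lambda_i$, and using that these characters are trivial on $\mathrm{R}_u(P)$ and on the derived group of the Levi of $P$, one gets $G_{\hwv}=G_{\hwv}^{\circ}\cdot T'$, with $G_{\hwv}$ generated by $U$, the $U_{-\alpha}$, and $T'$.

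Granting this structure, both implications are quick. If $[v]\in\Vgg$, then $[v]$ is fixed by $U$, by each $U_{-\alpha}$ and by $T'$; the first two give $X_{\beta}\cdot v\in\fg\cdot\hwv$ for all $\beta\in\sr\cup-(\sr\cap F^{\perp})$, while $T'$-invariance forces $\gamma$ to vanish on $T'$, i.e.\ $\gamma\in\Z\wm$, since the characters of $T/T'$ are precisely $\Z F=\Z\wm$. Conversely, assume $\gamma\in\Z\wm$ and $X_{\beta}\cdot v\in\fg\cdot\hwv$ for all $\beta\in\sr\cup-(\sr\cap F^{\perp})$. As $\fg\cdot\hwv$ is $\fg_{\hwv}$-stable, $\Vg$ is a $\fg_{\hwv}$-module and $\{\xi\in\fg_{\hwv}:\xi\cdot[v]=0\}$ is a Lie subalgebra of $\fg_{\hwv}$; by hypothesis it contains $\{X_{\beta}:\beta\in\sr\}$ and $\{X_{-\alpha}:\alpha\in\sr\cap F^{\perp}\}$, and it contains $\ft'$ because by (\ref{eq:tadaction}) an $H\in\ft$ acts on $[v]$ as $\gamma(H)$, which vanishes on $\ft'$ since $\gamma\in\Z\wm\subseteq\Q\wm=(\ft')^{\perp}$. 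Hence this subalgebra equals $\fg_{\hwv}$, so $[v]$ is $G_{\hwv}^{\circ}$-invariant; combined with $T'$-invariance (again $\gamma\in\Z\wm$) and $G_{\hwv}=G_{\hwv}^{\circ}\cdot T'$, this gives $[v]\in\Vgg$.

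I expect the only delicate point to be the bookkeeping around the possible disconnectedness of $G_{\hwv}$: invariance under $\fg_{\hwv}$ alone only yields the weaker condition $\gamma\in\Q\wm$, and the integrality $\gamma\in\Z\wm$ must be extracted from invariance under the (possibly disconnected) torus $T'$ --- which is why $T'$, rather than just $\ft'$, must enter, and why the decomposition $G_{\hwv}=G_{\hwv}^{\circ}\cdot T'$ is needed.
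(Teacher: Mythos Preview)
Your proof is correct and follows the same approach as the paper's: for part~(a) you unfold $\fg\cdot\hwv=\fb^{-}\cdot\hwv$ exactly as the paper indicates, and for part~(b) you supply in full the two ingredients the paper cites from \cite[Lemma~2.16]{degensphermod} and \cite[Theorem~30.1]{humphreys-lag}, namely the decomposition $G_{\hwv}=G_{\hwv}^{\circ}\cdot T'$ (which handles the disconnectedness) and the generation of $\fg_{\hwv}$ by $\ft'$ and the root spaces $\fg_{\beta}$ with $\beta\in\sr\cup -(\sr\cap F^{\perp})$. One small slip: when you argue that $\ft'$ annihilates $[v]$, you invoke~(\ref{eq:tadaction}), but what is needed is the $G$-action of $\ft'\subset\fg_{\hwv}$ on $\Vg$, not the $\Tad$-action; under the $G$-action an $H\in\ft'$ acts on the $V(\lambda_i)$-component of $v$ (which has $T$-weight $\lambda_i-\gamma$) by $(\lambda_i-\gamma)(H)=-\gamma(H)$, so the conclusion is the same up to a harmless sign.
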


\begin{proof}
Assertion (\ref{item:6}) follows from the fact that $\fg \cdot x_0 =
\fb^{-} \cdot x_0 = \ft \cdot x_0 + \fn^{-}\cdot x_0$ and that $F$
is linearly independent. Assertion (\ref{item:7}) follows from
\cite[Lemma~2.16]{degensphermod} and the fact that $\fg_{x_0}$ is
generated as a Lie algebra by $\ft_{x_0}$ and the root spaces $\fg_\beta$ with $\beta \in \sr \cup -(\sr \cap
  F^{\perp})$ (see, e.g., \cite[Theorem~30.1]{humphreys-lag}). 
\end{proof}

\emph{In the remainder of this section,  $\gamma$ is a $\Tad$-weight occuring in $\Vgg$
and  $v \in V$  a $\Tad$-eigenvector of weight $\gamma$ such that
$[v]$ is a nonzero element of $\Vgg$.}
By Propostion~\ref{prop:elemVggfacts} (and the choice of our $\Tad$-action), the weight $\gamma$ belongs to $\N\sr\cap \Z\wm$.

\begin{lemma}[{\cite[Lemma~3.3]{bravi&cupit}}] \label{lem:simpluspos}\
\begin{enumerate}
\item There exists at least one simple root $\alpha$ such that
  $X_\alpha  v\neq0$. \label{item:16}
\item If $\alpha$ is a simple root such that $X_\alpha  v\neq0$
and $\gamma\neq\alpha$, then $\gamma-\alpha$ is a positive root. \label{item:15}
\item If $\alpha$ is a simple root such that $\gamma-\alpha$ is a root 
then there exists $z\in\k$ such that $X_\alpha  v=z\,X_{-\gamma+\alpha}  \hwv$.
\end{enumerate}
\end{lemma}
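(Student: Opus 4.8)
The plan is to analyze the equation $X_\alpha v = 0$ or $\neq 0$ by expanding $v$ in $T$-weight vectors and using that $[v] \in \Vgg$. Recall that $V = \bigoplus_{i=1}^r V(\lambda_i)$, that $\gamma \in \N\sr \cap \Z\wm$ by the remark preceding the lemma, and that $x_0 = \sum_i v_{\lambda_i}$ so that $\fg \cdot x_0 = \ft\cdot x_0 + \fn^-\cdot x_0$ has the explicit $\Tad$-eigenbasis described in Proposition~\ref{prop:elemVggfacts}(\ref{item:6}). The key observation is that for $\alpha \in \sr$, the element $X_\alpha v$ has $\Tad$-weight $\gamma - \alpha$ (in the sense of the twisted action (\ref{eq:tadaction}), i.e. its class in $\Vg$ has that weight, while as a vector in $V$ it is a sum of $T$-weight vectors of the appropriate weights), and by Proposition~\ref{prop:elemVggfacts}(\ref{item:7}) we know $X_\alpha v \in \fg\cdot x_0$.

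For part (1), I would argue by contradiction: if $X_\alpha v = 0$ for every simple root $\alpha$, then $v$ is a highest weight vector in $V$, hence a linear combination of the $v_{\lambda_i}$. But then $v \in \fg \cdot x_0$, so $[v] = 0$ in $\Vg$, contradicting the assumption that $[v]$ is nonzero. (One should check that $v \notin \fg\cdot x_0$ is exactly the condition for $[v]\neq 0$; this is immediate.) For part (2), suppose $\alpha$ is simple with $X_\alpha v \neq 0$ and $\gamma \neq \alpha$. By Proposition~\ref{prop:elemVggfacts}(\ref{item:7}), $X_\alpha v \in \fg\cdot x_0$, and it is a $\Tad$-eigenvector of weight $\gamma - \alpha$. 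Consulting the eigenbasis of $\fg\cdot x_0$ in Proposition~\ref{prop:elemVggfacts}(\ref{item:6}): the $\Tad$-weight of $v_{\lambda_i}$ is $0$, and the $\Tad$-weight of $X_{-\beta}\cdot x_0$ (for $\beta \in \pr \setminus F^\perp$) is $\beta$. Since $\gamma - \alpha$ must match one of these and $\gamma \neq \alpha$ forces $\gamma - \alpha \neq 0$, we conclude $\gamma - \alpha = \beta$ for some positive root $\beta$, i.e. $\gamma - \alpha$ is a positive root, and moreover $\gamma - \alpha \notin F^\perp$.

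For part (3), assume $\alpha$ is simple and $\gamma - \alpha$ is a root; by part (1) combined with part (2) (or directly, since if $\gamma-\alpha$ is a root and positive, it lies in $\pr$; one must also handle $\gamma - \alpha \in F^\perp$ or the case $\gamma = \alpha$ separately, but in those cases the claim is easily seen) we want to show $X_\alpha v = z\, X_{-\gamma+\alpha} x_0$ for some scalar $z$. The point is that the $\Tad$-weight space of $\fg\cdot x_0$ of weight $\gamma - \alpha$ is at most one-dimensional: by Proposition~\ref{prop:elemVggfacts}(\ref{item:6}) the only basis vector of that $\Tad$-weight is $X_{-(\gamma-\alpha)}\cdot x_0$ (when $\gamma - \alpha \in \pr\setminus F^\perp$), since the $v_{\lambda_i}$ have $\Tad$-weight $0 \neq \gamma - \alpha$ and distinct positive roots give distinct $\Tad$-weights. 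Combining with the fact that $X_\alpha v \in \fg\cdot x_0$ (Proposition~\ref{prop:elemVggfacts}(\ref{item:7})) lies in that one-dimensional space, we get $X_\alpha v = z\, X_{-\gamma+\alpha}x_0$ for a unique $z \in \k$. The main obstacle is simply keeping the bookkeeping straight between the ordinary $T$-weights on $V$ and the twisted $\Tad$-weights of (\ref{eq:tadaction}), and correctly invoking (\ref{item:7}) of Proposition~\ref{prop:elemVggfacts} to ensure $X_\alpha v$ actually lands in $\fg\cdot x_0$ rather than merely having its class well-defined; once that is in place, everything reduces to reading off the explicit eigenbasis.
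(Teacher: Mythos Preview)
Your proposal is correct and follows essentially the same approach as the paper: both argue that $v$ cannot be a linear combination of the $v_{\lambda_i}$ (else $v\in\ft\cdot x_0\subset\fg\cdot x_0$, using linear independence of the $\lambda_i$), and both use that $X_\alpha\in\fg_{x_0}$ for $\alpha\in\sr$ (equivalently, Proposition~\ref{prop:elemVggfacts}(\ref{item:7})) to conclude that $X_\alpha v$ is a $\Tad$-eigenvector of weight $\gamma-\alpha$ lying in $\fg\cdot x_0$, whose weight spaces are read off from Proposition~\ref{prop:elemVggfacts}(\ref{item:6}). Your version is simply more explicit than the paper's terse one-sentence proof.
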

\begin{proof}
The vector $v$ cannot be a linear combination of the highest weight vectors $v_{\lambda_i}$,
otherwise (since the weights $\lambda_i$ are linearly independent) it would belong to $\ft\cdot \hwv\subset\fg\cdot \hwv$.
Moreover, since $X_\alpha\in\fg_{\hwv}$ for all $\alpha\in\sr$, 
$X_\alpha  v$ is a $\Tad$-eigenvector of weight $\gamma-\alpha$ in $\fg\cdot \hwv$. 
\end{proof}

We first deal with the case where $\gamma$ is a root. Notice that
since $\gamma \in \N\sr$, it is then a positive root. As is well
known, we then also have that $\supp(\gamma)$ is a connected subset of
the Dynkin diagram of $G$. 

\begin{lemma} \label{lem:tadposroots1simple}
If $\gamma$ is a root, which is not simple, then there exist at least two 
distinct simple roots $\alpha$ such that $\gamma-\alpha$ is a root.
\end{lemma}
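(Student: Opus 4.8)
The plan is to prove this as a purely combinatorial fact about roots in a root system, using the connectedness of $\supp(\gamma)$ that has just been recalled. Write $\gamma = \sum_{\alpha \in \supp(\gamma)} n_\alpha \alpha$ with all $n_\alpha \geq 1$, and let $\Pi = \supp(\gamma)$, a connected subdiagram. First I would recall the standard fact that for a root $\gamma$ with connected support $\Pi$, the element $\gamma - \alpha$ is a root precisely when $\alpha \in \Pi$ and removing $\alpha$ from the ``height-one level'' still leaves a root; more usefully, I would use the criterion via the pairing: if $\langle \gamma, \alpha^\vee \rangle > 0$ for some $\alpha \in \Pi$ and $\gamma \neq \alpha$, then $\gamma - \alpha$ is a root (reflection plus the fact that $\gamma - \alpha$ is nonzero and has support contained in $\Pi$, together with the property that in an irreducible root system the only multiples of a root $\alpha$ that are roots are $\pm\alpha$, so $s_\alpha \gamma = \gamma - \langle\gamma,\alpha^\vee\rangle\alpha$ being a root forces $\gamma - \alpha$ to be a root when $\langle\gamma,\alpha^\vee\rangle \geq 1$).

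So the core reduces to: there are at least two simple roots $\alpha \in \Pi$ with $\langle \gamma, \alpha^\vee \rangle > 0$. Suppose not, so that $\langle \gamma, \alpha^\vee \rangle \leq 0$ for all but at most one $\alpha \in \Pi$. Here is where the key step comes in: consider the restriction of the root system to $\Pi$ (i.e. the root subsystem generated by $\Pi$, which is irreducible since $\Pi$ is connected) and use that $\gamma$ is a nonzero positive element there. The standard argument is to pair $\gamma$ with itself: $0 < (\gamma,\gamma) = \sum_{\alpha \in \Pi} n_\alpha (\gamma, \alpha)$, and since each $n_\alpha > 0$ and $(\gamma,\alpha)$ is a positive multiple of $\langle\gamma,\alpha^\vee\rangle$, we need $\langle\gamma,\alpha^\vee\rangle > 0$ for at least one $\alpha \in \Pi$. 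To get a \emph{second} one, I would argue by contradiction: if $\alpha_0$ is the unique simple root in $\Pi$ with $\langle\gamma,\alpha_0^\vee\rangle > 0$ and $\langle\gamma,\alpha^\vee\rangle \le 0$ for all other $\alpha \in \Pi$, then write $\gamma = n_{\alpha_0}\alpha_0 + \gamma'$ where $\gamma'$ is supported on $\Pi \setminus \{\alpha_0\}$; pairing with $\alpha^\vee$ for $\alpha \in \Pi\setminus\{\alpha_0\}$ and using the connectedness (some such $\alpha$ is adjacent to $\alpha_0$, contributing a strictly negative term $n_{\alpha_0}\langle\alpha_0,\alpha^\vee\rangle < 0$) one derives that $\langle\gamma',\alpha^\vee\rangle > 0$ for at least one $\alpha \ne \alpha_0$ in $\Pi$, hence $\langle\gamma,\alpha^\vee\rangle = n_{\alpha_0}\langle\alpha_0,\alpha^\vee\rangle + \langle\gamma',\alpha^\vee\rangle$ could still be $\le 0$ — so this naive estimate is not quite enough and needs refinement.

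The cleaner route, which I would actually adopt, is: the set $\{\alpha \in \Pi : \gamma - \alpha \text{ is a root}\}$ is nonempty by Lemma~\ref{lem:simpluspos}(\ref{item:16}) combined with (\ref{item:15}) in the present context, but more robustly it is a standard fact that for \emph{any} root $\gamma$ that is not simple, the ``lowering set'' $\{\alpha \in \sr : \gamma - \alpha \in \pr\}$ has at least two elements. I would prove this by downward induction on the height of $\gamma$: pick one simple $\alpha$ with $\gamma - \alpha \in \pr$ (exists since $(\gamma,\gamma) > 0$ forces $(\gamma,\alpha) > 0$ for some simple $\alpha$, and then $\gamma \ne \alpha$ gives $\gamma - \alpha \in \pr$); if $\gamma - \alpha$ is simple, then $\gamma = \alpha + \beta$ with $\alpha,\beta$ simple and one checks directly (two-node diagram) that both $\gamma - \alpha = \beta$ and $\gamma - \beta = \alpha$ are roots, giving two; if $\gamma - \alpha$ is not simple, apply induction to get two simple roots $\beta_1, \beta_2$ with $(\gamma - \alpha) - \beta_i \in \pr$, and then show at least one $\beta_i$ is $\ne \alpha$ and satisfies $\gamma - \beta_i \in \pr$ — here one uses that if $\gamma - \alpha - \beta \in \pr$ then $\gamma - \beta \in \pr$ too (since $\gamma - \beta = (\gamma - \alpha - \beta) + \alpha$ and adding a simple root to a positive root either gives a positive root or fails by a pairing condition that one checks does not occur because $\gamma$ itself is a root).

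The main obstacle I anticipate is the bookkeeping in the last inductive step: handling the case where one of the two lowering directions $\beta_i$ for $\gamma - \alpha$ happens to equal $\alpha$, and ensuring the ``add $\alpha$ back'' operation genuinely lands in $\pr$ rather than producing something non-positive or non-root. This requires a short case analysis using $\langle \gamma, \alpha^\vee\rangle \le 3$ (at most, and $\le 2$ unless type $G_2$) and the structure of rank-two subsystems, so I would isolate it as a small auxiliary observation: for a root $\gamma$ and simple roots $\alpha \ne \beta$, if $\gamma - \alpha \in \pr$ and $\gamma - \alpha - \beta \in \pr$, then $\gamma - \beta \in \pr$. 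With that lemma in hand the induction closes cleanly and yields the two distinct simple roots $\alpha$ with $\gamma - \alpha$ a root, which is exactly the statement.
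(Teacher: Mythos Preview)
Your approach has a fundamental gap: you are trying to prove this as a purely combinatorial statement about root systems, but as such the statement is \emph{false}. For example, in type $\mathsf{B}_2$ the highest root $\gamma=\alpha_1+2\alpha_2$ has $\gamma-\alpha_2=\alpha_1+\alpha_2$ a root, while $\gamma-\alpha_1=2\alpha_2$ is not a root; so only one simple root can be subtracted. (Your proposed auxiliary lemma fails on exactly this example: with $\alpha=\alpha_2$, $\beta=\alpha_1$ one has $\gamma-\alpha$ and $\gamma-\alpha-\beta$ roots but $\gamma-\beta$ not.) The paper in fact \emph{uses} Lemma~\ref{lem:tadposroots1simple} later to exclude precisely such highest roots from being $\Tad$-weights in $\Vgg$.

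What you have overlooked is the standing hypothesis, declared just before Lemma~\ref{lem:simpluspos}: $\gamma$ is a $\Tad$-weight in $\Vgg$ with a nonzero eigenvector $[v]$. The paper's proof exploits this representation-theoretic data. Assuming only one simple root $\alpha$ has $\gamma-\alpha$ a root, one uses Lemma~\ref{lem:simpluspos}(3) to write $X_\alpha v = z\,X_{-\gamma+\alpha}\hwv$; since $[X_\alpha,X_{-\gamma}]=z'X_{-\gamma+\alpha}$ with $z'\neq 0$, replacing $v$ by $v+(z/z')X_{-\gamma}\hwv$ (same class in $\Vg$) kills $X_\alpha v$. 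For every other simple root $\alpha'$ one has $X_{\alpha'}v=0$ automatically because $\gamma-\alpha'$ is not a root. This forces $X_{\alpha'}v=0$ for all simple $\alpha'$, contradicting Lemma~\ref{lem:simpluspos}(\ref{item:16}). The argument is therefore essentially about the $G_{\hwv}$-invariance of $[v]$, not about root combinatorics alone.
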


\begin{proof}
Assume that there exists only one simple root $\alpha$ such that $\gamma-\alpha$ is a root.
By Lemma~\ref{lem:simpluspos}, there exists $z \in \k$ such that 
$X_{\alpha}   v=z\,X_{-\gamma+\alpha} \hwv$. 
Moreover, there exists $z'\in\k^\times$ such that $[X_\alpha,X_{-\gamma}]=z'\,X_{-\gamma+\alpha}$.
Therefore, if we put $z''=z/z'$ then $X_\alpha (v+z''\, X_{-\gamma} \hwv)=0$. 
Since $[v] = [v+z''\, X_{-\gamma} \hwv]$ in $\Vg$ we
can assume that $X_{\alpha}   v = 0$. Since $\gamma - \alpha'$ is
not a positive root for all $\alpha' \in \sr\setminus \{\alpha\}$, it then
follows that $X_\alpha  v=0$ for all $\alpha\in\sr$, which contradicts Lemma~\ref{lem:simpluspos}(\ref{item:16}). 
\end{proof}

\begin{proposition}\label{prop:highestshortroot}
If $\gamma$ is a root, of which the support is not of type $\ssG_2$, then it is a locally dominant short root, 
i.e.\ the dominant short root in the root subsystem generated by the simple roots of its support.
\end{proposition}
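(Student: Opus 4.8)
The plan is to show that, for a root $\gamma$ whose support is not of type $\ssG_2$, the set $\{\alpha \in \sr \colon \gamma - \alpha \text{ is a root}\}$ consists precisely of those simple roots $\alpha$ in $\supp(\gamma)$ for which $\langle \alpha^\vee, \gamma\rangle > 0$, and then to combine this with the structural constraints coming from $[v] \in \Vgg$. First I would observe that $\gamma \in \N\sr \cap \Z\wm$ with connected support $J := \supp(\gamma)$, and it suffices to argue inside the root subsystem $R_J$ generated by $J$, since $\gamma - \alpha$ being a root only involves $R_J$. In a reduced irreducible root system, a positive root $\theta$ satisfies: $\theta - \alpha$ is a root for a simple root $\alpha$ if and only if $\langle \alpha^\vee, \theta\rangle > 0$ (using that $\theta$ is not simple, so $\theta$ is not minimal in its $\alpha$-string for some $\alpha$, and the standard fact that $\theta - \alpha \in R$ iff the $\alpha$-string through $\theta$ reaches below $\theta$, controlled by the pairing). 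So the simple roots $\alpha$ that can appear with $X_\alpha v \neq 0$, by Lemma~\ref{lem:simpluspos}(2), are exactly those in $J$ with $\langle \alpha^\vee, \gamma\rangle > 0$.

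Next I would bring in the condition $[v] \in \Vgg$. By Proposition~\ref{prop:elemVggfacts}(\ref{item:7}), for \emph{every} $\beta \in \sr$ we need $X_\beta v \in \fg \cdot x_0$, and by Lemma~\ref{lem:simpluspos}(3) when $\gamma - \alpha$ is a root this forces $X_\alpha v = z_\alpha X_{-\gamma + \alpha} x_0$ for scalars $z_\alpha$. The key point is that these scalars cannot all be made to vanish simultaneously by adjusting $v$ within its class in $\Vg$: subtracting a multiple of $X_{-\gamma} x_0$ changes $X_\alpha v$ by a multiple of $[X_\alpha, X_{-\gamma}] = (\text{const}) X_{-\gamma+\alpha}$, so we can kill at most one $z_\alpha$ — and Lemma~\ref{lem:tadposroots1simple} already tells us at least two distinct simple roots $\alpha$ have $\gamma - \alpha \in R$. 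So there are at least two simple roots in $J$ with $\langle \alpha^\vee, \gamma \rangle > 0$. Combined with $\gamma \in \Z\wm \subset \Z\dw$, hence $\langle \alpha^\vee, \gamma\rangle \geq 0$ is false in general but $\langle \alpha^\vee, \gamma\rangle$ can be negative for $\alpha \notin \supp\gamma$; restricting to $J$, I want to pin down that $\gamma$ is dominant with respect to $R_J$.

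Then I would run the following dichotomy/classification argument. Write $\gamma = \sum_{\alpha \in J} c_\alpha \alpha$ with all $c_\alpha > 0$. For each $\alpha \in J$, either $\gamma - \alpha$ is a root (equivalently $\langle \alpha^\vee, \gamma\rangle > 0$), or $\langle \alpha^\vee, \gamma\rangle \leq 0$. I claim that in fact $\langle \alpha^\vee, \gamma \rangle \geq 0$ for all $\alpha \in J$: a positive root of $R_J$ with support all of $J$ that has a \emph{negative} pairing with some $\alpha \in J$ would have $\gamma + \alpha$ a root, but iterating this and using that $J$ is the full (connected) support, one shows $\gamma$ cannot be a positive root at all unless it is $\gamma$-dominant, i.e.\ all pairings $\geq 0$ — here I would use the standard fact that the highest root is the unique dominant positive root in an irreducible system and carefully reduce to it (the only positive roots that are dominant are the highest root, for the short coroots the highest short root; a positive non-dominant root $\theta$ always has $\theta + \alpha$ a root for the simple $\alpha$ with $\langle \alpha^\vee, \theta\rangle < 0$, contradicting maximality after finitely many steps only if we also know $\theta$ is as large as possible — so really the clean statement is: $\gamma$ being a root with at least two simple roots lowering it forces $\gamma$ to be the dominant short root, the locally dominant short root of $R_J$, once we rule out $\ssG_2$). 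The role of excluding $\ssG_2$ is exactly that in $\ssG_2$ the highest root and highest short root both have this "at least two lowerings" property and one cannot distinguish them this way; in all other types the locally dominant short root is forced. The main obstacle I anticipate is the last step: converting "at least two distinct simple roots lower $\gamma$, and $\gamma$ has full connected support $J$" into "$\gamma$ is the highest short root of $R_J$" cleanly, without a type-by-type check — I expect the paper either does a short case analysis over the types in Table~\ref{table:spherical_roots} (since $\gamma \in \Sigma^{sc}(G)$ is already quite restricted by Theorem~\ref{thm:vggweights}), or invokes a characterization of the highest short root as the unique positive root $\theta$ with $\langle \alpha^\vee, \theta \rangle \geq 0$ for all simple $\alpha$ and $\theta$ short, together with the observation that $\langle \alpha^\vee, \gamma \rangle \in \{0,1\}$ forced by the augmentation-type constraints already available.
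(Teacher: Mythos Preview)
Your proposal has several genuine gaps.

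First, the claimed equivalence ``$\theta - \alpha$ is a root if and only if $\langle \alpha^\vee, \theta\rangle > 0$'' is false. In type $\ssB_2$ with $\gamma = \alpha_1 + \alpha_2$ (the short dominant root), one has $\langle \alpha_2^\vee, \gamma\rangle = 0$ yet $\gamma - \alpha_2 = \alpha_1$ is a root. The correct one-sided statement, which the paper uses, is only that if $\gamma - \alpha$ is a root and the support is not of type $\ssG_2$, then $\langle \alpha^\vee, \gamma\rangle \geq 0$; this follows from bounding root-string lengths by $3$.

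Second, and more seriously, your assertion that subtracting a multiple of $X_{-\gamma}\hwv$ from $v$ can ``kill at most one $z_\alpha$'' is exactly where the paper's real work lies, and the paper shows the opposite. Parts I, II and V of the paper's proof establish that, under suitable hypotheses (orthogonality of the relevant simple roots, or a more delicate non-orthogonal configuration, together with the existence of some $\lambda \in F$ not orthogonal to an appropriate root), the scalars $z'_j/z_j$ coincide, so a \emph{single} subtraction kills \emph{all} the $X_{\alpha_j} v$ simultaneously. This is what produces the contradiction with Lemma~\ref{lem:simpluspos}(\ref{item:16}) when $\gamma$ is not locally dominant. Without this, you cannot extract local dominance from Lemma~\ref{lem:tadposroots1simple} alone, as you effectively acknowledge.

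Third, your suggestion at the end to invoke Theorem~\ref{thm:vggweights} is circular: Proposition~\ref{prop:highestshortroot} is used (via Corollary~\ref{cor:Vggweightposroot}) in the proof of that theorem.

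Finally, the argument is not a proof but a plan with an admitted obstacle; the paper's route is a structured case analysis (Parts I--V) combining explicit commutator computations with the representation-theoretic constraint that certain coroot combinations vanish on $F$, followed by ruling out the highest long root in each non-simply-laced type via Lemma~\ref{lem:tadposroots1simple}.
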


\begin{proof}
{\bf I}. Let $\alpha_1$ and $\alpha_2$ be two orthogonal simple roots such that 
$\gamma-\alpha_1$ and $\gamma-\alpha_2$ are roots. 
Notice that $\gamma-\alpha_1-\alpha_2$ is also a root. 
We claim that if there exists $\lambda\in F$ not orthogonal to
$\gamma-\alpha_1-\alpha_2$, then
we can assume 
\begin{equation}\label{eqn:simultaneous1}
X_{\alpha_1}   v = X_{\alpha_2}   v =0.
\end{equation}
Indeed, there exist $z_1, z_2 \in \k^\times$ such that
\begin{align*}
[X_{\alpha_1},X_{-\gamma}]=z_1\,X_{-\gamma+\alpha_1}; \\
[X_{\alpha_2},X_{-\gamma}]=z_2\,X_{-\gamma+\alpha_2}.
\end{align*}
Moreover, using the Jacobi identity and the fact that $[X_{\alpha_1},
X_{\alpha_2}]=0$ one finds that
\[
[X_{\alpha_2}, X_{-\gamma+\alpha_1}] = \frac{z_2}{z_1}[X_{\alpha_1}, X_{-\gamma+\alpha_2}]. 
\]
By Lemma~\ref{lem:simpluspos}(3), there exist $z'_1, z'_2 \in \k$ such that
\begin{align*}
X_{\alpha_1}  v=z'_1\,X_{-\gamma+\alpha_1}  \hwv; \\
X_{\alpha_2}  v=z'_2\,X_{-\gamma+\alpha_2}  \hwv.
\end{align*} 
Since 
$X_{\alpha_2}  X_{\alpha_1}  v=X_{\alpha_1}  X_{\alpha_2}  v$ we
obtain that   
\[(\frac{z_2}{z_1}\, z'_1  - z'_2)  [X_{\alpha_1}, X_{-\gamma+\alpha_2}]  \hwv = 0.\]
Using that there exists $\lambda\in F$ not orthogonal to the root $\gamma-\alpha_1-\alpha_2$ 
it follows that $\frac{z_2}{z_1}\, z'_1  - z'_2 = 0$, that is
\[\frac{z'_1}{z_1} = \frac{z'_2}{z_2}.\]
This implies that by replacing $v$ by $v-\frac{z'_1}{z_1}
X_{-\gamma}  \hwv = v-\frac{z'_2}{z_2} X_{-\gamma}  \hwv$, we can
assume (\ref{eqn:simultaneous1}).   

{\bf II}. The same can be done if we have $\alpha_1,\alpha_2,\ldots,\alpha_k$ simple roots 
with $\alpha_j$ orthogonal to $\alpha_{j+1}$ for all $j\in \{1,2,\ldots,k-1\}$ 
and such that $\gamma-\alpha_j$ is a root for all $j\in \{1,2,\ldots,k\}$.
More precisely, we claim that if there exists $\lambda\in F$ not
orthogonal to $\gamma-\alpha_1-\ldots-\alpha_k$, then
we can assume that for all $j\leq k$
\begin{equation}\label{eqn:simultaneous2}
X_{\alpha_j}   v =0.
\end{equation}
Indeed, for every $j\in \{1,2,\ldots, k\}$ there exists, as in part I,
$z_j\in \k^\times$ and $z'_j \in \k$ such that
$[X_{\alpha_j},X_{-\gamma}]=z_j\,X_{-\gamma+\alpha_j}$ and $X_{\alpha_j}  v=z'_j\,X_{-\gamma+\alpha_j}  \hwv$. 
Let $\lambda$ be an element of $F$ that is not orthogonal to
$\gamma-\alpha_1-\ldots-\alpha_k$. 
Then $\lambda$ is not orthogonal to $\gamma-\alpha_j-\alpha_{j+1}$ for
all $j\in \{1,2,\ldots,k-1\}$. By applying part I $(k-1)$ times to the
pairs $\alpha_j$, $\alpha_{j+1}$ we obtain that
\[\frac{z'_1}{z_1} = \frac{z'_2}{z_2} = \ldots = \frac{z'_k}{z_k}.\]
This implies that by replacing $v$ by $v-\frac{z'_1}{z_1}
X_{-\gamma}  \hwv $, we can assume (\ref{eqn:simultaneous2}).   

{\bf III}. 
Assume that there exist more than two simple roots, say $\alpha_1,\ldots,\alpha_k$, 
such that $\gamma-\alpha_j$ is a root for all $j\in\{1,2,\ldots,k\}$. We claim that they can be reordered such that  
$\alpha_j$ is orthogonal to $\alpha_{j+1}$ for all $j<k$ as in part II.

This can be verified by making use of the classification of root
systems, checking case-by-case all the positive roots, 
noticing along the way (although we will not need this) that $k$ is at most $3$. This is straightforward for the classical types. 
To avoid the large number of case-by-case checkings in the exceptional types $\ssE_6$, $\ssE_7$, $\ssE_8$ and $\ssF_4$ 
one can use for example the following argument.
If it were not possible to reorder the simple roots $\alpha_1,\ldots,\alpha_k$ as required, 
then there would exist three roots among them, 
say $\alpha_{j_1},\alpha_{j_2},\alpha_{j_3}$, such that $\alpha_{j_2}$ is not orthogonal to both 
$\alpha_{j_1}$ and $\alpha_{j_3}$. We will now show that this is
impossible for each exceptional type using well-known
properties of root systems of rank $2$ and $3$. Notice, in particular,
that if the support of $\gamma$ is not of type $\ssG_2$ and if
$\gamma-\alpha$ is a root for some simple root $\alpha$, then \[\<\alpha^\vee,\gamma\>\geq0\] since
otherwise there would exist a root string of length greater than $3$.

In types $\ssE_6$, $\ssE_7$ and $\ssE_8$ all the roots have the same length so we would necessarily have 
$\<(\alpha_{j_m})^\vee,\gamma\>=1$ for $m \in \{1,2,3\}$,  
but this is absurd since it would mean that $\<(\alpha_{j_1}+\alpha_{j_2}+\alpha_{j_3})^\vee,\gamma\>=3$.
In type $\ssF_4$ the three simple roots would generate a root subsytem of type $\ssB_3$ or of type $\ssC_3$.
In the former case (type $\ssB_3$) we would necessarily have
$\<(\alpha_{j_1})^\vee,\gamma\>=\<(\alpha_{j_2})^\vee,\gamma\>=1$ assuming $\alpha_{j_1}$ and $\alpha_{j_2}$ are long,
but this is absurd since it would mean $\<(\alpha_{j_1}+\alpha_{j_2}+\alpha_{j_3})^\vee,\gamma\>\geq4$.
In the latter case (type $\ssC_3$) we would necessarily have
$\<(\alpha_{j_1})^\vee,\gamma\>=1$ assuming $\alpha_{j_1}$ is long. 
If $\<(\alpha_{j_3})^\vee,\gamma\>$ is positive, then
$\<(\alpha_{j_1}+\alpha_{j_2}+\alpha_{j_3})^\vee,\gamma\>$ is greater
than $2$, which is not possible in type $\ssF_4$. If
$\<(\alpha_{j_3})^\vee,\gamma\>=0$, then $\gamma + \alpha_{j_3}$ is a
root, and $\<(\alpha_{j_1}+\alpha_{j_2}+\alpha_{j_3})^\vee,\gamma+\alpha_3\>$ is greater than
$2$, which is again absurd.

{\bf IV}.
We now want to prove that $\gamma$ is locally dominant (if the support of $\gamma$ is not of type $\ssG_2$).
The fact that $\gamma$ is locally short then follows.
Indeed, if the support of $\gamma$ is not simply laced, then the
highest root in the root system generated by that support 
does not satisfy Lemma~\ref{lem:tadposroots1simple}:
\begin{itemize}
\item[-] in type $\ssB_n$, $n\geq2$, the highest root is $\alpha_1+2(\alpha_2+\ldots+\alpha_n)=\omega_2$;
\item[-] in type $\ssC_n$, $n\geq3$, the highest root is $2(\alpha_1+\ldots+\alpha_{n-1})+\alpha_n=2\omega_1$;
\item[-] in type $\ssF_4$ the highest root is $2\alpha_1+3\alpha_2+4\alpha_3+2\alpha_4=\omega_1$.
\end{itemize}

To obtain a condradiction we assume that $\gamma$ is not locally
dominant, that is, we assume that there exists $\beta \in
\supp(\gamma)$ such that $\<\beta^{\vee}, \gamma\> <0$. Recall from part III that in type different from $\ssG_2$ if
$\gamma-\alpha$ is a root for a simple root $\alpha$, then $\<\alpha^\vee,\gamma\>\geq0$.

Suppose first that there are exactly $k>2$ simple roots, say $\alpha_1,\ldots,\alpha_k$, 
such that $\gamma-\alpha_j$ is a root for all $j\leq k$. From the
assumption that $\gamma$ is not locally dominant, it follows
that there exists $\lambda\in F$ not orthogonal to
$\gamma-\alpha_1-\ldots-\alpha_k$. By parts II and III we can then assume that $X_{\alpha_j}v=0$ for all $j\leq k$. 
This contradicts Lemma~\ref{lem:simpluspos}(1).

If there are exactly two simple roots $\alpha_1$ and $\alpha_2$ such that
$\gamma-\alpha_1$ and $\gamma-\alpha_2$ are roots, and $\alpha_1$ and $\alpha_2$ are orthogonal, 
then by part I we get the same contradiction with Lemma~\ref{lem:simpluspos}(1).

Furthermore, if the support of $\gamma$ has cardinality $\leq 2$, 
then the proposition follows by Lemma~\ref{lem:tadposroots1simple}.
Indeed, the only roots with support of cardinality $\leq2$ satisfying Lemma~\ref{lem:tadposroots1simple} are:
\begin{itemize}
\item[-] with support of type $\ssA_1$, $\alpha_1$,
\item[-] with support of type $\ssA_2$, $\alpha_1+\alpha_2$,
\item[-] with support of type $\ssB_2$, $\alpha_1+\alpha_2$. 
\end{itemize}

Therefore, we now restrict to the case of support of $\gamma$ of cardinality $>2$,
and assume that there are only two simple roots $\alpha_1$ and $\alpha_2$, such that
$\gamma-\alpha_1$ and $\gamma-\alpha_2$ are roots, and that $\alpha_1$
and $\alpha_2$ are not orthogonal. Notice that $\alpha_1+\alpha_2$ is
a root. Up to exchanging $\alpha_1$ and $\alpha_2$ 
we can assume that 
\begin{equation}\label{eqn:typeclike}
\<\alpha_2^\vee,\gamma\>>0\mbox{ and }\alpha_1+2\alpha_2\notin\rs.
\end{equation}
Indeed, at least one of the two $\< \alpha_1^\vee,\gamma\>$ and  $\< \alpha_2^\vee,\gamma\>$ must be positive
(otherwise $\gamma$ would be antidominant),
and not both $2\alpha_1+\alpha_2$ and $\alpha_1+2\alpha_2$ can be roots. 
If say $2\alpha_1+\alpha_2$ is a root,
then $\|\alpha_1\|<\|\alpha_2\|$, hence $\alpha_2$ is long and therefore $\<\alpha_2^\vee,\gamma\>$ must be $>0$.

Under (\ref{eqn:typeclike}) we have 
\[\<\alpha_2^\vee,\gamma-\alpha_1\>\geq1+1\]
hence  $\gamma-\alpha_1-\alpha_2$ and $\gamma-\alpha_1-2\alpha_2$ are
roots. Since $\gamma$ is not locally dominant, there is an
  element $\lambda$ of $F$ such that $\<(\gamma-\alpha_1-2\alpha_2)^\vee,
  \lambda\>\neq 0$. 

To conclude the proof of the proposition, we use once again an argument similar to that of part
I. Indeed, we will show in part V that we can assume that
$X_{\alpha_1} v = X_{\alpha_2} v = 0$, which contradicts
Lemma~\ref{lem:simpluspos}(\ref{item:16}). 

{\bf V}. We finish by proving the following claim: if $\alpha_1$ and
$\alpha_2$ are simple roots such that
\begin{itemize}
\item[-] $\alpha_1 + 2\alpha_2$ is not a root;
\item[-] $\gamma-\alpha_1$, $\gamma-\alpha_2$,
  $\gamma-\alpha_1-\alpha_2$, and $\gamma-\alpha_1-2\alpha_2$ are
  roots; and
\item[-] $\<(\gamma-\alpha_1-2\alpha_2)^\vee,
  \lambda\>\neq 0$ for some $\lambda \in F$; then
\end{itemize}
we can assume that $X_{\alpha_1} v = X_{\alpha_2} v = 0$.

Since $\alpha_1 + 2\alpha_2$ is not a root we have that
\([X_{\alpha_2}, X_{\alpha_1+\alpha_2}] = 0. \)  
By the third assumption of the claim, 
\begin{equation}
X_{-(\gamma-\alpha_1-2\alpha_2)} \hwv \neq 0. \label{eq:contr.1}
\end{equation}
We first show that we can assume that 
\begin{equation} \label{eq:1.1}
X_{\alpha_2} v =X_{\alpha_1+\alpha_2}v = 0. 
\end{equation}
There exist $z'_1,z'_2 \in \k$ such that
\begin{align*}
X_{\alpha_2}v &= z'_1 X_{-(\gamma-\alpha_2)} \hwv; \\
X_{\alpha_1+\alpha_2}v &= z'_2 X_{-(\gamma-\alpha_1-\alpha_2)} \hwv.
\end{align*}
Next, there exist $z_1,z_2 \in \k^\times$ such that
\begin{align*}
[X_{\alpha_2},X_{-\gamma}] &= z_1X_{-(\gamma-\alpha_2)};\\
[X_{\alpha_1+\alpha_2},X_{-\gamma}] &= z_2X_{-(\gamma-\alpha_1-\alpha_2)}.
\end{align*}
As in part I, one deduces from $X_{\alpha_2} X_{\alpha_1+\alpha_2} v =
X_{\alpha_1+\alpha_2}  X_{\alpha_2} v$
that
\[(\frac{z_2}{z_1}\,
z'_1-z'_2)[X_{\alpha_2},X_{-(\gamma-\alpha_1-\alpha_2)}]\hwv = 0.\]
Using
 (\ref{eq:contr.1}), it follows that 
\begin{equation}
\frac{z'_1}{z_1} = \frac{z'_2}{z_2}. 
\end{equation}
Hence, if we replace $v$ by $v-\frac{z'_1}{z_1}
X_{-\gamma} \hwv = v-\frac{z'_2}{z_2}
X_{-\gamma} \hwv $, then equations (\ref{eq:1.1}) hold.  

We now complete the proof by showing that (\ref{eq:1.1}) implies that 
\begin{equation}
X_{\alpha_1} v =0. \label{eq:6.1}
\end{equation}
There exists $z \in \k$ such that $X_{\alpha_1}v = z
X_{-(\gamma-\alpha_1)} \hwv$. From (\ref{eq:1.1}) we have that
\begin{equation*}
0 = X_{\alpha_1+ \alpha_2} v = X_{\alpha_2} X_{\alpha_1} v
= z X_{\alpha_2} X_{-(\gamma-\alpha_1)} \hwv
= z X_{-(\gamma - \alpha_1-\alpha_2)} \hwv,
\end{equation*}
where the second equality uses that $X_{\alpha_2} v =0$ and the fourth
one uses that $X_{\alpha_2} \hwv = 0$. Since equation
(\ref{eq:contr.1}) implies that $X_{\alpha_2} X_{-(\gamma -
  \alpha_1-\alpha_2)} \hwv \neq 0$, we have that
$X_{-(\gamma - \alpha_1-\alpha_2)} \hwv \neq 0$, and therefore that $z=0$
which proves equation (\ref{eq:6.1}), the claim at the start of part V and the proposition.
\end{proof}

The following is Theorem~\ref{thm:vggweights} for the case that
$\gamma$ is a root. 
\begin{corollary} \label{cor:Vggweightposroot}
Let $\gamma$ be a $\Tad$-weight in $\Vgg$. If $\gamma$ is a root, 
then $\gamma$ is a spherically closed spherical root of $G$. 
\end{corollary}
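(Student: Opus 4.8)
The plan is to show that when $\gamma$ is a root occurring as a $\Tad$-weight in $\Vgg$, it must in fact be one of the roots listed in Table~\ref{table:spherical_roots}, using the structural results just proved about such $\gamma$. We already know from the discussion preceding Lemma~\ref{lem:simpluspos} that $\gamma \in \N\sr$, so $\gamma$ is a positive root, and that $\supp(\gamma)$ is connected. Proposition~\ref{prop:highestshortroot} tells us that, provided $\supp(\gamma)$ is not of type $\ssG_2$, $\gamma$ is the locally dominant short root of the root subsystem generated by $\supp(\gamma)$. So the strategy splits cleanly into two cases: the $\ssG_2$ case, handled by hand, and the remaining case, handled by running through the connected Dynkin diagrams and identifying the dominant short root of each.

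First I would dispose of the case $\supp(\gamma)$ of type $\ssG_2$. Here Lemma~\ref{lem:tadposroots1simple} forces that if $\gamma$ is not simple, then at least two simple roots $\alpha$ satisfy $\gamma - \alpha \in \rs$; a direct inspection of the six positive roots of $\ssG_2$ shows that the only positive roots meeting this requirement are $\alpha_1 + \alpha_2$ (a short root) and $\gamma$ simple (but a simple root has support of cardinality $1$, not of type $\ssG_2$). Wait—more carefully: among the positive roots of $\ssG_2$ with full support, namely $\alpha_1+\alpha_2$, $2\alpha_1+\alpha_2$, $3\alpha_1+\alpha_2$, $3\alpha_1+2\alpha_2$, one checks which have $\gamma-\alpha$ a root for two distinct simple $\alpha$: this singles out $\alpha_1+\alpha_2$ and $3\alpha_1+2\alpha_2 = $ the highest root. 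But $3\alpha_1 + 2\alpha_2$ is not of the form listed in Table~\ref{table:spherical_roots} for $\ssG_2$—the listed ones are $4\alpha_1 + 2\alpha_2$ and $\alpha_1 + \alpha_2$—so one must further exclude $3\alpha_1+2\alpha_2$ using Lemma~\ref{lem:simpluspos}(3) and an argument in the spirit of Lemma~\ref{lem:tadposroots1simple} (simultaneously killing both $X_{\alpha_i}v$ to contradict Lemma~\ref{lem:simpluspos}(\ref{item:16})), leaving only $\alpha_1+\alpha_2$, which does appear in the table. The remaining possibility, $\gamma$ simple (support of type $\ssA_1$), appears in the table as $\alpha$.

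For the non-$\ssG_2$ case, by Proposition~\ref{prop:highestshortroot} $\gamma$ is the dominant short root of the irreducible root system $\Phi_{\supp(\gamma)}$, so I would simply enumerate: type $\ssA_n$ gives $\alpha_1 + \cdots + \alpha_n$ (the highest root, which is short since $\ssA_n$ is simply laced); type $\ssB_n$ ($n\ge 2$) gives $\alpha_1 + \cdots + \alpha_n$; type $\ssC_n$ ($n\ge 3$) gives $\alpha_1 + 2\alpha_2 + \cdots + 2\alpha_{n-1} + \alpha_n$; type $\ssD_n$ ($n\ge 4$) gives $\alpha_1 + \cdots + \alpha_n$... no, wait—the dominant short root of $\ssD_n$ is the highest root $\alpha_1 + 2\alpha_2 + \cdots + 2\alpha_{n-2} + \alpha_{n-1} + \alpha_n$ since $\ssD_n$ is simply laced; type $\ssE_6, \ssE_7, \ssE_8, \ssF_4$ give their highest roots (for $\ssF_4$, the highest \emph{short} root is $\alpha_1 + 2\alpha_2 + 3\alpha_3 + 2\alpha_4$). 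Comparing each of these with Table~\ref{table:spherical_roots}, one finds every one of them is listed there—$\ssA_n$ ($n\ge 2$) as $\alpha_1+\ldots+\alpha_n$, $\ssB_n$ as $\alpha_1+\ldots+\alpha_n$, $\ssC_n$ as displayed, $\ssD_n$ as displayed, $\ssF_4$ as displayed—and the cases with $|\supp(\gamma)| = 1$ give a simple root, listed as $\alpha$. For $\ssE_6, \ssE_7, \ssE_8$ one must note that the highest root is \emph{not} in the table; but this is exactly why Proposition~\ref{prop:highestshortroot} is used in tandem with Lemma~\ref{lem:tadposroots1simple}: in simply laced types the dominant short root equals the highest root, and for $\ssE$ types the highest root fails Lemma~\ref{lem:tadposroots1simple} (it satisfies $\gamma - \alpha \in \rs$ for exactly one simple $\alpha$, namely the one attached to the affine node), so no $\gamma$ with $\ssE$-type support of full rank can occur—wait, that needs $|\supp(\gamma)|$ large; sub-diagrams of $\ssE$ of type $\ssA$, $\ssD$ are handled by those cases. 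Thus the conclusion is that $\gamma \in \Sigma^{sc}(G)$.

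\medskip

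\emph{Expected main obstacle.} The delicate point is not the classical-type enumeration, which is mechanical, but making sure the exceptional and $\ssG_2$ sub-cases are genuinely excluded: specifically, confirming that the dominant short roots that are \emph{not} in Table~\ref{table:spherical_roots} (the $\ssE$-type highest roots, and $3\alpha_1+2\alpha_2$ in $\ssG_2$) cannot actually arise, which requires invoking Lemma~\ref{lem:tadposroots1simple} (for the $\ssE$ types, whose highest roots have a unique simple $\alpha$ with $\gamma-\alpha$ a root) and a separate simultaneous-vanishing argument à la part V of Proposition~\ref{prop:highestshortroot} for the $\ssG_2$ root $3\alpha_1+2\alpha_2$. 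Organizing these exclusions so that no case is missed—while keeping the argument short—is where care is needed; everything else is bookkeeping against the table.
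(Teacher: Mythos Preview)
Your overall strategy matches the paper's: invoke Proposition~\ref{prop:highestshortroot} to reduce to dominant short roots (outside type $\ssG_2$), then compare with Table~\ref{table:spherical_roots}, using Lemma~\ref{lem:tadposroots1simple} to eliminate the leftovers. However, there is a concrete error in your bookkeeping.

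\textbf{The $\ssD_n$ case is wrong.} You write that the highest root of $\ssD_n$ is ``$\ssD_n$ as displayed'' in the table. It is not. The entry in Table~\ref{table:spherical_roots} for type $\ssD_n$ is
\[
2(\alpha_1+\ldots+\alpha_{n-2})+\alpha_{n-1}+\alpha_n \;=\; 2\alpha_1 + 2\alpha_2 + \cdots + 2\alpha_{n-2} + \alpha_{n-1} + \alpha_n,
\]
which is \emph{not a root} (it equals $2\omega_1$). The highest root of $\ssD_n$ is
\[
\alpha_1 + 2\alpha_2 + \cdots + 2\alpha_{n-2} + \alpha_{n-1} + \alpha_n \;=\; \omega_2,
\]
and this root does \emph{not} appear in the table. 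So your comparison fails here: the $\ssD_n$ highest root must be excluded, exactly as you (correctly) exclude the $\ssE$-type highest roots. The exclusion is the same: the $\ssD_n$ highest root $\omega_2$ has $\gamma-\alpha$ a root only for the single simple root $\alpha=\alpha_2$, so it fails Lemma~\ref{lem:tadposroots1simple}. The paper does precisely this, listing $\ssD_n$ alongside $\ssE_6,\ssE_7,\ssE_8$ as cases eliminated by that lemma.

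\textbf{The $\ssG_2$ case has a harmless slip.} You claim that the highest root $3\alpha_1+2\alpha_2$ satisfies the two-simple-roots condition of Lemma~\ref{lem:tadposroots1simple} and then propose an extra argument to kill it. In fact $3\alpha_1 + 2\alpha_2 - \alpha_1 = 2\alpha_1 + 2\alpha_2$ is not a root of $\ssG_2$, so only $\gamma - \alpha_2 = 3\alpha_1+\alpha_2$ is a root, and Lemma~\ref{lem:tadposroots1simple} already excludes it. No additional ``simultaneous-vanishing'' argument is needed; the paper simply observes that $\alpha_1+\alpha_2$ is the unique positive root of $\ssG_2$ passing Lemma~\ref{lem:tadposroots1simple}.

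Once you correct the $\ssD_n$ misidentification and drop the spurious $\ssG_2$ sub-case, your argument coincides with the paper's.
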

\begin{proof}
If the support of $\gamma$ is not of type $\ssG_2$, then
by Proposition~\ref{prop:highestshortroot} we have only to check the locally dominant short roots.
The following roots do not satisfy Lemma~\ref{lem:tadposroots1simple}.
\begin{itemize}
\item[-] With support of type $\ssD_n$, $n\geq4$: $\alpha_1+2(\alpha_2+\ldots+\alpha_{n-2})+\alpha_{n-1}+\alpha_n=\omega_2$.
\item[-] With support of type $\ssE_6$: $\alpha_1+2\alpha_2+2\alpha_3+3\alpha_4+2\alpha_5+\alpha_6=\omega_2$.
\item[-] With support of type $\ssE_7$: $2\alpha_1+2\alpha_2+3\alpha_3+4\alpha_4+3\alpha_5+2\alpha_6+\alpha_7=\omega_1$.
\item[-] With support of type $\ssE_8$: $2\alpha_1+3\alpha_2+4\alpha_3+6\alpha_4+5\alpha_5+4\alpha_6+3\alpha_7+2\alpha_8=\omega_8$.
\end{itemize}
Therefore, we are left with all spherically closed spherical roots.
\begin{itemize}
\item[-] With support of type $\ssA_n$, $n\geq1$:
  $\alpha_1+\ldots+\alpha_n
  $.
\item[-] With support of type $\ssB_n$, $n\geq2$:
  $\alpha_1+\ldots+\alpha_n
  $.
\item[-] With support of type $\ssC_n$, $n\geq3$:
  $\alpha_1+2(\alpha_2+\ldots+\alpha_{n-1})+\alpha_n
  $.
\item[-] With support of type $\ssF_4$:
  $\alpha_1+2\alpha_2+3\alpha_3+2\alpha_4
  $.
\end{itemize}
If the support of $\gamma$ is of type $\ssG_2$ the only positive root satisfying Lemma~\ref{lem:tadposroots1simple} is
$\alpha_1+\alpha_2$, which is a spherically closed spherical root.
\end{proof}

Let us now consider the case where $\gamma$ is not a root. 
In contrast to the root case, here we notice the following general fact.

\begin{proposition} \label{prop:onesimple}
Let $\alpha$ be a simple root and let $\beta$ be a non-simple positive root 
such that $\alpha+\beta$ is not a root. 
Then there exists no simple root $\alpha'\neq\alpha$
such that $(\alpha+\beta)-\alpha'$ is a root.
\end{proposition}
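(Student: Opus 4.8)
\emph{Proof plan.} The plan is to argue by contradiction: suppose there is a simple root $\alpha'\neq\alpha$ with $\mu:=(\alpha+\beta)-\alpha'$ a root, and write $\gamma:=\alpha+\beta$. We may assume the root system of $G$ is irreducible, since if $\alpha$ and $\beta$ lay in different simple factors then $\gamma-\alpha'$ would fail to be a root for \emph{every} simple $\alpha'\neq\alpha$ and there would be nothing to prove; otherwise we pass to the common factor. The first step is to read off numerical constraints from root strings. Since $\gamma=\beta+\alpha$ is not a root while $\beta$ is, the $\alpha$-string through $\beta$ does not extend upward, so $\langle\beta,\alpha^\vee\rangle\geq0$ and hence $\langle\gamma,\alpha^\vee\rangle\geq2$; likewise, since $\mu+\alpha'=\gamma$ is not a root, $\langle\mu,(\alpha')^\vee\rangle\geq0$ and $\langle\gamma,(\alpha')^\vee\rangle\geq2$. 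As $\alpha\neq\alpha'$ are simple roots, $\langle\alpha,(\alpha')^\vee\rangle\leq0$ and $\langle\alpha',\alpha^\vee\rangle\leq0$, whence
\[\langle\mu,\alpha^\vee\rangle=\langle\gamma,\alpha^\vee\rangle-\langle\alpha',\alpha^\vee\rangle\geq2
\qquad\text{and}\qquad
\langle\beta,(\alpha')^\vee\rangle=\langle\gamma,(\alpha')^\vee\rangle-\langle\alpha,(\alpha')^\vee\rangle\geq2.\]

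Next I would convert these inequalities into restrictions on the type of $G$. Both $\mu$ and $\beta$ are positive roots, $\mu\neq\alpha$ (otherwise $\beta=\gamma-\alpha=\alpha'$ would be simple) and $\beta\neq\alpha'$ (since $\beta$ is non-simple), so $\mu\neq\pm\alpha$ and $\beta\neq\pm\alpha'$. For roots $\xi\neq\pm\eta$ the product $\langle\xi,\eta^\vee\rangle\langle\eta,\xi^\vee\rangle$ lies in $\{0,1,2,3\}$, so $\langle\xi,\eta^\vee\rangle\geq2$ forces $\langle\eta,\xi^\vee\rangle=1$ and then $\|\xi\|^2/\|\eta\|^2=\langle\xi,\eta^\vee\rangle\geq2$; in particular the root system is not simply laced and $\eta$ is short. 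Applying this to the two displayed inequalities shows that $\alpha$ and $\alpha'$ are both short simple roots, so the root system is of type $\ssB_n$, $\ssC_n$, $\ssF_4$ or $\ssG_2$. Types $\ssB_n$ and $\ssG_2$ each have a single short simple root, forcing $\alpha=\alpha'$, a contradiction. In $\ssF_4$ the two short simple roots are adjacent, and two adjacent short simple roots satisfy $\langle\alpha,(\alpha')^\vee\rangle=-1$, so $\langle\beta,(\alpha')^\vee\rangle\geq3$, impossible since no pair of roots in $\ssF_4$ has Cartan integer $3$. The same observation rules out the possibility that $\alpha$ and $\alpha'$ are adjacent in type $\ssC_n$, so we are reduced to $\ssC_n$ with $n\geq4$ and $\{\alpha,\alpha'\}$ a pair of non-adjacent short simple roots.

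To finish, I would carry out an explicit computation in the standard model of $\ssC_n$ in $\R^n$, with simple roots $\alpha_i=e_i-e_{i+1}$ for $i<n$ and $\alpha_n=2e_n$. Writing $\alpha=e_i-e_{i+1}$ and $\alpha'=e_j-e_{j+1}$ with $|i-j|\geq2$, the only positive long root $\beta$ with $\langle\beta,(\alpha')^\vee\rangle=2$ is $\beta=2e_j$ (which is non-simple because $j<n$). Then $\gamma=e_i-e_{i+1}+2e_j$ has three nonzero coordinates, consistent with $\gamma$ not being a root, whereas $\mu=\gamma-\alpha'=e_i-e_{i+1}+e_j+e_{j+1}$ has four nonzero coordinates (the indices $i,i+1,j,j+1$ being distinct because $|i-j|\geq2$) and so is not a root — contradicting the choice of $\alpha'$. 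This completes the proof.

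The step I expect to be the main obstacle is the passage from the numerical inequalities $\langle\mu,\alpha^\vee\rangle\geq2$ and $\langle\beta,(\alpha')^\vee\rangle\geq2$ to an actual contradiction: these do not suffice on their own in the multiply-laced types, and one must combine them with the short list of short simple roots in each type together with an explicit description of the roots of $\ssC_n$. Alternatively, once the argument has been reduced to $\ssG_2$, $\ssF_4$ and $\ssC_n$, one may simply inspect the (small) set of roots involved.
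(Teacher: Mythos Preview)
Your argument is correct. The reduction to the irreducible case is fine; the root-string inequalities giving $\langle\mu,\alpha^\vee\rangle\geq2$ and $\langle\beta,(\alpha')^\vee\rangle\geq2$ are valid, and the passage from these to ``$\alpha$ and $\alpha'$ are both short'' is justified since $\mu\neq\pm\alpha$ and $\beta\neq\pm\alpha'$. The type-by-type elimination is clean: $\ssB_n$ and $\ssG_2$ die because they have only one short simple root; $\ssF_4$ and the adjacent case in $\ssC_n$ die because $\langle\alpha,(\alpha')^\vee\rangle=-1$ would push $\langle\beta,(\alpha')^\vee\rangle$ to $3$, which cannot happen when the length ratio is $\sqrt2$; and the remaining non-adjacent $\ssC_n$ case is killed by your explicit coordinate computation.

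Your route, however, is quite different from the paper's. The paper does not use the classification of root systems at all. Instead it first observes (as you do) that $\langle\alpha^\vee,\mu\rangle>0$, hence $\beta-\alpha'=\mu-\alpha$ is a root. It then restricts the adjoint representation to the Levi subalgebra associated with $\{\alpha,\alpha'\}$: since $\beta-\alpha'$ is a root, the three weights $\beta-\alpha'$, $\beta$, and $\mu=\alpha+\beta-\alpha'$ all lie in one irreducible summand, say of highest weight $\lambda$. From $\lambda-\beta\in\N\{\alpha,\alpha'\}$ and $\lambda-\mu\in\N\{\alpha,\alpha'\}$ one deduces $\lambda-(\alpha+\beta)\in\N\{\alpha,\alpha'\}$, and since $\alpha+\beta$ is dominant for this rank-two Levi (again by your same root-string inequalities), the saturation property of weight sets \cite[Lemma~13.4.B]{Hum72} forces $\alpha+\beta$ to be a weight of the adjoint representation, i.e.\ a root --- a contradiction. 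The paper's argument is type-free and shorter; yours is more elementary in that it avoids any appeal to representation theory, at the cost of invoking the classification and an explicit check in $\ssC_n$.
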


\begin{proof}
Assume that there exists a simple root $\alpha'\neq\alpha$ 
such that $\alpha+\beta -\alpha'$ is a root. Since $\beta- \alpha'$ is nonzero, it is a root. 
This follows from the fact that $\alpha + \beta$ is not a root, whence$\langle\alpha^\vee , \beta\rangle\geq0$, 
and so $\langle\alpha^\vee , \alpha+\beta - \alpha'\rangle>0$.
Finally, to deduce that $\alpha+\beta$ is a root (i.e.\ a contradiction), 
one can use for example a saturation argument (see \cite[Lemma~13.4.B]{Hum72}) as follows.

Restrict the adjoint representation to the Levi subalgebra associated with $\alpha$ and $\alpha'$. 
Since $\beta - \alpha'$ is a root, both $\beta$ and $\alpha+\beta - \alpha'$ occur as weights 
in the same irreducible summand, say of highest weight $\lambda$. 
From  $\<\alpha^\vee, \beta\> \geq 0$, we get that
$\<\alpha^\vee, \alpha + \beta\> >0$, and since $\alpha + \beta$ is
not a root, $\<(\alpha')^\vee, \alpha + \beta - \alpha'\> \geq 0$, and
so  $\<(\alpha')^\vee, \alpha + \beta\> >0$. Consequently,
$\alpha+\beta$ is dominant with respect to $\alpha$ and
$\alpha'$. Moreover $\lambda - \alpha-\beta$ is a sum of simple roots,
because $\lambda - \beta$ and $\lambda - (\alpha + \beta - \alpha')$
both belong to $\mathrm{span}_{\N}\{\alpha, \alpha'\}$. This implies that $\alpha+\beta$ is a root.
\end{proof}

Let $\gamma$ be a $\Tad$-weight in $\Vgg$ which is not a root. 
Until Proposition~\ref{prop:loc_highest_root}, 
\emph{we assume that $\gamma$ is not the sum of two orthogonal simple roots,}
so that we can speak of the unique simple root $\alpha$ such that $\gamma-\alpha$ is a root.

\begin{lemma}\label{lem:orthogonality}
Let $\alpha$ be the simple root such that $\gamma-\alpha$ is a root.
If $\gamma\neq2\alpha$ then $\alpha$ is orthogonal to $\gamma-\alpha$. 
\end{lemma}

\begin{proof}  
We can choose a basis of $\fg$ 
\[\{X_{\beta}:\beta\mbox{ root}\}\cup\{\alpha^\vee:\alpha\mbox{ simple root}\}\]
such that $[X_\beta,X_{-\beta}]=\beta^\vee$ for all positive roots $\beta$, and
then for all roots $\beta_1,\beta_2$ denote by $c_{\beta_1,\beta_2}$ the scalar such that $[X_{\beta_1},X_{\beta_2}]=c_{\beta_1,\beta_2}X_{\beta_1+\beta_2}$.
For example, a Chevalley basis does the job (see \cite[Theorem~25.2]{Hum72}).

Since $X_\alpha  v\neq0$, we can assume that $X_\alpha  v=X_{-\gamma+\alpha}\hwv$.
Assume also, to obtain a contradiction, that $\langle\alpha^\vee,\gamma-\alpha\rangle>0$. 
Hence $\gamma-2\alpha$ is a positive root. Since $\gamma$ is not a root, we have
that $X_{\gamma-\alpha}X_\alpha  v=X_\alpha X_{\gamma-\alpha}  v$. 
From the following identities
\begin{eqnarray*}
& X_{\gamma-\alpha}X_\alpha  v & = \frac1{c_{\gamma-2\alpha,\alpha}}[X_{\gamma-2\alpha},X_\alpha]X_\alpha  v=
\frac1{c_{\gamma-2\alpha,\alpha}}[X_{\gamma-2\alpha},X_\alpha]X_{-\gamma+\alpha}  \hwv=\\
& & = \frac1{c_{\gamma-2\alpha,\alpha}}\big(X_{\gamma-2\alpha}[X_\alpha,X_{-\gamma+\alpha}]
-X_\alpha[X_{\gamma-2\alpha},X_{-\gamma+\alpha}]\big)  \hwv=\\
& & = \frac{c_{\alpha,-\gamma+\alpha}}{c_{\gamma-2\alpha,\alpha}}[X_{\gamma-2\alpha},X_{-\gamma+2\alpha}]  \hwv
-\frac{c_{\gamma-2\alpha,-\gamma+\alpha}}{c_{\gamma-2\alpha,\alpha}}[X_\alpha, X_{-\alpha}]  \hwv
\end{eqnarray*}

\begin{eqnarray*}
& X_\alpha X_{\gamma-\alpha}  v & = \frac1{c_{\gamma-2\alpha,\alpha}}X_\alpha[X_{\gamma-2\alpha},X_\alpha]  v=
\frac1{c_{\gamma-2\alpha,\alpha}}X_\alpha[X_{\gamma-2\alpha},X_{-\gamma+\alpha}]  \hwv=\\
& & =\frac{c_{\gamma-2\alpha,-\gamma+\alpha}}{c_{\gamma-2\alpha,\alpha}}[X_\alpha,X_{-\alpha}] \hwv
\end{eqnarray*}
it then follows that 
\begin{equation}\label{eqn:coroot1}
\frac{c_{\alpha,-\gamma+\alpha}}{c_{\gamma-2\alpha,\alpha}}(\gamma-2\alpha)^\vee
-2\frac{c_{\gamma-2\alpha,-\gamma+\alpha}}{c_{\gamma-2\alpha,\alpha}}\alpha^\vee
\end{equation}
takes value zero on all $\lambda\in F$. Since $\gamma \in \Z F$, the
expression (\ref{eqn:coroot1}) takes value zero on $\gamma$, too. 

Actually, the linear combination (\ref{eqn:coroot1}) of coroots
does not depend on the choice of the basis of $\fg$. 
Indeed,
\begin{eqnarray*}
&c_{\gamma-2\alpha,\alpha}(\gamma-\alpha)^\vee&=[[X_{\gamma-2\alpha},X_\alpha],X_{-\gamma+\alpha}]=\\
&&=[X_{\gamma-2\alpha},[X_\alpha,X_{-\gamma+\alpha}]]-[X_\alpha,[X_{\gamma-2\alpha},X_{-\gamma+\alpha}]]=\\
&&=c_{\alpha,-\gamma+\alpha}(\gamma-2\alpha)^\vee-c_{\gamma-2\alpha,-\gamma+\alpha}\alpha^\vee
\end{eqnarray*}
and
\[(\gamma-\alpha)^\vee=
\frac{\|\gamma-2\alpha\|^2}{\|\gamma-\alpha\|^2}(\gamma-2\alpha)^\vee+
\frac{\|\alpha\|^2}{\|\gamma-\alpha\|^2}\alpha^\vee.\]
Therefore, since $(\gamma-2\alpha)^\vee$ and $\alpha^\vee$ are linearly independent, (\ref{eqn:coroot1}) becomes
\begin{equation}
\frac{\|\gamma-2\alpha\|^2}{\|\gamma-\alpha\|^2}(\gamma-2\alpha)^\vee+
2\frac{\|\alpha\|^2}{\|\gamma-\alpha\|^2}\alpha^\vee
\end{equation} 
which is proportional to $\gamma^\vee$. 
Since $\|\gamma\|^2$ is not zero, the expression in
(\ref{eqn:coroot1}) cannot take value zero on $\gamma$, and we have
obtained the desired contradiction. 
\end{proof}

\begin{lemma}[{\cite[Lemma~3.6]{bravi&cupit}}] \label{lem:bc36}
Let $\alpha$ be the simple root such that $\gamma-\alpha$ is a positive root.
If $\gamma-\alpha=\beta_1+\beta_2$ with $\beta_1$ and $\beta_2$ positive roots
then $\alpha+\beta_1$ or $\alpha+\beta_2$ is a root.
\end{lemma}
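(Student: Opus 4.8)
\emph{The plan is to argue by contradiction}: assume that neither $\alpha+\beta_1$ nor $\alpha+\beta_2$ is a root. First I would recall the structural facts about $v$ already available: by Lemma~\ref{lem:simpluspos}(\ref{item:16})--(\ref{item:15}) and the uniqueness of $\alpha$ we have $X_\alpha v\neq 0$ while $X_\delta v=0$ for every simple root $\delta\neq\alpha$, and by Lemma~\ref{lem:simpluspos}(3) there is $z\in\k^\times$ with $X_\alpha v=z\,X_{-\gamma+\alpha}\hwv$; in particular $X_{-\gamma+\alpha}\hwv\neq 0$. It is also convenient to note first, using Lemma~\ref{lem:orthogonality} (applicable since $\gamma-\alpha=\beta_1+\beta_2$ is not simple, so $\gamma\neq 2\alpha$), that $\langle\alpha^\vee,\gamma-\alpha\rangle=0$, whence $\langle\alpha^\vee,\beta_1\rangle=-\langle\alpha^\vee,\beta_2\rangle$; since a negative value of $\langle\alpha^\vee,\beta_i\rangle$ would force $\alpha+\beta_i$ to be a root, we may assume $\langle\alpha^\vee,\beta_1\rangle=\langle\alpha^\vee,\beta_2\rangle=0$.

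The main computation I would run is the following. Because $\alpha+\beta_i$ is not a root, $[X_{\beta_i},X_\alpha]=0$, and because $\beta_i$ is a positive root, $X_{\beta_i}\hwv=0$. Suppose in addition that $\alpha\notin\supp\beta_i$; then $X_{\beta_i}v=0$, since $X_{\beta_i}$ may be written as an iterated bracket of root operators $X_\delta$ with $\delta\in\sr\setminus\{\alpha\}$, each of which annihilates $v$. Hence, with $\{i,j\}=\{1,2\}$,
\[0=X_{\beta_i}(X_\alpha v)=z\,X_{\beta_i}X_{-\gamma+\alpha}\hwv=z\,[X_{\beta_i},X_{-\gamma+\alpha}]\hwv,\]
and $[X_{\beta_i},X_{-\gamma+\alpha}]$ is a nonzero multiple of $X_{-\beta_j}$ because $\beta_i+(-\gamma+\alpha)=-\beta_j$ is a root; therefore $X_{-\beta_j}\hwv=0$.

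This settles the case where $\alpha$ has coefficient $1$ in $\gamma$, i.e.\ where $\alpha\notin\supp(\gamma-\alpha)$ and hence $\alpha$ lies in neither $\supp\beta_1$ nor $\supp\beta_2$: then $X_{-\beta_1}\hwv=X_{-\beta_2}\hwv=0$, and since $(-\beta_1)+(-\beta_2)=-(\gamma-\alpha)$ is a root, $X_{-\gamma+\alpha}$ is a nonzero multiple of $[X_{-\beta_1},X_{-\beta_2}]$, so $X_{-\gamma+\alpha}\hwv=0$, contradicting $X_\alpha v\neq 0$.

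The remaining case, in which $\alpha$ occurs in $\supp\beta_1$ or $\supp\beta_2$ (equivalently, $\alpha$ has coefficient $\geq 2$ in $\gamma$), is where I expect the real work to lie. The claim is that $X_{\beta_i}v=0$ still holds: now, when one builds $X_{\beta_i}$ up as a suitably ordered iterated bracket of simple root operators, each occurrence of $X_\alpha$ inside the bracket ends up being applied, on a vector already known inductively to be annihilated, so as to leave a term of the form $X_\rho(X_\alpha v)=z\,[X_\rho,X_{-\gamma+\alpha}]\hwv$ for a positive subroot $\rho<\beta_i$ not containing $\alpha$ (recall $X_\rho\hwv=0$); and this term vanishes provided $(\gamma-\alpha)-\rho\notin\rs$. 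Showing that the build-up can always be arranged so that this holds at every occurrence of $X_\alpha$ — i.e.\ locating, at each stage, a subroot $\rho$ of $\beta_i$ with $(\gamma-\alpha)-\rho$ not a root — is the technical heart of the proof; once $X_{\beta_1}v=X_{\beta_2}v=0$ is in hand, one concludes exactly as in the previous paragraph.
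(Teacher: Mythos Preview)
Your argument is incomplete: you fully treat only the case where $\alpha\notin\supp(\gamma-\alpha)$, and for the remaining case you offer a sketch that you yourself describe as ``the technical heart of the proof'' without actually carrying it out. The plan of arranging an iterated-bracket expression for $X_{\beta_i}$ so that every intermediate term vanishes is delicate, and you give no concrete mechanism guaranteeing it.

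More importantly, the paper's proof bypasses this difficulty entirely, via a one-line argument you are overlooking. Recall that $[v]\in\Vgg$; in particular $X_{\beta_2}v\in\fg\cdot\hwv$ because $X_{\beta_2}\in\fg_{\hwv}$. Now $X_{\beta_2}v$ has $\Tad$-weight $\gamma-\beta_2=\alpha+\beta_1$, and by Proposition~\ref{prop:elemVggfacts}(\ref{item:6}) the nonzero $\Tad$-weights in $\fg\cdot\hwv$ are exactly the elements of $\pr\setminus F^\perp$. Hence if $\alpha+\beta_1\notin\pr$ one concludes $X_{\beta_2}v=0$ \emph{immediately}, with no assumption on $\supp\beta_2$ and no iterated-bracket bookkeeping. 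Symmetrically $X_{\beta_1}v=0$, whence $X_{\gamma-\alpha}v=0$ (as $X_{\gamma-\alpha}$ is proportional to $[X_{\beta_1},X_{\beta_2}]$), and then
\[0=X_\alpha X_{\gamma-\alpha}v=X_{\gamma-\alpha}X_\alpha v=X_{\gamma-\alpha}X_{-\gamma+\alpha}\hwv,\]
forcing $X_{-\gamma+\alpha}\hwv=0$, a contradiction. Your detour through $X_{-\beta_j}\hwv=0$ and the orthogonality Lemma~\ref{lem:orthogonality} is not needed.
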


\begin{proof}
Since $X_\alpha  v\neq0$, we can assume that $X_\alpha  v=X_{-\gamma+\alpha}\hwv$. Next, we claim that if $\alpha+\beta_1\not\in\pr$ then $X_{\beta_2}v = 0$. Indeed, if $X_{\beta_2}v$ were nonzero, then it would be a $\Tad$-weight vector of weight $\alpha+\beta_1$. Since $X_{\beta_2}v \in \fg\cdot\hwv$ it would follow by Proposition~\ref{prop:elemVggfacts}(\ref{item:5}) that $\alpha+\beta_1 \in \pr$. This proves the claim. 
Similarly, if $\alpha+\beta_2\not\in\pr$ then $X_{\beta_1}v=0$.
Therefore, if neither $\alpha+\beta_1$ nor $\alpha+\beta_2$ is a root, then $X_{\gamma-\alpha} v=0$.
Since $\gamma\not\in\pr$, this implies
\[0=X_\alpha X_{\gamma-\alpha} v=X_{\gamma-\alpha} X_\alpha v=X_{\gamma-\alpha} X_{-\gamma+\alpha} \hwv\]
which means $X_{-\gamma+\alpha}\hwv=0$, a contradiction.
\end{proof}

\begin{lemma}\label{lem:lengthandsupport}
Let $\alpha$ be the simple root such that $\gamma-\alpha$ is a root.
Let $\delta$ be a simple root and $k$ an integer $2\leq k\leq4$ such that 
$\gamma-j\alpha-\delta$ is a root for $1\leq j\leq k$,
$j\alpha+\delta$ is a root for $1\leq j<k$, 
but $k\alpha+\delta$ is not a root. 
Then $\gamma-k\alpha$ is orthogonal to every $\lambda\in F$;
and in particular 
\begin{equation} \label{eq:8}
\|\gamma-\alpha\|^2=(k-1)\|\alpha\|^2.
\end{equation} 
\end{lemma}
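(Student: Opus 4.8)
The statement generalizes the phenomenon already visible in Lemma~\ref{lem:orthogonality}: under the hypotheses on $\alpha$, $\delta$, $k$, a certain explicit linear combination of coroots must vanish on every $\lambda\in F$, and then an independence-of-basis argument forces that combination to be proportional to $\gamma^\vee$, which cannot vanish. The plan is to mimic the proof of Lemma~\ref{lem:orthogonality} with the pair $(\alpha,\delta)$ in place of $(\alpha,\alpha)$, running the commutator computation one step further (up to the $k$-th power of $\mathrm{ad}_{X_\alpha}$). First I would fix a Chevalley basis $\{X_\beta\}\cup\{\alpha^\vee\}$ as in Lemma~\ref{lem:orthogonality}, and normalize $v$ so that $X_\alpha v = X_{-\gamma+\alpha}\hwv$ (possible since $X_\alpha v\neq 0$ by Lemma~\ref{lem:simpluspos}(\ref{item:16}), using that $\alpha$ is the \emph{unique} simple root moving $v$, so we are in the ``non-orthogonal-sum'' regime where that uniqueness holds).

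**Main computation.** Since $\gamma\notin\pr$, all the operators $X_{\gamma - j\alpha}$, $X_{\gamma-j\alpha-\delta}$ that appear annihilate $v$ trivially or commute past $X_\alpha$ appropriately; the key input is that $k\alpha+\delta$ is \emph{not} a root, so $[X_{(k-1)\alpha+\delta}, X_\alpha]=0$, while $j\alpha+\delta$ is a root for $j<k$. I would compute $X_{-\gamma+k\alpha+\delta}\cdot X_\alpha v$ in two ways — expanding $X_{-\gamma+k\alpha+\delta}$ as an iterated bracket of $X_{-\gamma+\alpha}$ against $(k-1)$ copies of $X_\alpha$ and one copy of $X_\delta$ (in a suitable order so that the ``forbidden'' bracket $[\cdot,X_\alpha]$ involving $k\alpha+\delta$ is what forces cancellation), and alternatively pushing $X_\alpha$ through — to obtain, after applying everything to $\hwv$ and using $X_\alpha\hwv = X_\delta\hwv = 0$, an identity of the form (an explicit nonzero scalar)$\cdot(\gamma-k\alpha)^\vee\cdot\hwv + (\text{scalar})\cdot\alpha^\vee\cdot\hwv = 0$. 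Evaluated on $F$ this says a specific combination $a(\gamma-k\alpha)^\vee + b\,\alpha^\vee$ vanishes on every $\lambda\in F$; since $\gamma\in\Z F$ it also vanishes on $\gamma$.

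**Conclusion via basis independence.** As in Lemma~\ref{lem:orthogonality}, I would observe that the combination $a(\gamma-k\alpha)^\vee + b\,\alpha^\vee$ arising this way is intrinsic: one recomputes $c\,(\gamma-(k-1)\alpha)^\vee = [[X_{\gamma-k\alpha},X_\alpha],X_{-\gamma+(k-1)\alpha}]$ and compares with the standard expansion $(\gamma-(k-1)\alpha)^\vee = \frac{\|\gamma-k\alpha\|^2}{\|\gamma-(k-1)\alpha\|^2}(\gamma-k\alpha)^\vee + (k-1)\frac{\|\alpha\|^2}{\|\gamma-(k-1)\alpha\|^2}\alpha^\vee$ (the coefficient $k-1$ coming from $\gamma-(k-1)\alpha = (\gamma-k\alpha)+\alpha$ and the chain $j\alpha+\delta$ roots for $j<k$). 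Matching the two shows the mysterious combination is a nonzero multiple of $\frac{\|\gamma-k\alpha\|^2}{\|\gamma-(k-1)\alpha\|^2}(\gamma-k\alpha)^\vee + (k-1)\frac{\|\alpha\|^2}{\|\gamma-(k-1)\alpha\|^2}\alpha^\vee$, which is proportional to $\gamma^\vee$. Since $\|\gamma\|^2\neq 0$, $\gamma^\vee$ does not vanish on $\gamma$; the only way out of the contradiction is that the scalar $a$ in front of $(\gamma-k\alpha)^\vee$ is zero, i.e.\ $\gamma-k\alpha$ is orthogonal to every $\lambda\in F$. Finally, $\gamma-k\alpha$ being orthogonal to $F$ and $\gamma\in\Z F$ forces $\langle(\gamma-k\alpha)^\vee,\gamma\rangle = k\langle(\gamma-k\alpha)^\vee,\alpha\rangle$... and plugging $\gamma-(k-1)\alpha = (\gamma-k\alpha)+\alpha$ into the norm formula, together with orthogonality of $\gamma-k\alpha$ to $\gamma$ hence to $\alpha$ (as $\gamma\in\Z F\Rightarrow \gamma-k\alpha\perp\gamma$, and $\alpha = \gamma - (\gamma-k\alpha)$ up to scaling... more cleanly: $\langle(\gamma-k\alpha)^\vee,\gamma-k\alpha\rangle=2$ and $\langle(\gamma-k\alpha)^\vee,\gamma\rangle=0$ give $\langle(\gamma-k\alpha)^\vee,\alpha\rangle = 2/k$, i.e.\ $\|\gamma-k\alpha\|^2 = k\,\langle\gamma-k\alpha,\alpha\rangle$, so $\|\gamma-\alpha\|^2 = \|(\gamma-k\alpha)+(k-1)\alpha\|^2 = \|\gamma-k\alpha\|^2 + 2(k-1)\langle\gamma-k\alpha,\alpha\rangle + (k-1)^2\|\alpha\|^2$) yields, after substituting $\langle\gamma-k\alpha,\alpha\rangle = \|\gamma-k\alpha\|^2/k$ and simplifying, the claimed equality $\|\gamma-\alpha\|^2 = (k-1)\|\alpha\|^2$.

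**Main obstacle.** The delicate point is the bracket bookkeeping in the two-way computation: one must choose the order of the iterated commutators defining $X_{-\gamma+k\alpha+\delta}$ so that exactly the vanishing bracket $[X_{(k-1)\alpha+\delta},X_\alpha]=0$ (from $k\alpha+\delta\notin\pr$) is what produces the relation, and one must be careful that none of the intermediate weights $\gamma-j\alpha$, $\gamma-j\alpha-\delta$ accidentally fail to be roots or that structure constants vanish — this is controlled precisely by the hypotheses ``$\gamma-j\alpha-\delta$ a root for $1\le j\le k$'' and ``$j\alpha+\delta$ a root for $1\le j<k$.'' Keeping track of which scalars are guaranteed nonzero (the $c_{\beta_1,\beta_2}$ for brackets of roots that are roots) versus which are forced to vanish is where the argument must be written out carefully, exactly as in the display chains in the proof of Lemma~\ref{lem:orthogonality}; the range $2\le k\le 4$ ensures we stay inside root strings of admissible length so that all these structure constants behave as expected.
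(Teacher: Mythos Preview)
Your overall plan---commutator identity, linear combination of coroots vanishing on $F$, then a basis-independence argument to identify that combination---matches the paper's strategy. But your execution has two genuine gaps.

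First, the operator you propose to use is wrong. The paper does \emph{not} compute $X_{-\gamma+k\alpha+\delta}\cdot X_\alpha v$; that lands in the wrong $T$-weight space and cannot be read off as coroots acting on $\hwv$. The correct identity is the same one as in Lemma~\ref{lem:orthogonality}, namely $X_{\gamma-\alpha}X_\alpha v = X_\alpha X_{\gamma-\alpha}v$ (valid because $\gamma$ is not a root). The new input from $\delta$ enters only through the factorization $X_{\gamma-\alpha}=\tfrac{1}{c_{\gamma-\alpha-\delta,\delta}}[X_{\gamma-\alpha-\delta},X_\delta]$, and then, on the right-hand side, through further factorizations $X_{\gamma-j\alpha-\delta}=\tfrac{1}{c_{\gamma-(j+1)\alpha-\delta,\alpha}}[X_{\gamma-(j+1)\alpha-\delta},X_\alpha]$ for $j=1,\dots,k-1$. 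The resulting combination involves $(\gamma-\alpha-\delta)^\vee$, $\delta^\vee$, and $\alpha^\vee$, not $(\gamma-k\alpha)^\vee$ and $\alpha^\vee$ as you write.

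Second, and more importantly, you have confused the logical shape of this lemma with that of Lemma~\ref{lem:orthogonality}. There the combination is proportional to $\gamma^\vee$ and the argument ends in a contradiction. Here the combination is proportional to $(\gamma-k\alpha)^\vee$, and the conclusion is \emph{direct}: since that functional vanishes on $F$, one has $\gamma-k\alpha\perp\lambda$ for all $\lambda\in F$. Your sentence ``the only way out of the contradiction is that the scalar $a$ in front of $(\gamma-k\alpha)^\vee$ is zero'' is a non sequitur: the scalar is in fact nonzero and the vanishing on $F$ is exactly the assertion of the lemma. Likewise, your derivation of~\eqref{eq:8} is overcomplicated and has a sign error; the paper simply uses $(\gamma-k\alpha,\gamma)=0$ together with $(\alpha,\gamma-\alpha)=0$ from Lemma~\ref{lem:orthogonality} to get $0=\|\gamma-\alpha\|^2-(k-1)\|\alpha\|^2$ in one line.
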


\begin{proof}
We can choose a basis as in the proof of Lemma~\ref{lem:orthogonality}
and, since $X_\alpha  v\neq0$, we can assume that $X_\alpha  v=X_{-\gamma+\alpha}\hwv$.

First, let us assume also, for simplicity, that $k=2$.
Then one has the following identities. 
\begin{eqnarray*}
& X_{\gamma-\alpha}X_\alpha v & = 
\frac1{c_{\gamma-\alpha-\delta,\delta}}[X_{\gamma-\alpha-\delta},X_\delta]X_{-\gamma+\alpha} \hwv=\\
& & = \frac1{c_{\gamma-\alpha-\delta,\delta}}\big(X_{\gamma-\alpha-\delta}[X_\delta,X_{-\gamma+\alpha}]
-X_\delta[X_{\gamma-\alpha-\delta},X_{-\gamma+\alpha}]\big) \hwv=\\
& & = \frac{c_{\delta,-\gamma+\alpha}}{c_{\gamma-\alpha-\delta,\delta}}[X_{\gamma-\alpha-\delta},X_{-\gamma+\alpha+\delta}] \hwv
-\frac{c_{\gamma-\alpha-\delta,-\gamma+\alpha}}{c_{\gamma-\alpha-\delta,\delta}}[X_\delta,X_{-\delta}] \hwv
\end{eqnarray*}

\begin{eqnarray*}
& X_\alpha X_{\gamma-\alpha} v & =
\frac1{c_{\gamma-\alpha-\delta,\delta}}X_\alpha[X_{\gamma-\alpha-\delta},X_\delta] v=
-\frac1{c_{\gamma-\alpha-\delta,\delta}}X_\alpha X_\delta X_{\gamma-\alpha-\delta} v=\\
& & = -\frac1{c_{\gamma-\alpha-\delta,\delta}c_{\gamma-2\alpha-\delta,\alpha}}
X_\alpha X_\delta [X_{\gamma-2\alpha-\delta},X_\alpha] v=\\
& & = -\frac1{c_{\gamma-\alpha-\delta,\delta}c_{\gamma-2\alpha-\delta,\alpha}}
X_\alpha X_\delta [X_{\gamma-2\alpha-\delta},X_{-\gamma+\alpha}] \hwv=\\
& & = -\frac{c_{\gamma-2\alpha-\delta,-\gamma+\alpha}}{c_{\gamma-\alpha-\delta,\delta}c_{\gamma-2\alpha-\delta,\alpha}}
X_\alpha [X_\delta, X_{-\alpha-\delta}] \hwv=\\
& & = -\frac{c_{\gamma-2\alpha-\delta,-\gamma+\alpha}c_{\delta,-\alpha-\delta}}
{c_{\gamma-\alpha-\delta,\delta}c_{\gamma-2\alpha-\delta,\alpha}}
[X_\alpha, X_{-\alpha}] \hwv
\end{eqnarray*}

We thus find a linear combination of co-roots 
\begin{equation}\label{eqn:coroot2}
\frac{c_{\delta,-\gamma+\alpha}}{c_{\gamma-\alpha-\delta,\delta}}(\gamma-\alpha-\delta)^\vee
-\frac{c_{\gamma-\alpha-\delta,-\gamma+\alpha}}{c_{\gamma-\alpha-\delta,\delta}}\delta^\vee
+\frac{c_{\gamma-2\alpha-\delta,-\gamma+\alpha}c_{\delta,-\alpha-\delta}}
{c_{\gamma-\alpha-\delta,\delta}c_{\gamma-2\alpha-\delta,\alpha}}
\alpha^\vee
\end{equation}
which must take value zero on all $\lambda\in F$. We now compute the coefficients in the above linear combination of coroots, 
showing they do not depend on the choice of the basis of $\fg$. 
Indeed,
\begin{eqnarray*}
&c_{\gamma-\alpha-\delta,\delta}(\gamma-\alpha)^\vee&=[[X_{\gamma-\alpha-\delta},X_\delta],X_{-\gamma+\alpha}]=\\
&&=[X_{\gamma-\alpha-\delta},[X_\delta,X_{-\gamma+\alpha}]]-[X_\delta,[X_{\gamma-\alpha-\delta},X_{-\gamma+\alpha}]]=\\
&&=c_{\delta,-\gamma+\alpha}(\gamma-\alpha-\delta)^\vee-c_{\gamma-\alpha-\delta,-\gamma+\alpha}\delta^\vee
\end{eqnarray*}
and, since
\begin{eqnarray*}
\lefteqn{c_{\gamma-\alpha-\delta,\delta}c_{\gamma-2\alpha-\delta,\alpha}(\gamma-\alpha)^\vee
+c_{\gamma-\alpha-\delta,-\gamma+\alpha}c_{\gamma-2\alpha-\delta,\alpha}\delta^\vee=}\\
&&=[[[X_{\gamma-2\alpha-\delta},X_{\alpha}],X_\delta],X_{-\gamma+\alpha}]
-[[[X_{\gamma-2\alpha-\delta},X_\alpha],X_{-\gamma+\alpha}],X_{\delta}]=\\
&&=[[X_{\gamma-2\alpha-\delta},X_\alpha],[X_\delta,X_{-\gamma+\alpha}]]=\\
&&=[[X_{\gamma-2\alpha-\delta},[X_\delta,X_{-\gamma+\alpha}]],X_\alpha]
-[X_{\gamma-2\alpha-\delta},[[X_\delta,X_{-\gamma+\alpha}],X_\alpha]]=\\
&&=[[X_\delta,[X_{\gamma-2\alpha-\delta},X_{-\gamma+\alpha}]],X_\alpha]
-[X_{\gamma-2\alpha-\delta},[[X_\delta,X_{-\gamma+\alpha}],X_\alpha]]=\\
&&=-c_{\delta,-\alpha-\delta}c_{\gamma-2\alpha-\delta,-\gamma+\alpha}\alpha^\vee
-c_{-\gamma+\alpha+\delta,\alpha}c_{\delta,-\gamma+\alpha}(\gamma-2\alpha-\delta)^\vee,
\end{eqnarray*}
also
\[
c_{-\gamma+\alpha+\delta,\alpha}c_{\delta,-\gamma+\alpha}(\gamma-2\alpha-\delta)^\vee=
-c_{\gamma-2\alpha-\delta,\alpha}c_{\delta,-\gamma+\alpha}(\gamma-\alpha-\delta)^\vee
-c_{\delta,-\alpha-\delta}c_{\gamma-2\alpha-\delta,-\gamma+\alpha}\alpha^\vee.
\]

On the other hand,
\[(\gamma-\alpha)^\vee=
\frac{\|\gamma-\alpha-\delta\|^2}{\|\gamma-\alpha\|^2}(\gamma-\alpha-\delta)^\vee+
\frac{\|\delta\|^2}{\|\gamma-\alpha\|^2}\delta^\vee\]
and
\[(\gamma-2\alpha-\delta)^\vee=
\frac{\|\gamma-\alpha-\delta\|^2}{\|\gamma-2\alpha-\delta\|^2}(\gamma-\alpha-\delta)^\vee
-\frac{\|\alpha\|^2}{\|\gamma-2\alpha-\delta\|^2}\alpha^\vee.\]

Therefore, since $(\gamma-\alpha-\delta)^\vee$ is neither proportional to $\delta^\vee$ 
nor to $\alpha^\vee$, 
(\ref{eqn:coroot2}) becomes
\begin{equation}
\frac{\|\gamma-\alpha-\delta\|^2}{\|\gamma-\alpha\|^2}(\gamma-\alpha-\delta)^\vee
+\frac{\|\delta\|^2}{\|\gamma-\alpha\|^2}\delta^\vee
-\frac{\|\alpha\|^2}{\|\gamma-\alpha\|^2}\alpha^\vee
\end{equation} 
which is proportional to $(\gamma-2\alpha)^\vee$. 

For $k>2$ the proof is similar. If $k=3$, the analog of (\ref{eqn:coroot2}) is
\begin{eqnarray*}
&&\frac{c_{\delta,-\gamma+\alpha}}{c_{\gamma-\alpha-\delta,\delta}}(\gamma-\alpha-\delta)^\vee
-\frac{c_{\gamma-\alpha-\delta,-\gamma+\alpha}}{c_{\gamma-\alpha-\delta,\delta}}\delta^\vee+\\
&+&\frac{c_{\gamma-2\alpha-\delta,-\gamma+\alpha}c_{\delta,-\alpha-\delta}}
{c_{\gamma-\alpha-\delta,\delta}c_{\gamma-2\alpha-\delta,\alpha}}\alpha^\vee+\\
&-&\frac{c_{\gamma-3\alpha-\delta,-\gamma+\alpha}c_{\delta,-\alpha-\delta}c_{\alpha,-2\alpha-\delta}}
{c_{\gamma-\alpha-\delta,\delta}c_{\gamma-2\alpha-\delta,\alpha}c_{\gamma-3\alpha-\delta,\alpha}}
\alpha^\vee
\end{eqnarray*}
which is proportional to $(\gamma-3\alpha)^\vee$. If $k=4$, we get
\begin{eqnarray*}
&&\frac{c_{\delta,-\gamma+\alpha}}{c_{\gamma-\alpha-\delta,\delta}}(\gamma-\alpha-\delta)^\vee
-\frac{c_{\gamma-\alpha-\delta,-\gamma+\alpha}}{c_{\gamma-\alpha-\delta,\delta}}\delta^\vee+\\
&+&\frac{c_{\gamma-2\alpha-\delta,-\gamma+\alpha}c_{\delta,-\alpha-\delta}}
{c_{\gamma-\alpha-\delta,\delta}c_{\gamma-2\alpha-\delta,\alpha}}\alpha^\vee+\\
&-&\frac{c_{\gamma-3\alpha-\delta,-\gamma+\alpha}c_{\delta,-\alpha-\delta}c_{\alpha,-2\alpha-\delta}}
{c_{\gamma-\alpha-\delta,\delta}c_{\gamma-2\alpha-\delta,\alpha}c_{\gamma-3\alpha-\delta,\alpha}}\alpha^\vee+\\
&+&\frac{c_{\gamma-4\alpha-\delta,-\gamma+\alpha}c_{\delta,-\alpha-\delta}c_{\alpha,-2\alpha-\delta}c_{\alpha,-3\alpha-\delta}}
{c_{\gamma-\alpha-\delta,\delta}c_{\gamma-2\alpha-\delta,\alpha}c_{\gamma-3\alpha-\delta,\alpha}c_{\gamma-4\alpha-\delta,\alpha}}\alpha^\vee
\end{eqnarray*}
which is proportional to $(\gamma-4\alpha)^\vee$.

Finally, since $\gamma-k\alpha$ is orthogonal to every $\lambda\in F$, 
we have $(\gamma-k\alpha,\gamma)=0$, which yields (\ref{eq:8}).
Indeed,
the assumption implies that $\gamma \neq 2\alpha$, 
hence $(\alpha,\gamma-\alpha)=0$ by Lemma~\ref{lem:orthogonality},
and 
\[
0=(\gamma-k\alpha,\gamma)=\|\gamma-\alpha\|^2-(k-1)\|\alpha\|^2.
\]
\end{proof}

\begin{proposition}\label{prop:loc_highest_root}
Suppose $\gamma$ is not a root and let $\alpha$ be a simple root such that $\gamma-\alpha$ is a root. 
Then $\gamma-\alpha$ is locally the highest root, 
i.e.\ the highest root in the root subsystem generated by the simple roots of its support.
\end{proposition}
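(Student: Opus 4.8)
This is the analogue for $\gamma$ not a root of Proposition~\ref{prop:highestshortroot}, and the plan is likewise to reduce to a rigid combinatorial situation and then invoke the classification of root systems together with Chevalley-basis computations. If $\gamma-\alpha$ is a \emph{simple} root, then its support has type $\ssA_1$ and $\gamma-\alpha$ is the highest root of that rank-one subsystem, so there is nothing to prove; this covers in particular $\gamma=2\alpha$ and $\gamma$ a sum of two orthogonal simple roots. So I would assume from now on that $\beta:=\gamma-\alpha$ is a non-simple positive root. Then $\gamma=\alpha+\beta\notin\rs$, so Proposition~\ref{prop:onesimple} applies: $\alpha$ is the \emph{unique} simple root with $\gamma-\alpha\in\rs$, i.e.\ $\gamma-\delta\notin\rs$ for every simple $\delta\neq\alpha$; combining this with Lemma~\ref{lem:simpluspos} gives $X_\alpha v\neq0$, so $X_\alpha v=z\,X_{-\beta}\hwv$ with $z\in\k^\times$. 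Since $\gamma\neq2\alpha$, Lemma~\ref{lem:orthogonality} gives $\<\alpha^\vee,\beta\>=0$, which together with $\beta+\alpha=\gamma\notin\rs$ forces the $\alpha$-string through $\beta$ to be $\{\beta\}$, so $\beta\pm\alpha\notin\rs$ and $\alpha$ is not removable from $\beta$. Finally, since $\supp\gamma$ is connected and Dynkin diagrams are trees, $\alpha\in\supp\beta$: were $\alpha\notin\supp\beta$, it would be adjacent to exactly one node of the connected set $\supp\beta$, forcing $\<\alpha^\vee,\beta\>\neq0$. Write $\rs'$ for the irreducible root subsystem generated by $\supp\beta$ and $\theta$ for its highest root; the goal is $\beta=\theta$.

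Suppose $\beta\neq\theta$. The highest root of $\rs'$ is the unique root $\mu$ with $\mu+\delta\notin\rs$ for all simple $\delta\in\supp\beta$ (equivalently, since the adjoint representation of the semisimple group with root system $\rs'$ is irreducible, one may pass from $\beta$ up to $\theta$ by successively adding simple roots while staying in $\rs$), so there is a simple root $\epsilon^\star\in\supp\beta$ with $\beta+\epsilon^\star\in\rs$. Also, as $(\beta,\beta)>0$, some simple $\epsilon\in\supp\beta$ has $\<\epsilon^\vee,\beta\>\ge1$, whence $\beta-\epsilon$ is a positive root; such an $\epsilon$ is removable from $\beta$ and is therefore $\neq\alpha$. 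Writing $\beta=\epsilon+(\beta-\epsilon)$ as a sum of two positive roots, Lemma~\ref{lem:bc36} gives $\alpha+\epsilon\in\rs$ or $\alpha+(\beta-\epsilon)=\gamma-\epsilon\in\rs$; the latter contradicts the uniqueness of $\alpha$, so $\alpha+\epsilon\in\rs$, i.e.\ $\alpha$ is a Dynkin neighbour of $\epsilon$. Applying this to every removable simple root of $\beta$ shows that $\alpha$ is adjacent to all of them.

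I would then confront these constraints --- $\alpha\in\supp\beta$ with $\<\alpha^\vee,\beta\>=0$ and $\beta\pm\alpha\notin\rs$; $\alpha$ adjacent to every removable simple root of $\beta$; $\beta$ not the highest root of $\rs'$; and $\gamma=\alpha+\beta\notin\rs$ --- with the classification of irreducible root systems. I expect that in types $\ssA$, $\ssD$, $\ssE$, $\ssG_2$, and for all but a few supports in types $\ssB$, $\ssC$, $\ssF_4$, they cannot be simultaneously satisfied (typically because no simple root of $\supp\beta$ is orthogonal to $\beta$, because the removable simple roots of $\beta$ have no common Dynkin neighbour, or because Lemma~\ref{lem:bc36} is already violated); and that the surviving triples $(\rs',\alpha,\beta)$ all have $\rs'$ non-simply-laced or of type $\ssF_4$, with $\beta$ the highest \emph{short} root of $\rs'$ and $\alpha$ a \emph{long} simple root orthogonal to $\beta$ --- the model case being $\rs'$ of type $\ssB_3$, $\beta=\alpha_1+\alpha_2+\alpha_3$, $\alpha=\alpha_2$ and $\gamma=\alpha_1+2\alpha_2+\alpha_3$. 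For each surviving triple, Lemma~\ref{lem:lengthandsupport} applies with a suitable simple root $\delta$ and an integer $k\in\{2,3,4\}$ and yields $\|\gamma-\alpha\|^2=(k-1)\|\alpha\|^2$; since $\gamma-\alpha=\beta$ is short while $\alpha$ is long, this is absurd. (Where Lemma~\ref{lem:lengthandsupport} does not apply verbatim, one runs the same sort of two- to four-step Chevalley-basis computation as in the proofs of Lemmas~\ref{lem:orthogonality} and \ref{lem:lengthandsupport}: applying an appropriate chain of root operators to the identity $X_\alpha v=z\,X_{-\beta}\hwv$ in two ways produces a nonzero combination of coroots which is proportional to $\gamma^\vee$ and must vanish on every $\lambda\in F$, hence on $\gamma\in\Z\wm$, contradicting $\|\gamma\|^2\neq0$.) This contradiction proves $\beta=\theta$.

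I expect the main obstacle to be the case analysis in the third paragraph: pinning down exactly which triples $(\rs',\alpha,\beta)$ survive the combinatorial constraints --- especially in the non-simply-laced classical series and in $\ssF_4$ --- and, for each survivor, choosing the precise chain of root operators for the length (Chevalley-basis) computation that eliminates it; the bound $2\le k\le4$ in Lemma~\ref{lem:lengthandsupport} indicates that chains of length up to four will be required.
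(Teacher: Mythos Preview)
Your approach is plausible but differs structurally from the paper's, and the order in which you deploy Lemma~\ref{lem:lengthandsupport} is the key distinction.

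The paper does \emph{not} defer the length lemma until after a case analysis has produced a short list of survivors. Instead, Part~I of the paper's proof applies Lemma~\ref{lem:lengthandsupport} \emph{first} and \emph{uniformly}: taking any simple root $\delta\neq\alpha$ with $\beta-\delta\in\rs$ (which exists since $\beta$ is not simple), the paper uses Proposition~\ref{prop:onesimple} and Lemma~\ref{lem:bc36} to get $\alpha+\delta\in\rs$, and then builds the chain $\gamma-j\alpha-\delta\in\rs$ for $j=1,\ldots,k$ directly from $\langle\alpha^\vee,\beta\rangle=0$ and $\langle\alpha^\vee,\delta\rangle<0$. The conclusion $\gamma-k\alpha\perp F$ then forces $\langle(\alpha')^\vee,\gamma\rangle=0$ for every $\alpha'\in\supp\gamma\setminus\{\alpha\}$, whence $\langle(\alpha')^\vee,\beta\rangle\geq0$ for all simple $\alpha'$: $\beta$ is locally dominant. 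This collapses your open-ended case analysis to exactly two possibilities --- $\beta=\theta$ (done) or $\beta$ is the highest \emph{short} root --- with no root-system-by-root-system checking needed. Part~II then disposes of the short-dominant case: equation~(\ref{eq:8}) forces $\alpha$ to be \emph{short} (the opposite of what you anticipate for your survivors, though consistent with your killing mechanism), which immediately rules out $\ssG_2$; in types $\ssB_n$ and $\ssF_4$ the only short simple root orthogonal to the short dominant root makes $\gamma$ a root; and the single remaining $\ssC_n$ configuration is eliminated by a direct appeal to Lemma~\ref{lem:bc36}.

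So your route should work, but the case analysis you correctly identify as the main obstacle is exactly what the paper sidesteps by reversing the order of the two tools. One small slip: your justification that $\alpha\in\supp\beta$ via ``$\supp\gamma$ is connected'' is not available at this point in the argument (connectedness of $\supp\gamma$ is only known \emph{after} Theorem~\ref{thm:vggweights}). The correct route is already implicit in your constraint~(b): if $\alpha\notin\supp\beta$ and $\langle\alpha^\vee,\beta\rangle=0$, then $\alpha$ is orthogonal to every simple root in $\supp\beta$, so for any decomposition $\beta=\beta_1+\beta_2$ into positive roots neither $\alpha+\beta_i$ is a root, contradicting Lemma~\ref{lem:bc36}.
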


\begin{proof}
{\bf I}.
First we want to prove that $\gamma-\alpha$ is locally dominant. 
We can assume that $\gamma-\alpha$ is not simple.
Hence, by Lemma~\ref{lem:orthogonality}, $\alpha$ is orthogonal to $\gamma-\alpha$.

There exists a simple root $\delta$ (different from $\alpha$) such that $\gamma-\alpha-\delta$ is a root.
By Proposition~\ref{prop:onesimple} and Lemma~\ref{lem:bc36} $\alpha+\delta$ is a root.

Since $\alpha+\delta$ is a root, $\<\alpha^\vee, \delta\> < 0$. Therefore,
$\<\alpha^\vee,\gamma-\alpha-\delta\>>0$ hence $\gamma-2\alpha-\delta$ is a root.
If moreover $2\alpha+\delta$ is a root, then by $\sl(2)$-theory,
$\<\alpha^\vee, \alpha + \delta\> \leq 0$ and so 
$\<\alpha^\vee,\gamma-2\alpha-\delta\>\geq0$,  whence $\gamma-3\alpha-\delta$ is a root.
If $3\alpha+\delta$  is also a root, then $\alpha$ and $\delta$ span a
root system of type $\ssG_2$. Consequently,
$\<\alpha^\vee,\gamma-3\alpha-\delta\>=-1$ and $\gamma-4\alpha-\delta$ is a root.

Therefore we can apply Lemma~\ref{lem:lengthandsupport} and obtain that, for some $k\geq1$, 
$\gamma-k\alpha$ is orthogonal to every $\lambda\in F$. This implies
that $\< (\alpha')^\vee, \gamma \> = 0$ for all $\alpha' \in
\supp(\gamma)\setminus\{\alpha\}$, whence $\< (\alpha')^\vee,
\gamma-\alpha \> \geq 0$ for all such $\alpha'$. Since $\alpha$ is
orthogonal to $\gamma-\alpha$, it follows that $\gamma-\alpha$
is locally dominant. 

{\bf II}.
To obtain a contradiction, we now assume that $\gamma-\alpha$ is not locally the highest root, 
that is, a locally short dominant root 
with support of non-simply-laced type:
\begin{itemize}
\item[-] in type $\ssB_n$, $n\geq2$, the short dominant root is $\alpha_1+\ldots+\alpha_n=\omega_1$;
\item[-] in type $\ssC_n$, $n\geq3$, the short dominant root is $\alpha_1+2(\alpha_2+\ldots+\alpha_{n-1})+\alpha_n=\omega_2$;
\item[-] in type $\ssF_4$ the short dominant root is $\alpha_1+2\alpha_2+3\alpha_3+2\alpha_4=\omega_4$;
\item[-] in type $\ssG_2$ the short dominant root is $2\alpha_1+\alpha_2=\omega_1$.
\end{itemize}
By equation (\ref{eq:8}), $\alpha$ is also short and $k=2$, in particular the support of $\gamma$ is not of type $\ssG_2$.
Moreover, by Lemma~\ref{lem:orthogonality}, $\alpha$ is orthogonal to $\gamma-\alpha$.
In type $\ssB_n$ and in type $\ssF_4$ this implies that $\gamma$ is a root.

We are left with the case where the support of $\gamma-\alpha$ is of type $\ssC_n$. 
Since $\alpha$ is short, $\alpha$ is orthogonal to $\gamma-\alpha$, 
$\gamma$ is not a root,
and moreover there exists a simple root $\delta\neq\alpha$ satisfying the hypothesis of Lemma~\ref{lem:lengthandsupport} for $k=2$,
we have that $n>3$, $\delta=\alpha_2$ and $\alpha=\alpha_3$. 
This contradicts Lemma~\ref{lem:bc36}, 
because $\alpha_1$ and $\gamma-\alpha-\alpha_1$ are roots, 
but neither $\alpha_1+\alpha$ nor $\gamma-\alpha_1$ is a root. 
\end{proof}

The following is Theorem~\ref{thm:vggweights} for the case that
$\gamma$ is not a root. 
\begin{corollary} \label{cor:Vggweightnotposroot}
Let $\gamma$ be a $\Tad$-weight in $\Vgg$. If $\gamma$ is not a 
root, then $\gamma$ is a spherically closed spherical root of $G$. 
\end{corollary}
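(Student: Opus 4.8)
The plan is to combine the structural results of this section with a case-by-case check. By the discussion preceding Proposition~\ref{prop:loc_highest_root} I may assume that $\gamma$ is not the sum of two orthogonal simple roots (in that case $\gamma$ is the spherically closed spherical root with support of type $\ssA_1\times\ssA_1$, listed in Table~\ref{table:spherical_roots}), so that there is a unique simple root $\alpha$ with $\theta:=\gamma-\alpha$ a root. If $\gamma=2\alpha$ then $\supp(\gamma)$ has type $\ssA_1$ and $\gamma$ is again in Table~\ref{table:spherical_roots}; so assume $\gamma\neq 2\alpha$. By Lemma~\ref{lem:orthogonality}, $\alpha\perp\theta$; in particular $\theta$ is not simple (else $\gamma$ would be a sum of two orthogonal simple roots), so by Proposition~\ref{prop:loc_highest_root} it is the highest root of the irreducible root system $R'$ whose simple roots are $\Sigma':=\supp(\theta)$.

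I would first show that $\alpha\in\Sigma'$, so that $\supp(\gamma)=\Sigma'$ and $\<\alpha^\vee,\theta\>=0$. If $\alpha$ were disconnected from $\Sigma'$ in the Dynkin diagram, pick positive roots $\beta_1,\beta_2\in R'$ with $\theta=\beta_1+\beta_2$; then neither $\alpha+\beta_1$ nor $\alpha+\beta_2$ is a root, contradicting Lemma~\ref{lem:bc36}. If $\alpha$ were connected to $\Sigma'$ but not contained in it, then $\alpha$ is joined to a single node $\beta\in\Sigma'$, and since $\theta$ is the highest root of $R'$ its coefficient on $\beta$ is positive, giving $\<\alpha^\vee,\theta\><0$ and contradicting $\alpha\perp\theta$.

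It then remains to run through the irreducible types of $\Sigma'$ and, within each, the simple roots $\alpha\in\Sigma'$ with $\<\alpha^\vee,\theta\>=0$ (equivalently, the nodes not adjacent to the affine node of the extended diagram); for each such pair $\gamma=\alpha+\theta$. When $\gamma$, after renumbering $\supp(\gamma)$ like Bourbaki, appears in Table~\ref{table:spherical_roots}, we are done: this accounts for $\ssA_3$, for $\ssB_n$ and $\ssD_n$ with $\alpha=\alpha_1$, for $\ssB_3$ with $\alpha=\alpha_3$, for the three leaves of $\ssD_4$ (using triality), and for $\ssG_2$. For the remaining `near misses' --- in types $\ssA_n$ $(n\ge 4)$, $\ssB_n$ $(n\ge 4)$, $\ssC_n$, $\ssD_n$ $(n\ge 5)$, $\ssE_6$, $\ssE_7$, $\ssE_8$ and $\ssF_4$ --- I would derive a contradiction using two tools: Lemma~\ref{lem:bc36} applied to a well-chosen splitting $\theta=\beta_1+\beta_2$ for which $\alpha+\beta_1$ and $\alpha+\beta_2$ are both non-roots; and Lemma~\ref{lem:lengthandsupport} applied with a simple root $\delta$ satisfying $\<\delta^\vee,\theta\>>0$ and the appropriate $k\in\{2,3,4\}$, whose conclusion either violates the known ratio $\|\theta\|^2/\|\alpha\|^2$ or asserts that $\gamma-k\alpha$ is orthogonal to every $\lambda\in F$ --- which, since the $\lambda$ are dominant, forces some simple root $\beta$ into $S^p(\wm)$, hence (as $\gamma\in\Z\wm$) $\<\beta^\vee,\gamma\>=0$, contradicting a direct computation.

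I expect this elimination of the near misses to be the main obstacle: there is no uniform argument, and the non-simply-laced types and $\ssD_n$ each require a tailored choice of the auxiliary data $(\beta_1,\beta_2)$ or $(\delta,k)$, while one must also verify carefully that $\gamma=\alpha+\theta$ is a Table~\ref{table:spherical_roots} entry exactly for the pairs listed above.
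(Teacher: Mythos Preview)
Your overall strategy is correct and coincides with the paper's: reduce to $\gamma=\alpha+\theta$ with $\theta$ the highest root of the subsystem on $\supp(\theta)$, show $\alpha\in\supp(\theta)$ and $\<\alpha^\vee,\theta\>=0$, then run through the types. Your argument that $\alpha\in\supp(\theta)$ (splitting into ``disconnected'' and ``connected but outside'') is exactly the paper's, and your proposed tools for the near misses (Lemma~\ref{lem:bc36} and Lemma~\ref{lem:lengthandsupport}) are the right ones.

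Where you diverge is in expecting the elimination step to be long and ad hoc. The paper observes that the argument of Part~I of Proposition~\ref{prop:loc_highest_root} already produces, via Lemma~\ref{lem:lengthandsupport}, a \emph{uniform} constraint: since $\gamma-k\alpha$ is orthogonal to every $\lambda\in F$ and is a nonnegative combination of simple roots, every simple root in $\supp(\gamma)\setminus\{\alpha\}$ lies in $S^p(\wm)$, hence $\<(\alpha')^\vee,\gamma\>=0$ for all $\alpha'\in S\setminus\{\alpha\}$. Combined with $\<\alpha^\vee,\theta\>=0$, this says that $\gamma$ is locally a multiple of the fundamental weight attached to $\alpha$. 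With this single criterion in hand, the case-by-case becomes a one-line check in each type (e.g.\ in $\ssC_n$ one needs $\<\alpha_1^\vee,\gamma\>=0$, which fails for every admissible $\alpha$; in $\ssA_n$ one needs $\<\alpha_1^\vee,\gamma\>=\<\alpha_n^\vee,\gamma\>=0$, forcing $n=3$, $\alpha=\alpha_2$). So rather than tailoring a splitting $\theta=\beta_1+\beta_2$ or a pair $(\delta,k)$ for each near miss, you can extract this condition once and be done; the paper's proof is accordingly about a page of tables rather than a sequence of separate arguments.
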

\begin{proof}
We list all the locally highest roots $\beta$ 
and deduce which are the only possible non-roots $\gamma$ (obtained by adding to $\beta$ a simple root)
satisfying Lemmas~\ref{lem:orthogonality},~\ref{lem:bc36} and~\ref{lem:lengthandsupport}.

In general, $\<\alpha^\vee,\beta\>$ must be $\geq0$ otherwise $\alpha+\beta\in\pr$.
If $\alpha$ is not in the support of $\beta$ it must be orthogonal to $\beta$, 
and in this case, by Lemma~\ref{lem:bc36}, $\beta$ must necessarily be simple. 

Let us start with $\beta$ simple, i.e., with support of type $\ssA_1$: $\beta=\alpha_1=2\omega_1$ 
gives only \[2\alpha_1\] or \[\alpha_1+\alpha'_1.\]

Let us now pass to $\beta$ not simple and recall that $\alpha$ must necessarily belong to the support of $\beta$,
moreover by Lemma~\ref{lem:orthogonality} $\<\alpha^\vee,\beta\>=0$ 
and by Lemma~\ref{lem:lengthandsupport}, for all $\alpha'\in S\setminus\{\alpha\}$, 
$\<(\alpha')^\vee,\alpha+\beta\>=0$.

With support of type $\ssA_n$, $n\geq2$: $\beta=\alpha_1+\ldots+\alpha_n=\omega_1+\omega_n$ 
gives only, for $n=3$, \[\alpha_1+2\alpha_2+\alpha_3.\]

With support of type $\ssB_n$, $n\geq2$: $\beta=\alpha_1+2(\alpha_2+\ldots+\alpha_n)=\omega_2$ if $n\geq3$ (it equals $2\omega_2$ if $n=2$)
gives only \[2(\alpha_1+\ldots+\alpha_n)\] or, for $n=3$, \[\alpha_1+2\alpha_2+3\alpha_3.\]

With support of type $\ssD_n$, $n\geq4$: $\beta=\alpha_1+2(\alpha_2+\ldots+\alpha_{n-2})+\alpha_{n-1}+\alpha_n=\omega_2$
gives only \[2(\alpha_1+\ldots+\alpha_{n-2})+\alpha_{n-1}+\alpha_n\] 
or, for $n=4$, \[\alpha_1+2\alpha_2+2\alpha_3+\alpha_4\] and \[\alpha_1+2\alpha_2+\alpha_3+2\alpha_4\] 
which are equal to $2\alpha_1+2\alpha_2+\alpha_3+\alpha_4$
up to an automorphism of the Dynkin diagram.

With support of type $\ssG_2$: $\beta=3\alpha_1+2\alpha_2=\omega_2$ 
gives only \[4\alpha_1+2\alpha_2.\]

The remaining cases give no other possibilities:
\begin{itemize}
\item[-]with support of type $\ssC_n$, $n\geq3$, $\beta=2(\alpha_1+\ldots+\alpha_{n-1})+\alpha_n=2\omega_1$;
\item[-]with support of type $\ssE_6$, $\beta=\alpha_1+2\alpha_2+2\alpha_3+3\alpha_4+2\alpha_5+\alpha_6=\omega_2$;
\item[-]with support of type $\ssE_7$, $\beta=2\alpha_1+2\alpha_2+3\alpha_3+4\alpha_4+3\alpha_5+2\alpha_6+\alpha_7=\omega_1$;
\item[-]with support of type $\ssE_8$, $\beta=2\alpha_1+3\alpha_2+4\alpha_3+6\alpha_4+5\alpha_5+4\alpha_6+3\alpha_7+2\alpha_8=\omega_8$;
\item[-]with support of type $\ssF_4$, $\beta=2\alpha_1+3\alpha_2+4\alpha_3+2\alpha_4=\omega_1$.
\end{itemize} 
\end{proof}

\subsection{Further properties of $\Tad$-weights in $\Vgg$} \label{sec:vggweightsprops}
After Theorem~\ref{thm:vggweights} the only possible $\Tad$-weights in $\Vgg$ are 
spherically closed spherical roots of $G$,
but each of them occur only under special conditions which we are going to describe.

The first statement is indeed a refinement of Theorem~\ref{thm:vggweights}. 
Recall the notion of compatibility with $S^p$ (see axiom~(S) of Definition~\ref{def:spherical-systems} 
and Remark~\ref{rem:spherical-systems}.1).

\begin{theorem}\label{thm:vggweightsrefinement}
If $\gamma$ is a $\Tad$-weight in $\Vgg$ 
then $\gamma$ is a spherically closed spherical root of $G$ 
compatible with $S^p(\wm)$.
\end{theorem}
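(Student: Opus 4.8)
The goal is to upgrade Theorem~\ref{thm:vggweights} by showing that a $\Tad$-weight $\gamma$ in $\Vgg$ is not merely a spherically closed spherical root of $G$, but is also compatible with $S^p(\wm)$. First I would recall from Remark~\ref{rem:spherical-systems}.1 the purely combinatorial reformulation of compatibility: for $\gamma$ of type $\ssB_n$ or $\ssC_n$ one needs to control $S^p(\wm)$ between certain subsets of $\{\alpha \in \supp\gamma \colon \<\alpha^\vee, \gamma\> = 0\}$ and $\{\alpha \in \sr \colon \<\alpha^\vee, \gamma\> = 0\}$ (with one simple root excluded), and in all other cases one simply needs
\[\{\alpha \in \supp\gamma \colon \<\alpha^\vee, \gamma\> = 0\} \subseteq S^p(\wm) \subseteq \{\alpha \in \sr \colon \<\alpha^\vee, \gamma\> = 0\}.\]
Since $S^p(\wm) = S^p(\wm) := \{\alpha \in \sr \colon \<\alpha^\vee, \lambda\> = 0 \text{ for all }\lambda \in F\}$, the upper inclusion $S^p(\wm) \subseteq \{\alpha \colon \<\alpha^\vee, \gamma\> = 0\}$ is immediate because $\gamma \in \Z F$: if $\alpha^\vee$ kills every $\lambda \in F$ it kills $\gamma$. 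So the content is entirely in the lower inclusion: every simple root $\alpha$ in (the relevant part of) $\supp\gamma$ orthogonal to $\gamma$ must also be orthogonal to every $\lambda \in F$, i.e.\ lie in $S^p(\wm)$.

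The key tool for the lower inclusion is the orthogonality information already extracted in Section~\ref{sec:vggweightsr}. When $\gamma$ is not a root, Lemma~\ref{lem:orthogonality} and especially Lemma~\ref{lem:lengthandsupport} (together with the reasoning in part~I of the proof of Proposition~\ref{prop:loc_highest_root}) show that if $\alpha$ is the unique simple root with $\gamma - \alpha$ a root, then $\gamma - k\alpha$ is orthogonal to every $\lambda \in F$ for a suitable $k$, whence $\<(\alpha')^\vee, \gamma\> = 0 \Rightarrow \<(\alpha')^\vee, \lambda\> = 0$ for $\lambda\in F$ is forced for each $\alpha' \in \supp(\gamma) \setminus \{\alpha\}$; and $\alpha$ itself is not orthogonal to $\gamma$ unless $\gamma = 2\alpha$ (in which case $\supp\gamma = \{\alpha\}$ and there is nothing to check). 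For $\gamma$ a root the argument is analogous but easier: from the proof of Proposition~\ref{prop:highestshortroot}, for every simple $\alpha \in \supp(\gamma)$ orthogonal to $\gamma$ we must arrange, using parts~I--III of that proof, that $X_\alpha v = 0$ can be forced simultaneously with the other such $X_{\alpha'} v$ unless some $\lambda \in F$ obstructs it; but the obstruction is precisely non-orthogonality of $\lambda$ to a root of the form $\gamma - (\text{sum of orthogonal simple roots all orthogonal to }\gamma)$, and a short computation shows such a root has the same norm-squared pairing with $\gamma$ as $\gamma$ itself does, so orthogonality to $\gamma$ propagates. Concretely, I would argue: if $\alpha \in \supp\gamma$, $\<\alpha^\vee,\gamma\> = 0$, but $\<\alpha^\vee, \lambda\> \neq 0$ for some $\lambda \in F$, then running the simultaneous-vanishing machinery of Proposition~\ref{prop:highestshortroot} (resp.\ the $k$-step ladder of Lemma~\ref{lem:lengthandsupport}) reaches a contradiction with Lemma~\ref{lem:simpluspos}(\ref{item:16}).

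Finally I would handle the two exceptional shapes, $\ssB_n$ and $\ssC_n$, where the combinatorial condition excludes one simple root ($\alpha_n$ for $\ssB_n$, $\alpha_1$ for $\ssC_n$) from the required lower inclusion. Here the point is that in $\Vgg$ the spherical root $\gamma = \alpha_1 + \cdots + \alpha_n$ of type $\ssB_n$ arises as $\beta + \alpha$ with $\beta = \alpha_1 + \cdots + \alpha_n$ itself (the short dominant root, equal to $\omega_1$ in the notation of Corollary~\ref{cor:Vggweightnotposroot}, with the extra $\alpha$ being... ) — actually one must go back to the case analysis in the proof of Corollary~\ref{cor:Vggweightnotposroot}: there $\gamma$ of type $\ssB_n$ in $\Vgg$ is $2(\alpha_1+\ldots+\alpha_n)$ arising from $\beta = \omega_2$-root by adding $\alpha_1$, and Lemma~\ref{lem:lengthandsupport} gives orthogonality of $\gamma - k\alpha_1$ to $F$, forcing $\<(\alpha_j)^\vee,\lambda\> = 0$ for $j = 2,\ldots,n$ and all $\lambda \in F$, i.e.\ $\alpha_2,\ldots,\alpha_n \in S^p(\wm)$; since for this $\gamma$ one has $\<(\alpha_j)^\vee,\gamma\> = 0$ exactly for $j$ in a set whose intersection with $\supp\gamma \setminus \{\alpha_n\}$ is contained in $\{\alpha_2,\ldots,\alpha_n\}$, the required lower inclusion holds. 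The $\ssC_n$ case is symmetric with $\alpha_1$ excluded. I expect the main obstacle to be the bookkeeping in these exceptional cases: one must match the precise simple root $\alpha$ added to the local highest root $\beta$ (as enumerated in the proof of Corollary~\ref{cor:Vggweightnotposroot}) against the precise simple root excluded in Remark~\ref{rem:spherical-systems}.1, and verify that the orthogonality conclusion of Lemma~\ref{lem:lengthandsupport} lands exactly where the combinatorial condition needs it — including the subtlety that $\alpha$ itself may or may not lie in the excluded slot.
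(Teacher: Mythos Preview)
Your overall strategy matches the paper's: the upper inclusion $S^p(\wm) \subseteq \{\alpha : \<\alpha^\vee,\gamma\>=0\}$ is automatic from $\gamma \in \Z\wm$ (with one exception, see below), and the lower inclusion is obtained case-by-case from the orthogonality conclusions of Section~\ref{sec:vggweightsr}. The paper does exactly this, invoking Lemma~\ref{lem:lengthandsupport} (with the specific $\alpha,\delta,k$ spelled out for each non-root $\gamma$) and parts~I and~V of the proof of Proposition~\ref{prop:highestshortroot} for the root cases. Your observation that $\gamma - k\alpha \perp F$ together with dominance of the $\lambda\in F$ forces each $\alpha'\in\supp(\gamma)\setminus\{\alpha\}$ into $S^p(\wm)$ is the right mechanism.

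There is, however, a genuine gap in your treatment of the $\ssB_n$ exception. You have misidentified which spherical root is at stake: the special compatibility condition in Remark~\ref{rem:spherical-systems}.1 applies to the \emph{root} $\gamma = \alpha_1+\ldots+\alpha_n$ of type $\ssB_n$, not to its double $2(\alpha_1+\ldots+\alpha_n)$. More importantly, for this $\gamma$ the exclusion of $\alpha_n$ appears on \emph{both} sides of the chain, so the upper inclusion becomes $S^p(\wm) \subseteq \{\alpha : \<\alpha^\vee,\gamma\>=0\}\setminus\{\alpha_n\}$. Since $\<\alpha_n^\vee,\gamma\>=0$ in type $\ssB_n$, the requirement $\alpha_n\notin S^p(\wm)$ is \emph{not} a consequence of $\gamma\in\Z\wm$, contrary to what your plan assumes; it needs a separate argument. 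The paper supplies one: after normalizing so that $X_{\alpha_n}v=0$, one has $X_{\alpha_1}v = X_{-\gamma+\alpha_1}\hwv \neq 0$, i.e.\ $X_{-(\alpha_2+\ldots+\alpha_n)}v_\lambda\neq 0$ for some $\lambda\in F$. Combined with $\alpha_2,\ldots,\alpha_{n-1}\in S^p(\wm)$ (already established), this forces $\<\alpha_n^\vee,\lambda\>\neq 0$, hence $\alpha_n\notin S^p(\wm)$. Your proposal contains nothing corresponding to this step. (For $\ssC_n$ the exclusion of $\alpha_1$ affects only the lower side, so your reading is correct there.)
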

\begin{proof}
If $\gamma = \alpha_1 + \alpha_2 + \ldots + \alpha_n$ with support of type $\ssA_n$,
then $\{\alpha_2, \alpha_3,\ldots, \alpha_{n-1}\} \subset S^p(\wm)$.  
This follows from part I of the proof of Proposition~\ref{prop:highestshortroot}.

If $\gamma = \alpha_1 + 2\alpha_2 + \alpha_3$ with support of type $\ssA_3$,
then $\{\alpha_1, \alpha_3\} \subset S^p(\wm)$.  
This follows by Lemma~\ref{lem:lengthandsupport} ($\alpha=\alpha_2$, $\delta=\alpha_1$ and $k=2$).

If $\gamma = \alpha_1 + \alpha_2 + \ldots + \alpha_n$ with support of type $\ssB_n$,
then $\{\alpha_2, \alpha_3,\ldots, \alpha_{n-1}\} \subset S^p(\wm)$ and $\alpha_n\not\in S^p(\wm)$.  
The former follows from part I of the proof of Proposition~\ref{prop:highestshortroot}.
For the latter, we can assume that $X_{\alpha_n}v=0$
and $X_{\alpha_1}v=X_{-\gamma+\alpha_n}\hwv$ nonzero, 
which implies $\alpha_n\not\in S^p$.

If $\gamma = 2(\alpha_1 + \ldots + \alpha_n)$ with support of type $\ssB_n$,
then $\{\alpha_2, \ldots, \alpha_n\} \subset S^p(\wm)$.  
This follows by Lemma~\ref{lem:lengthandsupport} ($\alpha=\alpha_1$, $\delta=\alpha_2$ and $k=2$).

If $\gamma = \alpha_1 + 2\alpha_2 + 3\alpha_3$ with support of type $\ssB_3$,
then $\{\alpha_1, \alpha_2\} \subset S^p(\wm)$.  
This follows by Lemma~\ref{lem:lengthandsupport} ($\alpha=\alpha_3$, $\delta=\alpha_2$ and $k=3$).

If $\gamma = \alpha_1 + 2(\alpha_2+\ldots+\alpha_{n-1})+ \alpha_n$ with support of type $\ssC_n$,
then $\{\alpha_3, \alpha_4, \ldots, \alpha_n\} \subset S^p(\wm)$.
This follows from part V of the proof of Proposition~\ref{prop:highestshortroot}.

If $\gamma = 2(\alpha_1 + \ldots + \alpha_{n-2})+\alpha_{n-1}+\alpha_n$ with support of type $\ssD_n$,
then $\{\alpha_2, \ldots, \alpha_n\} \subset S^p(\wm)$.
This follows by Lemma~\ref{lem:lengthandsupport} ($\alpha=\alpha_1$, $\delta=\alpha_2$ and $k=2$).

If $\gamma = \alpha_1 + 2\alpha_2 + 3\alpha_3 + 2\alpha_4$ with support of type $\ssF_4$,
then $\{\alpha_1, \alpha_2, \alpha_3\} \subset S^p(\wm)$.
This follows from part V of the proof of Proposition~\ref{prop:highestshortroot}.


If $\gamma = 4\alpha_1 + 2\alpha_2$ with support of type $\ssG_2$,
then $\alpha_2\in S^p(\wm)$.
This follows by Lemma~\ref{lem:lengthandsupport} ($\alpha=\alpha_1$, $\delta=\alpha_2$ and $k=4$).
\end{proof}

\begin{proposition}\label{prop:multfreequot}
If $\gamma$ is not a simple root then the $\Tad$-eigenspace $\Vgg_{(\gamma)}$ has dimension $\leq1$.
\end{proposition}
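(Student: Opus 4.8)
The plan is to show that the $\Tad$-eigenspace $\Vgg_{(\gamma)}$ has dimension at most $1$ when $\gamma$ is not a simple root, by exploiting the rigidity built into the identities of Section~\ref{sec:tad-weights-vgg}. The key observation is that an element $[v] \in \Vgg_{(\gamma)}$ is essentially determined by the data $(X_{\alpha}v)_{\alpha \in \sr}$: if $[v]$ and $[v']$ have the same weight $\gamma$ and $X_{\alpha}v = X_{\alpha}v'$ in $\fg\cdot\hwv$ for every simple root $\alpha$, then $X_{\alpha}(v - v') = 0$ for all $\alpha \in \sr$, so by Lemma~\ref{lem:simpluspos}(\ref{item:16}) the difference $v - v'$ represents the zero class in $\Vg$, i.e.\ $[v] = [v']$. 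Thus it suffices to bound the dimension of the space of possible tuples $(X_\alpha v)_\alpha$ by $1$.

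First I would use the structural results already proved: since $\gamma$ is not a simple root, either it is a non-simple root, in which case by Proposition~\ref{prop:highestshortroot} (resp.\ the $\ssG_2$ analysis in Corollary~\ref{cor:Vggweightposroot}) it is a very specific spherically closed spherical root, or $\gamma$ is not a root, in which case by Proposition~\ref{prop:loc_highest_root} it is obtained from a locally highest root $\beta = \gamma - \alpha$ by adding a single simple root $\alpha$. In the non-root case, there is a \emph{unique} simple root $\alpha$ with $\gamma - \alpha$ a root (the hypothesis preceding Lemma~\ref{lem:orthogonality} handles the sum-of-two-orthogonal-simple-roots case separately, but one checks $\gamma = \alpha + \alpha'$ is a simple-root sum, excluded here, or rather is treated by noting $X_\alpha v$ and $X_{\alpha'}v$ together determine $[v]$ and each is a scalar multiple of a fixed vector). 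By Lemma~\ref{lem:simpluspos}(3), for each such $\alpha$ we have $X_\alpha v = z_\alpha X_{-\gamma+\alpha}\hwv$ for a scalar $z_\alpha$, and $X_{\alpha'}v = 0$ for all other simple roots $\alpha'$ (since $\gamma - \alpha'$ is not a root). Hence the tuple $(X_\alpha v)_\alpha$ lives in a space of dimension at most $1$, parametrized by $z_\alpha$, and we are done in the non-root case.

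For the non-simple-root case, $\gamma$ is one of $\alpha_1+\ldots+\alpha_n$ (type $\ssA_n$, $\ssB_n$), $\alpha_1+2(\alpha_2+\ldots+\alpha_{n-1})+\alpha_n$ (type $\ssC_n$), $\alpha_1+2\alpha_2+3\alpha_3+2\alpha_4$ (type $\ssF_4$), or $\alpha_1+\alpha_2$ (type $\ssG_2$). Here there may be several simple roots $\alpha$ with $\gamma - \alpha$ a root, but the simultaneity arguments in parts I--V of the proof of Proposition~\ref{prop:highestshortroot} (and the coroot identities of Lemma~\ref{lem:lengthandsupport}) show that after subtracting a suitable multiple of $X_{-\gamma}\hwv$ one can force $X_\alpha v = 0$ for all these $\alpha$ \emph{simultaneously}, unless $\gamma - \sum_j \alpha_j$ (summed over the relevant simple roots) is orthogonal to all of $F$; tracing through those arguments, the ratios $z'_\alpha/z_\alpha$ are all forced equal, so again $(X_\alpha v)_\alpha$ varies in a $1$-dimensional space, cut out by the single free parameter that remains after these normalizations. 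Concretely, I would go through the short list of $\gamma$'s above and in each case invoke the relevant part of the earlier proofs to see that once one $z_\alpha$ is fixed, all others are determined, giving $\dim \Vgg_{(\gamma)} \leq 1$.

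The main obstacle I anticipate is the bookkeeping in the non-simple-root case: one must verify, case by case for the finitely many $\gamma$ listed, that the linear relations among the $z'_\alpha = z_\alpha \cdot (z'_\alpha/z_\alpha)$ coming from the commutativity relations $X_{\alpha}X_{\alpha'}v = X_{\alpha'}X_{\alpha}v$ (for $\alpha \perp \alpha'$) together with the non-vanishing of the relevant $X_{-\gamma+\alpha+\alpha'}\hwv$ (which requires $\gamma - \alpha - \alpha'$ to pair nontrivially with some $\lambda \in F$ — and this is exactly where the combinatorics of Proposition~\ref{prop:highestshortroot} and Lemma~\ref{lem:lengthandsupport} was engineered) indeed pin down the tuple up to one scalar. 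The subtle point is handling supports where the ``connecting chain'' of simple roots $\alpha$ with $\gamma-\alpha$ a root is not linear or where non-orthogonal pairs occur (the $\ssC_n$ and $\ssG_2$ situations); there one reuses the more delicate part~V argument rather than the straightforward part~I/II argument.
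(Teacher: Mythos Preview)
Your opening observation is exactly right and is the backbone of the paper's proof too: once $X_{\alpha}v$ is fixed for every $\alpha\in\sr$, the class $[v]$ is determined, because the difference is a highest weight vector of nonzero $\Tad$-weight, hence zero. Your treatment of the non-root case (when $\gamma$ is not of the form $\alpha+\alpha'$) is also correct and matches the paper.

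There are, however, two genuine gaps.

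\textbf{The non-simple-root case.} You try to feed the simultaneity arguments of Proposition~\ref{prop:highestshortroot} back in to force the ratios $z'_\alpha/z_\alpha$ equal. But those arguments hinge on the existence of some $\lambda\in F$ \emph{not} orthogonal to $\gamma-\alpha_1-\alpha_2$ (this is what makes $X_{-(\gamma-\alpha_1-\alpha_2)}\hwv\neq 0$ and lets you cancel). For the spherical roots that actually survive, Theorem~\ref{thm:vggweightsrefinement} forces the interior simple roots of $\supp(\gamma)$ into $S^p(\wm)$, and then $\gamma-\alpha_1-\alpha_2$ \emph{is} orthogonal to every $\lambda\in F$; so the hypothesis of Part~I fails and no relation between the ratios is produced. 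The paper avoids this entirely: it just records that for the spherical roots in Corollary~\ref{cor:Vggweightposroot} there are \emph{exactly two} simple roots $\alpha_1,\alpha_2$ with $\gamma-\alpha_i$ a root, uses the single degree of freedom $X_{-\gamma}\hwv$ to kill $X_{\alpha_2}v$, and then rescales to pin down $X_{\alpha_1}v$. No simultaneity argument, no ratio comparison.

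\textbf{The case $\gamma=\alpha+\alpha'$ with $\alpha\perp\alpha'$.} Saying that each of $X_{\alpha}v$, $X_{\alpha'}v$ is a scalar multiple of a fixed vector only bounds the dimension by $2$, not $1$; and here there is no $X_{-\gamma}\hwv$ available to absorb a degree of freedom, since $\gamma$ is not a root. The paper's argument is different: it uses that $\dim V(\lambda_i)_{(\gamma)}\le 1$ for every $i$ (because $\alpha\perp\alpha'$), so that after normalizing $X_{\alpha}v=X_{-\alpha'}\hwv$ each component $v_i\in V(\lambda_i)$ is already determined. You should fill this case in along those lines.
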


\begin{proof}
If $\gamma$ is a root (not simple), recall that there exist two simple roots, say $\alpha_1$ and $\alpha_2$, 
such that $\gamma-\alpha_1$ and $\gamma-\alpha_2$ is a root, 
and $\gamma-\alpha$ is not a root for all $\alpha\in S\setminus\{\alpha_1,\alpha_2\}$.
In particular, for all $\alpha\in S\setminus\{\alpha_1,\alpha_2\}$, we necessarily have $X_\alpha v=0$.
By adding to $v$ a suitable scalar multiple of $X_{-\gamma} \hwv$, 
we can assume that also $X_{\alpha_2} v=0$.
Moreover, by choosing a suitable scalar multiple,
we can assume that $X_{\alpha_1} v=X_{-\gamma+\alpha_1} \hwv$.

If $\gamma$ is neither a root nor the sum of two orthogonal simple roots,
recall that there exists a simple root $\alpha_1$
such that $\gamma-\alpha_1$ is a root,
and $\gamma-\alpha$ is not a root for all $\alpha\in S\setminus\{\alpha_1\}$. 
In particular, for all $\alpha\in S\setminus\{\alpha_1\}$, we necessarily have $X_\alpha v=0$.
Therefore, by choosing a suitable scalar multiple,
we can assume that $X_{\alpha_1} v=X_{-\gamma+\alpha_1} \hwv$.

In both cases we claim that under the above assumptions $v$ is uniquely determined.
Indeed, if $v_1$ and $v_2$ are two vectors in $V$ of $\Tad$-weight $\gamma$ fulfilling the above conditions,
then $X_{\alpha}(v_1-v_2)=0$ for all $\alpha\in S$, which implies $v_1=v_2$.

We are left with only one case: the spherical root $\gamma = \alpha+\alpha'$ with support of type $\ssA_1\times\ssA_1$.
We can assume $X_{\alpha}v=X_{-\alpha'}\hwv$.
For all $i\in\{1,\ldots,r\}$, $\dim V(\lambda_i)_{(\gamma)}\leq1$, 
and the condition $X_{\alpha} v=X_{-\alpha'} \hwv$   
uniquely determines every component $v_i \in V(\lambda_i)$ of $v$.
\end{proof}

\section{The weight spaces of $\tg \hs$} \label{sec:weight-spaces-tH}

In this section we prove the following.
\begin{theorem} \label{thm:TX0hs}
If $\wm$ is a free monoid of dominant weights, then $\tg\hs$ is a
multiplicity-free $\Tad$-module of which all the weights belong to
$\Sigma^{sc}(G)$. Moreover, if $\gamma \in
\Sigma^{sc}(G)$ occurs as a $\Tad$-weight in $\tg \hs$ then
$\gamma$ is N-adapted to $\wm$. 
\end{theorem}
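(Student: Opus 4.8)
The plan is to exploit the $\Tad$-equivariant embedding $\tg\hs \isom H^0(X_0,\shN_{X_0|V})^G \into \Vgg$ recalled in Section~\ref{sec:ihsrev}, transporting to $\tg\hs$ the results of Section~\ref{sec:tad-weights-vgg} and then cutting down further by imposing that a $G$-invariant section over $X_0$ restrict to a section over $G\cdot\hwv$ that extends. By Theorems~\ref{thm:vggweights} and~\ref{thm:vggweightsrefinement} every $\Tad$-weight $\gamma$ of $\tg\hs$ is a spherically closed spherical root of $G$ compatible with $S^p(\wm)$, and by Proposition~\ref{prop:elemVggfacts}(\ref{item:7}) it lies in $\Z\wm$; this gives conditions (\ref{item:inasr1}) and (\ref{item:inasr2}) of Corollary~\ref{cor:indiv_Nadapt-spher-roots}. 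When $\gamma=\alpha+\beta$ with $\alpha\perp\beta$ simple, one picks $v$ with $[v]\in\Vgg_{(\gamma)}$ nonzero; by Lemma~\ref{lem:simpluspos} only $\alpha$ and $\beta$ can act nontrivially on $v$, so after rescaling $X_\alpha v=X_{-\beta}\hwv$ and $X_\beta v=z\,X_{-\alpha}\hwv$ for some $z\in\k$, and comparing $X_\alpha X_\beta v=X_\beta X_\alpha v$ yields $z\,\alpha^\vee\cdot\hwv=\beta^\vee\cdot\hwv$; pairing with $\gamma\in\Z F$ and using $\<\alpha^\vee,\gamma\>=\<\beta^\vee,\gamma\>=2$ forces $z=1$, whence $\alpha^\vee=\beta^\vee$ on $\wm$, i.e.\ condition (\ref{nsorths}) (and $\Vgg_{(\gamma)}=0$ when it fails). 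Finally, Proposition~\ref{prop:multfreequot} yields $\dim(\tg\hs)_{(\gamma)}\le\dim\Vgg_{(\gamma)}\le1$ for every non-simple $\gamma$, which settles multiplicity freeness away from the simple roots.

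It remains to obtain conditions (\ref{item:inasr3}), (\ref{item:inasr4}) and (\ref{item:inasr5}) of Corollary~\ref{cor:indiv_Nadapt-spher-roots}, as well as multiplicity one at a simple weight $\gamma=\alpha$, where $\Vgg_{(\alpha)}$ can have dimension larger than $1$. These are not visible from the $G_{\hwv}$-module $\Vg$; they encode the behaviour of a $G$-invariant section of $\shN_{X_0|V}$ along the prime divisors of the boundary $X_0\setminus G\cdot\hwv$. Here the plan is to describe the image of $\tg\hs$ in $\Vgg$ through this extension condition. Since $X_0$ is the $\Tad$-fixed point it is horospherical, i.e.\ its root monoid is trivial; hence, by the results recalled in Section~\ref{sec:colored-cones} and Proposition~\ref{prop:weightmonoid}, its valuation cone is all of $\Hom(\Z\wm,\Q)$, its colors are the $D_\alpha$ with $\alpha\in\sr\setminus S^p(\wm)$ and $c_{X_0}(D_\alpha,\cdot)=\alpha^\vee$, its colored cone equals $\wm^\vee$, and the $G$-stable prime divisors of $X_0$ correspond exactly to the $\delta\in E(\wm)$ that are not proportional to any such coroot. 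A $G$-invariant rational section of $\shN_{X_0|V}$ of $\Tad$-weight $\gamma$ that is regular on $G\cdot\hwv$ extends to $X_0$ precisely when it has no pole along any of these $G$-stable prime divisors; computing the order of the section along the divisor of $\delta\in E(\wm)$ in terms of $\<\delta,\gamma\>$ turns this into the inequalities demanded by (\ref{item:inasr3}) when $\gamma\notin\sr$ and by (\ref{item:inasr4b})--(\ref{item:inasr4c}) when $\gamma\in\sr$, while for $\gamma=2\alpha$ the requirement that the relevant section be defined over $\Z\wm$ rather than over a proper sublattice gives the divisibility condition (\ref{item:inasr5}).

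The hard part will be making this dictionary precise: determining the exact order of vanishing of the section at each $G$-stable divisor, and, for $\gamma=\alpha\in\sr$, showing that the subspace of $\Vgg_{(\alpha)}$ consisting of sections that extend to $X_0$ is at most a line and is nonzero only when $a(\alpha)$ has two elements (condition (\ref{item:inasr4a})). I would carry this out by a case analysis over the finitely many entries of Table~\ref{table:spherical_roots} that are compatible with $S^p(\wm)$, using the structural constraints already established in Section~\ref{sec:vggweightsprops} --- for instance that $\{\alpha_2,\ldots,\alpha_{n-1}\}\inn S^p(\wm)$ when $\gamma$ has support of type $\ssA_n$, that $\{\alpha_1,\alpha_3\}\inn S^p(\wm)$ when $\gamma=\alpha_1+2\alpha_2+\alpha_3$, and that $\alpha_2\in S^p(\wm)$ when $\gamma=4\alpha_1+2\alpha_2$ --- to pin down which $\delta\in E(\wm)$ can pair positively with $\gamma$; in each case $(\tg\hs)_{(\gamma)}$ is then read off from the explicit deformed multiplication on $\k[X_0]$, a more computational route to the same inequalities. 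Combining this with the first paragraph gives both the multiplicity-one statement and the fact that every occurring weight satisfies all conditions of Corollary~\ref{cor:indiv_Nadapt-spher-roots}, hence is N-adapted to $\wm$.
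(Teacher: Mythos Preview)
Your overall strategy matches the paper's: embed $\tg\hs$ in $\Vgg$, harvest conditions (\ref{item:inasr1}), (\ref{item:inasr2}), (\ref{nsorths}) and the non-simple multiplicity bound from Section~\ref{sec:tad-weights-vgg}, and then use the extension of sections across the boundary $X_0\setminus G\cdot\hwv$ to obtain the remaining conditions and the simple-root multiplicity bound. Your derivation of (\ref{nsorths}) directly from $\Vgg$ via $X_\alpha X_\beta v = X_\beta X_\alpha v$ is actually a small simplification: the paper deduces (\ref{nsorths}) only after first invoking the extension argument to show that a single $\lambda\in F$ is non-orthogonal to $\alpha$.

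Where you diverge from the paper is in the treatment of the ``hard part''. You propose to compute orders of vanishing along the $G$-stable prime divisors and then do a case analysis over Table~\ref{table:spherical_roots}. The paper instead uses a single sharp tool, the extension criterion of Theorem~\ref{thm:extension-criterion} (imported from \cite{dsmot-preprint}): for $\lambda\in F$ with $G\cdot z_\lambda$ of codimension $1$ and $a=\<\lambda^\#,\gamma\>$, the section extends across $G\cdot z_\lambda$ automatically if $a\le 0$, never if $a>1$, and if $a=1$ precisely when $[v]$ is represented by some $\hat v\in V(\lambda)$. This trichotomy, combined with Proposition~\ref{prop:codim1orbits} (which, dually, is your observation that the $G$-stable prime divisors correspond to the $\delta\in E(\wm)$ not proportional to a coroot), gives conditions (\ref{item:inasr3}) and (\ref{item:inasr4}) uniformly, without case analysis. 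For the simple-root case it immediately yields $\dim(\tg\hs)_{(\alpha)}\le 1$ and the conditions (\ref{item:simpr1})--(\ref{item:simpr5}) of Lemma~\ref{lem:tangsr}, from which (\ref{item:inasr4a})--(\ref{item:inasr4c}) are read off.

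Two specific gaps in your outline: first, your heuristic for (\ref{item:inasr5}) --- that the section must ``be defined over $\Z\wm$ rather than over a proper sublattice'' --- does not correspond to how the divisibility $\<\alpha^\vee,\lambda\>\in 2\Z$ arises. The paper's argument is concrete: once (\ref{item:inasr3}) forces a unique $\lambda\in F$ non-orthogonal to $\alpha$, the normalized eigenvector satisfies $X_\alpha v = c\,X_{-\alpha}v_\lambda$ with $c\neq 0$, so $V(\lambda)_{\lambda-2\alpha}\neq 0$ and hence $\<\alpha^\vee,\lambda\>\ge 2$. Second, you do not isolate the key step for (\ref{item:inasr3}) when $\gamma\notin\sr$: if $\<\lambda^\#,\gamma\>>0$ and $G\cdot z_\lambda$ had codimension $1$, then by Theorem~\ref{thm:extension-criterion}(B)(C) one would need $\<\lambda^\#,\gamma\>=1$ and $[v]=[\hat v]$ with $\hat v\in V(\lambda)$; but then $X_\alpha\hat v$ being a nonzero multiple of $X_{-\gamma+\alpha}\hwv$ forces $X_{-\gamma+\alpha}v_\mu=0$ for all $\mu\neq\lambda$, contradicting Proposition~\ref{prop:codim1orbits}. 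This contradiction is what replaces your proposed case-by-case computation.
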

\begin{proof}
The assertion that all $\Tad$-weights of $\tg\hs$ belong to $\Sigma^{sc}(G)$
follows from the inclusion $\tg\hs \into \Vgg$ and
Theorem~\ref{thm:vggweights}, while the assertion that the weight space
$(\tg\hs)_{(\gamma)}$ has dimension at most one follows from
Proposition~\ref{prop:multfreequot} if $\gamma \notin \sr$, and from
Proposition~\ref{prop:tangsr} below if $\gamma \in \sr$. The statement
that if $\gamma \in \Sigma^{sc}(G)$ is a $\Tad$-weight in $\tg\hs$, then
$\gamma$ is N-adapted to $\wm$, is contained in
Proposition~\ref{prop:tangsr} for $\gamma \in \sr$ and is shown in
Section~\ref{sec:nonsimspher} for $\gamma \notin \sr$. 
\end{proof}

Recall from Proposition~\ref{prop:msintohs} that $\ms$ is
$\Tad$-equivariantly isomorphic to an open subscheme of $\hs$. 
Because every $\Tad$-weight in $\tg\ms \isom \tg\hs$ is an element of $\Sigma^{sc}(G)$ (see Theorem~\ref{thm:vggweights}) we obtain the following converse to the second statement in Theorem~\ref{thm:TX0hs}.
\begin{corollary} \label{cor:TX0hs}
Let $\wm$ be a free monoid of dominant weights and let $\sigma \in \Sigma^{sc}(G)$. If $\sigma$ is N-adapted to $\wm$, then $\sigma$ is a $\Tad$-weight in $\tg\ms$. 
\end{corollary}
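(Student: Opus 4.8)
The plan is to use the hypothesis to produce a point of $\ms$ whose $\Tad$-orbit degenerates onto $X_0$, and then to read a weight-$\sigma$ tangent vector at $X_0$ off the shape of that degeneration, using Theorem~\ref{thm:vggweights} and Table~\ref{table:spherical_roots} to force the weight to be exactly $\sigma$. First I would unwind the hypothesis: since $\sigma$ is N-adapted to $\wm$, Definition~\ref{def:Nadapted} provides an affine spherical $G$-variety $X$ with $\wm(X)=\wm$ and $\Sigma^N(X)=\{\sigma\}$. Being an affine spherical variety with weight monoid $\wm$, it corresponds to a $\k$-point $[X]$ of $\ms$, which we also regard as a point of $\hs$ via Proposition~\ref{prop:msintohs}.

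Next I would analyze the $\Tad$-orbit of $[X]$ and its closure. The $\Tad$-action on $\ms$ rescales the structure constants of the associated $G$-algebra by the characters $\lambda+\mu-\nu$ (up to the sign conventions fixed in Proposition~\ref{prop:msintohs}), so the stabilizer of $[X]$ is the common kernel of the characters in the root monoid $\mathscr M_X$. By Knop's theorem the saturation of $\mathscr M_X$ is freely generated by $\Sigma^N(X)=\{\sigma\}$, hence $\Z\mathscr M_X=\Z\sigma$; therefore the orbit $O:=\Tad\cdot[X]$ is one-dimensional with character lattice $\Z\sigma$, and in particular $[X]\neq X_0$. Since $\{X_0\}$ is the unique closed $\Tad$-orbit of $\ms$, $O$ is not closed, so its closure $C$ contains $X_0$. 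Passing to a $\Tad$-stable affine neighbourhood of $X_0$, the curve $C$ is an affine $\Tad$-toric variety $\Spec\k[M]$ with vertex $X_0$, where $M$ is a pointed, finitely generated submonoid of $\Z\sigma$ containing $\mathscr M_X$; since $\mathscr M_X$ contains positive multiples of $\sigma$, this forces $M\subseteq\N\sigma$, while $\Z M=\Z\sigma$. The tangent space $\tg C$ is then a $\Tad$-submodule of $\tg\ms\isom\tg\hs$ whose weights, each of multiplicity one, are exactly the minimal generators of $M$ --- a set of positive integral multiples of $\sigma$.

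Finally I would invoke Theorem~\ref{thm:vggweights}: every $\Tad$-weight of $\tg\hs$, and hence every minimal generator of $M$, belongs to $\Sigma^{sc}(G)$. Inspecting Table~\ref{table:spherical_roots} one checks that for $\sigma\in\Sigma^{sc}(G)$ no integral multiple $n\sigma$ with $n\geq3$ lies in $\Sigma^{sc}(G)$ (the only proportional pairs in the table are $\{\alpha,2\alpha\}$ for a simple root $\alpha$ and $\{\beta,2\beta\}$ with $\beta=\alpha_1+\dots+\alpha_n$ of type $\ssB_n$), so every minimal generator of $M$ equals $\sigma$ or $2\sigma$. If $\sigma\notin M$, then, the minimal generators being among $\{\sigma,2\sigma\}$, we would have $M=\N\cdot 2\sigma$, contradicting $\Z M=\Z\sigma$; hence $\sigma\in M$, and being the smallest nonzero element of $M\subseteq\N\sigma$, it is a minimal generator of $M$. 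Therefore $\sigma$ occurs as a $\Tad$-weight of $\tg C\subseteq\tg\ms$, which is what we had to prove. I expect the step needing the most care to be the second paragraph: reconciling the twisted and opposite $\Tad$-actions of Proposition~\ref{prop:msintohs} and equation~(\ref{eq:tadaction}) with the description of $O$ via $\mathscr M_X$, so that the weights come out as $+\sigma$ rather than its negative, and checking that the weights of $\tg C$ are precisely the minimal generators of $M$; alternatively one can shortcut this by quoting Alexeev and Brion's structural results identifying $\Tad$-orbit closures in $\ms$ with the affine toric varieties attached to root monoids.
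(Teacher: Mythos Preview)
Your argument is correct and follows essentially the same route as the paper: produce $X$ with $\Sigma^N(X)=\{\sigma\}$, identify the $\Tad$-orbit closure of $[X]$ with the affine toric variety attached to a submonoid $M\subset\N\sigma$ with $\Z M=\Z\sigma$, observe that the weights of $\tg(\overline{\Tad\cdot X})\subset\tg\ms$ are the minimal generators of $M$, and use Theorem~\ref{thm:vggweights} together with Table~\ref{table:spherical_roots} to force $\sigma$ itself to be a generator. The paper streamlines your second paragraph by directly quoting \cite[Proposition~2.13]{alexeev&brion-modaff}, which identifies $\overline{\Tad\cdot X}$ with $\Spec\k[-\mathscr M_X]$ (so $M=\mathscr M_X$ on the nose and the sign issues you flag are already handled), but the contradiction at the end is the same.
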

\begin{proof}
Let $X$ be an affine spherical $G$-variety with $\wm(X)=\wm$ and
$\Sigma^N(X)=\{\sigma\}$, and let $\mathscr M_X$ be its root
monoid. Recall that $\Sigma^N(X)$ is the basis of the saturation of
$\mathscr M_X$. Let $\{a_1, a_2, \ldots, a_k\}$ be a subset of $\N$
such that $\{a_1\sigma, a_2\sigma, \ldots, a_k\sigma\}$ is the minimal
set of generators of $\mathscr M_X$. By~\cite[Proposition 2.13]{alexeev&brion-modaff}, 
the $\Tad$-orbit closure of $X$, seen as a closed point of $\ms$, is $\Spec(\k[-\mathscr M_X])$. 
A straightforward computation using the basic theory of semigroup rings 
(see, e.g., \cite[\S 7.1]{miller-sturmfels}) shows that 
\[\tg(\overline{\Tad\cdot X}) \isom \k_{a_1 \sigma} \oplus \k_{a_2\sigma} \oplus \ldots \oplus \k_{a_k\sigma}\]
as $\Tad$-modules, where we used $\k_{a_i\sigma}$ for the one-dimensional $\Tad$-representation of weight $a_i\sigma$. We claim that one of the $a_i$ is equal to $1$ (and
consequently that $\mathscr M_X$ is generated by $\{\sigma\}$). We
show this by contradiction. Suppose that all of the $a_i$ are at least $2$. Then $k \ge 2$, since otherwise $\sigma$ would not be in $\Z\mathscr M_X$. Since $\tg(\overline{\Tad\cdot X}) \subset \tg\ms \subset \Vgg$, it then follows from Theorem~\ref{thm:vggweights} that $\{\sigma, a_1\sigma, a_2\sigma\} \subset \Sigma^{sc}(G)$. By the classification of spherically closed spherical roots (cf. Proposition~\ref{prop:sigmaG}) this is impossible: only the double or half of a spherically closed spherical root can be a spherically closed spherical root, and never both. 
\end{proof}

As before, $\wm$ will be a free monoid of dominant weights with basis
$F = \{\lambda_1, \lambda_2, \ldots, \lambda_r\}$. If $\lambda \in F$,
then we will write $\lambda^\#$ for the corresponding element of the
dual basis of $(\Z\wm)^*$; in other words, for all $\mu \in F\setminus
\{\lambda\}$ we have $\<\lambda^\#,\mu\>=0$, whereas 
$\<\lambda^\#,\lambda\> =1$.  Recall that $E(\wm)$ is defined in (\ref{eq:1}).
Because $\wm$ is free, we have that $E(\wm)$ is the dual basis to
$F$: \[E(\wm) = \{\lambda^\# \in (\Z\wm)^*\colon \lambda \in F\}.\]
For $\lambda \in F$ we put
\[z_{\lambda} := x_0 - v_{\lambda}.\] 

\subsection{The extension criterion}
We recall from~\cite{dsmot-preprint} a criterion which allows to decide whether
a $\Tad$-eigenvector $[v] \in \Vgg \isom H^0(G\cdot X_0,
\shN_{X_0|V})^G$ belongs to the subspace $\tg\hs \isom H^0(X_0, \shN_{X_0|V})^G$.

We denote by  $X_0^{\leq 1}  \subset X_0$ 
the union of $G \cdot x_0$ with all $G$-orbits of $X_0$ that have
codimension $1$.  
By \cite [Lemma~1.14] {brion-cirmactions} 
$X_0^{\leq 1}$ is an open subset of $X_0$.
The following proposition is a special case of \cite[Lemma
3.9]{brion-ihs}. Together with Theorem~\ref{thm:extension-criterion}
it gives the aforementioned criterion. 
\begin{proposition} \label{prop:ext_lemma}
A section $s \in H^0(G\cdot X_0,\shN_{X_0|V})$ extends to $X_0$ if and
only if it extends to $X_0^{\leq 1}$.
\end{proposition}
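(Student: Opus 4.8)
This is a special case of \cite[Lemma~3.9]{brion-ihs}, and the plan is to give the short direct argument. The implication from ``extends to $X_0$'' to ``extends to $X_0^{\leq 1}$'' is trivial: every section over $X_0$ restricts to one over the open subset $X_0^{\leq 1}$. For the converse, I would first record that, by \cite[Lemma~1.14]{brion-cirmactions}, $X_0^{\leq 1}$ is open in $X_0$, so its complement $Z := X_0\setminus X_0^{\leq 1}$ is a closed, $G$-stable subset which by construction is a union of $G$-orbits of codimension at least $2$; since the closure of each such orbit again has codimension at least $2$, this gives $\operatorname{codim}_{X_0}Z \ge 2$.

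The key step is then to invoke normality. Since $X_0$ is the closure of the $G$-orbit of $\hwv$ (see (\ref{eq:7})), it is an affine spherical $G$-variety and hence a \emph{normal} variety. Writing $\mathcal I$ for the ideal sheaf of $X_0$ in $V$, we have $\shN_{X_0|V} = \mathcal{H}om_{\mathcal O_{X_0}}(\mathcal I/\mathcal I^2, \mathcal O_{X_0})$, which is the dual of a coherent sheaf on a normal variety and is therefore reflexive; in particular it satisfies Serre's condition $S_2$. I would then apply the standard fact that, for a reflexive (indeed $S_2$) sheaf $\shF$ on a normal variety and an open immersion $j\colon U \into X_0$ whose complement has codimension $\ge 2$, one has $j_*(\shF|_U) = \shF$, so that the restriction map
\[
H^0(X_0, \shN_{X_0|V}) \longrightarrow H^0\bigl(X_0^{\le 1}, \shN_{X_0|V}\bigr)
\]
is an isomorphism.

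With this in hand the converse is immediate: if a section $s$ over the open $G$-orbit extends to a section $\widetilde s$ over $X_0^{\le 1}$, then by the displayed isomorphism $\widetilde s$ is the restriction of a (unique) global section over $X_0$, which in turn restricts to $s$. The only point that needs care, and the step I expect to be the main obstacle if one wants a fully self-contained account, is the reflexivity of $\shN_{X_0|V}$ --- that is, the passage from ``$X_0$ is a spherical variety'' to ``$\shN_{X_0|V}$ is $S_2$'', which is exactly where normality of $X_0$ is used and which is what allows sections to be extended across the codimension-$2$ locus $Z$.
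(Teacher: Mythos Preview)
Your argument is correct and is precisely the content of \cite[Lemma~3.9]{brion-ihs}, which the paper simply cites without further elaboration. One small point: your justification for the normality of $X_0$ is circular as stated, since ``spherical'' already \emph{includes} ``normal'' by definition, so you cannot deduce normality from sphericity. The fact that the orbit closure $X_0 = \overline{G\cdot x_0}$ is normal is a theorem of Vinberg and Popov~\cite{vin&pop-quasi} (normality of the multicone over a flag variety); once that is in hand, the rest of your argument---reflexivity of the normal sheaf as a dual of a coherent sheaf, and extension of sections of a reflexive sheaf across a codimension-$2$ closed subset of a normal variety---goes through exactly as you describe.
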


We recall that the orbit structure of $X_0$ is
well-understood~\cite[Theorem~8]{vin&pop-quasi}. It is easy to
describe the orbits of codimension $1$ (see, e.g.,
\cite[Proposition~3.1]{degensphermod} for details).

\begin{proposition} \label{prop:codim1orbits}
The $G$-orbits of codimension $1$ in $X_0$ are exactly the orbits $G
\cdot z_{\lambda}$ where $\lambda$ is an element of $F$ that satisfies
the following property:
\begin{equation*}
\text{for every } \alpha \in S \text{ such that } 
\<\alpha^{\vee},\lambda\> \neq 0 \text{ there exists } \mu \in F \setminus
\{\lambda\} \text{ such that } \<\alpha^{\vee},\mu\> \neq 0. 
\end{equation*}
\end{proposition}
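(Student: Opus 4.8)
The plan is to combine Vinberg and Popov's description of the $G$-orbit structure of $X_0$ with a short computation of isotropy Lie algebras. By \cite[Theorem~8]{vin&pop-quasi}, the $G$-orbits in $X_0$ are precisely the orbits of the points $x_I := \sum_{\lambda \in I} v_\lambda$, one for each subset $I \subseteq F$, and these orbits are pairwise distinct; here $x_F = x_0$ spans the open orbit, and $x_{F \setminus \{\lambda\}} = z_\lambda$. So it suffices to decide, for each $I \subseteq F$, whether
\[\operatorname{codim}_{X_0}(G \cdot x_I) = \dim \fg_{x_I} - \dim \fg_{x_0}\]
equals $1$.

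First I would describe $\fg_{x_I}$. Since the vectors $v_\lambda$, $\lambda \in I$, lie in distinct summands $V(\lambda)$ of $V$, we have $\fg_{x_I} = \bigcap_{\lambda \in I} \fg_{v_\lambda}$. The isotropy subalgebra of a highest weight vector is standard (a fact about parabolic subgroups; cf.\ \cite{humphreys-lag}): writing $I_\lambda := \{\alpha \in \sr : \<\alpha^\vee, \lambda\> = 0\}$ for the set of simple roots of the Levi of $\mathrm{Stab}_G(\k v_\lambda)$, the algebra $\fg_{v_\lambda}$ is the direct sum of $\ker(\lambda \colon \ft \to \k)$, of all the positive root spaces, and of the negative root spaces $\fg_{-\delta}$ with $\supp(\delta) \subseteq I_\lambda$. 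Intersecting over $\lambda \in I$, and using that $F$ (hence $I$) is linearly independent, gives
\[\dim \fg_{x_I} = (\dim \ft - |I|) + |\pr| + \bigl|\{\delta \in \pr : \supp(\delta) \subseteq J_I\}\bigr|, \qquad J_I := \{\alpha \in \sr : \<\alpha^\vee, \lambda\> = 0 \text{ for all } \lambda \in I\}.\]

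Then I would read off the codimension. Whenever $I \subseteq F$ one has $J_I \supseteq J_F$, and any simple root in $J_I \setminus J_F$ is a positive root supported on $J_I$ but not on $J_F$; hence
\[\operatorname{codim}_{X_0}(G \cdot x_I) = (|F| - |I|) + \bigl|\{\delta \in \pr : \supp(\delta) \subseteq J_I\}\bigr| - \bigl|\{\delta \in \pr : \supp(\delta) \subseteq J_F\}\bigr| \ \geq\ |F| - |I|,\]
with equality if and only if $J_I = J_F$. Consequently a codimension-one orbit must have $|I| = |F| - 1$, say $I = F \setminus \{\lambda\}$, and then $G \cdot x_I = G \cdot z_\lambda$ has codimension one exactly when $J_{F \setminus \{\lambda\}} = J_F$. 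Since the inclusion $J_{F \setminus \{\lambda\}} \supseteq J_F$ is automatic, this says that $\<\alpha^\vee, \mu\> = 0$ for all $\mu \in F \setminus \{\lambda\}$ forces $\<\alpha^\vee, \lambda\> = 0$; equivalently, whenever $\<\alpha^\vee, \lambda\> \neq 0$ there is a $\mu \in F \setminus \{\lambda\}$ with $\<\alpha^\vee, \mu\> \neq 0$ --- which is exactly the property in the statement.

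I do not expect any real obstacle: the single nontrivial input is the Vinberg--Popov orbit classification, and the rest is bookkeeping with isotropy subalgebras. The one point needing a little care is the equivalence $J_I = J_F \Leftrightarrow |\{\delta \in \pr : \supp(\delta) \subseteq J_I\}| = |\{\delta \in \pr : \supp(\delta) \subseteq J_F\}|$, which is immediate from the observation that each simple root is a positive root supported on itself.
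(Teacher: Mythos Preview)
Your proposal is correct and follows essentially the same approach as the paper. In fact, the paper does not spell out a proof at all: it simply invokes \cite[Theorem~8]{vin&pop-quasi} for the orbit structure and refers the reader to \cite[Proposition~3.1]{degensphermod} for the details, which amounts to precisely the isotropy-algebra computation you carry out.
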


\begin {theorem}[{\cite[Theorem 2.5]{dsmot-preprint}}]  \label{thm:extension-criterion}
Let $v \in V$ be a $\Tad$-eigenvector of weight $\gamma$ such that
 \(
                0 \not=   [v] \in  \Vgg.
\)  
Let $\lambda \in F$. Recall that $z_{\lambda} = x_0 - v_{\lambda}$.
Assume that  $z_{\lambda} \in X_0^{\leq 1}$ and put $Z:=G\cdot x_0 \cup G\cdot
z_{\lambda}.$ Put $a:=\<\lambda^\#, \gamma\>$. 
Denote by   $s \in  H^0 ( G \cdot x_0, \shN_{X_0|V} )^G$  the
$G$-equivariant section such that $s(x_0) = [v]$. 
\begin{enumerate}[A)]
\item If  $a \leq 0$,  then  $s$ extends to an element of $H^0 (Z,  \shN_{X_0|V} )^G$.
\item If  $a > 1$,   then $s$  does not extend  to an element of $H^0 (Z,  \shN_{X_0|V})^G$.
\item If $a = 1$, then the following are equivalent:
\begin{enumerate}[i)]
 \item $s$  extends to an element of $H^0 (Z,  \shN_{X_0|V} )^G$;
\item there exist  $\hat{v}  \in V(\lambda)$  such that  
                         $[v] = [\hat{v}]$  as elements of  $\Vg$.
\end{enumerate}

\end{enumerate}
\end{theorem}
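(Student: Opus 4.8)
The plan is to reduce the assertion to a one--variable computation along a transverse slice to the codimension--one orbit $G\cdot z_\lambda$, and then to read off the three cases from the exponents of the slice parameter that appear. Write $V=V(\lambda)\oplus V'$ with $V':=\bigoplus_{\mu\in F\setminus\{\lambda\}}V(\mu)$ and $v=v^{(\lambda)}+v'$ accordingly; since $[v]$ is a $\Tad$--eigenvector of weight $\gamma$, the component $v^{(\lambda)}$ has $T$--weight $\lambda-\gamma$ and $v'$ is a sum of $T$--weight vectors of weights $\mu-\gamma$, $\mu\in F\setminus\{\lambda\}$. First I would set up the slice: since $F$ is linearly independent, there are a cocharacter $\eta$ of $T$ and a positive integer $d$ with $\lambda\circ\eta$ the $d$--th power map $\Gm\to\Gm$ and $\mu\circ\eta$ trivial for all $\mu\in F\setminus\{\lambda\}$, whence $\eta(s)\cdot x_0=s^{d}v_\lambda+z_\lambda$. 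Thus the affine line $\ell_\lambda:=\{z_\lambda+tv_\lambda:t\in\A^1\}=\overline{\eta(\Gm)\cdot x_0}$ lies in $X_0$, meets $G\cdot x_0$ exactly in $\ell_\lambda\setminus\{z_\lambda\}$, and --- using $\dim G\cdot z_\lambda=\dim X_0-1$ together with the fact that $X_0$ is smooth along $G\cdot z_\lambda$ --- is transverse to $G\cdot z_\lambda$ at $z_\lambda$, with $\mathrm T_{z_\lambda}X_0=\fg\cdot z_\lambda+\k v_\lambda$. Since $X_0$ is normal, $G\cdot z_\lambda$ is a prime divisor of the open subset $X_0^{\le 1}$, so extendability of the $G$--equivariant section $s$ across it is a codimension--one (``no pole'') condition which, by $G$--equivariance, is detected along the single transverse curve $\ell_\lambda$ (this is the local analysis behind \cite[Lemma~3.9]{brion-ihs}). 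Hence $s$ extends to an element of $H^0(Z,\shN_{X_0|V})^G$ if and only if $s|_{\ell_\lambda\setminus\{z_\lambda\}}$ extends, as a section of $\shN_{X_0|V}$ over $\ell_\lambda$, across $z_\lambda$.

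Next I would compute that restriction. As $\eta(s)\in T\subset G$ and $s$ is $G$--equivariant, $s(\eta(s)\cdot x_0)=\eta(s)\cdot[v]=[\eta(s)\cdot v]$ in the fibre $V/\fg\cdot(\eta(s)\cdot x_0)$ of $\shN_{X_0|V}$ at $z_\lambda+s^{d}v_\lambda$. By Proposition~\ref{prop:elemVggfacts}(\ref{item:7}), $\gamma\in\Z\wm=\bigoplus_{\mu\in F}\Z\mu$, and $\<\lambda^\#,\gamma\>=a$, so $\gamma\circ\eta$ is the $(ad)$--th power map; combined with the weights of $v^{(\lambda)}$ and $v'$, this yields
\[\eta(s)\cdot v=s^{d(1-a)}v^{(\lambda)}+s^{-da}v'=s^{-da}(s^{d}v^{(\lambda)}+v').\]
Put $t=s^{d}$ and $\tau(t):=[\,t\,v^{(\lambda)}+v'\,]$, a section of $\shN_{X_0|V}$ regular on all of $\ell_\lambda$ with $\tau(0)=[v']$ in $(\shN_{X_0|V})_{z_\lambda}=V/\mathrm T_{z_\lambda}X_0$. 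Then $s|_{\ell_\lambda\setminus\{z_\lambda\}}$ is the pullback under $s\mapsto s^{d}$ of $t\mapsto t^{-a}\tau(t)$, and since $s\mapsto s^{d}$ multiplies orders of vanishing and of poles by $d$, it does not affect extendability. Finally, because $v'\in V'$, $\fg\cdot z_\lambda\subseteq V'$ and $V(\lambda)\cap V'=0$, one has $\tau(0)=0$ if and only if $v'\in\fg\cdot z_\lambda$; and, writing $v'=\xi\cdot z_\lambda=\xi\cdot x_0-\xi\cdot v_\lambda$ in one direction and projecting $v-\hat v\in\fg\cdot x_0$ onto $V'$ in the other, $v'\in\fg\cdot z_\lambda$ if and only if there exists $\hat v\in V(\lambda)$ with $[v]=[\hat v]$ in $\Vg$.

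It then remains to separate the three cases. If $a\le 0$, then $t^{-a}\tau(t)$ is regular at $t=0$, giving A). If $a=1$, then $t^{-1}\tau(t)$ extends across $t=0$ if and only if $\tau(0)=0$, i.e.\ if and only if $[v]=[\hat v]$ for some $\hat v\in V(\lambda)$, which is C). If $a\ge 2$ and $s$ extended, then $t^{-a}\tau(t)$ would be regular at $t=0$, forcing $\tau(0)=0$, hence $[v]=[\hat v]$ with $\hat v\in V(\lambda)$; replacing $v$ by $\hat v$ (changing neither $[v]$ nor $\gamma$ nor $a$) we would have $v'=0$ and $\tau(t)=t\,[\hat v]$, so $t^{-a}\tau(t)=t^{1-a}[\hat v]$ with $1-a\le -1$ would be regular at $t=0$ only if $[\hat v]=0$ in $V/\mathrm T_{z_\lambda}X_0$, i.e.\ $\hat v\in\k v_\lambda$; but $\hat v\neq 0$ (as $[v]\neq 0$), so then the $T$--weight $\lambda-\gamma$ of $\hat v$ would equal $\lambda$, forcing $\gamma=0$ and $a=0$, a contradiction. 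Hence $s$ does not extend, which is B).

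The step I expect to be the main obstacle is the first one: one must justify rigorously that $z_\lambda\in X_0^{\le 1}$ makes $X_0$ smooth along $G\cdot z_\lambda$, that $\ell_\lambda$ is a genuine $G_{z_\lambda}$--stable transverse slice there (so that extendability is detected by $\ell_\lambda$ alone), and that $t\mapsto[t\,v^{(\lambda)}+v']$ is a regular section of $\shN_{X_0|V}$ over $\ell_\lambda$ whose value at $0$ is the residue obstruction --- this is where the normality of $X_0$, the explicit structure of $X_0$ near its codimension--one orbits, and Brion's extension lemma enter. The exponent bookkeeping and the dictionary ``$v'\in\fg\cdot z_\lambda$'' $\Leftrightarrow$ ``$[v]=[\hat v]$'' are routine.
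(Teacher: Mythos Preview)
The paper does not prove this theorem; it is quoted without proof from \cite[Theorem~2.5]{dsmot-preprint}, so there is no in-paper argument to compare your proposal against.

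That said, your strategy is sound and is the natural approach: reduce by $G$-equivariance to the transverse slice $\ell_\lambda$, compute the section there via the cocharacter $\eta$, and read off pole orders. The points you flag at the end are routine. Normality of $X_0$ (Vinberg--Popov) gives smoothness in codimension one, and since $G\cdot z_\lambda$ is a single $G$-orbit, $X_0$ is smooth at every point of it. Transversality of $\ell_\lambda$ follows because $\fg\cdot z_\lambda\subset V'$ while $v_\lambda\in V(\lambda)$, together with the dimension count $\dim \mathrm T_{z_\lambda}X_0=\dim\fg\cdot z_\lambda+1$. The section $t\mapsto[t\,v^{(\lambda)}+v']$ is visibly the image under the quotient map $V\otimes\cO_{X_0}\to\shN_{X_0|V}$ of a polynomial map $\ell_\lambda\to V$, hence regular.

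One small cleanup in case B): to invoke ``the $T$-weight $\lambda-\gamma$ of $\hat v$'' you should first note that $\hat v$ may be taken to be a $T$-weight vector of weight $\lambda-\gamma$ (project onto that weight space; the difference lies in $\fg\cdot x_0$ because $[v]$ has pure $\Tad$-weight $\gamma$ in $\Vg$). Alternatively, and more directly, observe that $v_\lambda\in\ft\cdot x_0\subset\fg\cdot x_0$ since $F$ is linearly independent, so $\hat v\in\k v_\lambda$ already forces $[\hat v]=0$ in $\Vg$, contradicting $[v]\neq 0$, and no weight argument is needed.
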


\subsection{The spherical root $\gamma = \alpha \in \sr$} 
                                
In this section, we discuss the $\Tad$-weight space
$(\tg\hs)_{(\alpha)}$, where $\alpha$ is a simple
root. Specifically, we will prove the following proposition, which is a special case of 
Theorem~\ref{thm:TX0hs}.

\begin{proposition} \label{prop:tangsr}
If $\alpha$ is a simple root then  $\dim (\tg\hs)_{(\alpha)} \leq
1$.  Moreover, if 
$\dim (\tg\hs)_{(\alpha)} = 1$ then $\alpha$ is
$N$-adapted to $\wm$. 
\end{proposition}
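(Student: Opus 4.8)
The plan is to analyze a $\Tad$-eigenvector $[v] \in \Vgg$ of weight $\alpha \in \sr$ very concretely, using the structure results already established. Since $\gamma = \alpha$ is simple, Lemma~\ref{lem:simpluspos} still applies but is weaker than in the non-simple case: there may be several simple roots $\alpha'$ with $X_{\alpha'} v \neq 0$, and for $\alpha' = \alpha$ itself we only know $X_\alpha v \in \fg \cdot x_0$ is a $\Tad$-eigenvector of weight $0$, hence lies in $\ft \cdot x_0$, i.e. $X_\alpha v = \sum_\lambda c_\lambda v_\lambda$ for some scalars $c_\lambda$. For $\alpha' \neq \alpha$ with $\alpha - \alpha'$ a root, $X_{\alpha'} v = z' X_{-\alpha+\alpha'} x_0$ for a scalar $z'$; but $\alpha - \alpha'$ being a negative of a positive root forces $\alpha' = \alpha$, so actually the only simple root that can move $v$ is $\alpha$. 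First I would pin down $v$ up to the freedom $[v] = [v + \fg \cdot x_0 \cap V]$: writing $v = \sum_\lambda v_\lambda^{(v)}$ with $v_\lambda^{(v)} \in V(\lambda)$ of weight $\alpha$, the conditions from Proposition~\ref{prop:elemVggfacts}(\ref{item:7}) (that $X_\beta v \in \fg \cdot x_0$ for $\beta \in \sr \cup -(\sr \cap F^\perp)$ and $\alpha \in \Z\wm$) constrain which $\lambda$ can contribute and normalize the eigenvector. I expect this to show $\dim \Vgg_{(\alpha)}$ can be bigger than $1$ in general — it is only after passing to $\tg\hs$ via the extension criterion that one gets the dimension bound.

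The core of the argument is the extension criterion, Theorem~\ref{thm:extension-criterion}. For $\gamma = \alpha$ and each $\lambda \in F$ one has $a = \<\lambda^\#, \alpha\>$, which is the coefficient of $\lambda$ in the (unique, since $\wm$ is free) expression of $\alpha$ as a $\Z$-combination of $F$. Summing over codimension-one orbits $G \cdot z_\lambda$ (Proposition~\ref{prop:codim1orbits}), combined with Proposition~\ref{prop:ext_lemma}, $[v] \in \tg\hs$ if and only if $s$ extends across every $z_\lambda \in X_0^{\le 1}$. The key point: whenever $a = \<\lambda^\#, \alpha\> \ge 2$ for some relevant $\lambda$, case B) of the criterion says $s$ does not extend, so $\tg\hs_{(\alpha)} = 0$; whenever $a = 1$, case C) forces $[v] = [\hat v]$ with $\hat v \in V(\lambda)$, which cuts the eigenspace down. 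I would organize the proof by these cases on the tuple $(\<\lambda^\#,\alpha\>)_{\lambda \in F}$ (which is essentially the position of $\alpha$ in $\Z\wm$), showing that in the only surviving configurations the extended section is unique up to scalar, giving $\dim \le 1$.

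For the "moreover" part, when $\dim(\tg\hs)_{(\alpha)} = 1$ I would extract from the surviving configuration exactly the combinatorial data needed to check the six conditions of Corollary~\ref{cor:indiv_Nadapt-spher-roots} for $\sigma = \alpha$: condition (\ref{item:inasr1}) $\alpha \in \Z\wm$ is immediate (it is a $\Tad$-weight in $\Vgg$); (\ref{item:inasr2}) compatibility with $S^p(\wm)$ is Theorem~\ref{thm:vggweightsrefinement}; conditions (\ref{item:inasr4a})--(\ref{item:inasr4c}) about $a(\alpha)$ and $E(\wm)$ should translate, via the identifications $E(\wm) = \{\lambda^\# : \lambda \in F\}$ and the description $a(\alpha) = \{\delta : \delta(\alpha) = 1,\ \delta \in E(\wm) \text{ or } \alpha^\vee - \delta \in E(\wm)\}$, directly into the non-extension/extension dichotomy of the criterion — roughly, $\<\lambda^\#,\alpha\> \le 1$ for all $\lambda$ is (\ref{item:inasr4c}), the existence of the section plus its uniqueness encodes that $a(\alpha)$ has exactly two elements and that they take nonnegative values on $\wm$; conditions (\ref{item:inasr5}) and (\ref{nsorths}) are vacuous or follow from a short direct check since $\alpha$ itself is the spherical root. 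The main obstacle I anticipate is the bookkeeping in case C) of the extension criterion: showing that the requirement "$[v] = [\hat v]$ for some $\hat v \in V(\lambda)$ at every $\lambda$ with $\<\lambda^\#,\alpha\>=1$" is simultaneously satisfiable by at most a one-dimensional space of $[v]$, and matching this precisely with "$a(\alpha)$ has two elements," will require carefully tracking the components $v_\lambda^{(v)}$ and the scalars $c_\lambda$ in $X_\alpha v = \sum c_\lambda v_\lambda$ against the coroot pairing $\<\alpha^\vee, \lambda\>$.
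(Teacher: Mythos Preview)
Your approach is essentially the same as the paper's --- reduce to the extension criterion, Theorem~\ref{thm:extension-criterion}, and read off the N-adapted conditions from the surviving configurations --- but there is one genuine gap. You assume that whenever $a = \<\lambda^\#,\alpha\> > 0$ the extension criterion applies at $z_\lambda$, i.e.\ that $G\cdot z_\lambda$ has codimension $1$ in $X_0$. This is not automatic from Proposition~\ref{prop:codim1orbits}: that proposition characterizes codimension-$1$ orbits by a condition on $\lambda$ relative to \emph{all} simple roots, and one must check it is satisfied. The paper isolates this as a separate lemma (Lemma~\ref{lem:codim1orb}): write $\alpha = \sum_{\mu \in F_1} a_\mu \mu - \sum_{\mu \in F_2} a_\mu \mu$ with all $a_\mu \ge 0$, so $\lambda \in F_1$; then for any $\beta \in S\setminus\{\alpha\}$ with $\<\beta^\vee,\lambda\> \neq 0$, pairing the displayed equation with $\beta^\vee$ and using $\<\beta^\vee,\alpha\>\le 0$ and $F \subset \dw$ forces some $\mu \in F_2$ with $\<\beta^\vee,\mu\>\neq 0$; the case $\beta = \alpha$ is handled by the hypothesis $|F(\alpha)| \ge 2$. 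Without this step, your case B) and case C) arguments would only apply to those $\lambda$ for which you already know the orbit is codimension $1$, and you could not conclude $\<\lambda^\#,\alpha\>\le 1$ for \emph{all} $\lambda$.

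Two minor points. First, the description of $\Vgg_{(\alpha)}$ is cleaner than you fear: the $\Tad$-weight space of weight $\alpha$ in $V$ is exactly $\<X_{-\alpha}v_{\lambda_1},\ldots,X_{-\alpha}v_{\lambda_p}\>_\k$ where $F(\alpha)=\{\lambda_1,\ldots,\lambda_p\}$, and its intersection with $\fg\cdot x_0$ is the line through $X_{-\alpha}x_0$; so $\Vgg_{(\alpha)}$ has dimension $p-1$ and no delicate bookkeeping with the $c_\lambda$ is needed (this is Lemma~\ref{lem:vggalpha}). Second, to get $|a(\alpha)|=2$ the paper first shows $|E(\alpha)|\le 2$ (three elements would force $[v]=[\hat v_\lambda]=[\hat v_\mu]=[\hat v_\nu]$ with $\hat v_\star$ in three different $V(\lambda_\star)$, impossible by the dimension count just mentioned), and then, when $|E(\alpha)|=2$, shows both elements lie in $\{\lambda^\# : \lambda \in F(\alpha)\}$, from which $a(\alpha)=\{\lambda^\#,\alpha^\vee-\lambda^\#\}$ with $\lambda^\#\neq \alpha^\vee-\lambda^\#$ follows using $|F(\alpha)|\ge 2$.
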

The proof of Proposition~\ref{prop:tangsr} will be given on page~\pageref{proofproptangsr}. We first need a few lemmas and introduce notation we
will use for the remainder of this section. Put $F(\alpha) :=
\{\lambda \in F \colon \<\alpha^\vee,\lambda\> \neq 0\}$. We order
the elements of $F$ such that for $F(\alpha) = \{\lambda_1, \lambda_2,
\ldots, \lambda_p\}$ for some $p \leq r$. Then $F \setminus F(\alpha)
= \{\lambda_{p+1}, \lambda_{p+2}, \ldots,\lambda_r\}$. 

\begin{lemma} \label{lem:vggalpha}
For every $i \in \{1,2,\ldots, p\}$, put $v_i = X_{-\alpha} v_{\lambda_i}$. Then
$v_1 + v_2+ \ldots + v_p$ spans the $\Tad$-weight space of weight $\alpha$  in $\fg \cdot x_0$. If $\alpha \in \Z\wm$,
then  
\[\Vgg_{(\alpha)} = \<[v_1], [v_2],\ldots,[v_{p-1}]\>_{\k}.\]
\end{lemma}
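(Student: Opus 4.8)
The plan is to establish the two assertions in turn, in both cases by reducing to Proposition~\ref{prop:elemVggfacts}. One may assume $p\ge1$, since if $F(\alpha)=\emptyset$ then $\alpha\in F^\perp$ and the weight-$\alpha$ subspaces of $\fg\cdot x_0$, of $V$, of $\Vg$ and of $\Vgg$ are all zero, so both claims hold trivially. For the first assertion, I would invoke the $\Tad$-eigenbasis of $\fg\cdot x_0$ given in Proposition~\ref{prop:elemVggfacts}(\ref{item:6}): for our twisted $\Tad$-action the vector $v_\lambda$ has weight $0$ and $X_{-\beta}\cdot x_0$ has weight $\beta$, so, since $\alpha\neq0$ and $\alpha\in\pr\setminus F^\perp$, the weight-$\alpha$ subspace of $\fg\cdot x_0$ is the line spanned by $X_{-\alpha}\cdot x_0$. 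Expanding $X_{-\alpha}\cdot x_0=\sum_{\lambda\in F}X_{-\alpha}v_\lambda$ and using that for $\alpha$ simple and $\lambda$ dominant $X_{-\alpha}v_\lambda$ is nonzero precisely when $\langle\alpha^\vee,\lambda\rangle\neq0$, this line equals $\k(v_1+\dots+v_p)$, which proves the first claim.

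For the second assertion, I would first describe $(\Vg)_{(\alpha)}$ as a vector space. As $\alpha$ is simple and each $\lambda_i$ is dominant, $\dim V(\lambda_i)_{\lambda_i-\alpha}\leq1$, with equality exactly when $\langle\alpha^\vee,\lambda_i\rangle\neq0$; hence the $\Tad$-weight-$\alpha$ subspace $V_{(\alpha)}$ of $V$ equals $\langle v_1,\dots,v_p\rangle$. Quotienting by the line $\k(v_1+\dots+v_p)=(\fg\cdot x_0)_{(\alpha)}$ obtained in the first part, $(\Vg)_{(\alpha)}$ has dimension $p-1$ and is spanned by $[v_1],\dots,[v_p]$ subject to the single relation $[v_1]+\dots+[v_p]=0$; in particular $[v_1],\dots,[v_{p-1}]$ is a basis of $(\Vg)_{(\alpha)}$. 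It therefore remains to show that $(\Vg)_{(\alpha)}\subseteq\Vgg$. By Proposition~\ref{prop:elemVggfacts}(\ref{item:7}), and since $\alpha\in\Z\wm$ by hypothesis, this amounts to checking that $X_\beta v\in\fg\cdot x_0$ for every $v=\sum_{i=1}^p c_iv_i$ (with arbitrary $c_i\in\k$) and every $\beta\in\sr\cup-(\sr\cap F^\perp)$.

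The verification is a short case analysis, in which the hypothesis $\alpha\in\Z\wm$ is used at exactly one point. If $\beta=\alpha$, then $X_\alpha v=\sum_{i=1}^p c_i\langle\alpha^\vee,\lambda_i\rangle v_{\lambda_i}$, which is a linear combination of the basis vectors $v_{\lambda_i}$ of $\fg\cdot x_0$. If $\beta\in\sr\setminus\{\alpha\}$, then $[X_\beta,X_{-\alpha}]=0$ because a difference of two distinct simple roots is not a root, and since $X_\beta v_{\lambda_i}=0$ we get $X_\beta v=0$. The remaining case $\beta=-\beta'$ with $\beta'\in\sr\cap F^\perp$ is where $\alpha\in\Z\wm$ enters: writing $\alpha=\sum_j n_j\lambda_j$ with $n_j\in\Z$ gives $\langle(\beta')^\vee,\alpha\rangle=\sum_j n_j\langle(\beta')^\vee,\lambda_j\rangle=0$, and since $\beta'\neq\alpha$ (otherwise $\alpha\in F^\perp$, contradicting $p\geq1$) this forces $\alpha+\beta'$ not to be a root; hence $[X_{-\beta'},X_{-\alpha}]=0$, and together with $X_{-\beta'}v_{\lambda_i}=0$ (because $\langle(\beta')^\vee,\lambda_i\rangle=0$) we obtain $X_{-\beta'}v=0$. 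Thus $(\Vg)_{(\alpha)}\subseteq\Vgg$, whence $\Vgg_{(\alpha)}=(\Vg)_{(\alpha)}=\langle[v_1],\dots,[v_{p-1}]\rangle_{\k}$.

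The only point requiring any care is the last case of the case analysis: one has to notice that the combinatorial hypothesis $\alpha\in\Z\wm$ is exactly what rules out $\alpha+\beta'$ being a root when $\beta'\in\sr\cap F^\perp$. Everything else reduces to standard $\sl(2)$-theory and the description of $\fg\cdot x_0$ recalled in Proposition~\ref{prop:elemVggfacts}.
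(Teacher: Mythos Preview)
Your proof is correct and follows exactly the approach the paper sketches: the paper says only that ``by elementary highest weight theory'' the weight-$\alpha$ subspace of $V$ is $\langle v_1,\dots,v_p\rangle$ with intersection $\k(v_1+\dots+v_p)$ in $\fg\cdot x_0$, and that a ``straightforward application of Proposition~\ref{prop:elemVggfacts}'' gives $[v_i]\in\Vgg$; you have carried out that straightforward application in full, including the case analysis on $\beta\in\sr\cup-(\sr\cap F^\perp)$ and the observation that $\alpha\in\Z\wm$ forces $\langle(\beta')^\vee,\alpha\rangle=0$ for $\beta'\in\sr\cap F^\perp$.
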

\begin{proof}
By elementary highest weight theory, the $\Tad$-weight space in $V$ of weight
$\alpha$ is spanned by $\{v_1,v_2,\ldots,v_p\}$, and the intersection
of this weight space with $\fg \cdot x_0$ is the line spanned by
$X_{-\alpha}x_0 =  v_1 + v_2+ \ldots + v_p$. A straightforward
application of Proposition~\ref{prop:elemVggfacts} shows that $[v_i]
\in \Vgg$ for every $i \in \{1,2,\ldots,p-1\}$. 
\end{proof}

\begin{lemma} \label{lem:codim1orb}
Suppose $\alpha \in \Z\wm$ and $|F(\alpha)| \geq 2$. 
Let $\lambda \in F$. If $\<\lambda^\#,\alpha\> >0$, then $G\cdot
z_{\lambda}$ has codimension $1$ in $X_0$.
\end{lemma}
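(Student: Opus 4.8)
The plan is to combine Proposition~\ref{prop:codim1orbits} with the hypotheses on $\lambda$. Recall that $G\cdot z_\lambda$ has codimension $1$ in $X_0$ precisely when $\lambda$ satisfies the criterion of Proposition~\ref{prop:codim1orbits}: for every $\beta\in\sr$ with $\<\beta^\vee,\lambda\>\neq0$ there exists $\mu\in F\setminus\{\lambda\}$ with $\<\beta^\vee,\mu\>\neq0$. So I would fix a simple root $\beta$ with $\<\beta^\vee,\lambda\>\neq0$ and produce such a $\mu$.

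First I would treat $\beta=\alpha$ separately. The hypothesis $|F(\alpha)|\geq2$ means there are at least two elements of $F$ on which $\alpha^\vee$ is nonzero; since $\<\alpha^\vee,\lambda\>\neq0$ would make $\lambda$ one of them, there is another $\mu\in F(\alpha)\setminus\{\lambda\}$, and that $\mu$ works. (Here I should note that the hypothesis $\<\lambda^\#,\alpha\>>0$ forces $\lambda\in\supp(\alpha)$ in the expansion of $\alpha$ in the basis $F$ of $\wm$, but I do not even need $\<\alpha^\vee,\lambda\>\neq0$; the argument for $\beta=\alpha$ only uses $|F(\alpha)|\geq2$, and if $\<\alpha^\vee,\lambda\>=0$ the condition for $\beta=\alpha$ is vacuous anyway.)

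The main point, and the only place the hypothesis $\<\lambda^\#,\alpha\>>0$ enters, is the case $\beta\neq\alpha$. Suppose $\<\beta^\vee,\lambda\>\neq0$ but $\<\beta^\vee,\mu\>=0$ for all $\mu\in F\setminus\{\lambda\}$. Then $\beta^\vee$, restricted to $\Z\wm$, is a nonzero multiple of $\lambda^\#$: explicitly $\beta^\vee=\<\beta^\vee,\lambda\>\lambda^\#$ on $\Z\wm$. Now apply this to $\alpha\in\Z\wm$ (we are given $\alpha\in\Z\wm$): $\<\beta^\vee,\alpha\>=\<\beta^\vee,\lambda\>\<\lambda^\#,\alpha\>$. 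The right-hand side is nonzero since both factors are, so $\<\beta^\vee,\alpha\>\neq0$ with $\beta\neq\alpha$, meaning $\beta\in\supp(\alpha)$ and $\<\beta^\vee,\alpha\><0$ (a simple root $\beta\neq\alpha$ pairs non-positively with another simple root $\alpha$). Hence $\<\beta^\vee,\lambda\>$ and $\<\lambda^\#,\alpha\>$ have opposite signs; since $\<\lambda^\#,\alpha\>>0$ by hypothesis, $\<\beta^\vee,\lambda\><0$. But $\lambda$ is a \emph{dominant} weight, so $\<\beta^\vee,\lambda\>\geq0$ — a contradiction. Therefore no such $\beta$ exists, the criterion of Proposition~\ref{prop:codim1orbits} is satisfied, and $G\cdot z_\lambda$ has codimension $1$ in $X_0$.

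I expect the only subtlety to be making the restriction-to-$\Z\wm$ step precise: one must observe that $\beta^\vee$, a priori a functional on $\wl$, restricts to a functional on $\Z\wm$, and that since $F$ is a basis of $\Z\wm$ and $\beta^\vee$ kills all basis vectors except $\lambda$, it equals $\<\beta^\vee,\lambda\>\lambda^\#$ there; then one may pair with $\alpha$ precisely because $\alpha\in\Z\wm$. Everything else is a sign-chasing argument using dominance of $\lambda\in F\subset\dw$ and the fact that distinct simple roots pair non-positively.
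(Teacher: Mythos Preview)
Your proof is correct and follows essentially the same approach as the paper: both reduce to Proposition~\ref{prop:codim1orbits}, handle $\beta=\alpha$ via $|F(\alpha)|\ge 2$, and for $\beta\neq\alpha$ pair $\beta^\vee$ with the expression of $\alpha$ in the basis $F$, using $\<\beta^\vee,\alpha\>\le 0$ together with dominance of the $\mu\in F$. The only cosmetic difference is that the paper argues directly (writing $\alpha=\sum_{F_1}a_\mu\mu-\sum_{F_2}a_\mu\mu$ and exhibiting a $\mu\in F_2$ with $\<\beta^\vee,\mu\>>0$), whereas you argue by contradiction via $\beta^\vee|_{\Z\wm}=\<\beta^\vee,\lambda\>\lambda^\#$; one small slip is the phrase ``$\beta\in\supp(\alpha)$'' (since $\alpha$ is simple its support is $\{\alpha\}$), but your actual conclusion $\<\beta^\vee,\alpha\><0$ is correct and is all you use.
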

\begin{proof}
We will apply Proposition~\ref{prop:codim1orbits}.  
Since $\alpha \in \Z\wm$ and $\wm$ is free, there exists a partition $F = F_1 \cup
F_2$ of $F$ and for every $\mu \in F$ a unique nonnegative integer
$a_{\mu}$ such that 
\begin{equation} \label{eq:2}
\alpha = \sum_{\mu \in F_1} a_{\mu} \mu - \sum_{\mu \in F_2} a_{\mu}
\mu.
\end{equation}
By assumption $\lambda \in F_1$ and $a_{\lambda} = \<\lambda^\#,\alpha\> > 0$. 
Let $\beta \in \sr \setminus \{\alpha\}$ such that
$\<\beta^\vee,\lambda\> \neq 0$. Then, since $F \subset \dw$ and
 $\<\beta^\vee,\alpha\> \leq 0$, it follows from the expression
(\ref{eq:2}) that 
\[\sum_{\mu\in F_2}a_{\mu}\<\beta^{\vee},\mu\> \geq a_{\lambda}\<\beta^\vee,\lambda\>>0.\]
In particular, there exists $\mu \in F_2$ such that $\<\beta^\vee,\mu\> \neq 0$.
Furthermore, whether $\<\alpha^\vee,\lambda\>$ is zero or not, 
by the assumption that $|F(\alpha)| \geq 2$, there exists $\mu \in F
\setminus \{\lambda\}$ such that $\<\alpha^{\vee},\mu\> \neq 0$. 
This finishes the proof. 
\end{proof}

\begin{lemma} \label{lem:tangsr}
Let $\alpha$ be a simple root. Recall that $F(\alpha) = \{\lambda \in F\colon
\<\alpha^{\vee},\lambda\> \neq 0\}$ and put $E(\alpha) := \{ \delta \in E(\wm)
\colon \<\delta,\alpha\> = 1\}$.  Then $\dim (\tg\hs)_{(\alpha)} \leq 1$ and
if $\dim (\tg\hs)_{(\alpha)} = 1$ then 
\begin{enumerate}[(i)]
\item $\alpha \in \Z\wm$; \label{item:simpr1}
\item $|F(\alpha)| \ge 2$; \label{item:simpr2}
\item $\<\delta,\alpha\> \le 1$ for all $\delta \in E(\wm)$;   \label{item:simpr3}
\item $|E(\alpha)| \le 2$;  \label{item:simpr4}
\item If $|E(\alpha)| = 2$ then $E(\alpha) = \{\lambda^\# \in E(\wm)
  \colon \lambda \in F(\alpha)\}.$ \label{item:simpr5}
\end{enumerate}
\end{lemma}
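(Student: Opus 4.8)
The plan is to work directly with the extension criterion (Theorem~\ref{thm:extension-criterion}) applied to the weight vectors $[v_1], [v_2], \ldots, [v_{p-1}]$ spanning $\Vgg_{(\alpha)}$ identified in Lemma~\ref{lem:vggalpha}. Recall $v_i = X_{-\alpha}v_{\lambda_i}$ for $i \in \{1, \ldots, p\}$, where $F(\alpha) = \{\lambda_1, \ldots, \lambda_p\}$; note $v_i \in V(\lambda_i)$, so each $[v_i]$ with $i \le p-1$ already has a representative in a single $V(\lambda_i)$. First I would dispose of the trivial cases: if $\alpha \notin \Z\wm$ then $\Vgg_{(\alpha)} = 0$ by Proposition~\ref{prop:elemVggfacts}(\ref{item:7}), so $(\tg\hs)_{(\alpha)} = 0$ and there is nothing to prove; this gives (\ref{item:simpr1}). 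Similarly if $|F(\alpha)| \le 1$: if $p = 0$ then $\alpha = 0$, impossible; if $p = 1$ then $\alpha = a_{\lambda_1}\lambda_1$ for some $a_{\lambda_1} > 0$, but $\alpha$ is a root and $\lambda_1$ dominant, which is impossible unless... here one checks it cannot happen, and in any case $\Vgg_{(\alpha)} = \langle [v_1], \ldots, [v_{p-1}]\rangle = 0$, so again $(\tg\hs)_{(\alpha)} = 0$. Thus whenever $\dim(\tg\hs)_{(\alpha)} = 1$ we automatically have (\ref{item:simpr1}) and (\ref{item:simpr2}), so from now on assume $\alpha \in \Z\wm$ and $p \ge 2$.

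**Applying the extension criterion.** Under the standing assumptions, Lemma~\ref{lem:codim1orb} tells us that $G\cdot z_\lambda$ has codimension $1$ in $X_0$ whenever $\langle \lambda^\#, \alpha\rangle > 0$, so $z_\lambda \in X_0^{\le 1}$ and Theorem~\ref{thm:extension-criterion} applies with this $\lambda$, with $a = \langle \lambda^\#, \alpha\rangle$. By Proposition~\ref{prop:ext_lemma}, a section $s \in H^0(G\cdot x_0, \shN_{X_0|V})^G$ extends to $X_0$ if and only if it extends across each such codimension-$1$ orbit $G\cdot z_\lambda$. Now the key observation: for $\lambda = \lambda_j$ with $j \in \{1, \ldots, p-1\}$, one computes $\langle \lambda_j^\#, \alpha\rangle$; since $\alpha$ is a \emph{simple} root and a root, its expansion $\alpha = \sum_{\mu \in F_1} a_\mu \mu - \sum_{\mu \in F_2}a_\mu\mu$ in the basis $F$ has the property that the $a_\mu$ are small — I expect to show $\langle \lambda^\#, \alpha\rangle \in \{-1, 0, 1\}$ for the relevant $\lambda$, which is where condition (\ref{item:simpr3}) comes from. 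More precisely: if some $\delta \in E(\wm)$ had $\langle \delta, \alpha\rangle \ge 2$, write $\delta = \lambda^\#$; then for the section $s$ with $s(x_0) = [v]$, any nonzero $[v] \in \Vgg_{(\alpha)}$ would fail to extend across $G\cdot z_\lambda$ by part B) of Theorem~\ref{thm:extension-criterion} (since $a \ge 2$), forcing $(\tg\hs)_{(\alpha)} = 0$; hence if the weight space is nonzero, (\ref{item:simpr3}) holds.

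**The dimension bound and the structure of $E(\alpha)$.** With (\ref{item:simpr3}) in force, every $\delta \in E(\wm)$ has $\langle\delta,\alpha\rangle \le 1$, and $E(\alpha) = \{\delta \in E(\wm) : \langle\delta,\alpha\rangle = 1\} = \{\lambda^\# : \lambda \in F, \langle\lambda^\#,\alpha\rangle = 1\}$. The heart of the argument is then: for $[v] = \sum_{j=1}^{p-1} c_j [v_j] \in \Vgg_{(\alpha)}$, the section $s$ with $s(x_0)=[v]$ extends to $X_0$ iff for every $\lambda = \lambda_j$ with $\langle\lambda_j^\#,\alpha\rangle = 1$ (i.e. $\lambda_j^\# \in E(\alpha)$), part C) of Theorem~\ref{thm:extension-criterion} is satisfied, namely $[v]$ has a representative $\hat v \in V(\lambda_j)$ as an element of $\Vg$. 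I would analyze when $c_1[v_1] + \cdots + c_{p-1}[v_{p-1}]$ equals $[\hat v]$ for some $\hat v \in V(\lambda_j)$ modulo $\fg\cdot x_0$: writing $X_{-\alpha}x_0 = v_1 + \cdots + v_p \in \fg\cdot x_0$, one gets $[v_p] = -[v_1] - \cdots - [v_{p-1}]$, so the condition $[v] = [\hat v]$ with $\hat v \in V(\lambda_j)$ means the coefficients of $[v_i]$ for $i \ne j$ (after rewriting in terms of a basis, using the relation above) must vanish. This pins down $[v]$ up to scalar, giving $\dim(\tg\hs)_{(\alpha)} \le 1$, and simultaneously shows that extension is possible only when the indices $\lambda_j$ with $\langle\lambda_j^\#,\alpha\rangle = 1$ are constrained — namely, for the weight space to be nonzero, there must be at least one such $\lambda$ and the arithmetic forces $|E(\alpha)| \le 2$ (since $\alpha = \sum a_\mu\mu - \sum a_\mu \mu$ with $\langle\mu^\#,\alpha\rangle \le 1$ for all $\mu$ means at most two $\mu \in F_1$ can contribute with coefficient $1$, because $\alpha$ being a simple positive root tightly restricts $|F_1|$), giving (\ref{item:simpr4}); and when $|E(\alpha)| = 2$, compatibility of the extension conditions at the two orbits forces $E(\alpha) = \{\lambda^\# : \lambda \in F(\alpha)\}$, i.e. (\ref{item:simpr5}). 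The main obstacle I anticipate is the bookkeeping in this last step: carefully tracking which linear combinations $\sum c_j[v_j]$ simultaneously satisfy the part-C) representative condition at \emph{every} relevant $\lambda_j$, and extracting from a nonempty solution set both the dimension bound and the precise identification $E(\alpha) = \{\lambda^\# : \lambda \in F(\alpha)\}$ — this requires relating the combinatorics of the expansion of $\alpha$ in $F$ (which $\lambda_j$ have $\langle\lambda_j^\#,\alpha\rangle$ equal to $1$, $0$, or negative) to the structure of $F(\alpha)$, and using that $\alpha$ is simple to rule out the expansion having more than two positive-coefficient basis elements contributing value $1$.
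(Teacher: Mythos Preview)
Your overall strategy---use Proposition~\ref{prop:elemVggfacts} and Lemma~\ref{lem:vggalpha} for (\ref{item:simpr1}) and (\ref{item:simpr2}), then Lemma~\ref{lem:codim1orb} together with part~B) of Theorem~\ref{thm:extension-criterion} for (\ref{item:simpr3}), and part~C) for the dimension bound---is exactly the paper's approach, and those parts are fine.

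The gap is in your treatment of (\ref{item:simpr4}) and (\ref{item:simpr5}). You propose to bound $|E(\alpha)|$ by arithmetic on the expansion $\alpha=\sum_{\mu\in F_1}a_\mu\mu-\sum_{\mu\in F_2}a_\mu\mu$, invoking the vague principle that ``$\alpha$ being a simple positive root tightly restricts $|F_1|$''. This is neither justified nor needed: there is no a~priori bound on the number of basis elements appearing with positive coefficient when a simple root is expanded in an arbitrary free basis $F$ of dominant weights. The obstacle you anticipate is an artifact of taking the wrong route.

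The paper bypasses this entirely and uses \emph{only} the extension criterion plus the linear algebra already contained in Lemma~\ref{lem:vggalpha}. Suppose $|E(\alpha)|\ge 3$, say $\lambda^\#,\mu^\#,\nu^\#\in E(\alpha)$. Part~C) of Theorem~\ref{thm:extension-criterion} forces $[v]=[y_\lambda]=[y_\mu]=[y_\nu]$ with $y_\lambda\in V(\lambda)$, $y_\mu\in V(\mu)$, $y_\nu\in V(\nu)$. Since the $T_{\mathrm{ad}}$-weight-$\alpha$ part of $\fg\cdot x_0$ is the line $\k\,(v_1+\cdots+v_p)$, we have $y_\lambda-y_\mu=s(v_1+\cdots+v_p)$ for some $s\in\k$; but the $V(\nu)$-component of the left side is zero, so $s=0$, whence $y_\lambda=y_\mu$ as vectors in $V$. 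Living in distinct summands, both vanish, so $[v]=0$, contradiction. This gives (\ref{item:simpr4}). The same argument with two elements $\lambda^\#,\mu^\#\in E(\alpha)$ shows that if $F(\alpha)$ had a third element $\nu$, then again $s=0$ and $[v]=0$; hence $F(\alpha)=\{\lambda,\mu\}$, which is (\ref{item:simpr5}). No combinatorics of the expansion of $\alpha$ in $F$ is required.
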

\begin{proof}
Let us assume that $\dim (\tg\hs)_{(\alpha)} \geq 1$.  
Let $[v]$ be a nonzero element of
  $\Vgg_{(\alpha)}$ such that the $G$-equivariant section $s \in H^0(G\cdot x_0,\shN)^G$ defined by
$s(x_0) = [v]$ extends to $X_0$. By Proposition~\ref{prop:elemVggfacts} and  Lemma~\ref{lem:vggalpha}, 
conditions (\ref{item:simpr1}) and (\ref{item:simpr2}) hold. 
Lemma~\ref{lem:codim1orb} and Theorem~\ref{thm:extension-criterion} then imply (\ref{item:simpr3}). 
We now prove (\ref{item:simpr4}). If $|E(\alpha)|\ge 3$, then by 
Theorem~\ref{thm:extension-criterion} and Lemma~\ref{lem:codim1orb}, 
there exist at least three elements $\lambda, \mu,\nu \in F(\alpha)$ such that 
there exist $y_\lambda \in V(\lambda)$, $y_\mu \in V(\mu)$ and $y_\nu \in V(\nu)$ 
for which  $[v]=[y_{\lambda}]=[y_{\mu}]=[y_\nu] \in \Vg$. 
This is impossible by Lemma~\ref{lem:vggalpha} and (\ref{item:simpr4}) is proved. 
We turn to (\ref{item:simpr5}). Suppose $E(\alpha)=\{\lambda^\#, \mu^\#\}$.  
By Lemma~\ref{lem:codim1orb} and Theorem~\ref{thm:extension-criterion},  
there exist $y_\lambda \in V(\lambda)$ and $y_\mu \in V(\mu)$ such that $[v]=[y_{\lambda}]=[y_{\mu}] \in \Vg$. 
Using Lemma~\ref{lem:vggalpha} again,  (\ref{item:simpr5}) follows. 

Finally, we show that $\dim (\tg\hs)_{(\alpha)} \leq 1$. Since $\alpha \in \Z\wm$, there is at least one $\lambda \in E(\alpha)$. Lemma~\ref{lem:codim1orb} and Theorem~\ref{thm:extension-criterion} again imply that $[v]=[y_{\lambda}]$ for some  $y_\lambda \in V(\lambda)$,  which finishes the proof.    
\end{proof}

\begin{remark}
By Corollary~\ref{cor:TX0hs} and the proof of Proposition~\ref{lem:tangsr} below, the preceding lemma gives alternative conditions for $\alpha$ to be N-adapted to $\wm$ when $\wm$ is free. We list them as a separate lemma, since they seem easier to check then those in Corollary~\ref{cor:indiv_Nadapt-spher-roots}. 
\end{remark}

\begin{proof}[Proof of Proposition~\ref{prop:tangsr}]   \label{proofproptangsr}
Lemma~\ref{lem:tangsr} says that  $\dim (\tg\hs)_{(\alpha)} \leq 1$. We assume 
conditions (\ref{item:simpr1}) -- (\ref{item:simpr5}) in Lemma~\ref{lem:tangsr} 
and deduce conditions (\ref{item:inasr1}), (\ref{item:inasr2}),
(\ref{item:inasr4a}), (\ref{item:inasr4b})
and (\ref{item:inasr4c})  in Corollary~\ref{cor:indiv_Nadapt-spher-roots}. 
For (\ref{item:inasr1}) and  (\ref{item:inasr4c}), there is nothing to show. 
For the spherical root $\alpha$,  (\ref{item:inasr2}) follows from  (\ref{item:inasr1}). 
To show (\ref{item:inasr4a}), we first claim that $E(\alpha)$ contains at least one element. Indeed, $\alpha \in \Z\wm$ and $\<\lambda^\#,\alpha\>>0$ for at least one $\lambda\in F$, for otherwise $-\alpha$ would be a dominant weight. The claim now follows from (\ref{item:simpr3}). 
Next, suppose $\lambda^\# \in E(\alpha)$. Clearly $\lambda^\# \in a(\alpha)$. 
We claim that $\alpha^{\vee}-\lambda^\# \neq \lambda^\#$. 
Otherwise, we would have $\lambda^\# = \frac{1}{2}\alpha^{\vee}$, which would contradict (\ref{item:simpr2}). 
This shows $|a(\alpha)|\ge 2$. Now, if $a(\alpha)$ had a third element, then $E(\alpha)$ would have two elements, 
say $\lambda^\#$ and $\mu^\#$, with $\alpha^{\vee}-\lambda^\# \neq \mu^\#$. 
But this yields a contradiction: by (\ref{item:simpr5}), we have that $\<\alpha^{\vee}, \lambda\> = \<\alpha^{\vee},\mu\>=1$ 
and then that  $\alpha^{\vee}-\lambda^\#$ takes the same values as $\mu^\#$ on $F$. 
We have deduced  (\ref{item:inasr4a}). Finally, (\ref{item:inasr4b}) is clear 
since $a(\alpha)=\{\lambda^\#, \alpha^{\vee}-\lambda^\#\}$ for some $\lambda \in F(\alpha)$.   
\end{proof}

\subsection{The non-simple spherical roots} \label{sec:nonsimspher}

To complete the proof of Theorem~\ref{thm:TX0hs}, we show in this section that if
$\gamma$ is a spherically closed spherical root, which is not a simple root and which
occurs as a $\Tad$-weight in $\tg\hs$, then $\gamma$ is N-adapted to
$\wm$. 
 
We recall that conditions (1) and (2) of Corollary~\ref{cor:indiv_Nadapt-spher-roots} follow from Theorem~\ref{thm:vggweightsrefinement}.
We now verify condition (3): if $\delta \in E(\wm)$ such that
  $\<\delta, \gamma\> > 0$ then there exists $\beta \in S\setminus
  S^p(\wm)$ such that $\beta^\vee$ is a positive multiple of
  $\delta$. 
The argument is the same for all the non-simple spherical roots $\gamma$.

Let $v \in V$ be a $\Tad$-eigenvector of weight $\gamma$ such that
 \(
                0 \not=   [v] \in  \Vgg.
\)  
Let $\lambda \in F$. Recall that $z_{\lambda} = x_0 - v_{\lambda}$
and put $a=\<\lambda^\#, \gamma\>$. Assume $a>0$. 

We claim that under this assumption, $\mathrm{codim}_{X_0}G\cdot z_{\lambda}\geq2$.
Indeed, if $\mathrm{codim}_{X_0}G\cdot z_{\lambda}$ were $1$, then by Theorem~\ref{thm:extension-criterion}(B) $a=1$ 
and by Theorem~\ref{thm:extension-criterion}(C) there would exist  $\hat{v}  \in V(\lambda)$  such that  
                         $[v] = [\hat{v}]$  as elements of  $\Vg$.
Therefore, there would exist $\alpha\in S$ such that $\gamma-\alpha\in\pr$,
and such that $X_\alpha\hat{v}$ is nonzero and is equal to $X_{-\gamma+\alpha}x_0$ up to a nonzero scalar multiple.
This would imply $X_{-\gamma+\alpha}v_\lambda\neq0$ and $X_{-\gamma+\alpha}v_\mu=0$ for all $\mu\in F\setminus\{\lambda\}$,
and therefore that there exists $\alpha'\in \sr$ 
such that $\<(\alpha')^\vee,\lambda\>>0$ and $\<(\alpha')^\vee,\mu\>=0$ for all $\mu\in F\setminus\{\lambda\}$, 
which gives a contradiction with Proposition~\ref{prop:codim1orbits} and proves the claim.

The fact that $\mathrm{codim}_{X_0}G\cdot z_{\lambda}\geq2$
means that there exists $\beta\in \sr$ 
such that $\<\beta^\vee,\lambda\>>0$ and $\<\beta^\vee,\mu\>=0$ for all $\mu\in F\setminus\{\lambda\}$. This says exactly that the restriction of $\beta^{\vee}$ to $\Z\wm$ is a positive multiple of $\lambda^\#$, which is condition (3). 

We continue with the remaining conditions of Corollary~\ref{cor:indiv_Nadapt-spher-roots}.
Condition (4) does not apply to non-simple spherical roots.

Condition (5) follows using the analysis of Section~\ref{sec:tad-weights-vgg}. 
Indeed, we have shown that if $[v]$ is a nonzero $\Tad$-eigenvector of weight $2\alpha$ in $\Vgg$, with $\alpha\in S$, 
then $X_\alpha v$ is a (nonzero) scalar multiple of $X_{-\alpha}\hwv$. 
Since $2\alpha \in \Z\wm$, there exists $\lambda \in F$ such that $\<\alpha^{\vee}, \lambda\> >0$ and $\<\lambda^\#, 2\alpha\> > 0$. 
By the argument we used for condition (3), $\lambda$ is the unique element of $F$ which is non-orthogonal to $\alpha$. It follows that 
we actually have that $X_\alpha v$ is a nonzero scalar multiple of $X_{-\alpha}v_{\lambda}$. This implies that the $T$-eigenspace of weight $\lambda-2\alpha$ in $V(\lambda)$ is nonzero,
hence $\<\alpha^\vee,\lambda\>\geq2$. 
Consequently $\<\alpha^\vee,\lambda\>\in\{2,4\}$ and $\<\alpha^\vee,\mu\>=0$ for all $\mu\in F\setminus\{\lambda\}$,
hence $\alpha^\vee$ takes an even value on every element of $\Z\wm$.

Condition (6) follows analogously from Section~\ref{sec:tad-weights-vgg}. 
Indeed, we have shown that if $[v]$ is a nonzero $\Tad$-eigenvector of weight $\alpha+\alpha'$ in $\Vgg$, 
with $\alpha$ and $\alpha'$ orthogonal simple roots,
then $X_\alpha v$, if nonzero, is a scalar multiple of $X_{-\alpha'}\hwv$,
and $X_{\alpha'} v$, if nonzero, is a scalar multiple of $X_{\alpha}\hwv$.

Since $\alpha+\alpha' \in \Z\wm$, there exists $\lambda \in F$ such that $\<\alpha^{\vee}, \lambda\> >0$ and $\<\lambda^\#, \alpha + \alpha'\> > 0$. 
By the argument we used for condition (3), $\lambda$ is the unique element of $F$ which is non-orthogonal to $\alpha$. Then $X_{\alpha} v\neq0$. Indeed if it were $0$, then $X_{\alpha'} v$ would be nonzero, hence scalar multiple of $X_{-\alpha}v_{\lambda}$, which yields a
contradiction:
\[0=X_{\alpha'}X_{\alpha} v=X_{\alpha}X_{\alpha'} v=X_{\alpha}X_{-\alpha} v_{\lambda}\neq0,\]
Therefore $X_{\alpha} v=X_{-\alpha'}\hwv$, and if $\<(\alpha')^\vee, \mu\>\neq0$ then the $T$-eigenspace of weight $\mu-\alpha-\alpha'$ in $V(\mu)$ is nonzero,
hence also $\<\alpha^\vee,\mu\>\neq0$. 
This implies that $\alpha'$ is non-orthogonal to $\lambda$ and orthogonal to $\mu$ for all $\mu\in F\setminus\{\lambda\}$.
Therefore $\alpha^\vee$ and $(\alpha')^\vee$ are equal on every element of $\Z\wm$. This completes the proof of Theorem~\ref{thm:TX0hs}.

\begin{remark}
The information given in this remark is not needed for our results. We include it because it gives explicit conditions on $F$ for each spherically closed spherical root $\gamma$, which is not a simple root, to occur as a $\Tad$-weight in $\tg\hs$, that is, to be N-adapted to $\wm$. 

For each spherically closed spherical root $\gamma$, there exists $\alpha \in \sr$ such that $\<\alpha^\vee, \gamma\> > 0$. If $\gamma$ is a $\Tad$-weight in $\tg\hs$, then $\gamma \in \Z\wm$, and so there exits $\lambda \in F$ such that 
$\<\alpha^\vee, \lambda\> > 0$ and $\<\lambda^\#, \gamma\> >0$. If $\gamma$ is not a simple root, then by the argument above showing that $\gamma$ satifies condition (3) of Corollary~\ref{cor:indiv_Nadapt-spher-roots}, we have that $\lambda$ is the only element of $F$ which is not orthogonal to $\alpha$, that is, $b\lambda^\# = \alpha^{\vee}$ on $\Z\wm$ for some positive integer $b$. 

We now list, for each $\gamma$, the possibilities for $\lambda^\#$. 

\begin{enumerate}

\item If $\gamma = 2\alpha$, with $\alpha$ a simple root, then locally $\gamma = 4\omega$. In this case
 $\alpha^\vee=b\lambda^\#$ with $b\in\{2,4\}$.

\item If $\gamma = \alpha + \alpha'$, with $\alpha$ and $\alpha'$ two orthogonal simple roots, then
locally $\gamma=2\omega+2\omega'$.
In this case $\alpha^\vee=(\alpha')^\vee=b\lambda^\#$ with $b\in\{1,2\}$.

\item If $\gamma = \alpha_1 + \alpha_2 + \ldots
  + \alpha_n$ with support of type $\ssA_n$ with $n\ge 2$, then
locally $\gamma=\omega_1+\omega_n$. In this case, $\alpha^\vee=\lambda^\#$ with $\alpha\in\{\alpha_1,\alpha_n\}$.

\item If $\gamma = \alpha_1 + 2\alpha_2 +
 \alpha_3$ with support of type $\ssA_3$, then
locally $\gamma=2\omega_2$. In this case, we have $\alpha_2^\vee= b\lambda^\#$ with $b\in\{1,2\}$.

\item If $\gamma = \alpha_1 + \ldots
 + \alpha_n$ with support of type $\ssB_n$ with $n \ge 2$, then
locally $\gamma=\omega_1$.
Here $\alpha_1^\vee=\lambda^\#$.

\item If $\gamma =2 \alpha_1 + 2\alpha_2 + \ldots
 +2 \alpha_n$ with support of type $\ssB_n$ with $n \ge 2$, then
locally $\gamma=2\omega_1$.
Here  $\alpha_1^\vee=b\lambda^\#$, with $b\in\{1,2\}$.

\item If $\gamma = \alpha_1 + 2\alpha_2 + 3
  \alpha_3$ with support of type $\ssB_3$, then
locally $\gamma=2\omega_3$.
Here  $\alpha_3^\vee=b\lambda^\#$ with $b\in\{1,2\}$.

\item If $\gamma = \alpha_1 + 2\alpha_2 +
  2\alpha_3 + \ldots+2\alpha_{n-1} + \alpha_n$ with support of type
  $\ssC_n$ with $n\ge 3$, then
locally $\gamma=\omega_2$.
Here $\alpha_2^\vee=\lambda^\#$.

\item If $\gamma = 2\alpha_1 + 2\alpha_2 +
  \ldots + 2\alpha_{n-2} + \alpha_{n-1}+\alpha_{n}$ with support of
  type $\ssD_n$ with $n \ge 4$, then
locally $\gamma=2\omega_1$.
Here $\alpha_1^\vee=b\lambda^\#$ with $b\in\{1,2\}$.

\item If $\gamma = \alpha_1 + 2\alpha_2 +
  3\alpha_3 + 2\alpha_4$ with support of type $\ssF_4$, then
locally $\gamma=\omega_4$.
Here $\alpha_4^\vee=\lambda^\#$.

\item If $\gamma = 4\alpha_1 + 2\alpha_2$ with
 support of type $\ssG_2$, then
locally $\gamma=2\omega_1$.
Here $\alpha_1^\vee=b\lambda^\#$ with $b\in\{1,2\}$. 

\item If $\gamma = \alpha_1 + \alpha_2$ with
  support of type $\ssG_2$, then
locally $\gamma=-\omega_1+\omega_2$.
Here $\alpha_2^\vee=\lambda^\#$.

\end{enumerate}

\end{remark}

\section{The irreducible components of $\ms$} \label{sec:components}

In this section we prove the following.

\begin{theorem} \label{thm:orbitclosaffine}
Let $\wm$ be a free monoid of dominant weights.
Then the $\Tad$-orbit closures in $\ms$, equipped with their reduced induced scheme structure, are affine spaces.
\end{theorem}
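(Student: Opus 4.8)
The plan is to fix a closed point $X \in \ms$ and analyze its $\Tad$-orbit closure $Y := \overline{\Tad \cdot X}$ inside $\ms$, equipped with its reduced structure. Since $\ms$ has $X_0$ as its unique closed $\Tad$-orbit and every $\Tad$-orbit closure contains $X_0$, I would first use Alexeev and Brion's description~\cite[Proposition~2.13]{alexeev&brion-modaff}: $Y \isom \Spec(\k[-\mathscr M_X])$, where $\mathscr M_X$ is the root monoid of the affine spherical $G$-variety corresponding to $X$. Write $\Sigma^N(X) = \{\sigma_1, \ldots, \sigma_m\}$ for the (finite) set of N-spherical roots of $X$, i.e.\ the basis of the saturation of $\mathscr M_X$. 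For each $j$, let $\{a_{j,1}\sigma_j, \ldots, a_{j,k_j}\sigma_j\}$ be the minimal generating set of the ``$\sigma_j$-part'' of $\mathscr M_X$. The key claim, which makes everything work, is that each $k_j = 1$ and each $a_{j,1} = 1$, so that $\mathscr M_X$ is freely generated by $\Sigma^N(X)$ and hence $Y \isom \A^m$ as a variety.

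To prove this claim I would argue exactly as in the proof of Corollary~\ref{cor:TX0hs}, but now allowing $\Sigma^N(X)$ to have more than one element. Since $\tg Y \subset \tg\ms \isom \tg\hs \subset \Vgg$, and by the semigroup-ring computation $\tg Y \isom \bigoplus_{j,\ell} \k_{a_{j,\ell}\sigma_j}$ as a $\Tad$-module, Theorem~\ref{thm:vggweights} forces every weight $a_{j,\ell}\sigma_j$ to lie in $\Sigma^{sc}(G)$. For a fixed $j$: if some $a_{j,\ell} \ge 2$ then (since $\sigma_j \in \Z\mathscr M_X$ forces $k_j \ge 2$ in that case) at least two distinct positive multiples of $\sigma_j$, together with a multiple summing appropriately, would all lie in $\Sigma^{sc}(G)$; by the classification in Proposition~\ref{prop:sigmaG} this is impossible, since for a given spherical root only it and possibly its double (or half) lie in $\Sigma^{sc}(G)$, never three positive multiples. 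Hence $k_j = 1$ and $a_{j,1} = 1$ for all $j$, proving the claim and identifying $Y$ with affine space as a variety.

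It remains to upgrade ``$Y$ is affine space as a variety'' to ``$Y$ with its reduced induced scheme structure is an affine \emph{space}'' in the sense required, i.e.\ that $Y$ is smooth (equivalently $\k[Y] = \k[-\mathscr M_X]$ is a polynomial ring); but since $\mathscr M_X$ is freely generated by $\Sigma^N(X)$, the ring $\k[-\mathscr M_X]$ is literally $\k[t_1, \ldots, t_m]$, so $Y = \Spec \k[-\mathscr M_X]$ is already reduced and isomorphic to $\A^m$ as a scheme. Thus the reduced induced structure on the orbit closure coincides with this, and $Y$ is an affine space. The main obstacle is the combinatorial step ruling out $a_{j,\ell} \ge 2$: it hinges on the precise form of Table~\ref{table:spherical_roots} (which spherical roots have their double or half again a spherical root), and one must be careful that the argument is applied coordinate-by-coordinate in $\Z\Sigma^N(X) = \bigoplus_j \Z\sigma_j$, using that the $\sigma_j$ are linearly independent so that the sub-root-monoid in each $\Z\sigma_j$ can be analyzed separately. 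No genuinely new input beyond the results already established is needed; it is essentially Corollary~\ref{cor:TX0hs} run in reverse and in parallel over all N-spherical roots of $X$.
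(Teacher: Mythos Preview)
Your approach has a genuine gap at the ``coordinate-by-coordinate'' step. You assume that the minimal generators of $\mathscr M_X$ are each of the form $a_{j,\ell}\sigma_j$, i.e.\ a positive multiple of a single element of $\Sigma^N(X)$. Linear independence of the $\sigma_j$ does \emph{not} give this: a submonoid of $\N\sigma_1 \oplus \cdots \oplus \N\sigma_m$ with full lattice and full cone can perfectly well have minimal generators that are genuine sums. For instance, the monoid $\langle \sigma_1,\ \sigma_1+\sigma_2,\ 2\sigma_2\rangle$ has saturation $\N\sigma_1 \oplus \N\sigma_2$, yet $\sigma_1+\sigma_2$ is a minimal generator. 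So the tangent-space formula $\tg Y \isom \bigoplus_{j,\ell} \k_{a_{j,\ell}\sigma_j}$ is unjustified; the correct statement is $\tg Y \isom \bigoplus_g \k_g$ over the minimal generators $g$ of $\mathscr M_X$, and you must rule out mixed generators.

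This is not a cosmetic issue: such mixed sums \emph{can} lie in $\Sigma^{sc}(G)$. If $\sigma_1=\alpha$ and $\sigma_2=\alpha'$ are simple roots (orthogonal or not), then $\alpha+\alpha'$ is again a spherically closed spherical root by Table~\ref{table:spherical_roots}, so Theorem~\ref{thm:vggweights} alone cannot exclude it. What the paper does differently is to invoke the full strength of Theorem~\ref{thm:TX0hs}: every $\Tad$-weight in $\tg\ms$ is not merely in $\Sigma^{sc}(G)$ but is \emph{N-adapted to $\wm$}. The combinatorial Proposition~\ref{prop:cut} then shows that an N-adapted $\sigma \in \N\Sigma^N(X)$ must already lie in $\Sigma^N(X)$; its proof handles precisely the cases $2\alpha$, $\alpha+\alpha'$ (orthogonal), and $\alpha_1+\alpha_2$ (non-orthogonal), and each case uses N-adaptedness in an essential way. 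With that in hand the paper compares $\dim\tg(\overline{\Tad\cdot X}) \le |\Sigma^N(X)| = \dim\overline{\Tad\cdot X}$ and concludes smoothness at $X_0$, then appeals to \cite[Corollary~2.14]{alexeev&brion-modaff} for the global statement. Your direct attack on $\mathscr M_X$ would succeed once you plug this gap with Theorem~\ref{thm:TX0hs} and Proposition~\ref{prop:cut}, and would then in fact be a slight streamlining of the paper's argument; but as written it is incomplete.
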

The proof is given below. By \cite[Proposition~2.13]{alexeev&brion-modaff} this theorem has the following formal consequence.
\begin{corollary} \label{cor:rootmonfree}
If $X$ is an affine spherical variety with free weight monoid, then its root monoid $\mathscr M_X$ is free too. 
\end{corollary}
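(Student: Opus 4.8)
The plan is to derive Corollary~\ref{cor:rootmonfree} as a formal consequence of Theorem~\ref{thm:orbitclosaffine} together with \cite[Proposition~2.13]{alexeev&brion-modaff}. Let $X$ be an affine spherical $G$-variety whose weight monoid $\wm(X)$ is free, and set $\wm := \wm(X)$. Then $X$ corresponds to a closed $\Tad$-stable point of $\ms$, and by \cite[Proposition~2.13]{alexeev&brion-modaff} the $\Tad$-orbit closure $\overline{\Tad \cdot X}$ in $\ms$, equipped with its reduced induced scheme structure, is isomorphic to $\Spec(\k[-\mathscr M_X])$, the affine toric variety associated with the root monoid $\mathscr M_X$ (or rather its negative). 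In particular, $\overline{\Tad\cdot X} \cong \Spec \k[-\mathscr M_X]$ as varieties.

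Next I would invoke Theorem~\ref{thm:orbitclosaffine}: since $\wm$ is free, every $\Tad$-orbit closure in $\ms$, with its reduced structure, is an affine space $\A^m$ for some $m \geq 0$. Hence $\Spec \k[-\mathscr M_X] \cong \A^m$, i.e.\ $\k[\mathscr M_X]$ (equivalently $\k[-\mathscr M_X]$) is a polynomial ring in $m$ variables. It then remains to conclude that a finitely generated submonoid $\mathscr M$ of a lattice whose semigroup algebra $\k[\mathscr M]$ is a polynomial ring must itself be a free monoid. This is elementary: the grading of $\k[\mathscr M]$ by $\mathscr M$ must match, up to the torus action, the standard grading of $\A^m = \Spec\k[x_1,\dots,x_m]$, so the monoid of monomials is $\N^m$; concretely, the $\Tad$-weights on the cotangent space of $\A^m$ at the origin are the images of the generators, they are linearly independent (since the torus acts with an open orbit and the variety is smooth of dimension $m$), and $\mathscr M_X$ is freely generated by these weights. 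Thus $\mathscr M_X$ is free.

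I do not anticipate any serious obstacle here, since all the substantive work is already contained in Theorem~\ref{thm:orbitclosaffine} and in Alexeev--Brion's identification of the orbit closure with the toric variety of the root monoid. The only mild point of care is bookkeeping: one should make sure the reduced structure on $\overline{\Tad\cdot X}$ used in Theorem~\ref{thm:orbitclosaffine} is the same as the one for which \cite[Proposition~2.13]{alexeev&brion-modaff} gives $\Spec\k[-\mathscr M_X]$, and that the passage between $\mathscr M_X$ and $-\mathscr M_X$ (a lattice automorphism) does not affect freeness. With that settled, the corollary follows in one line, which is why the statement is flagged as a ``formal consequence.''
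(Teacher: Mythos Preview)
Your proposal is correct and follows exactly the approach indicated in the paper: the paper offers no detailed proof but simply notes that the corollary is a formal consequence of Theorem~\ref{thm:orbitclosaffine} via \cite[Proposition~2.13]{alexeev&brion-modaff}, which is precisely the argument you have spelled out. One minor slip: $X$ is a closed point of $\ms$ but not in general a $\Tad$-\emph{fixed} point (only $X_0$ is), so drop the phrase ``$\Tad$-stable'' from your first sentence.
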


Another consequence is that Conjecture~\ref{conj:brion} holds
for free monoids.
\begin{corollary} \label{cor:brionconj}
If $\wm$ is a free monoid of dominant weights then the irreducible
components of $\ms$, equipped with their reduced induced scheme structure, are affine spaces. 
\end{corollary}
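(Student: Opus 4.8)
The plan is to show that every irreducible component $C$ of $\ms$, equipped with its reduced induced scheme structure, is the closure of a single $\Tad$-orbit; Theorem~\ref{thm:orbitclosaffine} then immediately gives that $C$ is an affine space. First observe that $C$ is $\Tad$-stable, because $\Tad$ is connected, and that $X_0\in C$, because $X_0$ is the unique closed $\Tad$-orbit of $\ms$ and hence lies in the closure of every $\Tad$-orbit. Write $\tg C$ for the tangent space to $C$ at $X_0$ and $n:=\dim C$. By \cite[Theorem~1.3]{knop-auto} and \cite[Corollary~2.14]{alexeev&brion-modaff} the normalization of $C$ is an affine space of dimension $n$, so it would already suffice to prove that $C$ is smooth; and since the non-smooth locus of $C$ is closed and $\Tad$-stable, if non-empty it contains the unique closed orbit $X_0$, so it is enough to show $\dim\tg C=n$.

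The inequality $\dim\tg C\ge n$ is automatic, so the real point is the reverse one. Here I would combine the two main results of the paper with Theorem~\ref{thm:orbitclosaffine}. The closed immersion $C\hookrightarrow\ms$ gives an inclusion of $\Tad$-modules $\tg C\into\tg\ms$, and $\tg\ms$ is multiplicity-free by Theorem~\ref{thm:TX0hs} (via Proposition~\ref{prop:msintohs}); hence $\tg C$ is multiplicity-free, its set $\Sigma_C$ of $\Tad$-weights consists of pairwise distinct spherically closed spherical roots that are N-adapted to $\wm$ (Theorem~\ref{thm:TX0hs} and Corollary~\ref{cor:TX0hs}), and $\dim\tg C=|\Sigma_C|$. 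On the other hand, for every closed point $X$ of $\ms$ one has $\overline{\Tad\cdot X}=\Spec\k[-\mathscr{M}_X]$ by \cite[Proposition~2.13]{alexeev&brion-modaff}, which is an affine space by Theorem~\ref{thm:orbitclosaffine}; thus $\mathscr{M}_X$ is free, $\dim\overline{\Tad\cdot X}=\rk\mathscr{M}_X=|\Sigma^N(X)|$, and the tangent space to $\overline{\Tad\cdot X}$ at $X_0$ has exactly $|\Sigma^N(X)|$ distinct $\Tad$-weights. When $X\in C$ these weights lie in $\Sigma_C$, so $|\Sigma^N(X)|\le|\Sigma_C|=\dim\tg C$.

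It therefore suffices to produce a single closed point $X\in C$ with $|\Sigma^N(X)|=\dim\tg C$: for such an $X$ we get $\dim\overline{\Tad\cdot X}=|\Sigma^N(X)|=\dim\tg C\ge n=\dim C\ge\dim\overline{\Tad\cdot X}$, so all these numbers coincide, $C$ is smooth at $X_0$, and, since $\overline{\Tad\cdot X}$ is a closed irreducible subset of the irreducible $n$-dimensional $C$ of dimension $n$, we conclude $C=\overline{\Tad\cdot X}$, an affine space. Equivalently, one has to show that $\Sigma_C$ is itself N-adapted to $\wm$ and that the affine spherical $G$-variety $X_C$ with $\wm(X_C)=\wm$ and $\Sigma^N(X_C)=\Sigma_C$ — unique by Losev's theorem (Remark~\ref{rem:unique_adapt_var}) — is a point of $C$. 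I expect this to be the main obstacle, the apparent circularity being that each individual element of $\Sigma_C$ is known to be N-adapted but the whole set must be realized by one variety lying in $C$. The plan for it is to pass to a very general closed point $X$ of $C$: the universal family over $\ms$ restricts over $C$ to a family of affine spherical $G$-varieties with weight monoid $\wm$, and since the root monoid can only shrink under $\Tad$-specialization, $\mathscr{M}_X$ is maximal among $\{\mathscr{M}_{X'}:X'\in C\}$ for very general $X$; using that $\Sigma^N(X)$, being a basis of the saturation of $\mathscr{M}_X$, is linearly independent, together with the fact that the orbit closures $\overline{\Tad\cdot X'}$, $X'\in C$, cover $C$, one shows $\dim\overline{\Tad\cdot X}=\dim C$ and hence $\Sigma^N(X)=\Sigma_C$. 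Since every irreducible component of $\ms$ contains $X_0$ and thus arises as such a $C$, the statement follows.
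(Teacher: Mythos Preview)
Your argument is correct in spirit but takes a significantly longer route than the paper, and the step you yourself flag as ``the main obstacle'' is left as a plan rather than a proof. The paper's argument is much shorter: by \cite[Theorem~1.12]{alexeev&brion-modaff} the $\Tad$-orbits in $\ms$ correspond to isomorphism classes of affine spherical $G$-varieties with weight monoid $\wm$, and by \cite[Corollary~3.4]{alexeev&brion-modaff} there are only \emph{finitely many} such classes. Hence $\ms$ has finitely many $\Tad$-orbits, so every irreducible component $C$ contains a dense $\Tad$-orbit, and Theorem~\ref{thm:orbitclosaffine} applies directly. No tangent-space bookkeeping, no very-general-point argument, and no appeal to Losev's uniqueness theorem is needed at this stage.

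Your detour through $\tg C$ and $\Sigma_C$ essentially reproves, for each component separately, that it contains a dense orbit --- but via semicontinuity of the root monoid and a ``very general'' point, which as written is incomplete: to pass from ``orbit closures cover $C$'' and ``$\mathscr M_X$ is maximal for very general $X$'' to ``$\dim\overline{\Tad\cdot X}=\dim C$'' you implicitly need either finiteness of the orbit set (which is exactly the input the paper uses) or at least countability together with an uncountable ground field, plus the step that equal root monoid implies same $\Tad$-orbit (Losev). All of this is bypassed once you quote the finiteness result directly.
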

\begin{proof}
Since the $\Tad$-orbits in $\ms$ are in bijection with isomorphism
classes of affine spherical $G$-varieties, by \cite[Theorem
1.12]{alexeev&brion-modaff} and there are only finitely many such
isomorphism classes, by \cite[Corollary
3.4]{alexeev&brion-modaff}, we have that every irreducible component $Z$ of $\ms$ contains a dense
$\Tad$-orbit. It then follows from Theorem~\ref{thm:orbitclosaffine} that
$Z$, equipped with its reduced induced scheme structure, is an affine space.
\end{proof}

\begin{proof}[Proof of Theorem~\ref{thm:orbitclosaffine}]
Let $X$ be an affine spherical $G$-variety of weight monoid $\wm$,
seen as a (closed) point in $\ms$.
By \cite[Corollary~2.14]{alexeev&brion-modaff}, we know that the
normalization of $\overline{\Tad\cdot X}$ is an affine space. It is
therefore enough to show that $\overline{\Tad\cdot X}$ is smooth at
$X_0$. We do this by showing that
\begin{equation}\label{eq:smooth}
\dim\tg(\overline{\Tad\cdot X})=\dim\overline{\Tad\cdot X}.
\end{equation}

Recall that $\Sigma^N(X)$ is the basis of the monoid
obtained by saturation of the root monoid $\mathscr M_X$. 
To deduce (\ref{eq:smooth}) we make use of Theorem~\ref{thm:TX0hs}:
the $\Tad$-weights in $\tg(\overline{\Tad\cdot X})\subseteq\tg\ms$ 
are spherical roots N-adapted to $\wm$, each one occurring with multiplicity $1$. 
This, together with the fact that every $\Tad$-weight in $\tg(\overline{\Tad\cdot X})$ 
has to be an element of the root monoid $\mathscr M_X$, and 
hence a nonnegative integer linear combination of elements of $\Sigma^N(X)$,
gives (\ref{eq:smooth}) once we prove Proposition~\ref{prop:cut}
below. Indeed, applying this proposition with $\Sigma = \Sigma^N(X)$ yields
that the $\Tad$-weights in 
$\tg(\overline{\Tad\cdot X})$ belong to $\Sigma^N(X)$, while $\dim
\overline{\Tad\cdot X} = |\Sigma^N(X)|$ by \cite[Proposition
2.13]{alexeev&brion-modaff}. 
\end{proof}

\begin{proposition}\label{prop:cut}
Let $\Sigma$ be a subset of $\Sigma^{sc}(G)$ such that every $\gamma \in
\Sigma$ is N-adapted to $\wm$. If $\sigma\in\Sigma^{sc}(G)\cap\N\Sigma$ is N-adapted to $\wm$,
then $\sigma\in\Sigma$.
\end{proposition}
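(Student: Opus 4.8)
The statement is purely combinatorial: I want to show that if $\Sigma\subseteq\Sigma^{sc}(G)$ consists of spherical roots that are each N-adapted to $\wm$, and $\sigma$ is a spherically closed spherical root lying in the monoid $\N\Sigma$ and itself N-adapted to $\wm$, then in fact $\sigma\in\Sigma$. The first reduction is to observe that we can write $\sigma=\sum_{\gamma\in\Sigma}n_\gamma\gamma$ with $n_\gamma\in\N$, and that $\supp(\sigma)=\bigcup_{n_\gamma\ne0}\supp(\gamma)$, since all $\gamma$ lie in $\N\sr$ and coefficients only add. So the support of $\sigma$ is a union of supports of elements of $\Sigma$ that actually appear. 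I would then use the classification of spherically closed spherical roots (Proposition~\ref{prop:sigmaG} and Table~\ref{table:spherical_roots}): an element of $\Sigma^{sc}(G)$ has connected support, except for the type $\ssA_1\times\ssA_1$ case $\alpha+\alpha'$.

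**Key steps.** First, dispose of the case where $\sigma$ has support of cardinality $\le 2$ of type $\ssA_1$ or $\ssA_1\times\ssA_1$, treating $\sigma=\alpha$, $\sigma=2\alpha$, and $\sigma=\alpha+\alpha'$ by hand: each forces the $\gamma$'s appearing to have support inside $\supp(\sigma)$, and a short check (using that $2\alpha\notin\N\cdot\alpha$ never gives a contradiction only if $\alpha\in\Sigma$, but $\alpha$ is then N-adapted, which one must rule out using Corollary~\ref{cor:indiv_Nadapt-spher-roots}(\ref{item:inasr5}) versus the condition for $\alpha$ — this is exactly the mechanism in Luna's example) pins down $\sigma$. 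Second, in the general (connected, non-$\ssG_2$, not small) case: since $\sigma$ is a spherically closed spherical root, its support is of one of the listed types, and $\sigma$ is, up to the $\ssB_n$/$\ssC_n$ subtleties, the locally dominant short root or a small multiple thereof. If $\sigma=\sum n_\gamma\gamma$ nontrivially with $\ge 2$ terms or a coefficient $\ge 2$, I want to derive that $\supp(\sigma)$ would have to decompose or that $\sigma$ would be ``too large'' — concretely, compare $\langle\alpha^\vee,\sigma\rangle=\sum n_\gamma\langle\alpha^\vee,\gamma\rangle$ for simple roots $\alpha$ against the known values for spherically closed spherical roots. The crucial arithmetic fact (used in the proof of Corollary~\ref{cor:TX0hs}) is that among $\{\sigma, 2\sigma,\tfrac12\sigma\}$ at most two lie in $\Sigma^{sc}(G)$ and never all three; combined with connectedness of support this kills the possibility of a nontrivial decomposition inside $\N\Sigma$ whenever the $\gamma$'s have full support, and connectedness forces at least one $\gamma$ to have full support once $\supp(\sigma)$ is connected and each $\gamma$ overlaps. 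Third, handle the type $\ssG_2$ roots $\alpha_1+\alpha_2$ and $4\alpha_1+2\alpha_2$ and the mixed-support cases separately, again by the support argument plus the ``no chain $\sigma,2\sigma,\tfrac12\sigma$'' principle.

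**Main obstacle.** The delicate point is the case $\sigma\in\sr$ or $\sigma=2\alpha$, where $\Sigma$ could a priori contain $\alpha$ (with $\sigma=2\alpha$) or could contain a root with the same support — here $\N\Sigma$ genuinely does contain $2\alpha$ when $\alpha\in\Sigma$, so the combinatorial obstruction must come from the N-adaptedness hypothesis on $\sigma=2\alpha$: by Corollary~\ref{cor:indiv_Nadapt-spher-roots}(\ref{item:inasr5}), N-adaptedness of $2\alpha$ requires $\langle\alpha^\vee,\gamma\rangle\in 2\Z$ for all $\gamma\in\wm$, whereas N-adaptedness of $\alpha$ (Corollary~\ref{cor:indiv_Nadapt-spher-roots}(\ref{item:inasr4a})) requires $a(\alpha)$ to have two elements, which — as in the proof of Proposition~\ref{prop:tangsr} — forces $\tfrac12\alpha^\vee\notin E(\wm)$, i.e.\ is incompatible with $\alpha^\vee$ being even on $\Z\wm$ with $\alpha\in\Z\wm$. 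So if $\alpha\in\Sigma$ then $\alpha$ is N-adapted, forcing $|a(\alpha)|=2$; but then I must show $2\alpha$ cannot simultaneously be N-adapted, giving $\sigma\ne 2\alpha$, contradiction, hence the decomposition of $\sigma$ inside $\N\Sigma$ cannot use $\alpha$ and must be trivial. Carefully reconciling these conditions across all entries of Table~\ref{table:spherical_roots} — particularly the $\ssB_n$ versus $\ssC_n$ compatibility asymmetry — is where the real work lies; everything else is bookkeeping with the table.
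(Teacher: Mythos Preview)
Your approach has the right instinct for the $\sigma=2\alpha$ case and correctly identifies the N-adaptedness conflict there, but the general strategy has a real gap. Your claim that ``connectedness forces at least one $\gamma$ to have full support once $\supp(\sigma)$ is connected and each $\gamma$ overlaps'' is false: for instance $\sigma=\alpha_1+\alpha_2+\alpha_3$ of type $\ssA_3$ decomposes as $(\alpha_1+\alpha_2)+\alpha_3$, a sum of two spherically closed spherical roots neither of which has full support, and the coroot-pairing arithmetic you propose gives no obstruction. What actually kills this is the $S^p(\wm)$-compatibility condition (Corollary~\ref{cor:indiv_Nadapt-spher-roots}(\ref{item:inasr2})): N-adaptedness of $\sigma$ forces $\alpha_2\in S^p(\wm)$, while N-adaptedness of $\alpha_1+\alpha_2$ would force $\alpha_2\notin S^p(\wm)$. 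You allude to compatibility only at the very end as an afterthought; in fact it is the primary reduction tool.

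The paper's proof runs this reduction first: since $\sigma$ and every $\gamma$ in its decomposition must be compatible with the \emph{same} $S^p(\wm)$, a case check against Table~\ref{table:spherical_roots} shows that the only $\sigma$ admitting a nontrivial decomposition into such $\gamma$'s are sums of two simple roots. This leaves exactly three cases --- $\sigma=2\alpha$, $\sigma=\alpha+\alpha'$ with $\alpha\perp\alpha'$, and $\sigma=\alpha_1+\alpha_2$ with $\alpha_1,\alpha_2$ adjacent --- each disposed of by an N-adaptedness conflict. You handle only the first. For the orthogonal case, condition~(\ref{nsorths}) forces $\alpha^\vee=(\alpha')^\vee$ on $\Z\wm$, so $\alpha\notin\Z\wm$, contradicting $\alpha$ being N-adapted. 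For the adjacent case, pick $\delta\in E(\wm)$ with $\langle\delta,\sigma\rangle>0$; condition~(\ref{item:inasr3}) for $\sigma$ makes $\delta$ a positive multiple of some simple coroot, which must be $\alpha_1^\vee$ (say), and then condition~(\ref{item:inasr4c}) for $\alpha_1$ forces $\alpha_1^\vee=2\delta$, whence $|a(\alpha_1)|=1$, contradicting~(\ref{item:inasr4a}). Your support-and-doubling heuristic does not see these obstructions.
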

\begin{proof}
First of all, $\sigma$ (of spherically closed type) must be compatible
with $S^p(\wm)$ and is 
a nonnegative integer linear combination of other elements of
$\Sigma^{sc}(G)$ that satisfy the same compatibility condition.  This gives strong restrictions.
Indeed, $\sigma$ can only be the sum of two simple roots (equal or not, orthogonal or not).
All the other types of spherical roots have support that nontrivially intersects $S^p(\wm)$, 
and they can be excluded by a straighforward if somewhat lengthy case-by-case verification.

Moreover, $\sigma$ cannot be the double of a simple root, say $2\alpha$, with $\alpha\in\Sigma$,
since $\alpha$ and $2\alpha$ cannot both be N-adapted to $\wm$.
Indeed, if $2\alpha$ is N-adapted to $\wm$ then, since $\<\alpha^\vee, 2\alpha\> >0$ and $\alpha^\vee \in \wm^\vee$, there exists $\delta \in E(\wm)$ such that $\<\delta, 2\alpha\> >0$. Condition (\ref{item:inasr3}) of Corollary~\ref{cor:indiv_Nadapt-spher-roots} tells us that  
$\alpha^\vee$ is a positive multiple of $\delta$. By condition (\ref{item:inasr5}) of the same corollary, $\alpha^\vee$ is not primitive in $(\Z\wm)^*$. If now $\alpha\in \Z\wm$, then it follows from $\<\alpha^\vee, \alpha\>=2$ that  $\alpha^\vee=2\delta$ on $\Z\wm$.
Hence $\delta$ is the only element of $a(\alpha)$ and $\alpha$ is not N-adapted
to $\wm$.  

Analogously, $\sigma$ cannot be the sum of two orthogonal simple roots, say $\alpha+\alpha'$, 
with $\alpha$ and $\alpha'$ in $\Sigma$.
Indeed, since $\alpha+\alpha'$ is adapted to $\wm$ and $\<\alpha^\vee,\alpha\>\neq\<(\alpha')^\vee,\alpha\>$,
$\alpha$ cannot belong to $\Z\wm$.

Finally, let $\sigma$ be the sum of two nonorthogonal simple roots, say $\alpha_1+\alpha_2$,
with $\alpha_1$ and $\alpha_2$ in $\Sigma$.
 Take $\delta\in E(\wm)$ with $\<\delta,\sigma\> >0$. 
Such a $\delta$ exists because $\<\alpha_1^{\vee}, \sigma\>$ or $\<\alpha_2^\vee, \sigma\>$ is positive, 
$\sigma \in \Z\wm$ and $\wm \inn \dw$.  
Then $\delta$ must be positive on at least one of the two simple roots $\alpha_1$ or $\alpha_2$. 
Suppose it is positive on $\alpha_1$.  
Then $\delta \in a(\alpha_1)$, since $\alpha_1$ is N-adapted to $\wm$, 
hence $\delta$ takes the value $1$ on $\alpha_1$.  
By condition (\ref{item:inasr3}) of Corollary~\ref{cor:indiv_Nadapt-spher-roots} it follows that  $\alpha_1^\vee=2\delta$, 
which is not possible if $\alpha_1$ is N-adapted to $\wm$.
\end{proof}

\begin{remark}\label{rem:alper}
While the reduced induced scheme structure is the only natural scheme structure on the $\Tad$-orbit closures of Theorem~\ref{thm:orbitclosaffine}, there is at least one other natural scheme structure on the irreducible components of $\ms$, namely the one given by the primary ideals of $\k[\ms]$ associated to minimal primes. One can ask whether Conjecture~\ref{conj:brion} remains true for that scheme structure. Another natural question is whether or when $\ms$ is in fact a reduced scheme. We note that the tangent space $\tg\ms$ might fail to detect the "non-reducedness" of $\ms$. For example, the two affine schemes $\Spec(\k[x,y]/\<xy\>)$ and $\Spec(\k[x,y]/\<x^2y\>)$ have the same tangent space at the point corresponding to the maximal ideal $\<x,y\>$.  
\end{remark}

\def\cprime{$'$}
\providecommand{\bysame}{\leavevmode\hbox to3em{\hrulefill}\thinspace}
\providecommand{\MR}{\relax\ifhmode\unskip\space\fi MR }
\providecommand{\MRhref}[2]{%
  \href{http://www.ams.org/mathscinet-getitem?mr=#1}{#2}
}
\providecommand{\href}[2]{#2}


\begin{thebibliography}{{Cup}10}

\bibitem[AB05]{alexeev&brion-modaff}
Valery Alexeev and Michel Brion, \emph{Moduli of affine schemes with reductive
  group action}, J.\ Algebraic Geom.\ \textbf{14} (2005), no.~1, 83--117.
  \MR{2092127 (2006a:14017)}

\bibitem[ACF14]{avdeev&cupit-irrcomps-arxivv1}
Roman Avdeev and St{\'e}phanie Cupit-{F}outou, \emph{On the irreducible
  components of some moduli schemes for affine multiplicity-free varieties},
  arXiv:1406.1713v1 [math.AG], 2014.

\bibitem[Ahi83]{ahiezer-eqvcompl}
Dmitry Ahiezer, \emph{Equivariant completions of homogeneous algebraic
  varieties by homogeneous divisors}, Ann.\ Global Anal.\ Geom.\ \textbf{1}
  (1983), no.~1, 49--78. \MR{739893 (85j:32052)}

\bibitem[BCF08]{bravi&cupit}
Paolo Bravi and St{\'e}phanie Cupit-{F}outou, \emph{Equivariant deformations of the affine
  multicone over a flag variety}, Adv.\ Math.\ \textbf{217} (2008), no.~6,
  2800--2821. \MR{2397467 (2009a:14061)}

\bibitem[BL11]{bravi-luna-f4}
Paolo Bravi and Domingo Luna, \emph{An introduction to wonderful varieties with
  many examples of type {$\mathsf{F}_4$}}, J.\ Algebra \textbf{329} (2011),
  no.~1, 4--51.  \MR{2769314 (2012f:14102)}

\bibitem[Bou68]{bourbaki-geadl47}
Nicolas Bourbaki, \emph{\'{E}l\'ements de math\'ematique. {F}asc. {XXXIV}. {G}roupes
  et alg\`ebres de {L}ie. {C}hapitre~{IV}: {G}roupes de {C}oxeter et syst\`emes
  de {T}its. {C}hapitre~{V}: {G}roupes engendr\'es par des r\'eflexions.
  {C}hapitre~{VI}: syst\`emes de racines}, Actualit\'es Scientifiques et
  Industrielles, No.~1337, Hermann, Paris, 1968. \MR{0240238 (39 \#1590)}

\bibitem[BP11]{bravi-pezzini-primwonderful-arxivv1}
Paolo Bravi and Guido Pezzini, \emph{Primitive wonderful varieties},
  arXiv:1106.3187v1 [math.AG], 2011.

\bibitem[Bri89]{brion-rank1}
Michel Brion, \emph{On spherical varieties of rank one (after {D}.~{A}hiezer,
  {A}.~{H}uckleberry, {D}.~{S}now)}, Group actions and invariant theory
  ({M}ontreal, {PQ}, 1988), CMS Conf. Proc., vol.~10, Amer\. Math.\ Soc.,
  Providence, RI, 1989, pp.~31--41. \MR{1021273 (91a:14028)}

\bibitem[Bri90]{brion-gensymm}
\bysame, \emph{Vers une g\'en\'eralisation des espaces sym\'etriques}, J.\
  Algebra \textbf{134} (1990), no.~1, 115--143. \MR{1068418 (91i:14039)}

\bibitem[Bri10]{brion-cirmactions}
\bysame, \emph{Introduction to actions of algebraic groups}, Les cours du CIRM
  \textbf{1} (2010), no.~1, 1--22.

\bibitem[Bri13]{brion-ihs}
\bysame, \emph{Invariant {H}ilbert schemes}, Handbook of moduli. {V}ol.~{I},
  Adv.\ Lect.\ Math.\ (ALM), vol.~24, Int. Press, Somerville, MA, 2013,
  pp.~64--117. \MR{3184162}

\bibitem[Cam01]{camus}Romain Camus, \emph{Vari\'et\'es sph\'eriques affines lisses}, Ph.D.\ thesis,
  Institut Fourier, Grenoble, 2001.

\bibitem[{Cup}10]{cupit-wvgr-prep}
St{\'e}phanie Cupit-{F}outou, \emph{Wonderful varieties: a geometrical realization},
  arXiv:0907.2852v3 [math.AG], 2010.

\bibitem[Hum72]{Hum72}
James E.~Humphreys, \emph{Introduction to {L}ie algebras and representation
  theory}, Springer-Verlag, New York-Berlin, 1972, Graduate Texts in
  Mathematics, Vol.~9. \MR{0323842 (48 \#2197)}

\bibitem[Hum75]{humphreys-lag}
\bysame, \emph{Linear algebraic groups}, Springer-Verlag, New York, 1975,
  Graduate Texts in Mathematics, No.~21. \MR{0396773 (53 \#633)}

\bibitem[Jan07]{jansou-deformations}
S{\'e}bastien Jansou, \emph{D\'eformations des c\^ones de vecteurs primitifs},
  Math.\ Ann.\ \textbf{338} (2007), no.~3, 627--667. \MR{2317933 (2008d:14069)}

\bibitem[Kno91]{knop-lv}
Friedrich Knop, \emph{The {L}una-{V}ust theory of spherical embeddings},
  Proceedings of the {H}yderabad {C}onference on {A}lgebraic {G}roups
  ({H}yderabad, 1989) (Madras), Manoj Prakashan, 1991, pp.~225--249.
  \MR{1131314 (92m:14065)}

\bibitem[Kno96]{knop-auto}
\bysame, \emph{Automorphisms, root systems, and compactifications of
  homogeneous varieties}, J.\ Amer.\ Math.\ Soc.\ \textbf{9} (1996), no.~1,
  153--174. \MR{1311823 (96c:14037)}

\bibitem[Los09a]{losev-knopconj}
Ivan V.~Losev, \emph{Proof of the {K}nop conjecture}, Ann.\ Inst.\ Fourier
  (Grenoble) \textbf{59} (2009), no.~3, 1105--1134. \MR{2543664 (2010j:14091)}

\bibitem[Los09b]{losev-uniqueness}
\bysame, \emph{Uniqueness property for spherical homogeneous spaces}, Duke
  Math.\ J.\ \textbf{147} (2009), no.~2, 315--343. \MR{MR2495078 (2010c:14055)}

\bibitem[Lun01]{luna-typeA}
Domingo Luna, \emph{Vari\'et\'es sph\'eriques de type {$A$}}, Publ.\ Math.\ Inst.\
  Hautes \'Etudes Sci.\ (2001), no.~94, 161--226. \MR{1896179 (2003f:14056)}

\bibitem[MS05]{miller-sturmfels}
Ezra Miller and Bernd Sturmfels, \emph{Combinatorial commutative algebra},
  Graduate Texts in Mathematics, vol.~227, Springer-Verlag, New York, 2005.
  \MR{2110098 (2006d:13001)}

\bibitem[Pez10]{pezzini-cirmspher}
Guido Pezzini, \emph{Lectures on spherical and wonderful varieties}, Les cours
  du CIRM \textbf{1} (2010), no.~1, 33--53.

\bibitem[Pez13]{pezzini-redaut-arxivv2}
\bysame, \emph{On reductive automorphism groups of regular embeddings},
  arXiv:1206.0846v2 [math.AG], 2013.

\bibitem[PVS12]{degensphermod}
Stavros~Argyrios Papadakis and Bart Van~Steirteghem, \emph{Equivariant
  degenerations of spherical modules for groups of type {$\mathsf{A}$}}, Ann.\
  Inst.\ Fourier (Grenoble) \textbf{62} (2012), no.~5, 1765--1809, extended
  version at arXiv:1008.0911v3 [math.AG]. \MR{3025153}

\bibitem[PVS15]{dsmot-preprint}
\bysame, \emph{Equivariant degenerations of
  spherical modules: part II}, arXiv:1505.07446v1 [math.AG], 2015

\bibitem[Tim11]{timashev-embbook}
Dmitry~A.~Timashev, \emph{Homogeneous spaces and equivariant embeddings},
  Encyclopaedia of Mathematical Sciences, vol.~138, Springer, Heidelberg, 2011,
  Invariant Theory and Algebraic Transformation Groups, 8. \MR{2797018
  (2012e:14100)}

\bibitem[VP72]{vin&pop-quasi}
{\`E}rnest~B.~Vinberg and Vladimir~L.~Popov, \emph{A certain class of quasihomogeneous
  affine varieties}, Izv.\ Akad.\ Nauk SSSR Ser.\ Mat.\ \textbf{36} (1972),
  749--764, English translation in Math.\ USSR Izv.\ 6 (1972), 743--758.
  \MR{0313260 (47 \#1815)}

\bibitem[VS13]{bvs-owr}
Bart Van~Steirteghem, \emph{Various interpretations of the root system(s) of a
  spherical variety}, Oberwolfach Rep.\ \textbf{10} (2013), no.~2, 1464--1467.

\bibitem[Was96]{wasserman}
Benjamin Wasserman, \emph{Wonderful varieties of rank two}, Transform.\ Groups
  \textbf{1} (1996), no.~4, 375--403. \MR{1424449 (97k:14051)}

\end{thebibliography}
\end{document}